\theoremstyle{plain}
\newtheorem*{conjectuur*}{Conjecture}
\newtheorem{theorem}[subsection]{Theorem}
\newtheorem{corollary}[subsection]{Corollary}
\newtheorem{lemma}[subsection]{Lemma}
\newtheorem{proposition}[subsection]{Proposition}
\newtheorem{conjecture}[subsection]{Conjecture}
\theoremstyle{definition}
\newtheorem{definition}[subsection]{Definition}
\newtheorem{example}[subsection]{Example}
\theoremstyle{remark}
\newtheorem{remark}[subsection]{Remark}
\newcommand{\emptyprop}{q}
\newcommand \acf{algebraically closed field}
\newcommand \after{\circ}
\newcommand \binomial[2]{{\bigl( \begin{matrix} #1\cr#2\cr\end{matrix} \bigr)}}
\newcommand \ch{characteristic}
\newcommand \complet[1]{\widehat {#1}}
\newcommand \homo{homomorphism}
\newcommand \id{\mathfrak a}
\renewcommand\iff{if and only if}%took out commas July 2005
\newcommand\into{\hookrightarrow}
\newcommand \inv[1]{{#1^{-1}}}
\newcommand \inverse[2]{{#1^{-1}(#2)}}
\newcommand \iso{\cong}
\newcommand \loc{{\mathcal {O}}}
\newcommand \map[1]{{\newcommand{\tmpprop}{#1q}  \if\tmpprop\emptyprop \to\else \xrightarrow{{\phantom{i}{#1}\phantom{i}}}\fi}} 
\newcommand \maxim{\mathfrak m}
\newcommand \nat{\mathbb N}
\newcommand \norm[1]{\left|#1\right|}
\newcommand \onto{\twoheadrightarrow}
\newcommand \op\operatorname
\newcommand \pol[2]{#1[#2]}
\newcommand \pow[2]{#1[[#2]]}
\newcommand \pr{\mathfrak p}
\newcommand \range [2]{#1,\dots,#2}
\newcommand \restrict [2]{\left.#1\right|_{{#2}}}
\newcommand \rij[2]{(#1_1,\dots,#1_{#2})}
\let\sub\subseteq
\newcommand \tensor{\otimes}
\newcommand \zet{\mathbb Z}
\newcommand\asa{\leftrightarrow}
\newcommand\dan{\to}
\newcommand\En{\bigwedge}
\newcommand\en{\wedge}
\newcommand\niet{\neg}
\newcommand\Of{\bigvee}
\newcommand\of{\vee}
\newcommand{\commdiagram}[9][]{%
\begin{equation}
{\newcommand{\tmpprop}{#1q} 
\if\tmpprop\emptyprop \relax\else \label{#1}\fi}
\begin{aligned}%
\mbox{
\begin{picture}(130,90)%
\put(120,70){\vector( 0,-1){50}}%
\put(10,80){\vector( 1, 0){100}}%
\put(0,70){\vector( 0,-1){50}}%
\put(10,10){\vector( 1, 0){100}}%
\put(115,80){\makebox(0,0)[l]{$#4$}}%
\put(5,80){\makebox(0,0)[r]{$#2$}}%
\put(115,10){\makebox(0,0)[l]{$#9$}}%
\put(5,10){\makebox(0,0)[r]{$#7$}}%
\put(-3,50){\makebox(0,0)[r]{$#5$}}
\put(123,50){\makebox(0,0)[l]{$#6$}}
\put(60,3){\makebox(0,0)[c]{$#8$}}
\put(60,88){\makebox(0,0)[c]{$#3$}}
\end{picture}}
\end{aligned}
\end{equation}}
\newcommand\commtrianglefront[7][]{%
\begin{equation}
{\newcommand{\tmpprop}{#1q} 
\if\tmpprop\emptyprop \relax\else \label{#1}\fi}
\begin{aligned}%
\mbox{
\begin{picture}(120,80)%
\put(55,68){\vector(-1,-2){30}}
\put(65,68){\vector(1,-2){30}}
\put(30,5){\vector(1,0){60}}
\put(60,75){\makebox(0,0)[c]{$#2$}}
\put(25,5){\makebox(0,0)[r]{$#4$}}
\put(95,5){\makebox(0,0)[l]{$#6$}}
\put(60,0){\makebox(0,0)[c]{$#5$}}
\put(37,43){\makebox(0,0)[r]{$#3$}}
\put(83,43){\makebox(0,0)[l]{$#7$}}
\end{picture}}
\end{aligned}
\end{equation}}
\newcommand\commtriangleback[7][]{%
\begin{equation}
{\newcommand{\tmpprop}{#1q}
\if\tmpprop\emptyprop \relax\else \label{#1}\fi}
\begin{aligned}%
\mbox{
\begin{picture}(120,80)%
\put(55,70){\vector(-1,-2){30}}
\put(65,70){\vector(1,-2){30}}
\put(30,5){\vector(1,0){60}}
\put(60,75){\makebox(0,0)[c]{$#2$}}
\put(25,5){\makebox(0,0)[r]{$#6$}}
\put(95,5){\makebox(0,0)[l]{$#4$}}
\put(60,0){\makebox(0,0)[c]{$#5$}}
\put(37,43){\makebox(0,0)[r]{$#7$}}
\put(83,43){\makebox(0,0)[l]{$#3$}}
\end{picture}}
\end{aligned}
\end{equation}}
\newcommand\NYCCT{\address{Department of Mathematics\\
City University of New York\\
365 Fifth Avenue\\ 
New York, NY 10016 (USA)}
\email{hschoutens@citytech.cuny.edu}}
\newcommand\fld{F}
\newcommand\recur[3]{#2\lhd_{#1}#3}
\newcommand\ord[2]{\op{ord}_{#1}(#2)}
\newcommand\round[2]{\lceil\frac{#1}{#2}\rceil}
\newcommand\tin[2]{\tilde{#1}^{#2}}
\newcommand\org[2]{\dot{#1}({#2})}
\renewcommand\dan{\Rightarrow}
\newcommand \affine[2]{{\mathbb A_{#1}^{#2}}}
\newcommand \gr{Gro\-then\-dieck ring}
\newcommand \sciss{scissor group}
\newcommand \grgroup{Gro\-then\-dieck group}
\newcommand\zar{\mathcal Sch}
\newcommand\pp{\mathcal {PP}}
\newcommand\qf{\mathcal {QF}}
\newcommand\expl{\mathcal Expl}
\newcommand\zariski{schemic}
\newcommand\Zariski{Schemic}
\newcommand \var{x}
\newcommand \vary{y}
\newcommand\tuple[1]{\mathbf{#1}}
\newcommand\fim[1]{\texttt{Im}(#1)}
\newcommand \inter[2]{#1(#2)}%\inter[2]{{\norm{#1}^{#2}}}
\newcommand \coor[2]{\nabla_{\!#1}#2} 
\newcommand \parc[3]{\nabla_{\!#1}^{#2}#3} 
\newcommand \arc[2]{\nabla_{\!#1}#2} 
\newcommand \sym[1]{{\langle #1\rangle}}
\newcommand \class[1]{{\left[ #1\right]}}
\newcommand \arcbar[1]{\bar\nabla {#1}} 
\newcommand \form{formulary}
\newcommand \forms{formularies}
\newcommand \defcat[1]{\op {Def}({#1})}
\newcommand \grot[1]{{\mathbf {Gr}(#1)}}
\newcommand \grotlat[1]{{\mathbf {Sciss}(#1)}}
\newcommand \grotsch[1]{{\mathbf {Gr}(#1^{\op{sch}})}}
\newcommand \grotschzero[1]{{\mathbf {Gr}_0(#1^{\op{sch}})}}
\newcommand \grotschinv[1]{{\mathbf {Gr}(#1^{\op{sch}})}_\lef}
\newcommand \grotclass[1]{{\mathbf {Gr}(#1^{\op{var}})}}
\newcommand \grotart[2]{{\mathbf {Gr}_{#2}(#1^{\op{pp}})}}
\newcommand \grotartinv[1]{\mathbf {Gr}(#1^{\op{pp}})_{\lef}}
\newcommand \grotartinf[1]{\mathbf {Gr}^\infty(#1^{\op{pp}})} 
\newcommand \grotform[1]{{\mathbf {Gr}}(#1^{\op{form}})}
\newcommand \grotmod[3]{\mathbf {Gr}^{#1}_{#3}({#2})}
\newcommand \grotmon[2]{\mathbf {K}_{#2}(#1)}
\newcommand\ACF[1]{\theory {ACF}_{#1}}
\newcommand\arttheory[1]{\theory {Art}_{#1}}
\newcommand\loctheory[1]{\theory T_{#1}^{loc}}
\newcommand\integral[2]{\int #1\ d#2}
\newcommand\motint[2]{\int^{\op{mot}} #1\ d#2}
\newcommand \lef{{\mathbb L}}
\newcommand \theory[1]{\mathbf{#1}}
\newcommand \one{\top}
\newcommand \nul{\bot}
\newcommand \jet[3]{J_{#1}^{#2}#3}
\newcommand \Jet[2]{J_{#1}#2}
\newcommand \mor[3]{\op{Mor}_{#1}(#2,#3)}
\newcommand \jac[1]{\op{Jac}_{#1}}
\newcommand\igugeom[1]{\op{Igu}^{\op{geom}}_{#1}}
\newcommand\igutwist[2]{\op{Igu}^{#2}_{#1}}
\newcommand\igu[3]{\op{Igu}^{(#2,#3)} _{#1}}
\newcommand\igumot[3]{\op{Igu}^{(#2,#3)}_{ #1^{\op{mot}}}}
\title {Schemic Grothendieck rings and motivic rationality}
\author{Hans Schoutens}
\date\today
\subjclass{13D15;14C35;14G10;18F30}
\begin{document}

%ABSTRACT

\begin{abstract}  
We propose a suitable substitute for the classical \gr\ of an \acf, in which any quasi-projective scheme  is represented, while maintaining its  non-reduced structure. This yields a more subtle invariant, called the \zariski\ \gr, in which we can formulate a form of integration resembling Kontsevich's motivic integration via arc schemes. In view of its  more   functorial properties, we can present a \ch-free proof of the rationality of the geometric Igusa zeta series  for certain hypersurfaces, thus generalizing the ground-breaking work on motivic integration by Denef and Loeser. The construction  uses first-order formulae, and some infinitary versions, called formularies. 
\end{abstract}

\maketitle

\section{Introduction}

The classical  \gr\ $K_0(k)$ of an \acf\ $k$ is defined as the quotient of the free Abelian group on  varieties   over $k$, modulo the relations $\class X-\class {X'}$, if $X\iso X'$, and 
\begin{equation}\label{eq:scissvar}
\class X=\class{X- Y}+\class Y, 
\end{equation}
if $Y$ is a closed subvariety, for $Y,X,X'$ varieties (=reduced, separated schemes of finite type over $k$). We will refer to the former relations as \emph{isomorphism relations} and to the latter   as \emph{scissor relations}, in the sense that we    ``cut out $Y$ from $X$.'' In this way, we cannot just take the class  of a variety, but of any constructible subset. Multiplication on $K_0(k)$ is then induced by the fiber product. In sum, the three main ingredients for building the \gr\ are:  an isomorphism relation,  scissor relations, and a product. Only the former causes problems if one wants to generalize the construction of the \gr\ to include not just classes of varieties, but also of   finitely generated schemes (with their nilpotent structure). Put bluntly,  we cannot cut a scheme in two, as there is no notion of a scheme-theoretic complement.  To describe what this ought to be, we turn to model-theory.

To model-theorists, constructible subsets are nothing else than definable subsets (in view of quantifier elimination for \acf{s}). Moreover, union and intersection correspond to disjunction and conjunction of the corresponding formulae. Therefore, instead of working with the theory of \acf{s}, we could repeat the previous construction over any first-order theory $\theory T$. However, now it is less obvious what it means for two formulae to be isomorphic. The most straightforward way is to introduce the notion of a definable isomorphism. However, even for the theory of \acf{s}, this yields a priori a different notion of isomorphism than the geometric one: whereas the former allows for arbitrary quantifier free formulae, the latter is given by polynomials, that is to say, of formulae of the form $\vary=f(\var)$, which we will call \emph{explicit formulae}. This observation suggests that we should consider not necessarily all first-order formulae, but also some restricted classes. This general construction is discussed in \S\ref{s:grth}. 

It is beneficial to develop the theory in a relative setup, so we work over an arbitrary affine, Noetherian scheme $X=\op{Spec}A$, instead of just over an \acf. To construct a generalized \gr\ for schemes, a so-called   \emph{\zariski\ \gr}, we need to settle on a first-order theory $\theory T$. The classical \gr\ is obtained by taking for $\theory T$ the theory of \acf{s} that are also $A$-algebras. Alternatively, one could also have chosen   the theory of all $A$-algebras without zero-divisors, and so, to include all schemes, we could simply  replace this by the theory $\theory T_A$ of all $A$-algebras. Refinements lead to more relations and hence more manageable \gr{s}, the most important one of which is the  theory $\arttheory A$ of all Artinian local $A$-algebras (=algebras that have finite length as an $A$-module). Since we no longer have quantifier elimination, we also need to make a decision on which formulae we will allow, both for our definition of isomorphism as well as for the classes we want to study. Varieties, and more generally schemes, are given by equations, and so the family of formulae of the form $f_1=\dots=f_s=0$, with $f_i\in \pol A\var$ will provide the proper candidate for our generalization to schemes; we call such formulae therefore \emph{\zariski}. We show that there is a one-one correspondence between \zariski\ formulae in $n$ free variables up to $\theory T_A$-equivalence,\footnote{Two formulae are equivalent modulo a theory if they define the same subsets in each model of the theory.}  and closed subschemes of $\affine Xn$ (Theorem~\ref{T:fosch}). In fact, this result remains true when working in the theory $\arttheory A$. As for isomorphisms, we may take either the class of explicit isomorphisms, or the larger class of \zariski\ isomorphisms, both choices leading to the same  \emph{\zariski\ \gr} $\grotsch X$. There is an obvious ring \homo\ $\grotsch X\to K_0(X)$ to the classical \gr\ $K_0(X)$ of $X$. The main result, Theorem~\ref{T:classinv}, is that two affine schemes of finite type over $X$ are isomorphic \iff\ they have the same class in $\grotsch X$.

However, if we want more relations to hold in our \gr, we need to enlarge  the family of formulae, and work in the appropriate theory. In \S\ref{s:ppgr}, we explain how in order to define the class of a non-affine scheme, we need to work modulo the theory $\arttheory A$ in the larger class of \emph{pp-formulae}, that is to say,   existentially quantified \zariski\ formulae.  This is apparent already when dealing with basic open subsets: if $U=\op D(f)$ is the basic open subset of $\affine Xn$, that is to say, $U=\op{Spec}(\pol A\var_f)$, then as an abstract affine scheme, it is given by the \zariski\ formula $f(\var)z=1$, where $z$ is an extra variable, whereas as an open subset of $\affine Xn$, it is given by the pp-formula $(\exists z)f(\var)z=1$; the isomorphism between these two sets is only true modulo $\arttheory A$, and is   given by a (non-explicit) \zariski\ formula. This leads to the \emph{pp-\gr} $\grotart X{}$ of $X$, where instead of  quantifier free formulae, we take Boolean combinations of pp-formulae, up to \zariski\ isomorphisms. To any scheme of finite type over $X$, we can, by taking an open affine covering,  associate a unique element in $\grotart X{}$. 

Unfortunately,   the original scissor relation~\eqref{eq:scissvar} is no longer valid. Indeed, the complement of an open $U\sub Y$ does not carry a unique closed scheme structure anymore. The solution is to take the limit over all these structures, yielding the formal completion $\complet Y_Z$, where $Z$ is the underlying variety of $Y-U$. At the level of formulae (for simplicity, we assume $A=\fld$ is an \acf\ henceforth), the negation of the pp-formula defining a basic open subset is equivalent with an infinite disjunction of \zariski\ formulae, having the property that in any Artinian $\fld$-algebra, the set defined by the disjunction is already definable by one of the disjuncts (but different models may require different disjuncts). Such an infinitary (whence non-first-order) disjunction will be termed a \emph{formulary}. Replacing formulae by formularies in the definition of the pp-\gr, yields the \emph{infinitary \gr} $\grotartinf \fld$, in which every formal scheme over $\fld$ is represented by the class of some formulary, resurrecting the old scissor relation into a new one: for any closed immersion $Z\sub Y$ of schemes of finite type over $X$, we have $\class Y=\class {Y-Z}+\class{\complet Y_Z}$.  All this is explained in \S\ref{s:infgr}.

There is one more variant that will be considered here, called the \emph{formal \gr} $\grotform \fld$, in which we revert to the reduced situation by factoring out the ideal generated by all $\class {\complet Y_Z}-\class Z$ for all closed immersions $Z\sub Y$. However, we will only work in this latter quotient (in which any two schemes with the same underlying variety have the same class) after we have taken   arcs (see below). This does make a difference, as can be seen already on easy examples (Table~\ref{tab:1}). The advantage is that we get back the original scissor relation~\eqref{eq:scissvar}, which makes it easier to invoke inductive arguments when proving rationality of the motivic Igusa zeta series (to be discussed below). The relation between all these \gr{s} is given by the  following (ring) \homo{s}
\begin{equation}\label{eq:grhomo}
\grotsch \fld\to\grotart \fld{}\to \grotartinf \fld\to \grotform\fld\to\grotclass\fld.
\end{equation}

To discuss our main application, the motivic rationality of the geometric Igusa zeta series, we introduce a weak version of motivic integration in \S\ref{s:motint}. For any Artinian $\fld $-algebra $R$, and any affine scheme $X$ over $\fld $, we define the \emph{arc scheme} $\arc RX$ of $X$ along $\op{Spec}R$ as the scheme whose $S$-rational points correspond to the $R\tensor_\fld S$-rational points of $X$, for any Artinian $\fld$-algebra $S$. This generalizes the truncated arc space of a variety, which is obtained by taking $R=\pol \fld\xi/\xi^n\pol \fld \xi$ and ignoring the nilpotent structure. The \emph{arc integral} $\integral ZX$  is then defined as the class $\class{ \arc Z X }$ in $\grotart \fld{}$, and the main result is that it only depends only the classes of $X$ and $Z$ (unlike in the classical case). The \emph{geometric Igusa zeta series of  $X$ along the germ $(Y,P)$} is then defined as the formal power series
$$
\igu XYP(t):=\sum_n\left(\integral{\jet PnY}X\right)t^n
$$
in $\pow{\grotart \fld{} }t$, where the \emph{$n$-th jet} of a germ $(Y,P)$ is defined as the Artinian scheme $\jet PnY:=\op{Spec}(\loc_Y/\maxim^n_P)$, with $\maxim_P$   the maximal ideal  of the closed point $P$. For the remainder of this introduction, I will assume that  $(Y,P)$ is the germ of a point on a line, and simply write $\igugeom X$ for this zeta series. Under the \homo\ from \eqref{eq:grhomo}, this power series   becomes the Denef-Loeser geometric Igusa zeta series. The aim is to recover within the new framework their result that $\igugeom X$ is rational over the localization $\grotclass\fld_\lef$, where $\lef$ is the class of an affine  line, called the \emph{Lefschetz class}. Their proof relies on Embedded Resolution of Singularities, and hence works in positive \ch\ only for surfaces. In \S\S\ref{s:ratIgu} and \ref{s:DuVal}, I will give examples of   hypersurfaces, in any \ch, for which we can derive the rationality of the Igusa zeta series (in fact, over $\grotform\fld_\lef$), without any appeal to resolution of singularities. The proofs are, moreover, far more elementary and algorithmic in nature because of the functorial properties of our construction.

\section{The \grgroup\ of a lattice}\label{s:lat}
The most general setup in which one can define  a \grgroup\ is the category of semi-lattices. Recall that   a \emph{lattice} $\Lambda$  is a partially ordered set in which every finite subset has an infimum and a supremum.  For any two elements $a,b\in \Lambda$, we let $a\en b$   and $a\of b$ denote respectively the infimum and the supremum of $\{a,b\}$. If only infima exist, then we call $\Lambda$ a \emph{semi-lattice}. Given a semi-lattice $\Lambda$, we call a finite subset $S\sub\Lambda$ \emph{admissible} if it has a supremum $a$, in which case we call $S$ a \emph{covering} of $a$. 
 
\subsection*{Scissor relations}
 For each $n>1$, we define the \emph{$n$-th scissor polynomial}
$$
S_n(x):=1-\prod_{i=1}^n(1-\var_i)=\var_1+\dots+\var_n-\dots+(-1)^{n-1}\var_1\cdots\var_n\in\pol\zet\var
$$
where $\var=\rij\var n$.
Let $\mathfrak i_n$ be the \emph{generic idempotency ideal}, that is to say, the ideal in $\pol\zet\var$ generated by the   relations $\var_i^2-\var_i$, for $i=\range 1n$. The following identities among scissor polynomials will be useful later:

\begin{lemma}\label{L:scisind}
For each $n$, we have an equivalence relation
\begin{equation}\label{eq:scisind}
S_n\rij\var n+S_{n-1}(\var_1\var_n,\dots,\var_{n-1}\var_n)\equiv S_{n-1}\rij\var{n-1}+\var_n\mod\mathfrak i_n.
\end{equation}
 in $\pol\zet\var$, with $\var=\rij\var n$. More generally, for $\vary=\rij\vary m$, we have
\begin{multline}\label{eq:scisprod}
S_{n+m}(\var_1,\dots,\var_n,\vary_1,\dots,\vary_m)+S_{nm}(\var_1\vary_1,\var_1\vary_2,\dots,\var_n\vary_m)\equiv\\
S_n\rij\var n+S_m\rij\vary m\mod\mathfrak i_{n+m}.
\end{multline}
\end{lemma}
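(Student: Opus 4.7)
The plan is to establish the more general identity \eqref{eq:scisprod} first, and then read off \eqref{eq:scisind} as the case $m=1$ applied to the $n-1$ variables $\var_1,\dots,\var_{n-1}$ with $\vary_1:=\var_n$, using that $S_1(\vary_1)=\vary_1$. My overall approach rests on the observation that modulo $\mathfrak{i}_{n+m}$ the scissor polynomials behave as \emph{join} operations in a Boolean sense: writing $A:=S_n(\var_1,\dots,\var_n)$ and $B:=S_m(\vary_1,\dots,\vary_m)$, I aim for the two stronger congruences
\begin{align*}
S_{n+m}(\var_1,\dots,\var_n,\vary_1,\dots,\vary_m) &\equiv A+B-AB, \\
S_{nm}(\var_1\vary_1,\var_1\vary_2,\dots,\var_n\vary_m) &\equiv AB,
\end{align*}
both modulo $\mathfrak{i}_{n+m}$, whose sum is $A+B = S_n(\var)+S_m(\vary)$, giving \eqref{eq:scisprod}.

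First I would verify, by induction on $n$, that $A$ (and symmetrically $B$) is idempotent modulo $\mathfrak{i}$. Writing $S_n = (1-\var_n)S_{n-1}+\var_n$ and exploiting $(1-\var_n)^2\equiv 1-\var_n$, $\var_n(1-\var_n)\equiv 0$, and $\var_n^2\equiv\var_n$, the induction step collapses $S_n^2$ back to $S_n$. The first displayed congruence is then immediate by factoring:
$$
S_{n+m}(\var,\vary) = 1-\prod_{i=1}^n(1-\var_i)\prod_{j=1}^m(1-\vary_j) = 1-(1-A)(1-B) = A+B-AB,
$$
which is in fact an equality in $\pol\zet{\var,\vary}$, not just a congruence.

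The main computation is the second congruence, which I would handle by a two-stage expansion of the double product $\prod_{i,j}(1-\var_i\vary_j)$. For each fixed $i$,
$$
\prod_{j=1}^m(1-\var_i\vary_j) = \sum_{S\subseteq\{1,\dots,m\}}(-1)^{|S|}\var_i^{|S|}\prod_{j\in S}\vary_j;
$$
reducing $\var_i^{|S|}\equiv\var_i$ for $|S|\geq 1$ collapses this expression to $1-\var_i+\var_i\prod_j(1-\vary_j) = 1-\var_i B$. Taking the product over $i$ and repeating the same manoeuvre — now using that $B$, viewed as an element of the quotient ring, is idempotent, so that $B^{|T|}\equiv B$ for $|T|\geq 1$ — yields $\prod_i(1-\var_i B)\equiv 1-AB \mod \mathfrak{i}_{n+m}$. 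Hence $S_{nm}(\var_i\vary_j) = 1-\prod_{i,j}(1-\var_i\vary_j)\equiv AB$, as required.

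There is no genuine obstacle: the argument is careful bookkeeping of exponent reductions. The one point that must be recognised explicitly is that $B$ survives the first reduction as an idempotent scalar, which is precisely what enables the second reduction to proceed in complete parallel with the first.
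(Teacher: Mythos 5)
Your proof is correct, and its computational core is the same as the paper's: both reduce $\prod_{i,j}(1-\var_i\vary_j)$ modulo $\mathfrak i$ by the idempotent trick (the paper isolates it as identity~\eqref{eq:idemprod}, $\prod_i(1-ea_i)=1-e+e\prod_i(1-a_i)$; you carry out the equivalent exponent-collapse $\var_i^k\equiv\var_i$, then $B^k\equiv B$). Where you differ is in the organization: the paper directly computes $1-S_{nm}(\var_i\vary_j)\equiv Q+(1-Q)P$ and then checks the desired congruence by bookkeeping of $P=\prod_i(1-\var_i)$ and $Q=\prod_j(1-\vary_j)$, while you split the claim into the exact identity $S_{n+m}=A+B-AB$ (with $A=S_n(\var)$, $B=S_m(\vary)$) and the congruence $S_{nm}(\var_i\vary_j)\equiv AB$, then simply add. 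Your decomposition has the merit of making the Boolean-lattice reading explicit -- $S_{n+m}$ is the join, $S_{nm}(\var_i\vary_j)$ is the meet, the identity is inclusion-exclusion among idempotents -- and also records the useful sub-fact that $S_n$ is itself idempotent mod $\mathfrak i$, which the paper never states but which is implicit in its use of $1-Q$ as an idempotent. The price is an extra preliminary induction; the paper avoids it by working directly with $1-e$ and $e$ rather than with $S_n$.
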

\begin{proof}
Note that \eqref{eq:scisind} is just a special case of \eqref{eq:scisprod}. Let us first prove that in any ring $D$, we have an identity
\begin{equation}\label{eq:idemprod}
\prod_{i=1}^n(1-ea_i)=1-e+e\prod_{i=1}^n(1-a_i)
\end{equation}
for $a_i\in D$ and $e$ an idempotent in $D$. Indeed, write $1-ea_i=1-e+e(1-a_i)$. Since $e(1-e)=0$, the expansion of the product on the left hand side of \eqref{eq:idemprod} yields $(1-e)^n+e^n\prod_i(1-a_i)$, and the claim follows since $e$ and $1-e$ are both idempotents.

To prove \eqref{eq:scisprod}, we   carry our calculations out in $C:=\pol\zet{\var,\vary}/\mathfrak i_{n+m}$. To simplify notation, let us write $P:=\prod_i(1-\var_i)$ and $Q:=\prod_j(1-\vary_j)$. Hence the first, third, and fourth term in \eqref{eq:scisprod} are respectively $1-PQ$, $1-P$ and $1-Q$. Let us therefore expand the second term. Applying \eqref{eq:idemprod}, for each $i$, with $\var_i$ as   idempotent in the product indexed by $j$, and then again in the last line, with  $1-Q$ as idempotent, we get
\begin{align*}
1-S_{nm}(\var_1\vary_1,\var_1\vary_2,\dots,\var_n\vary_m)&=\prod_{i=1}^n\left(\prod_{j=1}^m(1-\var_i\vary_j)\right)\\
&=\prod_{i=1}^n (1-\var_i+\var_iQ)\\
&=\prod_{i=1}^n (1-(1-Q)\var_i)=
Q+(1-Q)P
\end{align*}
From this, \eqref{eq:scisprod} now follows immediately.
\end{proof}

We can write any scissor polynomial $S_n$ as the difference $S_n^+-S_n^-$ of two polynomials $S_n^+$ and $S_n^-$ with positive coefficients, that is to say, $S_n^+$ is the sum of terms in $S_n$  of odd degree, and $S_n^-$ is minus the sum of all terms of even degree. Put differently, $S_n^+$ and $S_n^-$ are the respective  sums of all square-free monomials in the variables $\rij xn$ of respectively odd and (positive) even degree.

\subsection*{The \sciss\ of a lattice}
 Given a lattice $\Lambda$, let $\pol\zet\Lambda$ be the free Abelian group on $\Lambda$. Using the infimum of $\Lambda$ as multiplication, we get a ring structure on $\pol\zet\Lambda$, that is to say, 
$$
(\sum_{i=1}^n a_i)\cdot(\sum_{j=1}^m b_j):=\sum_{i=1}^n\sum_{j=1}^ma_i\en b_j
$$
for $a_i,b_j\in\Lambda$.
Let $\tuple a:=\rij an$ be a tuple in $\Lambda $. We will write $\Of\tuple a$ for $a_1\of\dots\of a_n$.  Substitution induces a ring \homo\  $\pol\zet\var\to \pol\zet\Lambda\colon\var_i\mapsto a_i$.  In particular, $S_n(\tuple a)$ is a well-defined element in $\pol\zet\Lambda$, and we may abbreviate it as $S(\tuple a)$, since the arity is clear from the context. Note that, since any element of $\Lambda$ is idempotent in $\pol\zet\Lambda$, the   kernel of this \homo\ $\pol\zet \var\to  \pol\zet\Lambda$ contains $\mathfrak i$, the generic idempotency ideal.  We define the \emph{scissor relation} on $\tuple a$ as the formal sum
\begin{equation}\label{eq:scissor}
(\Of \tuple a)-S(\tuple a).
\end{equation}
  For instance, if $n=2$, then   the (second) scissor relation $(a_1\of a_2)-S_2(a_1,a_2)$ is equal to
\begin{equation}\label{eq:2scis}
(a_1\of a_2)-a_1-a_2+(a_1\en a_2).
\end{equation}
Similarly, for $n=3$, we get 
$$
 (a_1\of a_2\of a_3)-a_1-a_2-a_3+(a_1 \en a_2)+ (a_1 \en a_3)+ (a_2 \en a_3)- (a_1 \en a_2 \en a_3).
$$
We define the \emph{\sciss} of $\Lambda$, denoted $\grotlat\Lambda$, as the quotient of $\pol\zet\Lambda$ by   the subgroup $N$ generated by all second scissor relations~\eqref{eq:2scis}. Although we later will make a notational distinction between an element and its   class  in the \sciss, at present, no such distinction is needed, and so we continue to write $a$ for the image of $a\in\Lambda$ in $\grotlat\Lambda$.

\begin{remark}\label{R:enmult}
The ring structure on $\pol\zet\Lambda$, given by $\en$, descends to a ring structure on $\grotlat\Lambda$, since  $N$ is in fact an ideal. When we apply this to formulae in the next section,  this ring structure on $\grotlat\Lambda$  will play a minor role, and instead, a different multiplication will be introduced.
\end{remark}

\begin{proposition}\label{P:scissor}
For each  tuple $\tuple a$ in $\Lambda$, we have a scissor relation
$$
\Of\tuple a=S(\tuple a)
$$
in $\grotlat \Lambda$.
\end{proposition}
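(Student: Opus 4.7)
The plan is to argue by induction on the length $n$ of the tuple $\tuple a=\rij an$. The base case $n=2$ is nothing but the defining relation~\eqref{eq:2scis} of the subgroup $N\sub\pol\zet\Lambda$, so the identity $(a_1\of a_2)=S_2(a_1,a_2)$ holds in $\grotlat\Lambda$ by the very construction of the \sciss.

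For the inductive step, I would set $b:=\Of(a_1,\dots,a_{n-1})$ and apply the binary scissor relation to the pair $(b,a_n)$, yielding
\[
\Of\tuple a \;=\; b\of a_n \;\equiv\; b+a_n-(b\en a_n) \pmod{N}.
\]
Applying the induction hypothesis to $\rij a{n-1}$ rewrites $b$ as $S_{n-1}(a_1,\dots,a_{n-1})$ modulo $N$. Applying it to the $(n-1)$-tuple $(a_1\en a_n,\dots,a_{n-1}\en a_n)$, together with the finite distributivity identity $b\en a_n=\Of_{i<n}(a_i\en a_n)$ in $\Lambda$, rewrites $b\en a_n$ as $S_{n-1}(a_1a_n,\dots,a_{n-1}a_n)$, where the products are taken in the ring $\pol\zet\Lambda$ in which $\en$ is multiplication.

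Substituting, one finds
\[
\Of\tuple a \;\equiv\; S_{n-1}(a_1,\dots,a_{n-1})+a_n-S_{n-1}(a_1a_n,\dots,a_{n-1}a_n) \pmod{N},
\]
and it only remains to recognize the right-hand side as $S_n(\tuple a)$ in $\pol\zet\Lambda$. This is precisely the content of the identity~\eqref{eq:scisind} from Lemma~\ref{L:scisind}, which is an equivalence modulo the generic idempotency ideal $\mathfrak i_n$, hence holds unconditionally in $\pol\zet\Lambda$ because every basis element $a_i\in\Lambda$ is idempotent with respect to $\en$.

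The only real subtlety is the appeal to the distributive law $b\en a_n=\Of_{i<n}(a_i\en a_n)$, which is a genuine lattice-theoretic property not automatic for arbitrary lattices. In the lattices to be considered in the sequel (definable subsets, closed subschemes, Boolean combinations of pp-formulae, etc.), distributivity holds tautologically, so this causes no issue. Everything else is bookkeeping, with Lemma~\ref{L:scisind} packaging exactly the algebraic identity needed to iterate the binary scissor relation through successive values of $n$.
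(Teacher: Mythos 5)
Your proof is correct and follows essentially the same inductive route as the paper's: apply the binary scissor relation to the pair $\bigl(\Of(a_1,\dots,a_{n-1}),a_n\bigr)$, invoke the induction hypothesis twice (once for the disjunction $b$ and once for the tuple of meets $a_i\en a_n$), and use Lemma~\ref{L:scisind} to reassemble the result into $S_n(\tuple a)$. Your remark that distributivity of $\en$ over $\of$ is tacitly required is sharp — the paper's own proof silently makes the same assumption when it rewrites $b\en a_n$ as $(a_1\en a_n)\of\dots\of(a_{n-1}\en a_n)$ in \eqref{eq:s2}.
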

\begin{proof}
We prove this by induction on the length $n\geq 2$ of $\tuple a=\rij an$, where the case $n=2$ is just the definition. Since $\mathfrak i$ lies in the kernel of $\pol\zet\var\to \pol\zet \Lambda $, the equivalence~\eqref{eq:scisind} in Lemma~\ref{L:scisind} becomes an identity in the latter group, that is to say, 
\begin{equation}\label{eq:sn}
S_n\rij a n= 
S_{n-1}\rij a{n-1}-S_{n-1}(a_1\en a_n,\dots,a_{n-1}\en a_n)+ a_n.
\end{equation}

Viewing $\Of\tuple a$ as the disjunction $b\of a_n$, where     $b:=a_1\of\dots\of a_{n-1}$, the defining (second) scissor relation  yields  an identity
\begin{equation}\label{eq:s2}
\Of\tuple a=b+a_n-(b\en a_n)=(a_1\of\dots\of a_{n-1})+{a_n}-{(a_1\en a_n)\of\dots \of(a_{n-1}\en a_n)}.
\end{equation}
Subtracting \eqref{eq:sn} from \eqref{eq:s2}, the left hand side is $\Of\tuple a-S_n(\tuple a)$, and the right hand side is equal to
\begin{multline*}
\big((a_1\of\dots\of a_{n-1})-{S_{n-1}\rij a{n-1}}\big)-\\
\big({(a_1\en a_n)\of\dots ( a_{n-1}\of a_n)}-{S_{n-1}( a_1\en a_n,\dots, a_{n-1}\en a_n)}\big).
\end{multline*}
By induction, both terms are zero in $\grotlat\Lambda$, and the result follows.
\end{proof}

In particular,  $N$ is generated by all scissor relations, of any arity. We may generalize this to a semi-lattice $\Lambda$ (with multiplication still given by $\en$) as follows. Let $N$ be the subgroup of $\pol\zet\Lambda$ generated by all expressions $a-S_n(\tuple b)$, where $\tuple b=\rij bn$ (or rather its entries) ranges over all admissible coverings of $a$, that is to say, such that $a$ is the supremum of the $b_i$. It is not hard to check that $N$ is in fact an ideal, and the resulting residue ring is the scissor ring  $\grotlat\Lambda:=\pol\zet\Lambda/N$.

\subsection*{The \grgroup\ of a graph (semi-)lattice}
Let $\Lambda$ be a (semi-)lattice. 
By a \emph{(directed) graph} on  $\Lambda$ we simply mean  a binary relation $E$ on $\Lambda$ (we do not require it to be compatible with the join or the meet). We define the \emph{\grgroup} of $(\Lambda,E)$, denoted $\grotmon\Lambda E$, as the factor group of $\grotlat \Lambda$ modulo the subgroup   generated by all elements of the form $a-b$ such that there is an edge from $a$ to $b$. In other words, $\grotmon\Lambda E$ is the quotient of the free Abelian group $\pol\zet\Lambda$ on $\Lambda$ modulo the subgroup $N_{\text{sciss}}+N_E$, where $N_{\text{sciss}}=N$ is the group of   scissor relations and $N_E$ the group generated by all $a-b$ with $(a,b)\in E$. 

If $\tilde E$ is the equivalence relation generated by $E$ (meaning that $a$ is equivalent to $b$ if there is an undirected path from $a$ to $b$), then $N_E=N_{\tilde E}$, and hence both graphs have the same \grgroup, as do all intermediate graphs between $E$ and $\tilde E$. Therefore,  often, though not always, $E$ will already be an equivalence relation. Although the quotient $\pol\zet\Lambda/N_E$ is equal to the free Abelian group  on the quotient $\Lambda/\tilde E$, the latter is no longer a semi-lattice, and so a priori, does not admit a scissor subgroup.  We may paraphrase this situation as: \emph{cut first, then identify}.

\section{The \gr\ of a theory}\label{s:grth}
Inspired by the ground-breaking work of Denef and Loeser, model-theorists have recently been interested in the \gr\ of an arbitrary first-order theory, see for instance \cite{CH,CHGrot,HuKaz,KraSca}. The  new perspective offered here is that rather than looking at all formulae and all definable isomorphisms, much better behaved objects can be obtained when restricting these classes.

Fix a
language $\mathcal L$, by which we mean the collection of all well-formed
formulae in a certain signature, in a fixed countable collection of variables $v_1,v_2,\dots$.\footnote{We usually start numbering from $1$, as any non-logician would.} Note that some  
authors use the terms language and signature interchangeably. We denote a formula by Greek lower case letters $\phi,\psi,\dots$, and often we give names to their free variables as well, taken from the last letters of the Latin alphabet: $\var,\vary,z$. If $\phi\rij\var s$ is an $\mathcal L$-formula, and $M$ an $\mathcal L$-structure, then the \emph{set defined by $\phi$ in $M$}, or the \emph{interpretation of $\phi$ in $M$} is the following subset $\inter\phi M$. Suppose $x_i=v_{n_i}$, and let $n$ be the maximum of all $n_i$. Then $\inter\phi M$ is the subset of $M^n$ of all $\rij an$ such that $\phi(a_{n_1},\dots,a_{n_s})$ holds in $M$. Any set of the form $\inter\phi M$ will be called a \emph{definable subset}. Note that the Cartesian power $n$ is not determined by the number of free variables $s$, but by the highest index of a  $v_i$ occurring in the formula $\phi$; we   call $n$ the \emph{arity} of $\phi$ (which therefore is not to be confused with its number of free variables). For instance, the subset defined by $\phi:=(v_3=0)\en (v_7=1)$ in $\zet$ is the $7$-ary subset $\zet^2\times\{0\}\times\zet^3\times\{1\}$ of $\zet^7$. Also note that this leaves a certain amount of ambiguity:  the formula $v_3=0$ has, prima facie, arity $3$, but as a conjunct of $\phi$ it behaves as a formula of arity $7$. Notwithstanding all this, the tacit rule will be that if $\rij \var n$ denotes  the tuple of free variables of $\phi$, then $\rij\var n$ stands for the tuple $\rij vn$, or more generally, if $\var,\vary, z,\dots$ are tuples of free variables of $\phi$, which are listed in that order, and whose total number  equals  $n$,   then these variables represent the first $n$ variables $v_i$, that is to say, $(\var_1,\dots,\var_s,\vary_1,\dots,\vary_t,z,\dots)=\rij vn$. Put differently, unless mentioned explicitly, the arity of a formula is its number of free variables. In this respect, it is useful to introduce the the \emph{primary form} of $\phi$, defined as $\phi^\circ:=\phi(v_1,\dots,v_s)$, where $s$ is the number of   free variables of $\phi$. The implicit assumption is that a formula is in primary form, unless   the variables are stated explicitly. Furthermore, the implicit assumption is that the disjunction or conjunction of two formulae is equal to the maximum of their arities.

  This definition also applies to a sentence $\sigma$, that is to say, a formula without free variables: given   an $\mathcal L$-structure $M$, we let $\inter\sigma M$ be one of the two possible   subsets of $M^0:=\{\emptyset\}$, namely $\{\emptyset\}$ if $\sigma$ holds in $M$, and $\emptyset$ if it does not. If $\phi$ and $\psi$ are two $\mathcal L$-formulae, then $\phi\en\psi$ and $\phi\of\psi$ are again $\mathcal L$-formulae, defining in each $\mathcal L$-structure $M$ respectively $\inter\phi M\cap\inter\psi M$ and $\inter\phi M\cup\inter\psi M$. Moreover, $\niet\phi$ defines the complement of $\inter\phi M$ in $M^n$, where $n$ is the arity of $\phi$. A trivial yet important formula is the \emph{$n$-th Lefschetz formula} 
  $$
\lambda_n:=  (v_1=v_1)\en\dots\en (v_n=v_n),
  $$ 
defining the full Cartesian power $M^n$ in any model. We abbreviate the Lefschetz formula $\lambda_n(\var):=(\var=\var)$ with $\var$ an $n$-tuple of variables (which, according to our tacit assumptions, stand  for the $n$ first variables $v_i$), and write $\lambda(\var)$ for $\lambda_1(\var)$.

An $\mathcal L$-\emph{theory} $\theory T$ is any non-empty collection of consistent $\mathcal L$-sentences (it is convenient to assume that $\theory T$ contains at least one sentence, which we always could assume to be a tautology like $\forall \var\lambda(\var)$). A \emph{model} of $\theory T$ is an $\mathcal L$-structure for which all sentences in $\theory T$ are true. By the compactness theorem, every theory has at least one model. Given a (non-empty) collection $\mathfrak K$ of $\mathcal L$-structures, we define the \emph{$\mathcal L$-theory of $\mathfrak K$}, denoted $\theory T_{\mathcal L}(\mathfrak K)$, to be the collection of all $\mathcal L$-sentences that are true in any structure belonging to $\mathfrak K$. The collection $\mathfrak K$ is \emph{axiomatizable} (also called \emph{first-order}), if it consists precisely of the  models of $\theory T_{\mathcal L}(\mathfrak K)$.  
  
Let $\theory T$ be a theory in the language $\mathcal L$.
We say that   two formulae $\phi$ and
$\psi$ are \emph{$\theory
T$-equivalent}, denoted $\phi\sim_{\theory T}\psi$,
if they define the same subset in any model of $\theory T$, that is to say, if
$\inter\phi M=\inter\psi M$, for any model $M$ of $\theory T$. By the compactness
theorem, this is equivalent with $\theory T$ proving that $(\forall
\var)\phi(\var)\asa\psi(\var)$. In particular, equivalent formula must have the same free variables $v_i$.  Note that the logical connectives $\en$ and $\of$, as well as negation $\niet$,  respect this equivalence relation. If $\theory T$ consists entirely of tautologies, then two formulae are $\theory T$-equivalent \iff\ they are logically equivalent. If $\theory T=\theory T_{\mathcal L}(\mathfrak K)$ is the theory of a non-empty class $\mathfrak K$ of $\mathcal L$-language, then $\phi$ and $\psi$ are $\theory T$-equivalent \iff\ they define the same subset in each structure belonging to $\mathfrak K$. In other words, we do not need to check all models, but only those that ``generate'' the theory, and therefore, we will often make no distinction between theories and collections of structures. (Caveat: when dealing with infinitary formulae, as in \S\ref{s:infgr}, this is no longer true.)  For instance,  instead of calling two formulae $\theory T_{\mathcal L}(\mathfrak K)$-equivalent, we may just simply call them \emph{$\mathfrak K$-equivalent}. When the theory $\theory T$ is fixed, we will often identify $\theory T$-equivalent formulae. Formally, we therefore introduce:

\subsection*{The category of definable sets}
 The \emph{category of $\theory T$-definable sets}, $\defcat{\theory T}$, has
as objects the $\theory T$-equivalence classes of formulae and as   morphisms
the definable  maps. There, are in fact a few variant ways of defining the latter notion, and for our purposes,   the following will be most suitable. Let $\phi $ and $\psi $ be $\mathcal L$-formulae of arity $n$ and $m$ respectively, and let   $\var=\rij \var n$ and $\vary=\rij\vary m$ be variables. A formula $\theta(\var,\vary)$ (or, rather, its $\theory T$-equivalence class) is called \emph{morphic} (on $\phi$), or \emph{defines a $\theory T$-morphism $f_\theta\colon\phi\to \psi$}, if  $\theory T$ proves the following sentences
\begin{enumerate}
\item \label{i:map} $(\forall \var)(\exists\vary)\phi(\var)\dan\theta(\var,\vary)$
\item\label{i:fct} $(\forall \var,\vary,\vary')\phi(\var)\en\theta(\var,\vary)\en\theta(\var,\vary')\dan\vary=\vary'$
\item\label{i:im} $(\forall \var,\vary)\phi(\var)\en\theta(\var,\vary)\dan\psi(\var)$.
\end{enumerate}
In other words, \eqref{i:map} and \eqref{i:fct} express that in each model $M$ of $\theory T$, the definable subset $\inter\theta M\sub M^n\times M^m$, when restricted to $\inter\phi M\times M^m$, is the graph of a function $f_M\colon \inter\phi M\to M^m$, and, furthermore, \eqref{i:im} ensures that the image of $f_M$ lies in $\inter\psi M\sub M^m$. We will therefore denote this map by $f_M\colon \inter\phi M\to\inter\psi M$.  Although slightly inaccurate, we will express this situation also by simply saying that $\inter\theta M$ is the graph of a function $\inter\phi M\to\inter\psi M$. As part of the definition of morphic formula is an, often implicit, division of the free variables in two sets, the \emph{source} variables, $\var$, and the \emph{target} variables, $\vary$. The next result shows that these definitions constitute  a category.

\begin{lemma}\label{L:morcomp}
If $f_\theta\colon\phi(\var)\to\psi(\vary)$ and $g_\zeta\colon\psi(\vary)\to \gamma(z)$ are morphisms defined by $\theta(\var,\vary)$ and $\zeta(\vary,z)$ respectively, then the formula $(\zeta\after\theta)(\var,z):=(\exists\vary)\theta(\var,\vary)\en\zeta(\vary,z)$ is again a morphic formula, defining the composition $h\colon\phi(\var)\to\gamma(z)$. 
\end{lemma}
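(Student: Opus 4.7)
The plan is to verify directly that $(\zeta\after\theta)(\var,z):=(\exists\vary)\theta(\var,\vary)\en\zeta(\vary,z)$ satisfies the three defining clauses \eqref{i:map}, \eqref{i:fct}, \eqref{i:im} of a morphic formula from $\phi$ to $\gamma$, working inside an arbitrary model $M$ of $\theory T$; by the compactness-style argument already invoked in the paper, this is equivalent to $\theory T$ proving the requisite universal sentences. First I would fix $M$ and write $f=f_M\colon\inter\phi M\to\inter\psi M$ and $g=g_M\colon\inter\psi M\to\inter\gamma M$ for the functions whose graphs are $\inter\theta M$ and $\inter\zeta M$ (restricted to the appropriate domains).

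For \eqref{i:map}, given $a\in\inter\phi M$, clause \eqref{i:map} for $\theta$ produces some $b$ with $\theta(a,b)$; clause \eqref{i:im} for $\theta$ forces $b\in\inter\psi M$; then \eqref{i:map} for $\zeta$ supplies $c$ with $\zeta(b,c)$, so the pair $(a,c)$ satisfies $(\zeta\after\theta)$, witnessed by $b$. For \eqref{i:fct}, suppose $a\in\inter\phi M$ and $(\zeta\after\theta)(a,c)$, $(\zeta\after\theta)(a,c')$, witnessed by $b,b'$. Clause \eqref{i:fct} for $\theta$ gives $b=b'$, and \eqref{i:im} for $\theory$ puts this common value in $\inter\psi M$; then \eqref{i:fct} for $\zeta$ yields $c=c'$. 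For \eqref{i:im}, the same witness $b\in\inter\psi M$ together with \eqref{i:im} for $\zeta$ places $c$ in $\inter\gamma M$.

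Once the three clauses are checked I would conclude by unwinding definitions: the induced function $h_M\colon\inter\phi M\to\inter\gamma M$ sends $a$ to the unique $c$ such that $\zeta(f(a),c)$ holds, which by definition of $g$ is $g(f(a))$, so $h_M=g_M\after f_M$ in each model. This suffices for $(\zeta\after\theta)$ to define the categorical composition in $\defcat{\theory T}$. I do not anticipate a genuine obstacle; the only subtlety worth flagging is the bookkeeping on arities and free variables, since the clauses \eqref{i:map}--\eqref{i:im} are stated with a specific source/target split, and one must make sure that the existential quantifier in $(\exists\vary)\theta\en\zeta$ binds precisely the shared block $\vary$ of intermediate variables, leaving $\var$ as source and $z$ as target of the composite formula.
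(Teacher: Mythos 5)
Your proof is correct and follows exactly the same route as the paper's: fix a model $M$, establish clauses \eqref{i:map}, \eqref{i:fct}, and \eqref{i:im} for $\zeta\after\theta$ by combining the corresponding clauses for $\theta$ and $\zeta$, using \eqref{i:im} for $\theta$ to ensure the intermediate witness lands in $\inter\psi M$ before applying the clauses for $\zeta$. The only difference is that you also explicitly unwind the definitions to see $h_M = g_M\after f_M$, which the paper leaves implicit; and note the small typo ``\eqref{i:im} for $\theory$'' should read ``for $\theta$.''
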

\begin{proof}
The proof is straightforward but technical; since we will rely on it heavily, we will provide it in some detail. Put $\xi:=\zeta\after\theta$. We first verify condition~\eqref{i:map}, and to this end, we may work in a fixed model $M$ of $\theory T$. So, let $\tuple a\in\inter\phi M$. By \eqref{i:map} and \eqref{i:im} applied to $\theta(\var,\vary)$, we get a tuple $\tuple b\in\inter\psi M$ such that $(\tuple a,\tuple b)\in\inter\theta M$. By the same argument, applied to $\zeta$, we then get $\tuple c\in\inter\gamma M$ such that $(\tuple b,\tuple c)\in\inter\zeta M$. In particular, $\tuple b$ witnesses that $(\tuple a,\tuple c)\in\inter\xi M$, proving \eqref{i:map} for $\xi$. The same argument essentially also shows that \eqref{i:im} holds. So remains to show \eqref{i:fct} for $\xi$. To this end, let $\tuple a\in\inter\phi M$ such that $(\tuple a,\tuple c)$ and $(\tuple a,\tuple c')$ both belong to $\inter\xi M$. This means that there are tuples $\tuple b,\tuple b'$ such that $\theta(\tuple a,\tuple b)\en\zeta(\tuple b,\tuple c)$ and $\theta(\tuple a,\tuple b')\en\zeta(\tuple b',\tuple c')$ hold in $M$. By  condition~\eqref{i:fct} for $\theta$, we get $\tuple b=\tuple b'$, and by \eqref{i:im}, this tuple then belongs to $\inter\psi M$. So we may repeat this argument   to the tuples $\tuple b,\tuple c,\tuple c'$ and $\zeta$, to get $\tuple c=\tuple c'$, as we wanted to show.
\end{proof}

We
 call a morphism $f_\theta\colon \phi\to \psi$, or the corresponding formula $\theta$,
\emph{injective}, \emph{surjective}, or \emph{bijective}, if all the
corresponding maps $f_M\colon \inter\phi M\to \inter\psi M$ are. The next result shows that any definable bijection is an  isomorphism in the category $\defcat{\theory T}$, that is to say, its inverse is also a morphism, which we therefore call a  \emph{$\theory T$-isomorphism}.

\begin{lemma}\label{L:morinv}
Let $\theta(\var,\vary)$ be a morphic formula defining a bijection $f_\theta\colon\phi(\var)\to\psi(\vary)$. If $\op{inv}(\theta)(\var,\vary):=\theta(\vary,\var)$, then $\op{inv}(\theta)$ defines a morphism $g\colon \psi\to\phi$, which gives the inverse of $f_M$ on each model $M$ of $\theory T$.
\end{lemma}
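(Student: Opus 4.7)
The plan is to verify the three morphic-formula conditions \eqref{i:map}, \eqref{i:fct}, \eqref{i:im} for $\op{inv}(\theta)(\var,\vary)=\theta(\vary,\var)$ as a formula with source variables $\var$ (ranging over $\psi$) and target variables $\vary$ (ranging over $\phi$), and then to show that the resulting morphism $g$ composes with $f_\theta$ on either side to yield the identity. Throughout I would work in a fixed model $M$ of $\theory T$ and rely on Lemma~\ref{L:morcomp} for the composition step. A preliminary convenient reduction: we may assume without loss of generality that $\theory T$ proves $\theta(\var,\vary)\dan\phi(\var)$, since replacing $\theta$ by $\theta\en\phi(\var)$ defines exactly the same function $f_\theta$ (by condition \eqref{i:map} for $\theta$) while forcing the first coordinate of every $\theta$-pair to lie in $\inter\phi M$.

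With this reduction, each of \eqref{i:map}, \eqref{i:fct}, \eqref{i:im} for $\op{inv}(\theta)$ on $\psi$ is essentially a restatement of a property of the bijection $f_M$. Condition \eqref{i:map} asks that every $\tuple b\in\inter\psi M$ admit some $\tuple a$ with $\theta(\tuple a,\tuple b)$, which is precisely the surjectivity of $f_M$. For \eqref{i:fct}, if $\tuple b\in\inter\psi M$ and $\theta(\tuple a,\tuple b)\en\theta(\tuple a',\tuple b)$ both hold, then the standing assumption places $\tuple a,\tuple a'$ in $\inter\phi M$, whence $f_M(\tuple a)=\tuple b=f_M(\tuple a')$ forces $\tuple a=\tuple a'$ by injectivity. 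Condition \eqref{i:im} is immediate: $\theta(\tuple a,\tuple b)$ implies $\tuple a\in\inter\phi M$ by the standing assumption.

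It remains to show that $g_M$ inverts $f_M$, and for this I would invoke Lemma~\ref{L:morcomp} to write the composed formula $(g\after f_\theta)(\var,z)$ as $(\exists\vary)\,\theta(\var,\vary)\en\theta(z,\vary)$. On any $\tuple a\in\inter\phi M$ the witness $\vary$ must equal $f_M(\tuple a)$ by \eqref{i:map} and \eqref{i:fct} for $\theta$, and then \eqref{i:fct} applied to $\theta(z,\vary)$ forces $z=\tuple a$; hence the composition cuts out the diagonal of $\inter\phi M$, which is the identity morphism on $\phi$. The symmetric calculation for $f_\theta\after g$ proceeds identically, using surjectivity in place of injectivity, and produces the identity on $\psi$. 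The only genuinely subtle step is the preliminary reduction on $\theta$: the definition of morphic formula only constrains $\theta$ on $\inter\phi M\times M^m$ and leaves its behaviour outside this set free, so without first tightening $\theta$ by conjunction with $\phi(\var)$, the literal formula $\theta(\vary,\var)$ could introduce spurious preimages of $\psi$-points and violate \eqref{i:im}.
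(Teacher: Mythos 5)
Your overall approach coincides with the paper's: verify the three morphic conditions for $\op{inv}(\theta)$ model-by-model from bijectivity of $f_M$, then check the two compositions. But you have correctly spotted, and repaired, a gap that the paper's own proof glosses over. As you note, the morphic conditions constrain $\theta$ only on $\inter\phi M\times M^m$, so $\inter{\op{inv}(\theta)}M$ can contain ``spurious'' pairs $(\tuple a,\tuple b')$ with $\tuple a\in\inter\psi M$ but $\tuple b'\notin\inter\phi M$, and any such pair breaks conditions~\eqref{i:fct} and~\eqref{i:im} for $\op{inv}(\theta)$. A minimal counterexample: $\phi(\var):=(\var=0)$, $\psi(\vary):=(\vary=0)$, $\theta(\var,\vary):=(\vary=0)$ meets every hypothesis of the lemma, yet $\op{inv}(\theta)(\var,\vary)=(\var=0)$ visibly fails~\eqref{i:fct}. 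The paper's proof only exhibits the distinguished preimage $\tuple b=\inv{f_M}(\tuple a)$ and quietly assumes there are no other $\tuple b'$ with $(\tuple b',\tuple a)\in\inter\theta M$; your preliminary tightening, replacing $\theta$ by $\theta\en\phi(\var)$, is exactly the right repair, and you are right to flag that it changes the literal formula --- the lemma should be read as asserting that a $\theory T$-equivalent tightening of $\theta$ has its transpose as the inverse morphism. Your argument is therefore more careful than the paper's. One small wording slip in the final composition step: the deduction $z=\tuple a$ from $\theta(z,\vary)$ with $\vary=f_M(\tuple a)$ invokes the injectivity of $f_M$ (equivalently, the condition~\eqref{i:fct} for $\op{inv}(\theta)$ that you had just established), not condition~\eqref{i:fct} for $\theta$ itself.
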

\begin{proof}
To show that $\zeta(\var,\vary):=\op{inv}(\theta)(\var,\vary)$ is morphic, yielding a morphism $\psi\to\phi$,   we may again check this in a model $M$ of $\theory T$. Suppose $\tuple a\in\inter\psi M$. Since $f_M$ is bijective, there is some $\tuple b\in\inter\phi M$ such that $f_M(\tuple b)=\tuple a$. This means that $(\tuple b,\tuple a)\in\inter\theta M$, whence $(\tuple a,\tuple b)\in\inter\zeta M$, proving conditions~\eqref{i:map} and \eqref{i:im}. Since $f_M$ is a bijection, the tuple $\tuple b$ is unique, and this proves \eqref{i:fct}. It is now easy to see that the map $g_M\colon\tuple a\mapsto\tuple b$ is the inverse of $f_M$.
\end{proof}

\subsubsection{$\mathcal I$-morphisms}
Given   a
family $\mathcal I\sub \mathcal L$ of formulae (closed under $\theory T$-equivalence), by an
\emph{$\mathcal I$-definable $\theory T$-map}, or simply, an \emph{$\mathcal I$-morphism} between $\phi$ and $\psi$, we mean
a morphic formula $\theta$  belonging to $\mathcal I$ which defines a morphism
$f_\theta\colon\phi\to\psi$ 
(without imposing any restriction on  $\phi$ and $\psi$). We call $f_\theta\colon\phi\to \psi$ an \emph{$\mathcal I$-isomorphism (modulo $\theory T$)}, if $f_\theta$ is bijective and its inverse is again an $\mathcal I$-morphism. In view of Lemma~\ref{L:morinv}, a bijective morphism $f_\theta$ is an $\mathcal I$-isomorphism,  if, for instance, both $\theta$ and $\op{inv}(\theta)$ belong to $\mathcal I$. However, in general, a bijective $\mathcal I$-morphism need not be an $\mathcal I$-isomorphism (see, for instance, Example~\ref{E:nonexpl}).

A note of caution: in general, $\mathcal I$-isomorphism, in spite of its name, is not an equivalence relation, since it is not clear that the composition of two $\mathcal I$-isomorphism is again an $\mathcal I$-morphism. In view of Lemma~\ref{L:morcomp},   the collection of $\mathcal I$-morphisms is closed under composition, if, for instance,    $\mathcal I$ is  closed under existential quantification, although, as we shall see, this is not the only instance in which this is true.

\subsubsection{Explicit formulae}
A morphic formula, in general, only defines a partial map, but there is an important
 type of morphism that is always global: by an \emph{explicit} morphism, we mean a formula  $\theta(\var,\vary)$ of the form $\En_{j=1}^m\vary_j =t_j(\var)$, with each $t_j$ an $\mathcal L$-term, and with source variables $\var=\rij\var n$ and target variables $\vary=\rij\vary m$; we will abbreviate this by $\vary =t(\var)$, for $t:=\rij tm$. Such a formula always defines a global map on each model $M$, given as $t_M\colon M^n\to M^m\colon\tuple a\mapsto t(\tuple a)$. In particular, if $f\colon\phi\to \psi$ is explicit, then $f_M$ is the restriction of $t_M$ to $\inter\phi M$. We denote the collection of all explicit formulae by $\expl$. Note that $\expl$ is compositionally closed, for if $\vary=t(\var)$ and $z=s(\vary)$ are two explicit formulae, then the explicit formula $z=s(t(\var))$ defines their composition. However, as Example~\ref{E:nonexpl} shows, a bijective $\expl$-morphism need not be an $\expl$-isomorphism.  At any rate, any formula $\phi$  is    $\expl$-isomorphic to its primary form $\phi^\circ$.

We already observed that   the logical connectives $\en$ and $\of$ are well-defined on $\defcat{\theory T}$. We introduce two further operations on $\defcat{\theory T}$.

\subsubsection{Multiplication on $\defcat {\theory T}$}\label{s:mult}
Let $\phi$ and $\psi$ be 
two $\mathcal L$-formulae, in $n$ and $m$ free variables respectively (or, more correctly, of arity $n$ and $m$ respectively).
We   define their \emph{product} as
$$
(\phi\times\psi)\rij v{n+m}:={\phi\rij vn\en\psi(v_{n+1},\dots,v_{n+m})}.
$$
A note of caution: it is not always true that $\inter{(\phi\times\psi)} M$ is equal to $\inter\phi M\times \inter\psi M$, in a model $M$ of $\theory T$, due to the numbering of the variables.  This only holds for primary forms since always $\phi\times\psi=\phi^\circ\times\psi^\circ$, and the interpretation of this formula in $M$ is   equal to $\inter{\phi^\circ}M\times\inter{\psi^\circ}M$.  In particular, although  this multiplication is not Abelian, it is up to  explicit isomorphism (given by permuting the variables appropriately). For Lefschetz formulae we obviously have $\lambda_n\times\lambda_m=\lambda_{m+n}$.

We leave it to the reader to verify that if $\phi_i$ and $\psi_i$ are $\theory T$-equivalent, for $i=1,2$, then so are the respective products $\phi_1\times\phi_2$ and $\psi_1\times\psi_2$. In other words, the multiplication is well-defined modulo $\theory T$-equivalence, and hence yields a multiplication on $\defcat{\theory T}$.    The multiplicative unit in $\defcat{\theory T}$ is the class of any sentence $\sigma$ which is a logical consequence of $\theory T$, and will be denoted $\one$ (for instance, one may take $\sigma$ to be the tautology $(\forall\var)\lambda(\var)$). We will also write $\nul$ for the class of $\niet\sigma$, and we have  $\nul\times\phi=\nul=\phi\times\nul$, for all
formulae $\phi$ (note that, per convention, the Cartesian product of any set with the empty set is the empty set).

Given two morphisms  $f_\theta\colon\phi\to\psi$ and
$f_{\theta'}\colon{\phi'}\to{\psi'}$ of $\mathcal L$-formulae, they induce a morphism $f\colon (\phi\times\phi')\to (\psi\times\psi')$ between the respective products as follows. If $\theta(\tuple\var,\tuple\vary)$ and $\theta'(\tuple\var',\tuple\vary')$ are the respective defining formulae, then the order of the variables in the product $\theta\times\theta'$ is by definition   $\tuple\var,\tuple\vary,\tuple\var',\tuple\vary'$. The formula obtained from this product by   changing this order to $\tuple\var,\tuple\var',\tuple\vary,\tuple\vary'$ then defines $f$. Note that the formula defining $f$ is therefore $\expl$-isomorphic with $\theta\times\theta'$: indeed, this isomorphism is given by 
\begin{equation}\label{eq:multexpliso}
\delta(\tuple \var, \tuple\vary, \tuple\var', \tuple\vary,\tuple z_1,\tuple z_2,\tuple z_3,\tuple z_4):=(\tuple z_1=\tuple x) \en (\tuple z_2=\tuple x')  \en (\tuple z_3=\tuple y) \en (\tuple z_4=\tuple y')
\end{equation}
with  $\tuple z_1,\tuple z_2,\tuple z_3,\tuple z_4$  tuples of variables (of the appropriate length).

\subsubsection{Disjoint sum}\label{s:dissum}
To define the second operation on $\defcat{\theory T}$, we need to assume that $\mathcal L$ contains at least two constant symbols which are interpreted in each model of $\theory T$ as different elements. Since in all our applications, $\theory T$ will always be a theory of rings, for which we have the distinct constants $0$ and $1$, we will for simplicity assume that these two constant symbols are denoted $0$ and $1$ (not to be confused with the $\theory T$-equivalence classes $\nul$ and $\one$).
We define the \emph{disjoint sum} of two formulae $\phi$ and $\psi$  as the formula 
$$
\phi\oplus\psi:=\big(\phi\en (v_{n+1}=0)\big)\of\big(\psi\en (v_{n+1}=1)\big),
$$
 where $n$ is the maximum of the arities of $\phi$ and $\psi$ (so that $v_{n+1}$ is the ``next'' free variable).  The disjoint sum is a commutative operation, but without identity element: we only have that  $\phi\oplus \nul$ is $\expl$-isomorphic with $\phi$ via the morphic formula $\vary=\var$ (note that its inverse $\phi\to\phi\oplus\nul$ is given by $(\vary=\var) \en(v_{n+1}=0)$). Unlike  $\of$, disjoint sum is not idempotent: $\phi\oplus\phi$ will in general be different from $\phi$. We will   assume that the (set-theoretic) \emph{disjoint union} $V\sqcup W$ of two sets $V$ and $W$ is defined as the union of $V\times\{0\}$ and $W\times\{1\}$, so that we proved the following characterization of disjoint sum:
 
 \begin{lemma}\label{L:disjsum}
Given two formulae $\phi$ and $\psi$,   their direct sum $\phi\oplus\psi$ is, up to $\theory T$-equivalence, the unique formula $\gamma$ such that 
$$
\inter\gamma M=\inter\phi M\sqcup\inter\psi M,
$$
for all models $M$ of $\theory T$.\qed
\end{lemma}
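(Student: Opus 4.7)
The plan is to verify the two assertions of the lemma separately: (a) that $\phi\oplus\psi$ itself satisfies the displayed equation, and (b) that any formula $\gamma$ satisfying this equation is $\theory T$-equivalent to $\phi\oplus\psi$. Both reduce to unwinding definitions, with the only subtle point being the bookkeeping of arities.

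First I would fix an arbitrary model $M$ of $\theory T$ and compute $\inter{\phi\oplus\psi}M$ directly from the definition $\phi\oplus\psi=(\phi\en(v_{n+1}=0))\of(\psi\en(v_{n+1}=1))$, where $n=\max(\op{arity}(\phi),\op{arity}(\psi))$, so the whole formula has arity $n+1$. Because interpretation is compatible with $\en$ and $\of$, we get
\[
\inter{\phi\oplus\psi}M=\inter{\phi\en(v_{n+1}=0)}M\cup\inter{\psi\en(v_{n+1}=1)}M.
\]
The implicit arity convention for $\phi$ (and $\psi$) inside a larger formula yields $\inter{\phi\en(v_{n+1}=0)}M=\inter\phi M\times\{0^M\}$, and similarly the second disjunct equals $\inter\psi M\times\{1^M\}$, where $0^M,1^M$ denote the interpretations of the constants $0,1$ in $M$. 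By the standing assumption that $\mathcal L$ contains two constant symbols interpreted as distinct elements in every model of $\theory T$, we have $0^M\neq 1^M$, so the two pieces are disjoint. Comparing with the set-theoretic definition $V\sqcup W:=(V\times\{0\})\cup(W\times\{1\})$ recalled just before the lemma, this gives exactly $\inter\phi M\sqcup\inter\psi M$.

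For uniqueness, suppose $\gamma$ is another formula with $\inter\gamma M=\inter\phi M\sqcup\inter\psi M$ for every model $M$ of $\theory T$. Combined with part (a), this means $\inter\gamma M=\inter{\phi\oplus\psi}M$ in every model $M$, which is precisely the definition of $\gamma\sim_{\theory T}\phi\oplus\psi$ recalled in the discussion preceding the category $\defcat{\theory T}$.

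The main obstacle, such as it is, is purely notational: one has to be careful that when $\op{arity}(\phi)<n$ the ``padded'' interpretation of $\phi$ inside the larger formula is indeed what the disjoint union notation requires, and that $0$ and $1$ being distinct constants (so that the two disjuncts are in fact disjoint, justifying the word \emph{disjoint} sum) is essential. No further model-theoretic input is needed.
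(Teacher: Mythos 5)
Your proof is correct and takes the same approach the paper intends: the lemma carries a \qed\ in the statement because the author regards it as immediate from the definitions of $\oplus$ and $\sqcup$, and your argument is precisely the unwinding of those definitions (compatibility of interpretation with $\en,\of$, distinctness of $0^M$ and $1^M$ to get disjointness, and then uniqueness reading off the definition of $\theory T$-equivalence). Your closing remark about the arity-padding subtlety when $\op{arity}(\phi)<n$ is an accurate observation — the paper glosses over the same point — and flagging it rather than silently asserting $\inter{\phi\en(v_{n+1}=0)}M=\inter\phi M\times\{0^M\}$ is the right call.
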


\subsection*{The \gr\ of a theory}
It is  useful to   construct  \gr{s}  over restricted classes of
formulas, like quantifier free, or pp-formulae. To do this in as general a
setup as possible, fix a language $\mathcal L$ with constant symbols for $0$ and $1$ (so that disjoint sums are defined). By a \emph{sub-semi-lattice}  $\mathcal G$ of $\defcat{\theory T}$, we mean a collection of formulae closed under conjunction. If $\mathcal G$ is also closed under disjunction, we call it a \emph{sublattice}, and if it is moreover  closed under negation, we call it  \emph{Boolean}.\footnote{Although only the restrictions $\mathcal G\cap\mathcal L_n$ are then   Boolean lattices,  this should not cause any confusion.}
 We say that a sub-semi-lattice $\mathcal G$ is \emph{primary} if it contains the Lefschetz formula $\lambda$ and all  formulae of the form $v_i=c$, with $c$ a constant,    and, moreover,   $\phi$ belongs to $\mathcal G$ \iff\ its primary form $\phi^\circ$ does. It follows that $\mathcal G$ is closed under multiplication, and if $\mathcal G$ is a lattice, then it is also closed under  disjoint sums.  

For the remainder of this section, we fix two primary sub-semi-lattices  $\mathcal F\sub \defcat{\theory T}$ (the ``formulae'') and $\mathcal I\sub \defcat{\theory T}$
(the ``isomorphisms'').  We assume, moreover, that $\expl\sub\mathcal I$; and, more often than not, $\mathcal F$ will actually be a lattice. In any case, by \S\ref{s:lat},   we can define the \sciss\ $\grotlat{\mathcal F}$. The $\mathcal I$-isomorphism relation induces a binary relation on $\mathcal F$, which we denote by $\iso_{\mathcal I}$ (strictly speaking, we should consider the equivalence relation generated by this relation, but this does not matter when working with \grgroup{s}).  Recall that the corresponding \grgroup\ $\grotmon{\mathcal F}{\iso_{\mathcal I}}$ is the quotient of $\pol{\zet}{\mathcal F}$ modulo the subgroup $\mathfrak N:=\mathfrak N_{\mathcal I}+\mathfrak N_{\text{sciss}}$, where $\mathfrak N_{\mathcal I}$ is generated by all expressions $\sym \phi-\sym{\phi'}$, for any pair of $\mathcal I$-isomorphic formulae $\phi,\phi'\in\mathcal F$, and $\mathfrak N_{\text{sciss}}$ is generated, in the lattice case, by all  second scissor relations $\sym{\phi\of\psi}-\sym\phi-\sym\psi+ \sym{\phi\en\psi}$, for any pair $\phi,\psi\in\mathcal F$, and where for clarity, we have written $\sym\phi$ for the $\theory T$-equivalence class of $\phi$ (although we will continue our practice of confusing $\theory T$-equivalence classes with their representatives). If $\mathcal F$ is only a semi-lattice, then $\mathfrak N_{\text{sciss}}$ is generated by all    scissor relations $\sym{\psi}-\sym{S_n(\phi_1,\dots,\phi_n)}$, for all   $\phi_i\in\mathcal F$ such that $\sym\psi=\sym{\phi_1\of\dots\of\phi_n}$.

\begin{lemma}\label{L:scissid}
The subgroup $\mathfrak N$ is a two-sided ideal in $\pol{\zet}{\mathcal F}$ with respect to the multiplication $\times$ on formulae, and the quotient ring is commutative. 
\end{lemma}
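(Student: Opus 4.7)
The plan is to split the statement into three separate claims and prove each using the primary sub-semi-lattice structure of $\mathcal F$ and $\mathcal I$, together with the inclusion $\expl \sub \mathcal I$.

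\textbf{Commutativity of the quotient.} I would first argue that $\phi\times\psi$ and $\psi\times\phi$ are always $\expl$-isomorphic via a permutation of variables. The explicit formula $\vary=\tau(\var)$ that sends $(v_1,\dots,v_{n+m})$ to $(v_{n+1},\dots,v_{n+m},v_1,\dots,v_n)$ is a term-defined bijection whose inverse is of the same form, so in view of Lemma~\ref{L:morinv} and the inclusion $\expl\sub\mathcal I$, it gives an $\mathcal I$-isomorphism. Thus $\phi\times\psi-\psi\times\phi\in\mathfrak N_{\mathcal I}$, and the induced multiplication on $\pol\zet{\mathcal F}/\mathfrak N$ is commutative.

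\textbf{$\mathfrak N_{\text{sciss}}$ is preserved under multiplication.} I would check directly that $\times$ distributes over $\en$ and $\of$ up to logical (hence $\theory T$-)equivalence; this is immediate from the set-theoretic identities $(A\cup B)\times C=(A\times C)\cup(B\times C)$ and $(A\cap B)\times C=(A\times C)\cap (B\times C)$ in every model, and from the fact that $\mathcal F$ is primary (so these products genuinely land inside $\mathcal F$). Consequently, multiplying any second scissor relation $(\phi\of\psi)-\phi-\psi+(\phi\en\psi)$ by $\chi$ on either side yields another second scissor relation. In the semi-lattice case, the same verification applied to $S_n^+(\rij\phi n)$ and $S_n^-(\rij\phi n)$, using the bilinearity of $\times$ over addition and subtraction, shows that an arbitrary scissor relation $\psi-S_n(\rij\phi n)$ multiplies into a scissor relation for the admissible covering $(\phi_1\en\chi,\dots,\phi_n\en\chi)$ of $\psi\en\chi$. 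By commutativity (or by a symmetric argument), this suffices for a two-sided ideal.

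\textbf{$\mathfrak N_{\mathcal I}$ is preserved under multiplication.} The key fact I need is that if $\theta(\var,\vary)\in\mathcal I$ is an $\mathcal I$-isomorphism $\phi\to\phi'$, then for any $\chi$ the ``product with the identity'' morphism $\phi\times\chi\to\phi'\times\chi$ is again an $\mathcal I$-isomorphism. I would construct it explicitly: using the variable conventions of \S\ref{s:mult}, the formula $\theta(\var,\vary)\en(\vary'=\var')$ (with variables numbered in the order $\var,\var',\vary,\vary'$) is the graph of the desired map. Since $\theta\in\mathcal I$, the identity $\vary'=\var'$ lies in $\expl\sub\mathcal I$, and $\mathcal I$ is a primary sub-semi-lattice (hence closed under conjunction and under the variable bookkeeping implicit in $\times$), this conjunction belongs to $\mathcal I$. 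The inverse is obtained by replacing $\theta$ by $\op{inv}(\theta)$, which is in $\mathcal I$ by hypothesis, so the resulting map is genuinely an $\mathcal I$-isomorphism. Therefore $\phi\times\chi-\phi'\times\chi\in\mathfrak N_{\mathcal I}$, and combined with commutativity this shows $\mathfrak N_{\mathcal I}$ is a two-sided ideal.

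\textbf{Main obstacle.} The delicate point will be the last step: verifying that the combined formula $\theta(\var,\vary)\en(\vary'=\var')$, once reindexed to fit the primary form of $\phi\times\chi$ and $\phi'\times\chi$, still lies in $\mathcal I$ and still defines a morphism whose inverse is an $\mathcal I$-morphism. This is where the hypothesis that $\mathcal I$ is \emph{primary} (so that the passage between a formula and its primary form does not leave $\mathcal I$) and contains all $\expl$ (so that permutations and identity graphs are available) is used essentially; without both, one could not close the variable-reordering step inside $\mathcal I$.
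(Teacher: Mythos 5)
Your three-part decomposition (commutativity of the quotient, $\mathfrak N_{\mathcal I}$ an ideal, $\mathfrak N_{\text{sciss}}$ an ideal modulo $\mathfrak N_{\mathcal I}$) is exactly the structure of the paper's proof, and your handling of the first two claims matches the paper's appeal to \eqref{eq:multexpliso}. The gap lies in the distributivity claim. The product $\times$ of \S\ref{s:mult} renumbers variables, so the right-hand law $(\phi\of\psi)\times\chi\sim_{\theory T}(\phi\times\chi)\of(\psi\times\chi)$ fails when $\phi$ and $\psi$ have different arities $n_1\neq n_2$: in the left-hand formula the block of $\chi$-variables starts at position $\max(n_1,n_2)+1$, whereas in the right-hand formula the two copies of $\chi$ start at $n_1+1$ and $n_2+1$ respectively. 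For instance, with $\phi\colon v_1=0$, $\psi\colon v_1=v_2$, $\chi\colon v_1=1$, the tuple $(0,1,0)$ satisfies $(\phi\times\chi)\of(\psi\times\chi)$ but not $(\phi\of\psi)\times\chi$, in every model. Left distributivity $\chi\times(\phi\of\psi)\sim_{\theory T}(\chi\times\phi)\of(\chi\times\psi)$ \emph{does} hold, since $\chi$ occupies the same initial block of variables in all three formulae; so your argument survives if you multiply only on the left and then invoke the commutativity you proved in the first step, but the parenthetical ``or by a symmetric argument'' is wrong, since right distributivity genuinely fails. The paper avoids the problem by a different device: since $\mathfrak N_{\mathcal I}$ is already known to be an ideal, it replaces $\alpha$ by an $\mathcal I$-isomorphic copy $\alpha'$ whose free variables are disjoint from those of $\phi$ and $\psi$, after which $\sym\phi\times\sym{\alpha'}=\sym{\phi\en\alpha'}$ and every distributive identity reduces to an honest logical equivalence involving only $\en$ and $\of$. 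That disjoint-variable reduction is the device your plan is missing.
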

\begin{proof}
Note that in \S\ref{s:lat} we used the multiplication given by $\en$ to define scissor relations, whereas here multiplication is as defined in  \S\ref{s:mult}, and is not commutative. 
Let $\alpha,\phi,\psi$ be formulae in $\mathcal F$. If $\phi\iso_{\mathcal I}\psi$, 
then $\phi\times\alpha\iso_{\mathcal I}\psi\times\alpha$ by the discussion of \eqref{eq:multexpliso}, showing that $\sym\alpha\times(\sym\phi-\sym\psi)\in\mathfrak N_{\mathcal I}$, and a similar result for multiplication from the right. Hence $\mathfrak N_{\mathcal I}$   is a two-sided ideal.   Moreover, since $\alpha\times\phi$ is explicitly isomorphic, whence $\mathcal I$-isomorphic, to $\phi\times\alpha$, both formulae have the same image modulo $\mathfrak N_{\mathcal I}$, showing that multiplication is commutative in the quotient.

For simplicity, we only give the proof that $\mathfrak N$ is an ideal in the lattice case, and leave the semi-lattice case to the reader.  It suffices to show    that any multiple of 
$$
u:=\sym{\phi\of\psi}-\sym\phi-\sym\psi+ \sym{\phi\en\psi}
$$
 lies in $\mathfrak N$.   Choose $\alpha'$  to be $\mathcal I$-isomorphic to $\alpha$, but having free variables distinct from those of $\phi$ and $\psi$ (this can always be accomplished with an explicit change of variables). It follows from what we just proved that $\sym\alpha\times u-\sym{\alpha'}\times u$ belongs to $\mathfrak N$, and so we may assume from the start  that $\alpha$ has no free variables in common with $\phi$ and $\psi$. In particular,   $\sym\alpha \times\sym\phi=\sym{\phi\en\alpha}$, and similarly for any other term in $u$. It follows that 
 \begin{equation}\label{eq:multscissor}
\sym\alpha\times u= \sym{(\phi\en\alpha)\of(\psi\en\alpha)}-\sym{\phi \en\alpha}-\sym{\psi \en\alpha }+ \sym{(\phi \en\alpha)\en(\psi \en\alpha)}
\end{equation}
which is none other than   the second scissor relation  on the two formulae $\phi\en\alpha$ and $\psi\en\alpha$, and therefore, by definition, lies in $\mathfrak N$.
\end{proof}
 
Note that $\mathfrak N_{\text{sciss}}$ is in general not an ideal.
In any case,  $\grotmon{\mathcal F}{\iso_{\mathcal I}}$ has the structure of a commutative ring, which we will call the  \emph{$\mathcal I$-\gr\ of $\mathcal F$-formulae modulo $\theory
T$} and which we denote, for simplicity,  by $\grotmod{\theory T}{\mathcal F}{\mathcal I}$, or just $\grotmod{}{\mathcal F}{{\mathcal I}}$, if the underlying theory $\theory T$ is understood. 
 We denote the class of a
formula $\phi$ in $\grotmod{\theory T}{\mathcal F}{\mathcal
I}$ by $\class\phi$, or in case we want to emphasize the isomorphism type, by 
$\class\phi_{\mathcal I}$. We denote the class of $\nul$ and $\one$   by $0 $ and $1$ respectively; they are the   neutral elements for   addition and multiplication in $\grotmod{\theory T}{\mathcal F}{\mathcal I}$ respectively. Note that $1$ is equal to the class of $v_1=0$ defining a singleton. 

The \emph{full \gr} of a theory $\theory T$ is obtained by taking for $\mathcal F$ and $\mathcal I$ simply all formulae. It will be denoted $\grot{\theory T}$.
Immediately from the definitions, we have:

\begin{corollary}\label{C:addmap}
For each   pair $\mathcal F,\mathcal I\sub\defcat{\theory T}$ as above, there exists a canonical additive epimorphism of groups $\grotlat{{\mathcal F}}\onto \grotmod{\theory T}{\mathcal F}{\mathcal I}$. 

Moreover,
if $\theory T'$ is a subtheory of $\theory T$, and $\expl\sub\mathcal I'\sub\mathcal I$ and $\mathcal F'\sub\mathcal F$      primary sub-semi-lattices , then we have a natural \homo\ of \gr{s} $\grotmod{\theory T'}{\mathcal F'}{\mathcal I'}\to\grotmod{\theory T}{\mathcal F}{\mathcal I}  $.\qed
\end{corollary}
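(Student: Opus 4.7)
The plan is to prove each of the two assertions in sequence, the first being essentially a formality once the construction is unpacked, the second requiring a small checklist of compatibilities.

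For the first assertion, I would unwind the definitions. By \S\ref{s:lat}, $\grotlat{\mathcal F}$ is the quotient of $\pol\zet{\mathcal F}$ by the subgroup $\mathfrak N_{\text{sciss}}$ of scissor relations, while by Lemma~\ref{L:scissid} the \gr\ $\grotmod{\theory T}{\mathcal F}{\mathcal I}$ is the quotient of $\pol\zet{\mathcal F}$ by $\mathfrak N=\mathfrak N_{\mathcal I}+\mathfrak N_{\text{sciss}}$. Since $\mathfrak N\supseteq \mathfrak N_{\text{sciss}}$, we get an induced surjective group \homo\ $\grotlat{\mathcal F}\onto \pol\zet{\mathcal F}/\mathfrak N=\grotmod{\theory T}{\mathcal F}{\mathcal I}$, simply by taking an element of $\grotlat{\mathcal F}$ to the class in $\grotmod{\theory T}{\mathcal F}{\mathcal I}$ of any of its preimages in $\pol\zet{\mathcal F}$. (Ring multiplication does not intervene here: only additive structure is asserted.)

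For the second assertion, I would first observe that since $\theory T'$ is a subtheory of $\theory T$, every model of $\theory T$ is a model of $\theory T'$; consequently, $\theory T'$-equivalence implies $\theory T$-equivalence, and a $\theory T'$-morphism defined by a formula $\theta\in\mathcal I'$ remains a $\theory T$-morphism (interpreted in fewer structures). The inclusion $\mathcal F'\sub\mathcal F$ then induces a well-defined map $\pol\zet{\mathcal F'}\to\pol\zet{\mathcal F}$ on equivalence classes, $\sym\phi_{\theory T'}\mapsto\sym\phi_{\theory T}$. To see that this map descends to a homomorphism of the \gr{s}, I need to check that it kills the ideal $\mathfrak N'=\mathfrak N_{\mathcal I'}+\mathfrak N'_{\text{sciss}}$ defining $\grotmod{\theory T'}{\mathcal F'}{\mathcal I'}$.

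For scissor relations, this is immediate: the conjunction and disjunction (or, in the semi-lattice case, a covering) of formulae in $\mathcal F'\sub\mathcal F$ lies in both sub-semi-lattices, and the formal scissor relation in $\pol\zet{\mathcal F'}$ maps to the corresponding relation in $\pol\zet{\mathcal F}$. For $\mathcal I'$-isomorphism relations, suppose $\phi,\phi'\in\mathcal F'$ are $\mathcal I'$-isomorphic via some $\theta\in\mathcal I'\sub\mathcal I$ whose inverse $\op{inv}(\theta)$ is also in $\mathcal I'\sub\mathcal I$. Since both $\theta$ and $\op{inv}(\theta)$ define bijections on every model of $\theory T\supseteq\theory T'$ (such models being models of $\theory T'$), they witness $\phi\iso_{\mathcal I}\phi'$ in the ambient theory, so the generator $\sym\phi-\sym{\phi'}$ of $\mathfrak N_{\mathcal I'}$ maps into $\mathfrak N_{\mathcal I}$. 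Finally, for the ring structure: the product $\phi\times\psi$ from \S\ref{s:mult} is defined purely syntactically and hence agrees under the map, and the unit class $\one$ and zero class $\nul$ are preserved, so the induced additive \homo\ is in fact a ring \homo.

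The only mildly subtle point---and the main thing that must not be overlooked---is the inverse direction of the morphic-formula check: one needs both $\theta$ and $\op{inv}(\theta)$ in $\mathcal I'$ (or, equivalently, that $\theta$ witnesses an $\mathcal I'$-isomorphism modulo $\theory T'$) in order to conclude an $\mathcal I$-isomorphism modulo $\theory T$. This is automatic by assumption, since the relation $\iso_{\mathcal I'}$ on $\mathcal F'$ is exactly the one used in the definition of $\grotmod{\theory T'}{\mathcal F'}{\mathcal I'}$.
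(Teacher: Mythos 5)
Your proof is correct and does exactly what the paper intends: the corollary is stated with a bare \verb|\qed| because it is immediate from the definitions, and your argument is precisely the careful unwinding of those definitions (the containment $\mathfrak N_{\text{sciss}}\sub\mathfrak N_{\text{sciss}}+\mathfrak N_{\mathcal I}$ for the first part, and the fact that models of $\theory T$ are models of $\theory T'$ for the second). You are also right to flag that the inverse morphism needs to be tracked to pass from $\iso_{\mathcal I'}$ modulo $\theory T'$ to $\iso_{\mathcal I}$ modulo $\theory T$; this is the only place where the unwinding requires a moment's care, and your handling of it is correct.
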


Note that even if $\theory T= \theory T'$ and $\mathcal I=\mathcal I'$, the latter \homo\ need not be injective, as there are potentially more relations when the class of formulae is larger.

\subsubsection*{The Lefschetz class}
We denote the class of the first Lefschetz formula $\lambda:=(v_1=v_1)$  by $\lef$ (recall that by assumption $\lambda$ belongs to $\mathcal F$), and call it the \emph{Lefschetz class} of $\grotmod{\theory T}{\mathcal F}{\mathcal I}$.  By definition of product, we immediately get:

\begin{lemma}\label{L:Lef}
For every $n$, we have  $\class{\lambda_n}=\lef^n$ in $\grotmod{\theory T}{\mathcal F}{\mathcal I}$.\qed
\end{lemma}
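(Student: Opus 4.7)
The plan is a straightforward induction on $n$, relying entirely on the identity $\lambda_n \times \lambda_m = \lambda_{n+m}$ for Lefschetz formulae that was already recorded in \S\ref{s:mult}, together with the fact (established in Lemma~\ref{L:scissid}) that the product $\times$ on $\mathcal F$ descends to the ring multiplication of $\grotmod{\theory T}{\mathcal F}{\mathcal I}$, so that $\class{\phi \times \psi} = \class{\phi} \cdot \class{\psi}$.

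For the base case $n = 1$, the statement reduces to $\class{\lambda_1} = \class{\lambda} = \lef$, which is the very definition of the Lefschetz class. For the inductive step, assume $\class{\lambda_n} = \lef^n$. Since $\lambda_{n+1}$ is by definition $(v_1 = v_1) \en \cdots \en (v_{n+1} = v_{n+1})$, one reads off directly from the definition of $\times$ that $\lambda_{n+1} = \lambda_n \times \lambda$ (both sides are the same formula up to the implicit numbering of variables; one should note here that both formulae are already in primary form, so no explicit reindexing is needed). Applying the multiplicativity of $\class{\cdot}$ and the induction hypothesis then gives
\[
\class{\lambda_{n+1}} = \class{\lambda_n \times \lambda} = \class{\lambda_n} \cdot \class{\lambda} = \lef^n \cdot \lef = \lef^{n+1},
\]
as required.

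There is essentially no obstacle here, provided one is careful that the multiplication of formulae used in defining the ring $\grotmod{\theory T}{\mathcal F}{\mathcal I}$ is the one from \S\ref{s:mult} (variable-shift product), not the meet $\en$ used in \S\ref{s:lat} — this is precisely why the primary-form convention matters, and it is the only subtlety. Because $\mathcal F$ is assumed primary, both $\lambda_n$ and $\lambda$ lie in $\mathcal F$, so every class above is well-defined in $\grotmod{\theory T}{\mathcal F}{\mathcal I}$, and the induction goes through without any further hypothesis.
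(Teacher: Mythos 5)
Your argument is correct and matches the paper's: the paper marks the lemma with an immediate \qed, invoking precisely the identity $\lambda_n\times\lambda_m=\lambda_{n+m}$ from the product definition together with multiplicativity of the class map, which is exactly what your induction formalizes. No discrepancy.
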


\begin{lemma}\label{L:negation}
If $\mathcal F$ is a lattice, then $\class{\phi\oplus\psi}=\class\phi+\class\psi$ in $\grotmod{\theory T}{\mathcal F}{\mathcal I}$, for all $\phi,\psi\in{\mathcal F}$. If $\mathcal F$ is moreover   Boolean,  then$\class{\niet\phi}=\lef^n-\class\phi$, where $n$ is the arity of $\phi$.
\end{lemma}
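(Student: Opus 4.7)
My plan is to derive both identities directly from the second scissor relation $\class{\phi \of \psi} = \class\phi + \class\psi - \class{\phi \en \psi}$, leveraging the two constant symbols $0$ and $1$ and the Lefschetz formulae.

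For the disjoint sum, I would set $\alpha := \phi \en (v_{n+1}=0)$ and $\beta := \psi \en (v_{n+1}=1)$; both lie in $\mathcal F$ since $\mathcal F$ is primary (hence contains the atomic formulae $v_{n+1}=0$ and $v_{n+1}=1$) and closed under $\en$, and by definition $\phi \oplus \psi = \alpha \of \beta$. Applying the scissor relation then expresses $\class{\phi \oplus \psi}$ as $\class\alpha + \class\beta - \class{\alpha \en \beta}$. The intersection term vanishes because $(v_{n+1}=0) \en (v_{n+1}=1)$ is already $\theory T$-inconsistent (the theory forces $0$ and $1$ to be distinct), so $\alpha \en \beta \sim_{\theory T} \nul$. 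It then remains to show that $\class\alpha = \class\phi$ and $\class\beta = \class\psi$. For the first, I would exhibit the explicit formula $\En_{i=1}^n (\vary_i = \var_i) \en (\vary_{n+1} = 0)$, which defines a bijection $\inter\phi M \to \inter\alpha M$ with explicit inverse $\En_{i=1}^n (\var_i = \vary_i)$; since $\expl \subseteq \mathcal I$, both directions are $\mathcal I$-morphisms, making the bijection an $\mathcal I$-isomorphism.

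The negation claim will follow from the same scissor relation applied to $\phi$ and $\niet\phi$, both of which lie in $\mathcal F$ by Boolean closure. In any model of $\theory T$, the disjunction $\phi \of \niet\phi$ defines $M^n$ and the conjunction $\phi \en \niet\phi$ defines $\emptyset$, so these formulae are $\theory T$-equivalent to $\lambda_n$ and $\nul$ respectively. The scissor relation then yields $\class{\lambda_n} = \class\phi + \class{\niet\phi}$, and Lemma~\ref{L:Lef} identifies the left-hand side with $\lef^n$, giving $\class{\niet\phi} = \lef^n - \class\phi$. The only delicate point I anticipate is the arity bookkeeping in the disjoint-sum step: $\phi$ may a priori have arity strictly less than $n$, in which case one must verify that the explicit morphisms above remain well-formed after its variables are padded out to $n$. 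The subtlety is real---Example~\ref{E:nonexpl} warns that a bijective $\expl$-morphism need not be an $\expl$-isomorphism---but once the variables are indexed carefully this reduces to a mechanical verification.
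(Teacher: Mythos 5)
Your argument is correct and follows essentially the same route as the paper's: both apply the second scissor relation to the conjuncts $\phi\en(v_{n+1}=0)$ and $\psi\en(v_{n+1}=1)$, noting their conjunction is $\theory T$-inconsistent, and then to $\phi$ and $\niet\phi$ together with Lemma~\ref{L:Lef}. You additionally spell out the explicit isomorphism $\phi\iso\phi\en(v_{n+1}=0)$, a step the paper leaves implicit, and the arity bookkeeping you flag is harmless once one recalls that the disjoint-sum construction (cf.\ Lemma~\ref{L:disjsum}) implicitly presupposes $\phi$ and $\psi$ are brought to a common arity, at which point the padding you worry about is vacuous.
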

\begin{proof}
To prove the first assertion, let $\phi'(\var,z):=\phi(\var)\en (z=0)$ and $\psi'(\var,z):=\psi(\var)\en(z=1)$. By definition, and after possibly taking primary forms, we have $\class{\phi'\of\psi'}=\class{\phi\oplus\psi}$. The claim now follows since $\phi'\en\psi'$ defines the empty set in any model $M$ of $\theory T$, and hence its class is zero.
To prove the second,  observe that  $\sym{\phi\en\niet\phi}=0$, whereas $\phi\of\niet\phi$ is $\theory T$-equivalent with $\lambda_n$. Hence the result follows by Lemma~\ref{L:Lef}.\end{proof}

\begin{lemma}\label{L:incompscis}
If $\mathcal F$ is Boolean, then the  image of the  ideal $\mathfrak N_{\text{sciss}}$ in $Z:=\pol\zet{\mathcal F}/\mathfrak N_{\mathcal I}$
is generated as a group by all formal sums  of the form
$\sym\phi+\sym\psi-\sym{\phi\oplus\psi}$, with $\phi$ and $\psi$
formulae in ${\mathcal F}$.
\end{lemma}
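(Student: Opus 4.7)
The plan is to establish both inclusions between the subgroup $D \sub Z$ generated by the disjoint-sum expressions $\sym\phi + \sym\psi - \sym{\phi\oplus\psi}$ and the image of $\mathfrak N_{\text{sciss}}$ inside $Z = \pol\zet{\mathcal F}/\mathfrak N_{\mathcal I}$.

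First I would show that $D$ lies in the image of $\mathfrak N_{\text{sciss}}$. Starting from $\phi,\psi \in \mathcal F$ of arity $n$, apply the second scissor relation to $\phi' := \phi \en (v_{n+1}=0)$ and $\psi' := \psi \en (v_{n+1}=1)$; both belong to $\mathcal F$ by primariness. Their disjunction is $\phi \oplus \psi$ by the very definition of disjoint sum, while their conjunction is $\theory T$-equivalent to $\nul$ because $0$ and $1$ are distinct constants. Projection onto the first $n$ coordinates furnishes an explicit isomorphism $\phi' \iso \phi$ (with explicit inverse appending the constant $0$), giving $\sym{\phi'} = \sym\phi$ in $Z$, and analogously $\sym{\psi'} = \sym\psi$. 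The scissor relation on $\phi',\psi'$ thereby reduces in $Z$ to $\sym{\phi\oplus\psi} - \sym\phi - \sym\psi$, the negative of a disjoint-sum relation.

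For the reverse inclusion, I would exploit the Boolean structure of $\mathcal F$ in two steps. First, reduce a general scissor relation to the disjoint case: setting $\alpha := \phi$, $\beta := \niet\phi \en \psi$, and $\gamma := \phi \en \psi$ (all in $\mathcal F$), Boolean algebra yields $\alpha \of \beta \sim_{\theory T} \phi \of \psi$, $\gamma \of \beta \sim_{\theory T} \psi$, and $\alpha \en \beta \sim_{\theory T} \gamma \en \beta \sim_{\theory T} \nul$, which in $Z$ give the identity
\begin{equation*}
\sym{\phi\of\psi} - \sym\phi - \sym\psi + \sym{\phi\en\psi} = \bigl(\sym{\alpha\of\beta} - \sym\alpha - \sym\beta\bigr) - \bigl(\sym{\gamma\of\beta} - \sym\gamma - \sym\beta\bigr).
\end{equation*}
Hence it suffices to treat scissor relations on a disjoint pair $\xi,\eta \in \mathcal F$. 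Second, for such a pair the formula $\vary = \var$ defines an explicit bijective morphism $\xi \oplus \eta \to \xi \of \eta$ (forgetting the tag coordinate $v_{n+1}$); Lemma~\ref{L:morinv} together with the Boolean hypothesis realizes its inverse through the piecewise formula $(\xi(\var) \en \vary = \var \en v_{n+1} = 0) \of (\eta(\var) \en \vary = \var \en v_{n+1} = 1)$, making this an $\mathcal I$-isomorphism. Consequently $\sym{\xi \of \eta} = \sym{\xi \oplus \eta}$ in $Z$, and the scissor relation on $\xi,\eta$ rewrites as $\sym{\xi \oplus \eta} - \sym\xi - \sym\eta$, which is minus a disjoint-sum relation and thus lies in $D$.

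The main obstacle is this last step, namely producing the inverse of the projection as an $\mathcal I$-morphism; this is exactly where the Boolean hypothesis on $\mathcal F$ is indispensable, both for the preceding reduction to disjoint pairs (which requires $\niet\phi \in \mathcal F$) and for assembling the piecewise inverse formula from ingredients in $\mathcal F$.
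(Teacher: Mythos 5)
Your reverse inclusion runs on exactly the same decomposition the paper uses: taking $\alpha := \phi$, $\beta := \niet\phi\en\psi$, $\gamma := \phi\en\psi$, the identity you display (after changing sign) is precisely the paper's
$$
\sym\phi+\sym\psi-\sym{\phi\en\psi}-\sym{\phi\of\psi}=\Big(\sym\phi+\sym{\niet\phi\en\psi}-\sym{\phi\of\psi}\Big)-\Big(\sym{\niet\phi\en\psi}+\sym{\phi\en\psi}-\sym\psi\Big),
$$
and in both treatments the two bracketed expressions become disjoint-sum relations via the isomorphism $\xi\of\eta\iso_{\mathcal I}\xi\oplus\eta$ for a disjoint pair. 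You also supply the easy inclusion $D\sub\operatorname{im}\mathfrak N_{\text{sciss}}$, which the paper passes over in silence.

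The genuine gap is in how you realize the isomorphism $\xi\of\eta\iso_{\mathcal I}\xi\oplus\eta$. You propose the piecewise formula $(\xi(\var)\en\vary=\var\en v_{n+1}=0)\of(\eta(\var)\en\vary=\var\en v_{n+1}=1)$ as the inverse of the projection, attributing its legitimacy to the Boolean hypothesis on $\mathcal F$. But $\mathcal I$-isomorphism requires the witnessing formulae to lie in $\mathcal I$, and $\mathcal I$ is only assumed to be a primary sub-semi-lattice containing $\expl$ — it need not be Boolean, nor even closed under disjunction, and in the main applications it is strictly smaller than $\mathcal F$ (e.g.\ $\mathcal I=\zar\subsetneq\mathcal F=\qf$). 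The Boolean hypothesis bears only on $\mathcal F$, so it does nothing to put your disjunction in $\mathcal I$; your argument therefore only establishes a $\mathcal F$-isomorphism, not an $\mathcal I$-isomorphism. The paper avoids producing any explicit inverse formula: it invokes its Lemma~\ref{L:morinv} together with the criterion stated just afterwards, observing that the projection $\theta=(\vary=\var)$ is explicit (hence in $\mathcal I$) and that $\op{inv}(\theta)$, being the same formula with source and target exchanged, is again in $\mathcal I$. If you want to argue at the level of concrete inverse formulae as you do, you would need an additional hypothesis forcing the piecewise inverse into $\mathcal I$; absent that, the correct route is the paper's, through Lemma~\ref{L:morinv}.
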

\begin{proof}
Let $\mathfrak N'$ be the set of all formal sums in $Z$ of scissor relations
$\sym\alpha+\sym\beta-\sym{\alpha\oplus\beta}$, with $\alpha$ and
$\beta$   in ${\mathcal F}$. Let $\phi$ and $\psi$ be two formulae
in ${\mathcal F}$. Then the scissor relation
$\sym\phi+\sym\psi-\sym{\phi\en\psi}-\sym{\phi\of\psi}$ is equal to the
difference
$$
\Big(\sym\phi+\sym{\niet\phi\en\psi}-\sym{\phi\of\psi}
\Big)-\Big(\sym{\niet\phi\en\psi}+\sym{\phi\en\psi}-\sym\psi\Big)
$$
whence belongs to $\mathfrak N' $, since $\phi\of \psi$ is $\mathcal I$-isomorphic with $\phi\oplus(\niet\phi\en\psi)$. Using \eqref{eq:multscissor}, it is clear that   $\mathfrak N'$ is closed under multiples, whence is an ideal of $Z$.
\end{proof}

In particular, $\mathfrak N$ is generated as a group in $\pol\zet{\mathcal F}$ by $\mathfrak N_{\mathcal I}$ and all formal sums $\sym\phi+\sym\psi-\sym{\phi\oplus\psi}$, for $\phi,\psi\in\mathcal F$.

\begin{corollary}\label{C:grgen}
Suppose $\mathcal F$ is a  Boolean lattice. If $\mathcal G\sub\mathcal F$ is a sub-semi-lattice whose Boolean closure   is equal to $\mathcal F$, then the natural map   $\grotmod{\theory T}{\mathcal G}{\mathcal I}\to \grotmod{\theory T}{\mathcal F}{\mathcal I}$ is surjective, and hence $\grotmod{\theory T}{\mathcal F}{\mathcal I}$  is generated as a group by classes of formulae in $\mathcal G$. If $\mathcal G$ is moreover closed under disjoint sums, then any element in $\grotmod{\theory T}{\mathcal F}{\mathcal I}$ is of the form $\class\psi-\class{\psi'}$, with $\psi,\psi'\in\mathcal G$.
\end{corollary}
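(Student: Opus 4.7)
The plan is to establish the first (``surjectivity'') assertion in the form: every class $\class\phi$ with $\phi\in\mathcal F$ already lies in the image $H$ of the canonical \homo\ $\grotmod{\theory T}{\mathcal G}{\mathcal I}\to\grotmod{\theory T}{\mathcal F}{\mathcal I}$ of Corollary~\ref{C:addmap}. Since $H$ is an additive subgroup containing $\class\psi$ for every $\psi\in\mathcal G$, this will yield the statement that $\grotmod{\theory T}{\mathcal F}{\mathcal I}$ is generated as a group by classes of $\mathcal G$-formulae. The second assertion will then follow by collapsing positive and negative parts of the resulting $\zet$-linear combination into single classes by means of the identity $\class{\phi\oplus\psi}=\class\phi+\class\psi$ of Lemma~\ref{L:negation}.

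For the core step I would first put $\phi$ in disjunctive normal form: because the Boolean closure of $\mathcal G$ is $\mathcal F$, there is a $\theory T$-equivalence $\phi\sim_{\theory T}\Of_{s=1}^k(\alpha_s\en\niet\beta_s)$ with each $\alpha_s\in\mathcal G$ and each $\beta_s$ a finite disjunction of members of $\mathcal G$. A first application of the general scissor identity of Proposition~\ref{P:scissor} to this outer disjunction expresses $\class\phi$ as a $\zet$-linear combination of classes of finite conjunctions $\En_{s\in S}(\alpha_s\en\niet\beta_s)$, and each such conjunction rearranges as $\alpha\en\niet\beta$ with $\alpha:=\En_{s\in S}\alpha_s\in\mathcal G$ (using closure of $\mathcal G$ under $\en$) and $\beta:=\Of_{s\in S}\beta_s$ still a finite disjunction of $\mathcal G$-formulae. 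Hence it suffices to prove $\class{\alpha\en\niet\beta}\in H$ for such pairs $(\alpha,\beta)$.

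The key manoeuvre is then to observe that $\alpha\en\beta$ and $\alpha\en\niet\beta$ are pairwise incompatible with disjunction $\alpha$, so the scissor relation yields
$$
\class{\alpha\en\niet\beta}=\class\alpha-\class{\alpha\en\beta}.
$$
Writing $\beta=\Of_i\psi_i$ with $\psi_i\in\mathcal G$, we have $\alpha\en\beta=\Of_i(\alpha\en\psi_i)$, and a second invocation of Proposition~\ref{P:scissor} expresses $\class{\alpha\en\beta}$ as a $\zet$-linear combination of classes of conjunctions $\alpha\en\psi_{i_1}\en\dots\en\psi_{i_r}$, each of which lies in $\mathcal G$; therefore both $\class\alpha$ and $\class{\alpha\en\beta}$ land in $H$, completing the first part. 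For the closing assertion, an arbitrary $u\in\grotmod{\theory T}{\mathcal F}{\mathcal I}$ is by the first part of the form $\sum_i\class{\psi_i}-\sum_j\class{\psi'_j}$ with $\psi_i,\psi'_j\in\mathcal G$; iterating Lemma~\ref{L:negation} and using closure of $\mathcal G$ under $\oplus$, both sums collapse to $\class{\bigoplus_i\psi_i}$ and $\class{\bigoplus_j\psi'_j}$, giving $u=\class\psi-\class{\psi'}$ (adjoining a common dummy $\psi_0\in\mathcal G$ to each side when one index set is empty).

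The main obstacle I anticipate is the bookkeeping in the DNF expansion together with the double application of Proposition~\ref{P:scissor}; the crucial insight is the complementary pair $\alpha\en\beta,\alpha\en\niet\beta$, which eliminates negations without ever invoking the Lefschetz class---this matters because $\lef^n$ is a priori a product, not an evident additive combination of $\mathcal G$-classes, so one cannot easily obtain it from $H$ through the direct route $\class{\niet\phi}=\lef^n-\class\phi$ of Lemma~\ref{L:negation}.
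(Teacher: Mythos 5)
Your proof is correct and follows the same global strategy as the paper's --- put $\phi$ in disjunctive normal form over $\mathcal G$-literals and then apply Proposition~\ref{P:scissor} --- but you substitute a different key step. Where the paper disposes of negations via Lemma~\ref{L:negation}, writing $\class{\niet\phi}=\lef^n-\class\phi$, you instead use the complementary-pair identity $\class{\alpha\en\niet\beta}=\class\alpha-\class{\alpha\en\beta}$, obtained from the second scissor relation on the incompatible pair $\alpha\en\beta$, $\alpha\en\niet\beta$ (whose disjunction is $\alpha$ and whose conjunction defines the empty set). This is sound, and it has the merit of explicitly handling the mixed disjuncts $\alpha\en\niet\beta_1\en\dots\en\niet\beta_l$ arising in DNF, a step the paper's terser argument (negations of $\mathcal G$-formulae, and disjunctions of $\mathcal G$-formulae, have classes in $G$) leaves to the reader. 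However, your stated reason for avoiding Lemma~\ref{L:negation} --- that $\lef^n$ is ``a priori a product, not an evident additive combination of $\mathcal G$-classes'' --- is unfounded: $\mathcal G$ is by standing hypothesis a \emph{primary} sub-semi-lattice, so it contains $\lambda$ and is closed under $\times$, whence $\lambda_n=\lambda\times\dots\times\lambda\in\mathcal G$ and $\lef^n=\class{\lambda_n}$ is already the class of a $\mathcal G$-formula; this is exactly what the paper's use of Lemma~\ref{L:negation} tacitly relies on, so that route was open to you as well. The final disjoint-sum step is identical in substance (your appeal to the additivity in Lemma~\ref{L:negation} yields the same identity $\class{\phi\oplus\psi}=\class\phi+\class\psi$ that the paper obtains from Lemma~\ref{L:incompscis}).
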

\begin{proof}
Let $G$ be the image of $\grotmod{\theory T}{\mathcal G}{\mathcal I}\to \grotmod{\theory T}{\mathcal F}{\mathcal I}$ (see Corollary~\ref{C:addmap}), that is to say, the subgroup generated by all $\class\phi$ with $\phi\in\mathcal G$. 
By Lemma~\ref{L:negation}, the class of the negation of a formula in $\mathcal G$  lies in $G$. Since every term in $\class{S_n\rij\phi n}$, for $\phi_i\in\mathcal G$, lies by assumption in $G$, so does the class of any disjunction $\phi_1\of\dots\of\phi_n$ by Proposition~\ref{P:scissor} and Corollary~\ref{C:addmap}. This proves the first assertion.

To prove the last, we can write, by what we just proved,  any element $u$ of $\grotmod{\theory T}{\mathcal F}{\mathcal I}$  as a sum $\class{\phi_1}+\dots+\class{\phi_s}-\class{\phi_{s+1}}-\dots-\class{\phi_t}$, with $\phi_i\in\mathcal G$. Let $\psi$ be the disjoint sum $\phi_1\oplus\dots\oplus\phi_s$ and let $\psi'$ be the disjoint sum $\phi_{s+1}\oplus\dots\oplus\phi_t$. By assumption, both $\psi$ and $\psi'$ lie in $\mathcal G$, and $u=\class\psi-\class{\psi'}$ by Lemma~\ref{L:incompscis}.
\end{proof}

  We
call two ${\mathcal F}$-formulae
$\phi_1$ and $\phi_2$    \emph{stably $\mathcal I$-isomorphic in
${\mathcal F}$ (modulo $\theory T$)},
if there exists
  an ${\mathcal F}$-formula $\psi$   such that  $\phi_1 \oplus\psi\iso_{\mathcal
I}\phi_2 \oplus\psi$. A priori, this is weaker than being $\mathcal I$-isomorphic, but in many cases, as we shall see, it is equivalent to it. In any case, we have:

\begin{lemma}\label{L:stabiso}
Suppose $\mathcal F$ is Boolean. If $\phi_1 \oplus\psi\iso_{\mathcal
I}\phi_2 \oplus\psi$ and $\psi\dan\psi'$ modulo $\theory T$, then $\phi_1 \oplus\psi'\iso_{\mathcal
I}\phi_2 \oplus\psi'$.
\end{lemma}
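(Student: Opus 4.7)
The plan is to decompose $\psi'$ into the disjoint pieces $\psi$ and $\chi:=\psi'\en\niet\psi$, and then reduce the conclusion to the hypothesis by padding with $\chi$ on both sides of the given stable isomorphism.

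First, since $\mathcal F$ is Boolean, $\chi$ lies in $\mathcal F$. The assumption $\psi\dan\psi'$ modulo $\theory T$ forces $\psi\en\chi$ to be $\theory T$-equivalent to $\nul$ and $\psi\of\chi$ to be $\theory T$-equivalent to $\psi'$, so that in every model $M$ the sets $\inter\psi M$ and $\inter\chi M$ partition $\inter{\psi'}M$. I would then verify that this partition produces an $\mathcal I$-isomorphism $\psi'\iso_{\mathcal I}\psi\oplus\chi$ in the spirit of Lemma~\ref{L:disjsum}: the inverse is the explicit projection $\vary=\var$ (hence in $\mathcal I$ since $\expl\sub\mathcal I$), while the forward map is defined by the case distinction ``$(\vary=\var)\en\big((\psi(\var)\en z=0)\of(\chi(\var)\en z=1)\big)$''.

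Next, applying the operation $\phi_i\oplus(-)$ for $i=1,2$ and using associativity of disjoint sum up to explicit isomorphism (a mere reordering of the marker variables), I would derive
\[
\phi_i\oplus\psi'\iso_{\mathcal I}\phi_i\oplus(\psi\oplus\chi)\iso_{\mathcal I}(\phi_i\oplus\psi)\oplus\chi.
\]
The hypothesized $\mathcal I$-isomorphism $f_\theta\colon\phi_1\oplus\psi\to\phi_2\oplus\psi$, combined with the identity on $\chi$, then assembles (via a marker-indexed case split) into an $\mathcal I$-isomorphism $(\phi_1\oplus\psi)\oplus\chi\iso_{\mathcal I}(\phi_2\oplus\psi)\oplus\chi$: act by $f_\theta$ on the side where the outer marker is $0$ and by the identity on the side where it is $1$. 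Chaining the three equivalences yields $\phi_1\oplus\psi'\iso_{\mathcal I}\phi_2\oplus\psi'$, as desired.

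The main obstacle is the first step: certifying that the decomposition bijection $\psi'\to\psi\oplus\chi$ is genuinely an $\mathcal I$-isomorphism rather than merely an $\mathcal F$-definable bijection. Since its inverse is already explicit, what is truly needed is that $\mathcal I$ be closed under bijective piecewise combinations built from $\mathcal F$-formulae—a mild closure property that holds in all the specific instances of $\mathcal I$ considered later in the paper (explicit, \zariski, pp, etc.), but which should be explicitly invoked in a careful write-up.
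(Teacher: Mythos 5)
Your argument reproduces the paper's one-line proof in expanded form: both decompose $\psi'$ as $\psi$ together with $\chi:=\psi'\en\niet\psi$, add a disjoint copy of $\chi$ to each side of the hypothesized isomorphism, and then invoke the claimed $\mathcal I$-isomorphism $\psi'\iso_{\mathcal I}\psi\oplus\chi$. So structurally your route is exactly the paper's own.

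The concern you flag in your final paragraph, however, is more than a ``mild closure property'' to be supplied: it is a genuine gap, shared by the paper's terse proof. For $\mathcal I=\zar$ --- the case actually used in the proof of Theorem~\ref{T:classinv} --- the claim $\psi'\iso_{\zar}\psi\oplus\chi$ can fail. Take $\psi:=(v=0)$ and $\psi':=(v^2=0)$ over an \acf\ $\fld$, so $\chi$ is satisfied exactly by the non-zero nilpotents of order $\le2$. Suppose a \zariski\ formula $\theta(\vary,\var,z)$ defined a bijection $g\colon\psi'\to\psi\oplus\chi$. In the residue field $\fld$, the only $\psi'$-point is $0$, and $g_\fld(0)=(0,0)$, so $\theta(0,0,0)$ holds; being a conjunction of polynomial equations with no constant term in $(\var,z)$ there, this persists under any $\fld$-algebra map, so $\theta(0,0,0)$ holds in $R:=\pol\fld y/(y^2)$ too. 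Now consider the generic element $\bar y\in R$: functoriality along the quotient $R\onto\fld$, $y\mapsto0$, sends $g_R(\bar y)$ to $g_\fld(0)=(0,0)$; since the second coordinate of an element of $\inter{(\psi\oplus\chi)}R$ is either $0$ or $1$, and $1\not\mapsto0$, we must have $g_R(\bar y)=(0,0)=g_R(0)$, so $g_R$ is not injective. Hence no such \zariski\ $\theta$ exists, the decomposition $\psi'\iso_{\zar}\psi\oplus\chi$ is false here, and the proof as written --- yours and the paper's alike --- breaks at this step. The lemma can presumably be rescued by weakening the conclusion to a stable isomorphism, or by an argument tailored to the specific $\mathcal I$ at hand, but the case-split formula is certainly not ``already in $\mathcal I$'' for $\mathcal I=\zar$.
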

\begin{proof}
By adding a disjoint copy  to either side, we get 
$$
\phi_1 \oplus\psi\oplus(\psi'\en\niet\psi)\iso_{\mathcal I}\phi_2 \oplus\psi\oplus(\psi'\en\niet\psi).
$$
Since $\psi\dan\psi'$, the formulae $\psi'$ and  $\psi\oplus(\psi'\en\niet\psi)$ are $\mathcal I$-isomorphic.
\end{proof}

\begin{theorem}\label{T:stabiso}
Suppose $\mathcal F$ is Boolean, and $\iso_{\mathcal I}$ is an equivalence relation. Two formulae $\phi,\psi\in{\mathcal F}$ have the same class in $\grotmod{\theory T}{\mathcal F}{\mathcal I}$ \iff\
they are stably $\mathcal I$-isomorphic in ${\mathcal F}$.
\end{theorem}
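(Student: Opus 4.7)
The forward implication is immediate. If $\phi \oplus \gamma \iso_{\mathcal I} \psi \oplus \gamma$, then by Lemma~\ref{L:negation}, $\class\phi + \class\gamma = \class{\phi \oplus \gamma} = \class{\psi \oplus \gamma} = \class\psi + \class\gamma$ in $\grotmod{\theory T}{\mathcal F}{\mathcal I}$, and cancelling $\class\gamma$ in this abelian group yields $\class\phi = \class\psi$. The substance is the converse, which is essentially the classical group-completion statement for the commutative monoid of $\mathcal I$-isomorphism classes under $\oplus$, adapted to the present framework.

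So suppose $\class\phi = \class\psi$, i.e.\ $\sym\phi - \sym\psi \in \mathfrak N$. By Lemma~\ref{L:incompscis}, $\mathfrak N$ is generated as a subgroup of $\pol\zet{\mathcal F}$ by iso-relations $\sym A - \sym B$ (for $\mathcal I$-isomorphic $A,B\in\mathcal F$) and disjoint-sum relations $\sym A + \sym B - \sym{A \oplus B}$. Hence there is a finite identity
\begin{equation*}
\sym\phi - \sym\psi \;=\; \sum_s (\sym{A_s} - \sym{B_s}) \;+\; \sum_t \delta_t\bigl(\sym{C_t} + \sym{D_t} - \sym{C_t \oplus D_t}\bigr)
\end{equation*}
in $\pol\zet{\mathcal F}$, with $A_s \iso_{\mathcal I} B_s$ and $\delta_t \in \{\pm 1\}$; the signs on iso-relations may be absorbed by swapping $A_s$ and $B_s$, using symmetry of $\iso_{\mathcal I}$. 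Splitting the indices $t$ into $T^+\cup T^-$ by the sign of $\delta_t$ and moving all negative terms across, one obtains an identity in $\pol\zet{\mathcal F}$ in which every generator occurs with coefficient $+1$; since $\pol\zet{\mathcal F}$ is the free abelian group on $\mathcal F$, this is a genuine multiset equality of formulas.

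Now take the disjoint sum, in some fixed order, of all the formulas appearing on each side of this multiset equality. Since $\oplus$ is commutative and associative up to $\expl$-isomorphism (and $\expl \subseteq \mathcal I$), the two resulting formulas are $\mathcal I$-isomorphic. Isolating the distinguished summands $\phi$ and $\psi$, this reads
\begin{equation*}
\phi \oplus G_L \iso_{\mathcal I} \psi \oplus G_R,
\end{equation*}
where $G_L$ (resp.\ $G_R$) denotes the disjoint sum of the remaining formulas on the left (resp.\ right). By inspection, each of $G_L$ and $G_R$ is, up to reordering of summands, of the form $\bigoplus_s X_s \oplus \bigoplus_t (C_t \oplus D_t)$, with $X_s = B_s$ for $G_L$ and $X_s = A_s$ for $G_R$. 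Since $A_s \iso_{\mathcal I} B_s$ and $\oplus$ respects $\mathcal I$-isomorphism in each argument, $G_L \iso_{\mathcal I} G_R$, and taking $\gamma := G_L$ produces the desired stable isomorphism $\phi \oplus \gamma \iso_{\mathcal I} \psi \oplus \gamma$.

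The one technical point that this argument glosses over, and the main obstacle to a completely clean write-up, is that $\oplus$ must respect $\mathcal I$-isomorphism in each argument: given morphic formulas $\theta_i\colon \phi_i \to \phi'_i$ defining $\mathcal I$-isomorphisms for $i=1,2$, one combines $\theta_1$ and $\theta_2$ with a variable-permutation analogous to \eqref{eq:multexpliso} to produce a morphic formula defining an $\mathcal I$-isomorphism $\phi_1 \oplus \phi_2 \to \phi'_1 \oplus \phi'_2$. This is routine but requires care with the bookkeeping of the flag variable distinguishing the two summands. Granting this, the three preceding paragraphs are precisely the Grothendieck group-completion lemma for the commutative monoid $\mathcal F/\iso_{\mathcal I}$ under $\oplus$.
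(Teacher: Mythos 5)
Your proof is correct and follows essentially the same route as the paper's: express $\sym\phi-\sym\psi$ via Lemma~\ref{L:incompscis} as a $\zet$-combination of isomorphism relations and disjoint-sum relations, rearrange to get a multiset equality in a free abelian group, take disjoint sums, and conclude a stable $\mathcal I$-isomorphism. The paper streamlines the bookkeeping slightly by first passing to $Z=\pol\zet{\mathcal F}/\mathfrak N_{\mathcal I}\iso\pol\zet{\bar{\mathcal F}}$ so that only disjoint-sum relations remain, and by taking $\sigma$ to be the disjoint sum of \emph{all} the auxiliary formulae from both sides (so $\phi\oplus\sigma$ and $\psi\oplus\sigma$ are, up to explicit reparenthesization, literally the disjoint sums of the two multisets), avoiding the separate step of comparing your $G_L$ and $G_R$; but this is cosmetic. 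The one point you flag as needing care --- that $\oplus$ preserves $\mathcal I$-isomorphism in each argument --- is also used silently in the paper's final sentence, so your explicit acknowledgment is, if anything, more careful than the source.
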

\begin{proof}
One direction is easy, so that we only need to verify the direct implication. Let $\bar{\mathcal F}$ be the quotient of $\mathcal F$ modulo the equivalence $\iso_{\mathcal I}$, that is to say, the collection of $\mathcal I$-isomorphism classes of formulae in $\mathcal F$. Let 
$Z:=\pol\zet{\mathcal F}/\mathfrak N_{\mathcal I}$, so that $\grotmod{\theory T}{\mathcal F}{\mathcal I}\iso Z/\mathfrak N_{\text{sciss}}Z$.  Moreover, the quotient map $\mathcal F\to\bar{\mathcal F}$ induces an isomorphism $Z\iso \pol\zet{\bar{ \mathcal F}}$, so that as an Abelian group, $Z$ is freely generated.
By Lemma~\ref{L:incompscis}, we can find  ${\mathcal F}$-formulae $\phi_i$, $\phi_i'$,   $\psi_i$, and $\psi_i'$   (without loss of generality we may assume that their number is the same), 
such that  
\begin{equation}\label{eq:sumincomp}
\sym\phi+\sum_i\sym{\phi_i}+\sym{\phi_i'}+\sym{\psi_i \oplus\psi_i'}= \sym\psi
+\sum_i\sym{\psi_i}+\sym{\psi_i'}+\sym{\phi_i \oplus\phi_i'}
\end{equation}
in $Z$.
Let $\sigma$ be the disjoint sum of all the     formulae
$\phi_i,\phi_i',\psi_i$ and $\psi_i'$. Since $Z$ is freely generated every formula in the left hand side of equation~\eqref{eq:sumincomp} also appears   in its right hand side, and vice versa. Hence both formulae 
$\phi \oplus\sigma$ and $\psi \oplus\sigma$ yield the same class in $Z\iso\pol\zet{\bar{\mathcal F}}$, whence must be $\mathcal I$-isomorphic.
\end{proof}

\section{Affine schemes of finite type}
All schemes are assumed to be Noetherian, even if we do not always mention this. Let $X=\op{Spec}A$ be an affine Noetherian scheme. By an \emph{$X$-scheme}, we mean a separated scheme $Y$ together with a morphism of finite type $Y\to X$. Hence, affine $X$-schemes   are in one-one correspondence  with finitely generated $A$-algebras. 
We call a scheme $Y$ a \emph{variety} if it is reduced (but not necessarily irreducible).

Fix an affine Noetherian scheme $X=\op{Spec}A$. Let $\mathcal L_A$ be the language of $A$-algebras
in the signature consisting of two binary operations $+$ and $\cdot$, plus
constant symbols for each element in $A$. A formula in this language will simply be  called an
\emph{$A$-algebra formula}. By a \emph{\zariski}  formula in $\mathcal L_A$, we
mean a
finite conjunction of   formulae $f(\var)=0$, where $f\in \pol A\var$ and
$\var$ is a finite tuple of indeterminates. We denote the collection of all
\zariski\ formulae by $\zar$.   
Let $\theory T_A$ be the theory of $A$-algebras, that is to say, the theory
whose models are the $\mathcal L_A$-structures that carry the structure of an
$A$-algebra. We also consider some   extensions of this theory. 
As we shall see, and as would not be surprising to an algebraic geometer, it suffices to work with  local rings. Being local is a first-order property as it is equivalence with the statement that the sum of any two non-units is  again a non-unit, and hence $\loctheory A$ is the theory $\theory T_A$ to which we adjoin the    first-order sentence
$$
(\forall x,y) \Big((\forall a) (ax\neq 1) \en (\forall b)(by\neq 1)\dan (\forall c)((x+y)c\neq 1)\Big).
$$ 
But not only can we restrict to local rings, we may restrict our theory to zero-dimensional algebras.   More precisely, let $\arttheory A$ be the class of all  local $A$-algebras of finite length as an $A$-module, for short, the  \emph{local $A$-Artinian algebras}. Unfortunately, $\arttheory A$  is not elementary, and hence its theory will have models that are not Artinian local rings. Nonetheless, as observed earlier, whenever we have to verify an equivalence or an isomorphism modulo this theory, it suffices to check this on the rings in $\arttheory A$.   Finally, the ``classical'' theory is recovered from looking at the theory $\ACF A$, consisting of all \acf{s}  that are $A$-algebras. Instead of writing $\mathcal L_A$ and  $\arttheory A$,   we also may write $\mathcal L_X$ or $\arttheory X$, when we take a more  geometrical point of view. Similarly, given an $A$-algebra $B$ and a \zariski\ formula $\phi$, we call $\inter\phi B$ sometimes the definable subset of $Y:=\op{Spec}B$ given by $\phi$, an denote it  $\phi(Y)$.

\begin{theorem}\label{T:fosch}
Let $A$ be a Noetherian ring, $\var$ an $n$-tuple of indeterminates, and
$\affine An$ the affine scheme $\op{Spec}(\pol Ax)$. 
There is a one-one correspondence between the following three sets:
\begin{enumerate}
\item\label{i:gf} 
  the  set  of $\theory T_A$-equivalence classes of 
\zariski\  formulae of arity  $n$;
\item\label{i:Art} 
  the  set  of $\arttheory A$-equivalence classes of 
\zariski\  formulae of arity  $n$;
\item\label{i:id}  the set  of all ideals in
$\pol A\var$;
\item\label{i:cs} the set  of all closed
subschemes of $\affine An$.
\end{enumerate}
\end{theorem}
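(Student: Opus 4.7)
The plan is to construct a single map $\phi\mapsto I_\phi$ from Zariski formulae of arity $n$ to ideals of $\pol A\var$, show that it descends to both (1) and (2), and verify it induces bijections in both cases; the bijection (3)$\Leftrightarrow$(4) is then the standard correspondence between ideals and closed subschemes of $\affine An=\op{Spec}\pol A\var$.

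To a Zariski formula $\phi=\bigwedge_{i=1}^s f_i(\var)=0$ assign $I_\phi:=(f_1,\dots,f_s)\sub\pol A\var$. Noetherianity of $\pol A\var$ makes $\phi\mapsto I_\phi$ surjective onto (3); and if $I_\phi=I_\psi$, then in any $A$-algebra $B$ both $\inter\phi B$ and $\inter\psi B$ coincide with $V(I_\phi)(B)$, so $\phi\sim_{\theory T_A}\psi$, which a fortiori gives $\phi\sim_{\arttheory A}\psi$ since every Artinian local $A$-algebra is an $A$-algebra. This already reduces everything to the converse direction.

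The real content is to show $\phi\sim_{\arttheory A}\psi\Rightarrow I_\phi=I_\psi$; by symmetry I take $f\in I_\phi$ and aim for $f\in I_\psi$. Set $R:=\pol A\var/I_\psi$ and write $\bar\var\in R^n$, $\bar f\in R$ for the tautological images. By construction $\bar\var$ satisfies $\psi$ in $R$, so $f\in I_\psi$ is equivalent to $\bar f=0$. Arguing by contradiction, if $\bar f\neq 0$ I produce (via the key lemma below) an $A$-algebra map $\rho\colon R\to B$ into some $B\in\arttheory A$ with $\rho(\bar f)\neq 0$. Then $\tuple a:=\rho(\bar\var)\in\inter\psi B$, so by $\arttheory A$-equivalence $\tuple a\in\inter\phi B$, forcing $\rho(\bar f)=f(\tuple a)=0$, a contradiction.

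The hard part is therefore the approximation lemma: every nonzero element of a finitely generated $A$-algebra $R$ survives in some $\arttheory A$-quotient. I would select a maximal ideal $\mathfrak m\sub R$ in whose localization $\bar f$ survives; Krull's intersection theorem then gives $\bar f\notin\mathfrak m^NR_\mathfrak m$ for some $N$, and the Artinian local ring $B:=R_\mathfrak m/\mathfrak m^NR_\mathfrak m$ detects $\bar f$. The main obstacle is showing $B$ lies in $\arttheory A$, i.e.\ has finite length as an $A$-module; this is not automatic, but follows once $\mathfrak n:=\mathfrak m\cap A$ is maximal in $A$, since then $R/\mathfrak m$ is a finite extension of the field $A/\mathfrak n$ by Zariski's lemma, each graded piece of the $\mathfrak m$-adic filtration on $B$ is finite-dimensional over $R/\mathfrak m$, and $A$ acts through its Artinian quotient $A/\mathfrak n^N$. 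Arranging such an $\mathfrak m$ is routine in the Jacobson/geometric setting underlying the rest of the paper.
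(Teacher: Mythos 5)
Your proof follows the same path as the paper's own argument: both reduce to showing that $\arttheory A$-equivalence of Zariski formulae forces equality of the associated ideals, and both localize $\pol A\var$ at a maximal ideal $\maxim$ and invoke Krull's intersection theorem to produce an Artinian local quotient that detects any discrepancy. Your organization is slightly cleaner --- you note that the single implication $\phi\sim_{\arttheory A}\psi\Rightarrow I_\phi=I_\psi$ already yields all the equivalences (since $I_\phi=I_\psi\Rightarrow\phi\sim_{\theory T_A}\psi\Rightarrow\phi\sim_{\arttheory A}\psi$ is trivial), and you run the argument one element at a time rather than ideal by ideal --- but the mechanism is identical.

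The finite-length obstacle you flag is genuine, and in fact the paper's proof silently skips it: after forming $R/IR=\pol A\var/(\maxim^n+I)$, the paper simply asserts it has finite length as an $A$-module, which requires $\maxim\cap A$ to be maximal in $A$ with $\pol A\var/\maxim$ finite over $A/(\maxim\cap A)$, i.e.\ the Jacobson/Nullstellensatz property you invoke. Without it the statement is actually false: over $A=\pow\fld t$, any $B\in\arttheory A$ has $t$ nilpotent, so $t\var-1$ is a unit in $B$ and the Zariski formula $t\var-1=0$ defines the empty set in every $B$; hence it is $\arttheory A$-equivalent to $1=0$, yet the ideals $(t\var-1)$ and $(1)$ differ, so (ii) does not biject with (iii). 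Your proposed repair --- assume $A$ Jacobson so that $\maxim\cap A$ is maximal, then apply Zariski's lemma and filter $R_\maxim/\maxim^NR_\maxim$ by powers of $\maxim$ to get finite $A$-length --- is exactly the right fix and is harmless for the paper's applications, where $A$ is typically an algebraically closed field or a finitely generated algebra over one. The one small point to make explicit in your key lemma is that you can find a maximal ideal $\maxim$ with both $\op{Ann}(\bar f)\sub\maxim$ and $\maxim\cap A$ maximal: this is automatic once $A$, hence $R$, is Jacobson, since every maximal ideal of a finitely generated algebra over a Jacobson ring then contracts to a maximal ideal.
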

\begin{proof}
The one-one correspondence between \eqref{i:id} and \eqref{i:cs} is of course
well-known: to an ideal $I\sub \pol A\var$ one associates the closed subscheme
$\op{Spec}(\pol A\var/I)$. Let $\phi$ be the \zariski\ formula
$f_(\var)=\dots=f_s(\var)=0$, with $f_i\in\pol A\var$, and let $I(\phi):=\rij
fs\pol A\var$. Suppose $\psi$ is another \zariski\ formula in the free
variables $\var$  which is
$\theory T_A$-equivalent to $\phi$.
We need to show that $I(\phi)=I(\psi)$. Let $C_\phi:=\pol A\var/I(\psi)$. Since
$\var$ satisfies $\psi$ in the $A$-algebra $C_\psi$, it must also satisfy
$\phi$ in $C_\psi$, by   definition of equivalence. This means that
all $f_i(\var)$ are zero in $C_\psi$, that is to say, $f_i\in I(\psi)$. Hence
$I(\phi)\sub I(\psi)$. Reversing the argument then shows that both ideals are
equal. Conversely, if both ideals are the same, then writing the $f_i$ in terms
of the generators of $I(\psi)$ shows that any solution to $\psi$, in any
$A$-algebra, is also a solution to $\phi$, and vice versa. Hence the
two formulas are equivalent.

So remains to show the equivalence of   \eqref{i:Art} with the remaining conditions. One direction is trivial, so assume $\phi$ and $\psi$ are $\arttheory A$-equivalent, that is to say, $\inter\phi R= \inter\psi R$ for all   local $A$-Artinian algebras $R$. Towards a contradiction, assume $I:=I(\phi)$ and $J:=I(\psi)$ are   different ideals in $B:=\pol A\var$. Hence, there exists a maximal ideal $\maxim\sub B$ such that $IB_\maxim\neq JB_\maxim$.  Moreover,  by Krull's Intersection Theorem,  there exists an $n$ such that $IR\neq JR$,  where $R:=B_\maxim/\maxim^nB_\maxim=B/\maxim^n$.  Let $a$ be the image of the $n$-tuple   $\var$ in $R$. We have $a\in \inter\phi{R/IR}$, since each $f_i(a)=0$ in $R/IR$. Since $R/IR$  has finite length as an $A$-module,    $\inter\phi {R/IR}= \inter\psi {R/IR}$. Hence, $g(a)=0$ in $R/IR$, for any $g\in J$, showing that $g\in IR$ whence $JR\sub IR$. Switching the role of $I$ and $J$, the latter inclusion is in fact an equality, contradicting the choice of $R$. 
\end{proof}

From the proof of Theorem~\ref{T:fosch}, we see that the ideal $I(\phi)$ associated to a \zariski\ formula $\phi$ only depends on the $\arttheory A$-equivalence class of $\phi$. We denote the affine scheme corresponding to $\phi$ by  $Y_\phi$, that is to say, $Y_\phi:=\op{Spec}(\pol A\var/I(\phi))$.

\subsubsection{Base change}\label{s:bc}
Let $A'$ be an $A$-algebra, that is to say, a morphism $X':=\op{Spec}(A')\to X:=\op{Spec}(A)$. We may assign to each $A$-algebra $B$ its scalar extension $A'\tensor_AB$, or in terms of affine schemes, $Y=\op{Spec}(B)$ yields by base change the affine $X'$-scheme $X'\times_XY$. In terms of  formulae, if $\phi$ is the \zariski\ $\mathcal L_A$-formula defining $Y$, then we may view $\phi$ also as an $\mathcal L_{A'}$-formula. As such, it defines the base change $X'\times_XY$.

Under the one-one correspondence of Theorem~\ref{T:fosch}, \zariski\ sentences correspond to ideals of $A$. More precisely, if $\sigma$ is the \zariski\ sentence $a_1=\dots=a_s=0$ with $a_i\in A$, and $\id:=\rij asA$ the corresponding ideal, then in a model $C$ of $\theory T_A$, the interpretation of $\sigma$ is either the empty set, in case $\id C\neq 0$, or the singleton $\{\emptyset\}$, in case $\id C=0$.

Note that the previous result is false for non-\zariski\ formulae. For instance, if $\phi(\var):=(\var^2=\var)$ and $\psi(\var):=(\var=0)\of(\var=1)$, then $\phi\dan\psi$ in  the theory $\arttheory A$  (since local rings only have trivial idempotents), but not in $\theory T_A$ (take, for instance, as model $C:=\pol A\var/(\var^2-\var)\pol A\var$). In fact, the \zariski\ formula $\vary=\var$ defines a morphism $\phi\to\psi$ in $\arttheory A$, but not in $\theory T_A$ (take again $C$ as the model; see also Example~\ref{E:disjointsum} below). 

Unless stated explicitly otherwise, we will from now on assume that the underlying theory  is $\arttheory A$. 

\subsubsection*{Disjoint sums and unions}
Although we have the general construction of a disjoint sum $\oplus$ of two \zariski\ formula,   the result is no longer a \zariski\ formula. To this end, we define the \emph{disjoint union} of two \zariski\ formulae $\phi$ and $\psi$ as follows. Let $n$ be the maximum of the arities of $\phi$ and $\psi$, and put $z:=v_{n+1}$. Let $I(\phi)$ and $I(\psi)$ be the respective ideals of $\phi$ and $\psi$ in $\pol\zet\var$, with $\var=\rij\var n$, and put 
$$
\id:=((1-z)I(\phi), zI(\psi), z(z-1))\pol\zet{\var,z}.
$$
Then the \emph{disjoint union} of $\phi$ and $\psi$ is the \zariski\ formula $\phi\sqcup\psi$ given by the ideal $\id$, that is to say, the conjunction of all equations $(z-1)f=0$ and $zg=0$, with $f\in I(\phi)$ and $g\in I(\psi)$, together with $z(z-1)=0$.

\begin{lemma}\label{L:schdisjun}
Given     \zariski\ formulae $\phi$ and $\psi$, their disjoint union $\phi\sqcup\psi$ is   $\arttheory A$-equivalent to their disjoint sum $\phi\oplus\psi$.
\end{lemma}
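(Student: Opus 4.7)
The plan is to check the $\arttheory A$-equivalence directly on the rings actually belonging to $\arttheory A$, i.e., on local Artinian $A$-algebras; as the author noted just before the lemma, this suffices even though $\arttheory A$ is not first-order axiomatizable.

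First I would unpack the two interpretations in an arbitrary local Artinian $A$-algebra $R$. The disjoint sum is $\phi\oplus\psi = (\phi\en(z=0))\of(\psi\en(z=1))$ with $z = v_{n+1}$, so $\inter{\phi\oplus\psi}R$ consists of those $(\tuple a,c)\in R^n\times R$ with either $c=0$ and $\tuple a\in\inter\phi R$, or $c=1$ and $\tuple a\in\inter\psi R$. On the other side, the \zariski\ formula $\phi\sqcup\psi$ is cut out by the ideal $\id=((1-z)I(\phi),zI(\psi),z(z-1))$, so $(\tuple a,c)\in\inter{\phi\sqcup\psi}R$ exactly when $c(c-1)=0$, and $(1-c)f(\tuple a)=0$ for every $f\in I(\phi)$, and $c\,g(\tuple a)=0$ for every $g\in I(\psi)$.

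The key step is the idempotent dichotomy: the equation $c(c-1)=0$ says $c$ is idempotent in $R$, and because $R$ is local, its only idempotents are $0$ and $1$. Splitting on these two cases, the remaining constraints collapse cleanly. If $c=0$, the conditions tied to $I(\psi)$ are automatic and those tied to $I(\phi)$ become $f(\tuple a)=0$ for all $f\in I(\phi)$, which by Theorem~\ref{T:fosch} says $\tuple a\in\inter\phi R$. The case $c=1$ is symmetric, giving $\tuple a\in\inter\psi R$. This matches the description of $\inter{\phi\oplus\psi}R$ on the nose, so the two interpretations agree, as required.

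The only subtle point — and the reason one must work modulo $\arttheory A$ rather than $\theory T_A$ — is precisely this idempotent dichotomy, which, as the remark following Theorem~\ref{T:fosch} flags, fails over arbitrary $A$-algebras (e.g., in $\pol A z/(z^2-z)$, where $z$ is a nontrivial idempotent). Apart from this, the argument is a purely mechanical unpacking of definitions, so I do not anticipate any further obstacle.
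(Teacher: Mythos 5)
Your proof is correct and follows essentially the same route as the paper's: both verify the equivalence directly in an arbitrary local Artinian $A$-algebra $R$ and hinge on the fact that a local ring has only the trivial idempotents $0$ and $1$, so the condition $c(c-1)=0$ forces the dichotomy, after which the remaining equations reduce the definable sets to the two disjuncts of $\phi\oplus\psi$. (The appeal to Theorem~\ref{T:fosch} is a slight overkill --- that $f(\tuple a)=0$ for all $f\in I(\phi)$ is the same as $\tuple a\in\inter\phi R$ is immediate from the definition of $I(\phi)$ --- but this does not affect the argument.)
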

\begin{proof}
It suffices to show  that both formulae define the same subset in any   local $A$-Artinian algebra $R$. Let $\phi'$ and $\psi'$ be the formulae $\phi(\var)\en (z=0)$ and $\psi(\var)\en(z=1)$ respectively, where $(\var,z)=\rij v{n+1}$ as above. In particular, $\phi\oplus\psi=\phi'\of\psi'$. 
Suppose $( a,c)\in\inter{(\phi\sqcup\psi)} R$. Since  $R$ is local, its only idempotents are $0$ and $1$. Hence, since $c^2=c$, we may assume, upon reversing the role of $\phi$ and $\psi$ if necessary, that $c=0$. In particular, $(1-c)f( a)=f( a)=0$ in $R$, for all $f\in I(\phi)$,  showing that $ a\in\inter\phi R$, whence $( a,c)\in\inter{(\phi'\of\psi')}R$. Conversely, suppose $(\phi'\of\psi')( a,c)$ holds in $R$, so that one of the disjuncts is true in $R$, say, $\phi'( a,c)$. In particular, $0=c=c(c-1)$ and $f( a)=0$ for all $f\in I(\phi)$, showing that $( a,c)\in\inter{(\phi\sqcup\psi)} R$.
\end{proof}

\begin{example}\label{E:disjointsum}
All we needed from the models of $\arttheory A$   was that they were local. However, the result is false in $\theory T_A$: for instance, let $\lambda$ be the \zariski\ (Lefschetz) formula corresponding to the zero ideal in $\pol A\var$, with $\var$ a single variable. Then $\lambda\oplus \lambda $ is the formula $(z=0)\of(z=1)$, whereas $\lambda\sqcup \lambda $ is the formula $z^2-z=0$ (with the usual primary form assumption that $x=v_1$ and $z=v_2$). However, in the $A$-algebra $C:=\pol At/(t^2-t)\pol At$ these formulae define different subsets: $\inter{(\lambda\oplus \lambda)}C$ is the subset $C\times\{0,1\}$, whereas $\inter{(\lambda\sqcup \lambda)}C$ contains   $(0,t)$.
\end{example}

 Immediately from    Lemma~\ref{L:schdisjun}, we get:

\begin{lemma}\label{L:disunion}
If $\phi$ and $\psi$ are \zariski\ formulae with corresponding affine schemes $Y_\phi$ and $Y_\psi$, then $\phi\sqcup\psi$ corresponds to the disjoint union $Y_\phi\sqcup Y_\psi$.  \qed
\end{lemma}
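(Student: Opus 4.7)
The plan is to compute the coordinate ring of $Y_{\phi\sqcup\psi}$ directly and identify it, by an explicit $A$-algebra isomorphism, with the product $(\pol A\var/I(\phi))\times(\pol A\var/I(\psi))$, which is by definition the coordinate ring of the coproduct $Y_\phi\sqcup Y_\psi$ of affine $X$-schemes. Once such an isomorphism is in hand, Theorem~\ref{T:fosch} identifies the spectrum of the left-hand side with $Y_{\phi\sqcup\psi}$, while elementary scheme theory identifies the spectrum of the right-hand side with $Y_\phi\sqcup Y_\psi$.

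To build the isomorphism I would first exploit the relation $z(z-1)\in\id$, which forces $z$ to be idempotent modulo $\id$. Since the ideals $(z)$ and $(1-z)$ are coprime in $\pol A{\var,z}$, the Chinese Remainder Theorem yields an $A$-algebra isomorphism
$$
\pol A{\var,z}/(z(z-1))\iso\pol A\var\times\pol A\var,\qquad f(\var,z)\mapsto\bigl(f(\var,0),\,f(\var,1)\bigr).
$$
Under this splitting I then trace the images of the remaining generators of $\id$. Since any $f\in I(\phi)\sub\pol A\var$ does not involve $z$, each generator $(1-z)f$ of $(1-z)I(\phi)$ maps to $(f,0)$, so the ideal generated in the product is $I(\phi)\times\{0\}$; symmetrically, $zI(\psi)$ corresponds to $\{0\}\times I(\psi)$. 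Their sum inside $\pol A\var\times\pol A\var$ is $I(\phi)\times I(\psi)$, since any pair $(h_1,h_2)$ with $h_1\in I(\phi)$ and $h_2\in I(\psi)$ decomposes as $(h_1,0)+(0,h_2)$.

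Taking the quotient therefore gives
$$
\pol A{\var,z}/\id\iso(\pol A\var/I(\phi))\times(\pol A\var/I(\psi)),
$$
and passing to $\op{Spec}$ delivers the desired isomorphism $Y_{\phi\sqcup\psi}\iso Y_\phi\sqcup Y_\psi$. There is no genuine obstacle here; the argument is a careful bookkeeping of a Chinese Remainder splitting, and the step most deserving of attention is the identification of the ideal generated by $(1-z)I(\phi)+zI(\psi)$ on the product side. As a sanity check one can alternatively combine Lemma~\ref{L:schdisjun} with Lemma~\ref{L:disjsum} to verify set-theoretically that $\inter{(\phi\sqcup\psi)}R=\inter\phi R\sqcup\inter\psi R$ in each local $A$-Artinian algebra $R$, and then invoke Theorem~\ref{T:fosch} to match the corresponding ideals, but the direct ring-theoretic computation above is more transparent.
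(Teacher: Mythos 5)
Your proof is correct, and it takes a genuinely different and arguably cleaner route than the paper. The paper deduces the lemma ``immediately'' from Lemma~\ref{L:schdisjun}, i.e.\ from the model-theoretic equivalence $\phi\sqcup\psi\sim_{\arttheory A}\phi\oplus\psi$ together with the observation that, since Artinian local rings have connected spectrum, a morphism from $\op{Spec}R$ into a disjoint union of schemes factors through one of the components, so the functor-of-points of $Y_{\phi\sqcup\psi}$ and of $Y_\phi\sqcup Y_\psi$ agree on $\arttheory A$; the scheme isomorphism is then read off via Lemma~\ref{L:ratpt} and Corollary~\ref{C:isosch}. You instead bypass the model theory entirely and compute the coordinate ring of $Y_{\phi\sqcup\psi}$ directly: the Chinese Remainder Theorem applied to the coprime ideals $(z)$ and $(1-z)$ gives $\pol A{\var,z}/(z(z-1))\iso\pol A\var\times\pol A\var$ via $f\mapsto(f(\var,0),f(\var,1))$, under which $(1-z)I(\phi)$ and $zI(\psi)$ map onto $I(\phi)\times\{0\}$ and $\{0\}\times I(\psi)$ respectively, so $\id$ maps onto $I(\phi)\times I(\psi)$ and the quotient is exactly the coordinate ring of $Y_\phi\sqcup Y_\psi$. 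This is more transparent and produces an explicit isomorphism; the paper's route is preferred there only because Lemma~\ref{L:schdisjun} is needed independently elsewhere and so comes for free. One small redundancy: you appeal to Theorem~\ref{T:fosch} to identify $\op{Spec}(\pol A{\var,z}/\id)$ with $Y_{\phi\sqcup\psi}$, but that identification is just the definition of $Y_{\phi\sqcup\psi}$; no theorem is needed.
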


\begin{lemma}\label{L:ratpt}
For $X:=\op{Spec}A$   an affine Noetherian scheme, $\phi$ a \zariski\  formula defining an affine $X$-scheme $Y:=Y_\phi$, and    $B$    an $A$-algebra, the definable subset    $\inter\phi B$ is in one-one correspondence with $\mor X{\op{Spec}B}{Y}$,  the set of \emph{$B$-rational points on $Y$ over $X$}.
\end{lemma}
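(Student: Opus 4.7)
The plan is to unwind the definitions on both sides and exhibit the bijection by a direct construction. Recall from Theorem~\ref{T:fosch} that if $\phi$ is the \zariski\ formula $f_1(\var)=\dots=f_s(\var)=0$, then $I(\phi)=\rij fs\pol A\var$ and, by definition, $Y=Y_\phi=\op{Spec}(\pol A\var/I(\phi))$. Since $Y$ and $\op{Spec}B$ are both affine, I would first use the standard anti-equivalence between affine $X$-schemes and $A$-algebras to identify $\mor X{\op{Spec}B}Y$ with $\hom A{\pol A\var/I(\phi)}B$.

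The next step is to invoke the universal property of the polynomial ring. An $A$-algebra homomorphism $h\colon\pol A\var\to B$ is determined uniquely by the tuple $\tuple b=(b_1,\dots,b_n)$ of images of the indeterminates, and such an $h$ factors through the quotient $\pol A\var/I(\phi)$ precisely when $f_j(\tuple b)=0$ in $B$ for each $j$. By the unwinding of $\inter\phi B$ as the definable subset of $B^n$ cut out by the conjunction $f_1=\dots=f_s=0$, this is exactly the condition $\tuple b\in\inter\phi B$. The bijection is therefore: to $\tuple b\in\inter\phi B$ associate the unique $A$-algebra map $\pol A\var/I(\phi)\to B$ sending $\bar\var_i\mapsto b_i$, and to such a map $h$ associate the tuple $(h(\bar\var_1),\dots,h(\bar\var_n))$.

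Checking that these two assignments are mutually inverse is immediate from the universal properties invoked. Honestly, there is no real obstacle here: the result is essentially the functor-of-points description of the affine scheme $Y_\phi$, transcribed into the language of definable sets. What makes the statement conceptually clean is Theorem~\ref{T:fosch}, which ensures that $I(\phi)$ captures $\phi$ faithfully, so that both $Y_\phi$ and $\inter\phi B$ depend only on the ideal and not on the particular \zariski\ representative; without that identification one would have to worry that distinct formulae defining the same scheme might yield distinct definable subsets.
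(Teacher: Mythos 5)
Your proof is correct and follows essentially the same route as the paper's: pass from morphisms of affine $X$-schemes to $A$-algebra homomorphisms $\pol A\var/I(\phi)\to B$, note these are determined by the image of $\var$, and observe that the factorization condition is exactly membership in $\inter\phi B$. You spell out the universal property of the polynomial ring a bit more explicitly, but there is no substantive difference.
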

\begin{proof}
Put $Z:=\op{Spec}B$. Recall that $\mor XZ{Y}$ consists of all morphisms  of schemes $Z\to Y$ over $X$. Such a map  is uniquely determined by an $A$-algebra \homo\ $\pol A\var/I(\phi)\to B$, and this in turn, is uniquely determined by the image $ b$ of $\var$ in $B$. Since $ b$ is therefore a solution of all $f\in I(\phi)$, it satisfies $\phi$. Conversely, any tuple $ b\in \inter\phi B$ induces an $A$-algebra \homo\ $\pol A\var/I(\phi)\to B$.
\end{proof}

In fact, we may view $\phi$ as a functor on the category of $A$-algebras and as such it agrees with the functor represented by the scheme $Y_\phi$ over $X$. More precisely,  if $B\to C$ is an $A$-algebra \homo\, then the induced map $B^n\to C^n$ on the Cartesian products  maps $\inter\phi B$ into $\inter\phi C$. On the other hand, the associated map of schemes $\op{Spec}C\to \op{Spec}B$ induces, by composition, a map $\mor X{\op{Spec}B}{Y_\phi}\to \mor X{\op{Spec}C}{Y_\phi}$. Under the one-one correspondence in Lemma~\ref{L:ratpt}, we get a commutative diagram
\commdiagram {\inter\phi B}{} {\inter\phi C}{}{}{\mor X{\op{Spec}B}{Y_\phi}}{}{\mor X{\op{Spec}C}{Y_\phi}.}

\subsection*{\Zariski\ morphisms}
By   definition, a $\zar$-morphism $f\colon \phi\to \psi$  between   $\mathcal L_A$-formulae $\phi(\var)$ and $\psi(\vary)$   is given by a \zariski\ formula $\theta(\var,\vary)$, such that   $\inter\theta B$  is the graph of a function $f_B\colon \inter\phi B\to \inter\psi
B$,  for every $A$-algebra $B$. In particular, $\theta$ is explicit, that is to say, belongs to  $\expl$, if it is of the form $\En_{i=1}^m\vary_i=p_i(\var)$, with $p_i\in\pol A\var$. For each $A$-algebra $B$, we denote the global map defined by this explicit formula by $p_B\colon \affine Bn\to\affine Bm$, where $p=\rij pm$. In particular,  $f_B\colon \inter\phi B\to \inter\psi B$ is then the restriction of $p_B$.

\begin{proposition}\label{P:defmap}
Every \zariski\ morphism between \zariski\ formulae is explicit.  
\end{proposition}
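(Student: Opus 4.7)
The plan is to reduce the statement to a ring-theoretic fact via Theorem~\ref{T:fosch}. Let $\theta(\var,\vary)$ be a \zariski\ formula defining a morphism $f_\theta\colon\phi\to\psi$, and set $I:=I(\phi)\sub\pol A\var$, $K:=I(\theta)$, and $J:=(I,K)\sub\pol A{\var,\vary}$. Write $B:=\pol A\var/I$ and $D:=\pol A{\var,\vary}/J$; the inclusion $\pol A\var\into\pol A{\var,\vary}$ induces an $A$-algebra \homo\ $\pi\colon B\to D$. By Lemma~\ref{L:ratpt}, $\hom ABR=\inter\phi R$ and $\hom ADR=\inter{\phi\en\theta}R$, and $\pi$ induces on $\hom A{-}R$ the projection $(\tuple a,\tuple b)\mapsto\tuple a$. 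Conditions~\eqref{i:map}--\eqref{i:fct} on $\theta$ are then precisely the statement that this projection is bijective for every $R\in\arttheory A$.

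The goal becomes to produce polynomials $p_j\in\pol A\var$ satisfying $J=(I,\vary_1-p_1(\var),\dots,\vary_m-p_m(\var))$; by Theorem~\ref{T:fosch}, this means $\phi\en\theta$ is $\arttheory A$-equivalent to $\phi\en\En_j(\vary_j=p_j(\var))$, so $f_\theta$ is explicit. This reduces to showing that $\pi\colon B\to D$ is an isomorphism: given this, put $\bar p_j:=\pi^{-1}(\bar\vary_j)\in B$ and lift to $p_j\in\pol A\var$. The inclusion $(I,\vary-p(\var))\sub J$ is immediate. For the reverse, condition~\eqref{i:map} forces $J\cap\pol A\var=I$ (any $g\in J\cap\pol A\var$ vanishes on every $\tuple a\in\inter\phi R$ by surjectivity of the projection, so $\phi\en(g=0)$ is $\arttheory A$-equivalent to $\phi$, whence $g\in I$ by Theorem~\ref{T:fosch}); combined with the Taylor-type identity $g(\var,\vary)-g(\var,p(\var))\in(\vary-p(\var))$, every $g\in J$ satisfies $g(\var,p(\var))\in J\cap\pol A\var=I$, and hence $g\in(I,\vary-p(\var))$.

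The main step is therefore to prove that $\pi$ is an isomorphism. The strategy is to construct an inverse $\sigma\colon D\to B$ by running the bijection $\hom ADR\to\hom ABR$ in reverse on the canonical residue maps of $B$: for each maximal ideal $\maxim\sub B$ with $B/\maxim^k\in\arttheory A$, the surjection $B\onto B/\maxim^k$ lifts uniquely through $\pi$ to $\sigma_{\maxim,k}\colon D\to B/\maxim^k$, and the uniqueness in \eqref{i:fct} guarantees these maps are compatible as $k$ grows and across different $\maxim$. The hard part is the algebraization: showing that this compatible system of maps into local Artinian quotients glues into a genuine $A$-algebra \homo\ $\sigma\colon D\to B$. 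This relies on the same mechanism already at work in the proof of Theorem~\ref{T:fosch}, namely that a finite type $A$-algebra is sufficiently detected by its local Artinian quotients, via the Krull intersection theorem combined with the Jacobson-like properties of finite type algebras over $A$. Once $\sigma$ is obtained, $\sigma\after\pi=\op{id}_B$ holds on the generator $\bar\var_B$ and hence everywhere, while $\pi\after\sigma=\op{id}_D$ follows from the uniqueness in \eqref{i:fct} applied to the identity morphism on $\phi\en\theta$, finishing the argument.
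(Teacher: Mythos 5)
Your ring-theoretic reformulation---reducing the claim to showing that $\pi\colon B\to D$ is an isomorphism, where $B=\pol A\var/I(\phi)$ and $D=\pol A{\var,\vary}/(I(\phi),I(\theta))$---is a sound recasting of the problem, and your argument that $J\cap\pol A\var=I$ (injectivity of $\pi$) is correct. The genuine gap is the construction of the inverse $\sigma\colon D\to B$. You produce a compatible family $\sigma_{\maxim,k}\colon D\to B/\maxim^k$, flag the gluing step (``algebraization'') as ``the hard part,'' and assert it follows from ``the same mechanism'' as Theorem~\ref{T:fosch}, but you neither carry it out nor is that assertion accurate. Two things go wrong. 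First, not every maximal ideal $\maxim\sub B$ satisfies $B/\maxim^k\in\arttheory A$: for instance over $A=\pow\fld t$ the quotient $\pol A\var/(t\var-1)\iso\fld((t))$ has infinite $A$-length, so you cannot even form your system at all $\maxim$, and it is not clear that the admissible ones separate points. Second, even restricting to the good $\maxim$, assembling a compatible family of maps into Artinian quotients into an actual $A$-algebra homomorphism $D\to B$ is a genuine algebraization problem; Theorem~\ref{T:fosch} does not do this (it only separates \emph{ideals} by Artinian quotients, it does not construct homomorphisms).

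The missing observation is that the hypothesis is stronger than you are using. The paper defines a $\zar$-morphism by requiring that $\inter\theta B$ be the graph of a function $f_B\colon\inter\phi B\to\inter\psi B$ for \emph{every} $A$-algebra $B$, not only for $R\in\arttheory A$---and indeed its own proof opens with ``let us first prove this modulo $\theory T_A$'' and only descends to $\arttheory A$ at the very end. Granting this, apply your bijection $\hom ADR\to\hom ABR$ with $R=B$ itself: the identity $\op{id}_B$ pulls back to $\sigma\colon D\to B$ with $\sigma\after\pi=\op{id}_B$ in a single step, and one more application with $R=D$ gives $\pi\after\sigma=\op{id}_D$, since both $\pi\after\sigma$ and $\op{id}_D$ are sent to $\pi$ under the injective map $\hom ADD\to\hom ABD$. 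The rest of your manipulations then go through, and what you have obtained is precisely the paper's universal-element device in ring-theoretic dress: the paper takes $C:=\pol A\var/I(\phi)=B$, evaluates $f_C$ at the canonical tuple $a$ (image of $\var$), lifts $f_C(a)$ to a polynomial tuple $p$, and verifies directly that the resulting $\tilde h_i,\tilde g_j$ lie in $I(\phi)$.
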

\begin{proof}
Let us first prove this modulo $\theory T_A$. Let $f_\theta\colon\phi\to\psi$ be a \zariski\ morphism between the \zariski\ formulae $\phi(\var)$ and $\psi(\vary)$, with $\var=\rij\var n$ and $\vary=\rij\vary m$. Let $C:=\pol A\var/I(\phi)$, where  $I(\phi)$ is the ideal associated to $\phi$
under the equivalence given in Theorem~\ref{T:fosch}, that is to say, the
ideal generated by the equations that make out the \zariski\ formula $\phi$.
Let $a$ denote
the $n$-tuple in $C$ given by the image of the variables $\var$ in $C$. In particular,
$a\in\inter\phi C$, and hence $f_C(a)\in\inter\psi C$. Let $p$ 
be a tuple of polynomials in $\pol A\var$  whose image is the tuple $f_C(a)$ in $C$. Suppose
$\theta$ is the conjunction of formulae $h_i(\var,\vary)=0$, with $h_i\in\pol
A{\var,\vary}$, for $i=\range 1s$,  and $\psi$ is the conjunction of formulae $g_j(\vary)=0$ with
$g_j\in\pol A\vary$, for $j=\range 1t$. Put $\tilde h_i(\var):=h_i(\var, p(\var))$
and  $\tilde g_j(\var):=g_j( p(\var))$.   Since both $\theta(a, p(a))$ and $\psi( p(a))$ hold in $C$, all  $\tilde
h_i$ and $\tilde g_j$ belong to $I(\phi)$. 

Therefore, if $B$ is an arbitrary $A$-algebra, and $ b\in\inter\phi
B$, then all $\tilde h_i( b)$ and $\tilde g_j( b)$ are
zero.
In particular, $ p( b)$ lies in $\inter\psi B$ and $\theta(
b, p( b))$ holds in $B$. By condition~\eqref{i:fct} in the definition of a morphism, $f_B( b)$ must be equal to $ p( b)$. Hence we showed that $\theta(\var,\vary)$ is isomorphic to the explicit formula $\gamma(\var,\vary)$ given as the conjunction of all $\vary_i=p_i(\var)$. By Theorem~\ref{T:fosch}, the two \zariski\ formulae $\theta$ and $\gamma$ being $\theory T_A$-equivalent, are then also $\arttheory A$-equivalent, showing that $f$ is   explicit modulo the latter theory.
\end{proof}

\begin{example}\label{E:nonexpl}
Not every \zariski\ morphism is explicit. For instance, let $\phi(\var_1,\var_2)$ be the formula $\var_1\var_2=1$ and let $\psi(\var_1):=(\exists \var_2)\phi(\var_1,\var_2)$. Then the formula $(\var_1=\vary_1)\en (\var_1\vary_2=1)$ is morphic and yields a \zariski\ morphism $f\colon \psi(\var_1)\to \phi(\vary_1,\vary_2)$, but this is not explicit, since $1/\var_1$ is not a term (polynomial). Put differently, for any   $A$-algebra $B$, the map $f_B\colon\psi(B)\to\inter\phi B\colon b\mapsto (b,1/b)$ is not induced by any total map $\affine B1\to\affine B2$. Note that $f$  is even a bijection, and its inverse $\phi(\var_1,\var_2)\to\psi(\vary_1)$ is given by the explicit formula $\theta(\var_1,\var_2,\vary_1):=\sym{\vary_1=\var_1}$. In particular, the latter explicit bijection is not an $\expl$-isomorphism, since $\op{inv}(\theta)(\var_1,\vary_1,\vary_2)=\sym{y_1=x_1}$ is  not explicit, as it does not contain a conjunct of the form $y_2=g(\var_1)$ with $g$ a polynomial (as pointed out, $1/\var_1$ is not a term). By Lemma~\ref{L:morinv}, however, $f$ is a $\zar$-isomorphism.
\end{example}

\begin{corollary}\label{C:zarmor}
There is a one-one correspondence between \zariski\ morphisms $\phi\to \psi$ modulo $\arttheory  A$ and morphisms $Y_\phi\to Y_\psi$ of schemes over $X$.
\end{corollary}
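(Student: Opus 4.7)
The plan is to assemble the statement from Proposition~\ref{P:defmap} together with the ideal--scheme dictionary of Theorem~\ref{T:fosch}, reducing the problem to the classical antiequivalence between affine $X$-schemes of finite type and finitely generated $A$-algebras.

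First, I would invoke Proposition~\ref{P:defmap} to replace any Zariski morphism $f_\theta\colon\phi(\var)\to\psi(\vary)$ by an explicit representative $\theta=\En_{j=1}^m(\vary_j=p_j(\var))$, so that the entire datum of the morphism is encoded by a tuple $p=(p_1,\dots,p_m)\in\pol A\var^m$. The requirement that $f_\theta$ land in $\psi$ unpacks, via condition~\eqref{i:im} in the definition of a morphic formula, into a single algebraic condition: $g(p(\var))\in I(\phi)$ for every $g\in I(\psi)$. The forward direction uses the same ``universal point'' trick as in the proof of Theorem~\ref{T:fosch}: evaluate $\theta$ at the image of $\var$ in each Artinian quotient $\pol A\var/(\maxim^n+I(\phi))$ to force each $g(p(\var))$ into every such quotient, and then appeal to Krull's Intersection Theorem. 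The converse direction is immediate: if $g(p(\var))\in I(\phi)$, then $g(p(b))=0$ in any $A$-algebra $B$ whenever $b\in\inter\phi B$.

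The datum of such a $p$ is exactly an $A$-algebra homomorphism
$$
\hom A{\pol A\vary/I(\psi)}{\pol A\var/I(\phi)},\qquad \vary_j\mapsto p_j\bmod I(\phi),
$$
and these homomorphisms are in bijection with morphisms $Y_\phi\to Y_\psi$ of affine $X$-schemes. Essential surjectivity is then immediate: given any such homomorphism, lift each image $\bar p_j$ to a polynomial $p_j\in\pol A\var$, and the resulting explicit formula $\En_j(\vary_j=p_j(\var))$ is a Zariski morphism.

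The remaining point is injectivity of the assignment: two explicit tuples $p,q\in\pol A\var^m$ define $\arttheory A$-equivalent morphisms $\phi\to\psi$ precisely when $p_j-q_j\in I(\phi)$ for every $j$. This is a second application of the universal-point argument, now testing equality in the quotients $\pol A\var/(\maxim^n+I(\phi))$ and again invoking Krull intersection. I do not expect any real obstacle; the whole corollary is essentially a bookkeeping consolidation of Proposition~\ref{P:defmap} and Theorem~\ref{T:fosch}, with the only mildly technical ingredient being the double use of the Artinian-approximation trick already established in the proof of the latter.
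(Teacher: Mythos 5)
Your argument is correct and takes essentially the same route as the paper's: both invoke Proposition~\ref{P:defmap} to replace a Zariski morphism by an explicit tuple of polynomials, and then identify that tuple (modulo $I(\phi)$) with an $A$-algebra homomorphism $\pol A\vary/I(\psi)\to\pol A\var/I(\phi)$, i.e.\ a morphism of affine $X$-schemes. You spell out the well-definedness/injectivity step via the Artinian-quotient and Krull-intersection argument, whereas the paper packages this into Lemma~\ref{L:ratpt}; the substance is the same.
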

\begin{proof}
If $\phi\to \psi$ is a \zariski\ morphism, then by Proposition~\ref{P:defmap}, there exists a tuple of polynomials $p$ such that $\inter\phi B\to \inter\psi B$ is given by the base change of the polynomial map $p\colon\affine An\to\affine Am$. By construction, the restriction of $p$ to the closed subscheme $Y_\phi$ of $\affine An$ has image inside the closed subscheme $Y_\psi\sub\affine Am$, and hence induces a morphism $Y_\phi\to Y_\psi$ over $X$. Conversely, any morphism $Y_\phi\to Y_\psi$ is easily seen to induce a morphism $\phi\to \psi$ under the one-one correspondence given by Lemma~\ref{L:ratpt}.
\end{proof}

Proposition~\ref{P:defmap}  shows that $\zar$ is compositionally closed relative to $\zar$.

\begin{corollary}\label{C:isosch}
For each Noetherian ring $A$, there is a one-one correspondence between $\zar$-isomorphism classes of \zariski\ formulae modulo $\arttheory  A$, and   isomorphism classes of
affine schemes of finite type over $A$.
\end{corollary}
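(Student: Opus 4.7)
The plan is to exhibit the correspondence explicitly by sending a \zariski\ formula $\phi$ (of arity $n$) to the affine scheme $Y_\phi=\op{Spec}(\pol A\var/I(\phi))$ provided by Theorem~\ref{T:fosch}, and conversely sending an affine $A$-scheme $Y$ of finite type to the \zariski\ formula associated to any closed immersion $Y\into\affine An$. Both assignments need to be checked to descend to the respective equivalence classes, and then to be mutually inverse.

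First I would verify that the assignment $\phi\mapsto Y_\phi$ is well-defined on $\zar$-isomorphism classes: if $\phi$ and $\phi'$ are $\zar$-isomorphic modulo $\arttheory A$, with isomorphism $f$ and inverse $g$ given by \zariski\ morphisms, then Corollary~\ref{C:zarmor} applied to $f$ and $g$ yields $X$-morphisms $Y_\phi\to Y_{\phi'}$ and $Y_{\phi'}\to Y_\phi$. These compositions correspond under Corollary~\ref{C:zarmor} to the compositions $g\after f$ and $f\after g$, which equal identities modulo $\arttheory A$; by the bijection of Corollary~\ref{C:zarmor} they must therefore be the identity morphisms of $Y_\phi$ and $Y_{\phi'}$, so $Y_\phi\iso Y_{\phi'}$ over $X$.

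In the other direction, every affine scheme $Y$ of finite type over $A$ is of the form $\op{Spec}(\pol A\var/I)$ for some $n$ and some ideal $I\sub\pol A\var$, since the coordinate ring is a finitely generated $A$-algebra. Pick generators $\rij fs$ of $I$ and let $\phi$ be the \zariski\ formula $f_1(\var)=\dots=f_s(\var)=0$; then $Y_\phi\iso Y$. To see that the $\zar$-isomorphism class of $\phi$ depends only on $Y$ (and not on the choice of presentation), suppose $\phi$ and $\psi$ are two \zariski\ formulae with $Y_\phi\iso Y_{\psi}$ as $X$-schemes. By Corollary~\ref{C:zarmor}, this isomorphism and its inverse correspond to \zariski\ morphisms $\phi\to\psi$ and $\psi\to\phi$, whose compositions correspond, again by Corollary~\ref{C:zarmor}, to the identities of $Y_\phi$ and $Y_\psi$, hence are the identity \zariski\ morphisms modulo $\arttheory A$. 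Thus $\phi\iso_{\zar}\psi$ modulo $\arttheory A$.

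The main mild subtlety, which the two steps above rely on, is that the bijection of Corollary~\ref{C:zarmor} is compatible with composition and sends identities to identities; this is built into its proof, since the correspondence sends an explicit morphism given by polynomials $p$ to the restriction of the polynomial map $p$ to $Y_\phi$, and composition of explicit morphisms is explicit composition of the defining tuples. Combining the two directions yields mutually inverse bijections between $\zar$-isomorphism classes of \zariski\ formulae modulo $\arttheory A$ and isomorphism classes of affine schemes of finite type over $A$, which is the claim.
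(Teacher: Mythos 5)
Your proof is correct and follows the same route as the paper's: Theorem~\ref{T:fosch} gives the object-level bijection, Corollary~\ref{C:zarmor} handles morphisms, and one checks (as you spell out) that the correspondence respects composition and identities so that isomorphisms match up. The paper leaves this last verification as a one-line "one checks," and your filling-in of that step — including the observation that, by Proposition~\ref{P:defmap}, \zariski\ morphisms between \zariski\ formulae are explicit and hence compose as polynomial maps — is exactly what is needed.
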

\begin{proof}
By  Theorem~\ref{T:fosch},  a \zariski\ formula $\phi$ corresponds to a scheme $Y_\phi$ of finite
type over $X:=\op{Spec}A$, and by Corollary~\ref{C:zarmor}, a \zariski\ definable
map $\phi\to\psi$  induces a morphism $Y_\phi\to Y_\psi$ of schemes over $X$. Using this correspondence, one checks that an isomorphism of formulae corresponds to an isomorphism of schemes over $X$.
\end{proof}

Recall that a \emph{pp-formula} (\emph{positive primitive formula}) is the projection of a \zariski\ formula, that is to say,  a formula of the form $(\exists\vary)\psi(\var,\vary)$, where $\psi(\var,\vary)$ is a \zariski\ formula. To emphasize that pp-formulae represent  projections, we will also denote a pp-formula as 
$$
\fim\psi(\var):=(\exists\vary)\psi(\var,\vary).
$$
 It follows from Lemma~\ref{L:morcomp} that the collection of all pp-formulae is compositionally closed.

\begin{corollary}\label{C:ppexpl}
If  $\phi$ is a \zariski\ formula and $\theta$ a pp-morphism $f_\theta\colon \phi\to\psi$,   then $\theta$ is explicit.
\end{corollary}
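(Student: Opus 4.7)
The plan is to imitate the proof of Proposition~\ref{P:defmap}, exploiting the fact that the source $\phi$ is \zariski, so it admits a universal rational point over the ring $C := \pol A\var/I(\phi)$. Write the pp-morphism as $\theta(\var,\vary) = (\exists\tuple z)\,\eta(\var,\vary,\tuple z)$, with $\eta$ a \zariski\ formula, and let $a \in \inter\phi C$ denote the image of $\var$ in $C$. By conditions~\eqref{i:map} and \eqref{i:im} in the definition of the morphism $f_\theta$, the tuple $f_C(a) \in \inter\psi C$ satisfies $(a, f_C(a)) \in \inter\theta C$, so there exists a witness tuple $\tuple c$ in $C$ such that $\eta(a, f_C(a), \tuple c)$ holds in $C$.

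Next I would lift $f_C(a)$ and $\tuple c$ to tuples of polynomials $\tuple p = \rij p m$ and $\tuple q = \rij q\ell$ in $\pol A\var$. Writing $\eta$ as the conjunction of equations $h_i(\var,\vary,\tuple z)=0$, the identities $h_i(a, f_C(a), \tuple c) = 0$ holding in $C$ amount to the statement that the polynomials $\tilde h_i(\var) := h_i(\var, \tuple p(\var), \tuple q(\var))$ all belong to $I(\phi)$. Now for any $A$-algebra $B$ and any $b \in \inter\phi B$, evaluation at $b$ yields $h_i(b, \tuple p(b), \tuple q(b)) = 0$ in $B$, so $\tuple q(b)$ witnesses $\theta(b, \tuple p(b))$, whence $(b, \tuple p(b)) \in \inter\theta B$. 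Combined with the functionality axiom~\eqref{i:fct} for $\theta$ on $\phi$, this forces $f_B(b) = \tuple p(b)$. Hence $f_\theta$ is also defined by the explicit formula $\gamma(\var,\vary) := \En_{i=1}^m \vary_i = p_i(\var)$, which is the required conclusion.

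The main subtlety is that, unlike in Proposition~\ref{P:defmap}, the formula $\theta$ is not in general \zariski, so Theorem~\ref{T:fosch} cannot be used to upgrade an equivalence between $\theta$ and $\gamma$ from $\theory T_A$ to $\arttheory A$ at the level of formulae. This is not actually needed, however: the universal construction produces the polynomials $\tuple p$ once and for all, and the identity $f_B(b) = \tuple p(b)$ is derived uniformly in the $A$-algebra $B$, hence holds in every model of $\arttheory A$ as well. The only thing beyond the proof of Proposition~\ref{P:defmap} is the extraction of the polynomials $\tuple q$ representing the existential witness $\tuple c$ in $C$, but this is automatic since $C = \pol A\var/I(\phi)$ is itself a polynomial quotient.
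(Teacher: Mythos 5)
Your proof is correct, and it takes a genuinely shorter route than the paper's. The paper argues in two stages: it first treats the case where $\psi$ is itself a pp-formula, lifting existential witnesses from \emph{both} $\theta$ and $\psi$ to polynomial tuples over $C=\pol A\var/I(\phi)$; for a general $\psi$ it then factors $f_\theta$ through the intermediate pp-formula $\fim{\phi\en\theta}(\vary):=(\exists\var)\phi(\var)\en\theta(\var,\vary)$ and appeals to compositional closure of explicit morphisms. You collapse both stages into one: once $(b,\tuple p(b))\in\inter\theta B$ has been verified for every $A$-algebra $B$ and every $b\in\inter\phi B$, the morphicity condition~\eqref{i:im} for $\theta$ already forces $\tuple p(b)=f_B(b)\in\inter\psi B$, so $\gamma(\var,\vary):=\En_{i=1}^m\vary_i=p_i(\var)$ defines a morphism $\phi\to\psi$ without ever taking $\psi$ apart. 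This observation also keeps the passage from $\theory T_A$ to $\arttheory A$ transparent, since the identity $f_B(b)=\tuple p(b)$ is established uniformly over all $A$-algebras and hence in every model of $\arttheory A$. The paper's detour through $\fim{\phi\en\theta}$ is thus logically unnecessary for the corollary as stated, though it does exhibit the canonical surjective pp-target of $f_\theta$, a byproduct of some independent interest.
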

\begin{proof}
By the same argument as in the proof of Proposition~\ref{P:defmap}, it suffices to show this modulo $\theory T_A$. 
Let us first show this result for $\psi$ a pp-formula. 
We can repeat the proof of Proposition~\ref{P:defmap}, up to the point that we introduced the polynomials $\tilde h_i$ and  $\tilde g_j$. Instead, $\theta(\var,\vary)$ is of the form $(\exists z)\zeta(\var,\vary,z)$, with $\zeta$ a \zariski\ formula in the variables $\var$, $\vary$, and $z$, and likewise, $\psi$ is of the form $\fim\gamma=(\exists z)\gamma(\var,z)$, with $\gamma$ a \zariski\ formula in the variables $\var$ and $z$. Let $I(\zeta)$ and $I(\gamma)$ be generated respectively by polynomials $h_i(\var,\vary,z)\in \pol A{\var,\vary,z}$ and $g_j(\var,z)\in\pol A{\var,z}$, for $i=\range 1s$ and $j=\range 1t$. Since $\theta(a,p(a)$ and $\psi(p(a))$ hold in $C$, we can find tuples of polynomials $g,q\in \pol A\var$ so that $\zeta(a,p(a),g(a))$ and $\gamma(p(a),q(a))$ hold  in $C$, implying that all $\tilde h_i(\var):=h_i(\var,p(\var),g(\var))$ and $\tilde g_j(\var):=g_j(p(\var),q(\var))$ lie in $I(\phi)$. Hence in an arbitrary $A$-algebra $B$, we have for every $ b\in \inter\phi B$, that $\tilde h_i( b)$ and $\tilde g_j( b)$ are all zero. This proves $p( b)\in \inter\psi B$ and $( b,p( b))\in\inter\theta B$, and we can now finish the proof as above for $\psi$ a pp-formula.

For the general case, let $\psi$ be arbitrary, and define the pp-formula
$$
\fim{\phi\en\theta}(\vary):=(\exists \var)\phi(\var)\en\theta(\var,\vary).
$$
In order to show that $\theta$ defines a morphism $\phi\to\fim{\phi\en\theta}$, we may verify this in an  arbitrary $A$-algebra $B$. Let $c\in\inter\phi B$ and put $ b:=f_B(c)$. Hence $\theta(c, b)$, whence $\fim{\phi\en\theta}( b)$, holds, showing that $f_B$ maps $\inter\phi B$ inside $\inter{\fim{\phi\en\theta}}B$. On the other hand, if $ b\in\inter{\fim{\phi\en\theta}}B$, then there exists $c\in\inter\phi B$ such that $\theta(c, b)$ holds, and hence $ b=f_B(c)\in\inter\psi B$. Hence, the implication $\fim{\phi\en\theta}\dan\psi$ is explicit. Moreover,  by our previous argument, the morphism $\phi\to\fim{\phi\en\theta}$ is explicit, and therefore, so is the composition $\phi\to\fim{\phi\en\theta}\to\psi$, as we wanted to show.
\end{proof}

\begin{corollary}
If $\phi$ is a pp-formula  and $f\colon\phi\to\psi$   a pp-morphism, then we can lift $f$ to an explicit morphism. More precisely, if  $\phi=\fim\gamma$ with $\gamma$ a \zariski\ formula, then there exist explicit morphisms $p\colon\gamma\to\phi$ and $\tilde f\colon\gamma\to \psi$ with $p$ surjective, yielding a commutative diagram
\commtrianglefront\gamma{p}\phi f{\psi.}{\tilde f}
\end{corollary}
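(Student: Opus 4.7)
The plan is to take $p\colon\gamma\to\phi$ to be the obvious projection that forgets the existentially quantified coordinates of $\gamma$. Writing $\gamma=\gamma(\var,\vary)$ so that $\phi=\fim\gamma$ has $\var$ as its free variables, I set $p$ to be the explicit formula $\var'=\var$, with $\var'$ playing the role of target variables. In any model $B$, this sends $(a,b)\in\inter\gamma B$ to $a$, and since $\gamma(a,b)$ witnesses $\fim\gamma(a)$, the image lies in $\inter\phi B$, so $p$ really defines a morphism $\gamma\to\phi$. Surjectivity of each $p_B$ is immediate from the definition of $\phi$ as $\fim\gamma$: any $a\in\inter\phi B$ comes with some witness $b$, and $p_B(a,b)=a$.

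Next I set $\tilde f:=f\after p$, the composition in the category of definable sets from Lemma~\ref{L:morcomp}. Both $p$ and $f$ are pp-morphisms (every explicit formula is trivially pp), and pp-morphisms are closed under composition as observed just before Corollary~\ref{C:ppexpl}, so $\tilde f$ is itself a pp-morphism. Its source $\gamma$ is a \zariski\ formula, hence the previous Corollary~\ref{C:ppexpl} upgrades $\tilde f$ to an explicit morphism modulo $\arttheory A$.

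Commutativity of the triangle is built into the construction, since $\tilde f$ was defined as $f\after p$. In effect, this corollary is a clean packaging of Corollary~\ref{C:ppexpl}: one produces the lift by pairing the canonical explicit surjection $\gamma\onto\fim\gamma$ with the assumed pp-morphism $f$ out of $\phi$, and then invokes Corollary~\ref{C:ppexpl} to make the composition explicit. I do not foresee any genuine obstacle beyond citing Corollary~\ref{C:ppexpl} correctly; the only bookkeeping point is ensuring that after passing to primary forms the target variables of $p$ coincide with the free variables of $\phi$, which is an explicit change of variables and thus harmless.
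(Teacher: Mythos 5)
Your proposal is correct and follows essentially the same route as the paper: take $p$ to be the explicit projection $\vary=\var$ (obviously surjective onto $\phi=\fim\gamma$), set $\tilde f:=f\after p$, and apply Corollary~\ref{C:ppexpl} to the pp-morphism $\tilde f$ out of the \zariski\ formula $\gamma$ to conclude it is explicit. You merely spell out a few verifications (morphism and surjectivity of $p$, closure of pp-morphisms under composition) that the paper leaves implicit.
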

\begin{proof}
Let $p\colon \gamma\to \phi$ be the projection map, that is to say, the explicit morphism given by $\vary =\var$. Hence the composition $\tilde f:=f\after p$ is a pp-morphism. By Corollary~\ref{C:ppexpl}, this is an explicit morphism, as we wanted to show.
\end{proof}

\subsection*{Zariski closure of a formula}
Given a formula $\psi$ in $\mathcal L_A$, we define its \emph{Zariski closure} $\bar\psi$ as follows. Suppose $\psi$ has arity $n$. Let $I$ be the sum of all $I(\phi)$, with $\phi\in\zar_n$ such that $\psi\dan \phi$ holds in $\theory T_A$, and let $\bar\psi$ be a \zariski\ formula corresponding to the ideal $I$. Hence, by Theorem~\ref{T:fosch}, the Zariski closure is defined up to $\arttheory A$-equivalence. Moreover, it satisfies the following universal property:   $\psi\dan \bar\psi$, and if $\phi$ is a \zariski\ formula such that $\psi\dan\phi$, then $\bar\psi\dan\phi$. Form the definition, it follows that $\bar\psi$ is equivalent to the infinite conjunction of all $\phi\in\zar_n$ such that $\psi\dan\phi$. 

\begin{lemma}\label{L:zarclmor}
Every explicit morphism $f\colon\psi\to\psi'$ of $\mathcal L_A$-formulae extends to an explicit morphism $\bar f\colon\bar\psi\to\bar\psi'$.  
\end{lemma}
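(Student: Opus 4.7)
The plan is to show that the polynomial tuple defining $f$ itself defines the desired morphism $\bar f$, with no change of formula needed. Write $f\colon\psi\to\psi'$ as the explicit formula $\vary=p(\var)$, where $p=\rij pm$ with $p_i\in\pol A\var$ and $\var=\rij\var n$, $\vary=\rij\vary m$ of matching arities with $\psi$ and $\psi'$. I would simply declare $\bar f$ to be the same explicit formula $\vary=p(\var)$ and verify the three morphism axioms of \S\ref{s:grth}. Axioms \eqref{i:map} and \eqref{i:fct} are automatic for any explicit formula, since $p$ induces a global function $p_B\colon\affine Bn\to\affine Bm$ in every $A$-algebra $B$. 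So the only non-trivial task is axiom \eqref{i:im}: I must show that $\var\in\inter{\bar\psi}B$ implies $p(\var)\in\inter{\bar{\psi'}}B$ for every model $B$ of $\theory T_A$ (and hence, a fortiori, of $\arttheory A$).

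To establish this image condition, I would argue one Zariski approximant at a time. Fix any $\phi'\in\zar_m$ such that $\psi'\dan\phi'$ holds modulo $\theory T_A$. Substituting $p$ into the defining equations of $\phi'$ produces a new Zariski formula $\phi'\after p$ in the variables $\var$; concretely, if $I(\phi')=\rij gt\pol A\vary$, then $\phi'\after p$ is given by the ideal generated by $g_1(p(\var)),\dots,g_t(p(\var))$. Composing the implication $\psi\dan\psi'$ coming from the fact that $f$ is a morphism with the implication $\psi'\dan\phi'$, I obtain $\psi\dan\phi'\after p$ modulo $\theory T_A$. The universal property of Zariski closure now gives $\bar\psi\dan\phi'\after p$, since $\phi'\after p$ is one of the Zariski formulae entering the definition of $\bar\psi$. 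Hence, for any $\var\in\inter{\bar\psi}B$, the tuple $p(\var)$ satisfies $\phi'$ in $B$.

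Finally, I would combine these pointwise results across all relevant $\phi'$. By construction the ideal $I(\bar{\psi'})$ is the sum $\sum I(\phi')$ over $\phi'\in\zar_m$ with $\psi'\dan\phi'$, so its zero set in $B^m$ equals the intersection $\bigcap\inter{\phi'}B$. Since $p(\var)$ lies in each $\inter{\phi'}B$ by the previous step, it lies in the intersection, i.e.\ in $\inter{\bar{\psi'}}B$, which is exactly axiom \eqref{i:im} for $\bar f\colon\bar\psi\to\bar{\psi'}$.

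I do not expect a real obstacle here: the proof is essentially an exercise in the universal property of the Zariski closure plus the observation that Zariski formulae are stable under composition with polynomial maps. The only place one must be a little careful is matching the theory used to define $\bar\psi$ (which is $\theory T_A$) with the theory governing morphisms (by standing assumption, $\arttheory A$); this is harmless because for \zariski\ formulae the two equivalences coincide by Theorem~\ref{T:fosch}, and implications of the form $\psi\dan\phi$ into a \zariski\ $\phi$ that hold in $\arttheory A$ also hold in $\theory T_A$ precisely when the target is \zariski, which is the only case we use.
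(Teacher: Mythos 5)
Your core argument --- pulling back Zariski approximants of $\psi'$ along $p$ and invoking the universal property of Zariski closure --- is the same as the paper's; the paper simply packages all the approximants $\phi'$ into a single Zariski formula $\bar\psi'(p(\var))$ rather than arguing one $\phi'$ at a time, but the content is identical. One notational slip: ``the implication $\psi\dan\psi'$ coming from the fact that $f$ is a morphism'' is not what being a morphism gives you; what you have is the image condition~\eqref{i:im}, namely that $p$ carries $\inter\psi B$ into $\inter{\psi'}B$. Your conclusion $\psi\dan\phi'\after p$ is nonetheless correct, since that image condition composed with $\psi'\dan\phi'$ yields it.

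The actual error is in your closing remark: it is false that implications $\psi\dan\phi$ with Zariski target $\phi$ which hold modulo $\arttheory A$ always hold modulo $\theory T_A$. Take $\psi:=(x^2=x)\en\niet(x=0)\en\niet(x=1)$ and $\phi:=(1=0)$. In every Artinian local $A$-algebra $R$ one has $\inter\psi R=\emptyset$, since local rings have only trivial idempotents, so $\psi\dan\phi$ holds modulo $\arttheory A$ vacuously; yet in $B:=\pol Az/(z^2-z)\pol Az$ the class of $z$ lies in $\inter\psi B$ while $\inter\phi B=\emptyset$, so the implication fails modulo $\theory T_A$. Fortunately you never actually need such a promotion: the graph condition in the definition of a $\zar$-morphism (hence of an explicit morphism) is imposed ``for every $A$-algebra $B$'', i.e., modulo $\theory T_A$, so the implication $\psi\dan\phi'\after p$ you derive is already a $\theory T_A$-implication and the universal property applies directly. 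Theorem~\ref{T:fosch} is needed only to pass between the two theories for the \emph{Zariski} formulae $\bar\psi$, $\bar\psi'$, and $\phi'\after p$, which is the legitimate part of your remark.
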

\begin{proof}
Let $\theta$ be an explicit formula defining the morphism $f_\theta\colon \psi\to\psi'$, and let $p\colon \affine An\to\affine Am$ be the total map defined by $\theta$. Again, by Theorem~\ref{T:fosch}, we may work modulo $\theory T_A$. Hence for any $A$-algebra $B$,  the induced map $f_B$ is simply the restriction of the base change $p_B$ of $p$. So remains to show that $p_B$ maps $\inter{\bar\psi} B$ inside $\inter{\bar\psi'} B$. Let $\phi(\var)$ be the \zariski\ formula  $\bar\psi'(p(\var))$ obtained by substituting $p$ for $\vary$. It follows that $\inter\psi B\sub \inter\phi B$. Since this holds for all $B$, we get $\psi\dan\phi$ and hence by the universal property of Zariski closure, $\bar\psi\dan\phi$. It is now easy to see that this means that $p_B$ maps $\inter{\bar\psi}B$ inside $\inter{\bar\psi'}B$.  
\end{proof}

 The non-explicit, \zariski\ map from Example~\ref{E:nonexpl} does not extend to the Zariski closure of $\psi$, as $\bar\psi(\var_1)=(\var_1=\var_1)$.

 \begin{corollary}\label{C:zarclmor}
If two $\mathcal L_A$-formulae $\psi$ and $\psi'$ are $\expl$-isomorphic, then so are their Zariski closures $\bar\psi$ and $\bar\psi'$.
\end{corollary}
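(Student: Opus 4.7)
The plan is to lift both the explicit isomorphism and its explicit inverse to the Zariski closures using Lemma~\ref{L:zarclmor}, and then verify that the two lifted maps remain mutually inverse by exploiting the universal property of the Zariski closure.

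More precisely, suppose $f_\theta\colon\psi\to\psi'$ is an $\expl$-isomorphism, defined by an explicit formula $\theta$ of the form $\vary=p(\var)$, with inverse $g_\eta\colon\psi'\to\psi$ defined by an explicit formula $\eta$ of the form $\var=q(\vary)$; here $p\in\pol A\var^m$ and $q\in\pol A\vary^n$ are polynomial tuples. By Lemma~\ref{L:zarclmor}, the morphisms $f_\theta$ and $g_\eta$ extend to explicit morphisms $\bar f\colon\bar\psi\to\bar\psi'$ and $\bar g\colon\bar\psi'\to\bar\psi$ defined by the same polynomial tuples $p$ and $q$.

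The key step is to show that $\bar g\after\bar f$ and $\bar f\after\bar g$ are the identity morphisms on $\bar\psi$ and $\bar\psi'$ respectively. Since $g_\eta\after f_\theta=\op{id}_\psi$, for every model $M$ of $\theory T_A$ and every $\tuple a\in\inter\psi M$ we have $q(p(\tuple a))=\tuple a$. Writing this as the \zariski\ formula $\phi(\var):=\En_{i=1}^n(q_i(p(\var))-\var_i=0)$, we conclude that $\psi\dan\phi$ holds in $\theory T_A$. By the universal property of Zariski closure, $\bar\psi\dan\phi$, so $q(p(\var))=\var$ holds on $\inter{\bar\psi}B$ for every $A$-algebra $B$. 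This exactly says that $\bar g\after\bar f=\op{id}_{\bar\psi}$, and by symmetry $\bar f\after\bar g=\op{id}_{\bar\psi'}$. Hence $\bar f$ is a bijective explicit morphism with explicit inverse $\bar g$, i.e.\ an $\expl$-isomorphism between $\bar\psi$ and $\bar\psi'$.

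The only delicate point is ensuring that the identity relation $q\after p=\op{id}$, which a priori only holds on the set defined by $\psi$, survives on the possibly larger set defined by $\bar\psi$; but this is precisely what the universal property of Zariski closure buys us, since the relation is itself \zariski. No further verification is needed, so the argument is short once the two ingredients (Lemma~\ref{L:zarclmor} and the universal property) are in place.
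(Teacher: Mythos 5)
Your proof is correct and follows the same route as the paper's: lift both $f$ and its explicit inverse to the Zariski closures via Lemma~\ref{L:zarclmor}, then verify that the lifted compositions are identities. You in fact spell out, by encoding $q\after p=\op{id}$ as a \zariski\ formula and invoking the universal property of $\bar\psi$, the step the paper glosses as ``it is not hard to see.''
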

\begin{proof}
Let $\theta$ and $\op{inv}(\theta)$ be   explicit formulae defining respectively $f\colon\psi\to\psi'$ and its inverse $g\colon \psi'\to\psi$. Applying Lemma~\ref{L:zarclmor} to both morphisms   yields explicit morphisms $\bar f\colon\bar\psi\to\bar\psi'$ and $\bar g\colon\bar\psi'\to\bar\psi$.  Moreover, since the compositions $g\after f\colon \psi\to \psi$ and $f\after g\colon \psi'\to\psi'$  are both   identity morphisms, so must their Zariski closures be, and it is not hard to see that these  are $\bar g\after \bar f$ and $\bar f\after\bar g$ respectively. Hence, we showed that $\bar f$ and $\bar g$ are each others inverse.
\end{proof}

The Zariski closure of a formula can in general be hard to calculate. Here is a simple example: if $\fld$ is an \acf\ and   $f,g\in\pol \fld\var$ are relatively prime polynomials in a single variable, then the Zariski closure of $\psi:=(f=0)\of(g=0)$ is the formula $fg=0$. It is clear that $\psi\dan(fg=0)$. To show that it satisfies the universal property for Zariski closures, let $\phi$ be any \zariski\ formula implied by $\psi$. We need to show that $I(\phi)\sub fg\pol \fld\var$. We may reduce therefore to the case that $\phi$ is the formula $h=0$, and hence we have to show that any root of $f$ or $g$ in $\fld$ is also a root of $h$ of at least the same multiplicity. By the Nullstellensatz, and after a translation, it suffices to prove that if $0$ is a root of $f$ of multiplicity $e$, then it is also a root of $h$ of multiplicity $e$. Write $f(\var)=\var^e\tilde f(\var)$, for some $\tilde f\in\pol \fld\var$. Let $B:=\pol \fld\var/fg\pol \fld\var$, and put $b:=\var\tilde fg\in\pol \fld\var$. Since $f(b)=\var^e\tilde f^eg^e\tilde f(b)$, it is zero in $B$, that is to say, $b\in\psi(B)$. Hence, by assumption, $b\in\phi(B)$, that is to say, $h(b)=0$ in $B$. In particular, $h(b)$ is divisible by $\var^e$ in $\pol \fld\var$, and writing out $h$ as polynomial in $\var$ then easily implies that $h$ itself must be divisible by $\var^e$. Hence $\var=0$ is a root of $h$ of multiplicity $e$, as we wanted to show. We expect that this result holds true in far greater generality: is it the case that the Zariski closure of a disjunction $\phi_1\of\phi_2\of\dots\of\phi_s$ of \zariski\ formulae $\phi_i$ is the \zariski\ formula corresponding to the ideal $I(\phi_1)\cap I(\phi_2)\cap\dots\cap I(\phi_s)$?

\section{The \zariski\ \gr}
Let $X:=\op{Spec}A$ be  an affine, Noetherian scheme. 
%DELETE?
%Although $\zar$ is compositionally closed relative to $\zar$ by Proposition~\ref{P:defmap}, it is not so relative to its Boolean closure, the lattice  $\qf$ of \emph{quantifier free formulae}. So, our first choice for primary sublattices will be   $\mathcal F=\qf$ and $\mathcal I=\expl$. Furthermore, although many of the constructions can be carried out in $\theory T_A$, we will work within the more versatile theory $\arttheory A$.  Since our attention will gradually shift  from rings to schemes,  we will also write $\mathcal L_X$ and $\arttheory X$ to denote the respective language and theory. 

\subsection*{The classical \gr}
Before we discuss our generalization  to schemes, let us first study the classical case. To this end, we must work in the theory $\ACF X$, the theory of \acf{s} having the structure of an $A$-algebra. We have the following analogue of Theorem~\ref{T:fosch}.

\begin{theorem}\label{T:fovar}
Let $A$ be a Noetherian ring, $\var$ an $n$-tuple of indeterminates, and
$\affine An$ the affine scheme $\op{Spec}(\pol Ax)$. 
There is a one-one correspondence between the following three sets:
\begin{enumerate}
\item\label{i:vargf} 
  the  set   of $\ACF A$-equivalence classes of 
\zariski\  formulae of arity  $n$;
\item\label{i:varid}   the set of  radical ideals in
$\pol A\var$;
\item\label{i:varcs} the set of reduced subschemes  of $\affine An$.
\end{enumerate}
\end{theorem}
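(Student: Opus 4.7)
The plan is to mirror the proof of Theorem~\ref{T:fosch}, replacing $I(\phi)$ by its radical $\sqrt{I(\phi)}$ and, on the model-theoretic side, replacing Artinian local quotients of $\pol A\var$ by algebraic closures of residue fields at prime ideals; this change is forced by the fact that nilpotents vanish in any field, so only the vanishing locus of the radical can be detected by models of $\ACF A$. The correspondence \equivalence{i:varid}{i:varcs} is the standard one from algebraic geometry: a radical ideal $I\sub\pol A\var$ corresponds to the reduced closed subscheme $\op{Spec}(\pol A\var/I)$ of $\affine An$, and conversely.

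For \equivalence{i:vargf}{i:varid}, I would send a \zariski\ formula $\phi$ to $\sqrt{I(\phi)}$. The easy ingredients are surjectivity---every radical ideal is finitely generated by Noetherianity of $\pol A\var$, and hence arises from an obvious \zariski\ formula---and one implication: if $\sqrt{I(\phi)}=\sqrt{I(\psi)}$, then in any field $K$ that is an $A$-algebra, a tuple vanishes on $I(\phi)$ \iff\ it vanishes on $\sqrt{I(\phi)}=\sqrt{I(\psi)}$ \iff\ it vanishes on $I(\psi)$, whence $\inter\phi K=\inter\psi K$ and $\phi\sim_{\ACF A}\psi$.

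The substantive direction, and the main obstacle, is the converse: $\phi\sim_{\ACF A}\psi$ forces $\sqrt{I(\phi)}=\sqrt{I(\psi)}$. Suppose not; by symmetry, choose $f\in\sqrt{I(\phi)}\setminus\sqrt{I(\psi)}$. Since $\sqrt{I(\psi)}$ is the intersection of all primes containing $I(\psi)$, there is a prime $P$ of $\pol A\var$ with $I(\psi)\sub P$ and $f\notin P$. Let $\bar K$ be an algebraic closure of $\op{Frac}(\pol A\var/P)$; the composition $A\to\pol A\var\onto\pol A\var/P\into\bar K$ endows $\bar K$ with an $A$-algebra structure, making it a model of $\ACF A$. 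The image $a$ of the tuple $\var$ in $\bar K$ satisfies $\psi$, since every generator of $I(\psi)$ lies in $P$ and hence vanishes in $\bar K$; but $f(a)\neq 0$ in the field $\bar K$, whereas $f^m\in I(\phi)$ for some $m$ would force $f(a)^m=f(a)=0$ in a field on every solution of $\phi$. Hence $a$ witnesses $\inter\psi{\bar K}\neq\inter\phi{\bar K}$, contradicting the assumed equivalence. The genuine content is really this production of an algebraically closed $A$-algebra separating $I(\phi)$ from $I(\psi)$ at the level of radicals when $A$ is only Noetherian rather than, say, already a field; the remainder parallels Theorem~\ref{T:fosch} mutatis mutandis.
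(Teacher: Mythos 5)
Your proof is correct and follows essentially the same approach as the paper: construct an algebraically closed $A$-algebra as the algebraic closure of the residue field at a prime ideal that separates $I(\phi)$ from $I(\psi)$, and observe that the image of the variable tuple witnesses $\inter\phi K\neq\inter\psi K$. The only cosmetic difference is the direction of the separation (you take $P\supseteq I(\psi)$ with $f\notin P$ for some $f\in\sqrt{I(\phi)}\setminus\sqrt{I(\psi)}$, whereas the paper picks a prime containing $I(\phi)$ but not $I(\psi)$), and you make the surjectivity step explicit, which the paper leaves implicit.
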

\begin{proof}
The one-one correspondence between the last two sets is again classical. Let $\phi$ and $\psi$ be   \zariski\ formulae  in the free
variables $\var$. Assume first that $\phi$ and $\psi$ are
$\ACF A$-equivalent.
We need to show that $I(\phi)$ and $I(\psi)$ have the same radical. Suppose not, so that there exists a prime ideal $\pr\sub\pol A\var$ containing exactly one of these ideals, say, $I(\phi)$, but not the other. Let $K$ be the algebraic closure of $\pol A\var/\pr$, so that $K$ is a model of $\ACF A$, and let $a$ denote the image of $\var$ in $K$. By assumption, we can find $f\in I(\psi)$ such that $f\notin\pr$.   In particular,  since $f(a)\neq 0$ in $K$, the tuple  $a$ does not belong to $\inter\psi K=\inter\phi K$. However, for any $g\in I(\phi)$, we have $g\in\pr$ whence $g(a)=0$ in $K$, contradiction.

  Conversely, if both ideals have the  same radical, then  each $f\in I(\phi)$ has some power $f^N$  belonging to $I(\psi)$. In particular, if $K$ is a model of $\ACF A$ and $c\in \inter\psi K$, then $f^N(c)$ whence also $f(c)$ vanishes in $K$, for all $f\in I(\phi)$, showing that $c\in\inter\phi K$. This shows that $\inter\psi K\sub\inter\phi K$, and the reverse inclusion follows by the same argument, proving that $\phi$ and $\psi$ are $\ACF A$-equivalent.
\end{proof}

Let $X$ be a Noetherian affine scheme. We define its \emph{classical \gr}   to be the \gr\
$$
\grotclass X:=\grotmod{\ACF X}{\qf}{\expl},
$$
that is to say, the ring obtained by killing the ideal of all scissor relations and all $\expl$-isomorphism relations in the free Abelian group on all   classes of quantifier free formulae modulo $\ACF X$. The analogue of Corollary~\ref{C:isosch} holds, showing that the set of $\expl$-isomorphism classes of \zariski\ formulae modulo $ \ACF X$ is in one-one correspondence with the set of  isomorphism classes of reduced affine $ X$-schemes. Moreover, Corollary~\ref{C:grgen} applies with $\mathcal G=\zar$, so that the classes of \zariski\ formulae generate this \gr. Hence, we showed  the first assertion of:

\begin{corollary}\label{C:classgr}
If $ \fld$ is an \acf, then $\grotclass \fld$ is the \gr\ $K_0( \fld)$ obtained by taking the free Abelian group on isomorphism classes of varieties and killing all scissor relations. Moreover, if $\fld$ has \ch\ zero, then it is also equal to the full \gr\ $\grot{\ACF \fld}$ of the theory $\ACF \fld$.
\end{corollary}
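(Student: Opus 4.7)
The first assertion is essentially a bookkeeping exercise gathering what has already been established. By Theorem~\ref{T:fovar} and the analogue of Corollary~\ref{C:isosch}, $\expl$-isomorphism classes of \zariski\ formulae modulo $\ACF \fld$ are in bijection with isomorphism classes of reduced affine $\fld$-schemes, i.e.\ affine varieties. Since $\zar$ is a sub-semi-lattice whose Boolean closure is $\qf$, Corollary~\ref{C:grgen} applies to show that $\grotclass \fld$ is generated as a group by classes of affine varieties. The scissor relations on \zariski\ formulae (coming from $\mathfrak N_{\text{sciss}}$) match verbatim the classical relations~\eqref{eq:scissvar}. A non-affine variety is then expressed in both rings as an alternating sum of its affine open pieces via inclusion-exclusion, giving a well-defined ring isomorphism $\grotclass \fld \iso K_0(\fld)$.

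For the second assertion, I would consider the canonical \homo\ $\iota\colon\grotclass \fld\to\grot{\ACF\fld}$ of Corollary~\ref{C:addmap} and argue it is both surjective and injective. Surjectivity is immediate from quantifier elimination for \acf{s}: every $\mathcal L_\fld$-formula is $\ACF\fld$-equivalent to a quantifier-free formula, so the classes of qf-formulae already generate the full \gr.

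For injectivity, I would show that every definable bijection between qf-formulae can be replaced, modulo scissor relations, by a disjoint union of $\expl$-isomorphisms. Given a constructible bijection $f\colon X\to Y$ between affine $\fld$-varieties, the key step is to produce a finite stratification $X=X_1\sqcup\dots\sqcup X_r$ by locally closed subvarieties such that each restriction $f|_{X_i}\colon X_i\to f(X_i)$ is an isomorphism of varieties, hence an $\expl$-isomorphism. Granting this, the scissor relation in $\grotclass\fld$ yields
\[
\class X=\sum_i\class{X_i}=\sum_i\class{f(X_i)}=\class Y,
\]
so every relation imposed in $\grot{\ACF\fld}$ beyond those in $\grotclass\fld$ is already present, proving $\iota$ is injective.

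The main obstacle is obtaining the stratification, and this is exactly where the \ch\ zero hypothesis enters. Over an \acf\ of \ch\ zero, generic smoothness lets one successively peel off locally closed strata on which the dominant constructible map $f$ is an isomorphism onto its image, terminating by Noetherian induction. In \ch\ $p$ the argument collapses: the Frobenius $\affine\fld1\to\affine\fld1\colon x\mapsto x^p$ is a $\qf$-definable bijection admitting no such decomposition into polynomial isomorphisms, so $\iota$ ceases to be injective.
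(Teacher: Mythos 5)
Your proposal is correct and takes essentially the same route as the paper: quantifier elimination gives that $\grot{\ACF\fld}$ is generated by qf-formulae, and the key point, as in the paper, is to stratify a definable bijection into locally closed pieces on which it becomes an explicit (polynomial-in-both-directions) isomorphism, which is exactly where characteristic zero is needed (the paper cites Marker for this stratification and notes the same $p$-th-root obstruction you identify via the Frobenius example). The only cosmetic difference is that the paper phrases the injectivity step as ``it suffices to show $\class X=\class Y$ for $\ACF\fld$-isomorphic affine schemes'' via Corollary~\ref{C:grgen} rather than explicitly as injectivity of $\iota$, and does not spell out a mechanism (such as generic smoothness) for the stratification, simply invoking the reference.
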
 
\begin{proof}
To prove the last assertion, observe that $\ACF \fld$ has quantifier elimination, and therefore the full \gr\ $\grot{\ACF \fld}$ is generated by the classes of quantifier free formulae. The only issue is the nature of isomorphism. 
In view of Corollary~\ref{C:grgen}, it suffices to show that if $f\colon Y\to X$ is an $\ACF \fld$-isomorphism of affine $\fld$-schemes, then $\class X=\class Y$ in $\grotclass \fld$. By \cite[?]{Mar}, we can find a constructible partition $Y=Y_1\cup\dots\cup Y_s$ of $Y$, such that each restriction $\restrict f{Y_i}$ is an explicit isomorphism (note that we need \ch\ zero to avoid having to take $p$-th roots). Hence $\class{Y_i}=\class{f(Y_i)}$ in $\grotclass \fld$, and the result now follows since $\class Y$ and $\class X$ are the respective sums of all $\class{Y_i}$ and all $\class{f(Y_i)}$.
\end{proof}

\subsection*{The \zariski\ \gr}

Let  $\qf$  be the Boolean closure of $\zar$, that is to say, the lattice of \emph{quantifier free formulae}. We define the
\emph{\zariski\  \gr} of $X$ as 
$$
\grotsch X:=\grotmod{\arttheory X}{\qf}{\zar}.
$$
As before, we denote the class of a formula by $\class \phi$, and in case $Y$ is an affine $X$-scheme, we also write $\class Y$  for the class of its defining \zariski\ formula given by Corollary~\ref{C:isosch}, and henceforth, identify both. In particular, the base scheme $X$ corresponds to the class of the sentence $\one$, which we will denote simply by $1$.  

\begin{lemma}\label{L:cycle}
Let $X$ be an affine, Noetherian scheme. Any element in $\grotsch X$ is of the
form $\sum_{i=1}^s n_i\class{Y_i}$, for some integers $n_i$, and some affine $X$-schemes $Y_i$. Alternatively, we may  write any element as $\class Z-\class
Z'$, for $Z$ and $Z'$ affine $X$-schemes.
\end{lemma}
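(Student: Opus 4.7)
The plan is to derive both assertions directly from Corollary~\ref{C:grgen}, applied with $\mathcal F=\qf$, $\mathcal G=\zar$, and $\mathcal I=\zar$. By construction $\qf$ is the Boolean closure of $\zar$, and $\zar$ is closed under conjunction (the conjunction of two systems of polynomial equations is again such a system), so it is a sub-semi-lattice of $\qf$. Moreover $\zar$ is primary: the Lefschetz formula $\lambda$ corresponds to the zero ideal, and formulae $v_i=c$ are already \zariski. Hence Corollary~\ref{C:grgen} applies and tells us that the natural map $\grotmod{\arttheory X}{\zar}{\zar}\to\grotsch X$ is surjective, so $\grotsch X$ is generated as a group by classes $\class\phi$ with $\phi\in\zar$.

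First I invoke Corollary~\ref{C:isosch} to identify each $\zar$-isomorphism class of a \zariski\ formula (modulo $\arttheory X$) with an isomorphism class of an affine $X$-scheme, writing $\class\phi=\class{Y_\phi}$. This immediately produces the first representation $\sum_{i=1}^s n_i\class{Y_i}$ with $n_i\in\zet$ and each $Y_i$ an affine $X$-scheme.

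For the sharper statement, I need to verify that $\zar$ is closed under disjoint sums modulo $\arttheory X$, so that the second half of Corollary~\ref{C:grgen} applies. This is precisely Lemma~\ref{L:schdisjun}: given $\phi,\psi\in\zar$, the explicit \zariski\ formula $\phi\sqcup\psi$ (built from $(1-z)I(\phi)$, $zI(\psi)$, $z(z-1)$) is $\arttheory X$-equivalent to the disjoint sum $\phi\oplus\psi$. Thus, in $\grotsch X$, an arbitrary element written as $\sum_{i=1}^s\class{\phi_i}-\sum_{j=s+1}^t\class{\phi_j}$ with $\phi_k\in\zar$ can be rewritten, using Lemma~\ref{L:negation}, as $\class{\phi_1\sqcup\cdots\sqcup\phi_s}-\class{\phi_{s+1}\sqcup\cdots\sqcup\phi_t}$, and by Corollary~\ref{C:isosch} the two \zariski\ formulae appearing here correspond to affine $X$-schemes $Z$ and $Z'$, yielding $\class Z-\class{Z'}$.

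There is no real obstacle here; the lemma is a packaging of earlier work. The only subtle point is the need to replace the formal disjoint sum $\oplus$ (which introduces idempotent relations and is not in general \zariski) by the scheme-theoretic disjoint union $\sqcup$, and this replacement is legitimate only modulo the Artinian theory $\arttheory X$, which is exactly the theory over which $\grotsch X$ has been defined.
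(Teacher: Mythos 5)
Your proof is correct and follows essentially the same route as the paper: both derive the lemma from Corollary~\ref{C:grgen} applied with $\mathcal G=\zar$ inside the Boolean lattice $\mathcal F=\qf$, using Theorem~\ref{T:fosch} (via Corollary~\ref{C:isosch}) to translate between \zariski\ formulae and affine $X$-schemes, and using Lemma~\ref{L:schdisjun} to replace the formal disjoint sum $\oplus$ by the \zariski\ disjoint union $\sqcup$ modulo $\arttheory X$-equivalence. The only cosmetic difference is your extra appeal to Lemma~\ref{L:negation}, which is redundant once the second assertion of Corollary~\ref{C:grgen} is available, but this does not affect correctness.
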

\begin{proof}
Both statements follow  immediately from Theorem~\ref{T:fosch} and Corollary~\ref{C:grgen}, since $\zar$ is closed, modulo $\arttheory X$, under conjunctions, and, by Lemma~\ref{L:schdisjun},    disjoint sums.  
\end{proof}

 In particular, the natural morphism $\grotmod{\arttheory X}{\zar}\zar\to \grotsch X$   is surjective.    
Let us call two $X$-schemes $Y$ and $Y'$ \emph{stably isomorphic}, if there exists an affine $X$-scheme $Z$ such that $Y\sqcup Z$ and $ Y'\sqcup Z$ are isomorphic over $X$. A priori this is a weaker equivalence relation   than the isomorphism relation, but for affine Noetherian schemes, it is the same, as we will discuss in Appendix~\ref{s:app}.

\begin{theorem}\label{T:classinv}
Two affine $X$-schemes  $Y$ and $Y'$ are   isomorphic \iff\  their classes $\class Y$ and $\class {Y'}$ in $\grotsch X$ are the same. 
\end{theorem}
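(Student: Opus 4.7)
\textbf{Proof plan for Theorem~\ref{T:classinv}.}

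The ``if'' direction is immediate: if $Y$ and $Y'$ are isomorphic as $X$-schemes, then by Corollary~\ref{C:isosch} their defining \zariski\ formulae $\phi$ and $\phi'$ are $\zar$-isomorphic modulo $\arttheory X$, so they represent the same class in $\grotsch X$. The real content is the converse, and my plan is to reduce the ring-theoretic equality $\class Y=\class{Y'}$ first to a stable isomorphism of formulae, then to a stable isomorphism of affine schemes, and finally to an isomorphism.

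Assume $\class Y=\class {Y'}$ in $\grotsch X$. I would first arrange that the hypotheses of Theorem~\ref{T:stabiso} are satisfied: $\mathcal F=\qf$ is a Boolean lattice by definition, and to make $\iso_{\mathcal I}$ a genuine equivalence relation I pass from $\mathcal I=\zar$ to $\mathcal I=\expl$, invoking the announced equality $\grotmod{\arttheory X}{\qf}{\zar}=\grotmod{\arttheory X}{\qf}{\expl}$ (since $\expl$ is closed under composition, $\iso_{\expl}$ is automatically transitive). Applying Theorem~\ref{T:stabiso} then yields a quantifier free formula $\sigma$ such that
$$
\phi\oplus\sigma\iso_{\expl}\phi'\oplus\sigma.
$$

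The second step is to upgrade $\sigma$ to a \zariski\ formula. Let $\bar\sigma$ be the Zariski closure of $\sigma$ as defined before Lemma~\ref{L:zarclmor}; then $\sigma\dan\bar\sigma$ modulo $\arttheory A$, and since $\qf$ is Boolean, Lemma~\ref{L:stabiso} gives
$$
\phi\oplus\bar\sigma\iso_{\expl}\phi'\oplus\bar\sigma.
$$
Both sides are disjoint sums of \zariski\ formulae, and by Lemma~\ref{L:schdisjun} each is $\arttheory A$-equivalent to a \zariski\ formula, namely $\phi\sqcup\bar\sigma$ and $\phi'\sqcup\bar\sigma$, corresponding via Lemma~\ref{L:disunion} to the affine $X$-schemes $Y\sqcup Z$ and $Y'\sqcup Z$, with $Z:=Y_{\bar\sigma}$. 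An explicit isomorphism is in particular a \zariski\ isomorphism, so by Corollary~\ref{C:isosch} it lifts to an honest isomorphism of affine $X$-schemes
$$
Y\sqcup Z\iso Y'\sqcup Z.
$$

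The final, and main, obstacle is cancellation: to deduce from $Y\sqcup Z\iso Y'\sqcup Z$ that $Y\iso Y'$ as $X$-schemes. This is exactly the statement that stable isomorphism coincides with isomorphism for affine Noetherian $X$-schemes, which is announced just before the theorem and whose proof is deferred to Appendix~\ref{s:app}. Granting that result, the proof is complete. I expect the two technical points requiring extra care to be (i) checking that the stated interchangeability of $\zar$- and $\expl$-isomorphism at the level of Grothendieck rings is solid enough to transfer Theorem~\ref{T:stabiso}, and (ii) the cancellation step from the appendix, which is where all the genuine commutative-algebra content sits.
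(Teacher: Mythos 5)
Your proposal is correct and follows the same chain as the paper's own proof: reduce equality of classes to stable isomorphism of formulae via Theorem~\ref{T:stabiso}, upgrade the auxiliary formula to a \zariski\ one via Lemma~\ref{L:stabiso} (Zariski closure), convert disjoint sum into disjoint union via Lemma~\ref{L:schdisjun} to land in the scheme category, and finally cancel the auxiliary summand using the appendix result (Theorem~\ref{T:stabisosch}).

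The one place where you depart from the written proof is a worthwhile precision: you note that Theorem~\ref{T:stabiso} requires $\iso_{\mathcal I}$ to be a genuine equivalence relation, which is not obvious for $\iso_{\zar}$ on all of $\qf$ (Lemma~\ref{L:morcomp} only gives that a composition of two \zariski\ morphisms is a pp-morphism, and Proposition~\ref{P:defmap} only covers morphisms between \zariski\ formulae). The paper applies Theorem~\ref{T:stabiso} with $\mathcal I=\zar$ without comment, so you are being more careful here. However, your fix---replacing $\zar$ by $\expl$ wholesale---relies on the equality $\grotmod{\arttheory X}{\qf}{\zar}=\grotmod{\arttheory X}{\qf}{\expl}$, which is only announced in the introduction and never actually proved in the body. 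An alternative fix that avoids this dependency: observe that the stable-isomorphism hypothesis need only be transitive for the data that actually occurs, namely formulae that are $\arttheory X$-equivalent to \zariski\ ones after the Zariski-closure step, where Proposition~\ref{P:defmap} forces the \zariski\ morphisms to be explicit and hence compositionally closed. Either route closes the gap; just be aware that the equality you invoke is, as written, an unproven claim in the paper.
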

\begin{proof}
By Theorem~\ref{T:stabiso}, the defining \zariski\ formulae $\phi$ and $\phi'$ of respectively $Y$ and $Y'$ are   stably $\zar$-isomorphic,   that is to say, 
$$
\phi\oplus\psi\iso_{\zar}\phi'\oplus\psi,
$$
 for some quantifier free formula $\psi$. By Lemma~\ref{L:stabiso}, we may replace $\psi$ by any formula implied by it, whence, in particular, by its Zariski closure. In conclusion, we may assume $\psi$ is \zariski.  By Lemma~\ref{L:schdisjun},   disjoint sum  and union  are $\arttheory X$-equivalent. Hence, if $Z$ denotes the affine scheme defined by $ \psi$, then $Y\sqcup Z\iso Y'\sqcup Z$ by Corollary~\ref{C:zarmor}, that is to say,  $Y$ and $Y'$ are stably isomorphic, whence  isomorphic, by Theorem~\ref{T:stabisosch}.
\end{proof}

By definition of the Lefschetz class, the class of $\affine X1:=\op{Spec}(\pol A\var)$, with $\var$ a single indeterminate, is $\lef$. In particular, we get the following generalization of Lemma~\ref{L:Lef}:

\begin{lemma}\label{L:Lefsch}
If $Y$ is an affine $X$-scheme, then $\class{\affine Yn}=\lef^n\cdot\class Y$ in $\grotsch X$.\qed
\end{lemma}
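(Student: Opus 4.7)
The plan is to express $\affine Yn$ as a fiber product of $X$-schemes and then transfer this to a product of defining formulae, where the multiplicative structure of $\grotsch X$ takes over. By Corollary~\ref{C:isosch}, $Y$ corresponds (up to $\zar$-iso\-mor\-phism modulo $\arttheory X$) to a unique \zariski\ formula $\phi(\var)$ with $\var$ an appropriate tuple of indeterminates; write $Y = \op{Spec}(\pol A\var/I(\phi))$ as in Theorem~\ref{T:fosch}. Then
$$
\affine Yn = \op{Spec}\bigl((\pol A\var/I(\phi))[\vary_1,\dots,\vary_n]\bigr) = \op{Spec}\bigl(\pol A{\var,\vary}/I(\phi)\pol A{\var,\vary}\bigr),
$$
so the ideal associated to $\affine Yn$ is $I(\phi)\pol A{\var,\vary}$, meaning that its defining \zariski\ formula is just $\phi(\var)$ reinterpreted in the enlarged variable tuple $(\var,\vary)$. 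Up to an explicit change of variables (and hence $\expl$-isomorphism, whence $\zar$-isomorphism), this is precisely the product $\phi\times\lambda_n$, where $\lambda_n(\vary)$ is the $n$-th Lefschetz formula.

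Once this identification is in hand, the result follows from the formal properties of the \gr. Indeed, by the definition of the product on $\defcat{\arttheory X}$ (\S\ref{s:mult}) and Lemma~\ref{L:scissid}, we have $\class{\phi\times\lambda_n} = \class\phi\cdot\class{\lambda_n}$ in $\grotsch X$. Applying Lemma~\ref{L:Lef} to replace $\class{\lambda_n}$ by $\lef^n$ and invoking commutativity of multiplication (again by Lemma~\ref{L:scissid}) then yields
$$
\class{\affine Yn} = \class\phi\cdot\lef^n = \lef^n\cdot\class Y,
$$
as required.

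The main potential pitfall is the careful bookkeeping of variables: one must ensure that the \zariski\ formula obtained from the polynomial presentation of $\affine Yn$ really is, modulo $\arttheory X$, $\zar$-isomorphic to $\phi\times\lambda_n$ rather than merely to some cosmetically different formula. This is handled exactly by the explicit isomorphism of~\eqref{eq:multexpliso} that permutes variable blocks, so no genuine difficulty arises; everything else is a direct appeal to Lemma~\ref{L:Lef} and the multiplicative structure established in Lemma~\ref{L:scissid}.
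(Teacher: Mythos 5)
Your proof is correct and follows exactly the route the paper has in mind: identify the defining \zariski\ formula of $\affine Yn$ with $\phi\times\lambda_n$, then apply the multiplicative structure of $\grotsch X$ together with Lemma~\ref{L:Lef}. The paper treats the statement as immediate from the definitions (it is stated with a \qed\ and no proof body), so you have simply supplied the bookkeeping that the paper leaves implicit.
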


\section{The   pp-\gr{} of $X$}\label{s:ppgr}
Theorem~\ref{T:classinv} essentially says that $\grotsch X$ creates no further relations among   $X$-schemes. To obtain non-trivial relations among   classes of $X$-schemes, we will now work in  a larger class of formulae. It turns out that   pp-formulae are sufficiently general to accomplish this. To obtain the greatest amount of versatility, rather than working within the semi-lattice of pp-formulae, we will  work in the Boolean  lattice  $\pp$, given as the Boolean closure of all pp-formulae, that is to say, all finite disjunctions of pp-formulae and their negations.  Recall that we write $\fim\phi(\var)$ to denote the pp-formula $(\exists\vary)\phi(\var,\vary)$, for $\phi$ a \zariski\ formula. We define the \emph{pp-\gr} of $X=\op{Spec}A$ to be the \gr\
$$
\grotart X{}:=\grotmod{\arttheory X}\pp\zar.
$$
By Corollary~\ref{C:addmap}, we have a natural ring \homo\ $\grotsch X\to \grotart X{}$.
The analogue of Lemma~\ref{L:schdisjun} holds, and in particular, we can describe the elements of $\grotart X{}$ as in Lemma~\ref{L:cycle}:

\begin{proposition}\label{P:ppcycle}
The disjoint sum of two pp-formulae is $\arttheory X$-equivalent to a pp-formulae. In particular, every element of $\grotart X{}$ can be written as a difference $\class\psi-\class{\psi'}$ with $\psi$ and $\psi'$ pp-formulae.
\end{proposition}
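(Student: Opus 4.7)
The plan is to establish the first claim by commuting existential quantifiers past the disjoint-sum disjunction, reducing to the Zariski case already handled by Lemma \ref{L:schdisjun}, and then to obtain the second claim as a direct application of Corollary \ref{C:grgen}.

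For the first claim, I would write the two pp-formulae in primary form as $\fim\phi(\var) = (\exists \vary)\phi(\var,\vary)$ and $\fim{\phi'}(\var) = (\exists \vary')\phi'(\var,\vary')$ with $\phi,\phi'$ Zariski, and after $\alpha$-renaming of the bound variables arrange that $\vary$ and $\vary'$ are pairwise disjoint tuples, disjoint from $\var$ and from the fresh disjoint-sum variable $z$. By the definition of $\oplus$ in \S\ref{s:dissum},
$$\fim\phi \oplus \fim{\phi'} \;=\; \bigl(\fim\phi(\var) \en z=0\bigr) \of \bigl(\fim{\phi'}(\var) \en z=1\bigr).$$
Because $\vary$ does not appear in the second disjunct and $\vary'$ does not appear in the first, both existential quantifiers can be moved to the front, yielding
$$(\exists\vary,\vary')\bigl((\phi(\var,\vary)\en z=0) \of (\phi'(\var,\vary')\en z=1)\bigr) \;=\; (\exists\vary,\vary')\bigl(\phi \oplus \phi'\bigr).$$
Now Lemma \ref{L:schdisjun} says that the disjoint sum $\phi\oplus\phi'$ of the two Zariski formulae is $\arttheory X$-equivalent to the Zariski disjoint union $\phi\sqcup\phi'$, so $\fim\phi\oplus\fim{\phi'}$ is $\arttheory X$-equivalent to the pp-formula $(\exists\vary,\vary')(\phi\sqcup\phi')$.

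For the second assertion, I would invoke Corollary \ref{C:grgen} applied to $\mathcal F := \pp$ (a Boolean lattice by definition) and $\mathcal G$ the sub-semi-lattice consisting of all pp-formulae, whose Boolean closure is $\pp$. Closure of $\mathcal G$ under conjunction is immediate from $(\exists \vary)\phi \en (\exists \vary')\phi' \equiv (\exists \vary,\vary')(\phi \en \phi')$ after renaming, and closure under disjoint sums modulo $\arttheory X$ is precisely what the first part of the proposition provides. The corollary then yields that every element of $\grotart X{}$ can be written as $\class\psi - \class{\psi'}$ with $\psi,\psi'$ pp-formulae.

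The main obstacle is the bookkeeping of variable indices forced by the primary-form convention: one must pick the bound tuples $\vary,\vary'$ so that after pulling the existentials to the front the remaining free variables land in the correct positions to witness the result as a primary pp-formula with free variables $\var$ and $z$. This is mechanical once the renaming is fixed at the outset, and the logical content of the proof is entirely carried by the trivial commutation $(\exists \vary)A \of (\exists \vary')B \equiv (\exists \vary,\vary')(A\of B)$ together with Lemma \ref{L:schdisjun}.
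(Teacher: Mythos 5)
Your proof is correct and follows essentially the same route as the paper's: the paper's argument (which refers back to the proof of Lemma~\ref{L:cycle}) is precisely to pull the existential quantifiers out of the disjoint sum, identify the resulting quantifier-free matrix with the disjoint sum of the Zariski kernels, and then invoke Lemma~\ref{L:schdisjun} to replace that by the Zariski disjoint union $\phi_0\sqcup\phi'_0$, so that the whole thing becomes $(\exists\vary,\vary')(\phi_0\sqcup\phi'_0)$; the second assertion is then obtained, exactly as you do, from Corollary~\ref{C:grgen}. You are also right that the only friction is the variable-index bookkeeping imposed by the primary-form and $\oplus$/$\sqcup$ conventions (the marker variable sits at position $n+1$ while the bound tuples $\vary,\vary'$ want to occupy later slots), which the paper quietly absorbs into an explicit change of variables; your acknowledgment that this is mechanical is appropriate, though for a polished write-up it would be cleaner to phrase the middle step as ``the matrix is $\expl$-isomorphic to $\phi_0\oplus\phi'_0$'' rather than asserting literal equality of formulae.
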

\begin{proof}
Let   $\phi(\var):=\fim{\phi_0}$ and $\psi(\var):=\fim{\psi_0}$ be two pp-formulae, with $\phi_0(\var,\vary)$ and $\psi_0(\var,\vary)$ \zariski\ formulae. As in the proof of Lemma~\ref{L:cycle}, one easily shows that the disjoint union   $\phi\sqcup\psi$ is $\arttheory X$-equivalent with $(\exists y)(\phi_0\sqcup\psi_0)$. The second assertion now follows from this and Corollary~\ref{C:grgen}.
\end{proof}

Assume $Y=\op{Spec}B$ is an affine, Noetherian scheme. Let  $\Theta^{\text{aff}}_Y$  be the collection of all  affine opens of $Y$. We view $\Theta^{\text{aff}}_Y$ as a semi-lattice with $\en$ given by intersection. In general, the union of affine opens need not be affine, so that we cannot define $\of$ on $\Theta^{\text{aff}}_Y$, and therefore, it is only a sub-semi-lattice of the lattice $\Theta_Y$ of all opens of $Y$. Recall that for a finite, open affine covering  $\mathcal U=\{U_1,\dots,U_n\}$ of $Y$, we have a  scissor relation  
$$
Y=S_n\rij Un 
$$
in $\grotlat{\Theta_Y}$. 
The map $\Theta^{\text{aff}}_Y\to \grotart X{}$, sending an affine open $U\sub Y$ to its class $\class U$ in $\grotart X{}$, extends  to an additive map  $\pol\zet{\Theta^{\text{aff}}_Y}\to \grotart X{}$ (note that this map is not multiplicative since multiplication on the former is different from that on the latter). We will    show in Corollary~\ref{C:classsch} below that it in fact induces a ring \homo\ $\grotmon{\Theta^{\text{aff}}_Y}{\iso_X}\to \grotart X{}$. Among the members of $\Theta^{\text{aff}}_Y$ are the \emph{basic} open subsets   $\op D(f)=\op{Spec}(B_f)$, with $f$ a non-nilpotent element of $B$ (so that in particular, a basic open subset is never empty). Note that   if $\op D(f)$ is a basic open, and $U\sub Y$ and affine open, then $\op D(f)\cap U$ is the basic open $\op D(\restrict fU)$ in $U$.

\begin{proposition}\label{P:basop}
Let $X$ be a Noetherian scheme,  and $Y$ an affine $X$-scheme. For every finite covering $\{D_1,\dots, D_n\}$ of $Y$ by basic open subsets, we have an identity $\class Y=\class{S_n\rij Dn}$ in $\grotart X{}$.
\end{proposition}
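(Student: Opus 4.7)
The plan is to reduce the claim to the $n$-th scissor identity in the Boolean lattice $\pp$, so that Proposition~\ref{P:scissor} does almost all of the work; the remaining content is purely commutative-algebraic.

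First, I would write $Y=\op{Spec}(B)$ with $B=\pol A\var/I(\phi_Y)$, where $\phi_Y$ is the \zariski\ formula defining $Y$ via Theorem~\ref{T:fosch}. Each $D_i=\op D(f_i)$ corresponds, as an open subscheme of $Y$, to the pp-formula
\[
\delta_i(\var)\;:=\;\phi_Y(\var)\en(\exists z)(f_i(\var) z=1),
\]
where I abuse notation by writing $f_i$ both for the element of $B$ and for a polynomial lift. So the class $\class{D_i}\in\grotart X{}$ is by definition $\class{\delta_i}$, and, more generally, the class of any intersection $D_{i_1}\cap\dots\cap D_{i_k}$ is the class of $\phi_Y\en(\exists z)(f_{i_1}\cdots f_{i_k}z=1)$, because intersection of pp-formulae is a pp-formula.

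The crux is to prove
\[
\phi_Y\;\sim_{\arttheory X}\;\delta_1\of\dots\of\delta_n,
\]
viewed as formulae in $\pp$. The implication $\Leftarrow$ is immediate since each $\delta_i\dan\phi_Y$. For the implication $\Rightarrow$ I use that $\{D_i\}$ covers $Y$ scheme-theoretically iff the $f_i$ generate the unit ideal of $B$, so there is an identity $1=\sum g_i f_i$ in $B$. Given a local $A$-Artinian algebra $R$ with maximal ideal $\maxim$ and a point $\tuple a\in\inter{\phi_Y}R$, the tuple $\tuple a$ induces an $A$-algebra homomorphism $B\to R$ sending $\var\mapsto\tuple a$; applying it to the identity yields $1=\sum \bar g_i\bar f_i$ in $R$. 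If every $\bar f_i$ lay in $\maxim$ we would get $1\in\maxim$, which is absurd. Hence some $\bar f_i$ is a unit in $R$, which is exactly the statement that $\tuple a\in\inter{\delta_i}R$. This uses in an essential way that we work modulo $\arttheory X$ rather than $\theory T_A$ (locality of the models).

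With the formula-level equivalence in hand, I would finish by invoking Proposition~\ref{P:scissor} applied to the tuple $(\delta_1,\dots,\delta_n)$ in the Boolean lattice $\pp$: it gives the $n$-th scissor identity
\[
\class{\delta_1\of\dots\of\delta_n}\;=\;\class{S_n(\delta_1,\dots,\delta_n)}
\]
in $\grotlat{\pp}$, which descends to $\grotart X{}$ via the natural surjection of Corollary~\ref{C:addmap}. Combining with the equivalence above gives $\class Y=\class{\phi_Y}=\class{S_n(D_1,\dots,D_n)}$, as desired. The main obstacle is really just the verification that the disjunction of the $\delta_i$ recovers $\phi_Y$ modulo $\arttheory X$; once locality of the ring $R$ is used to extract a unit from the partition-of-unity relation $1=\sum g_if_i$, the rest is a formal consequence of the already-proved lattice scissor identities.
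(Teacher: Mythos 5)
Your proposal is correct and follows essentially the same route as the paper: introduce the pp-formulae $\delta_i$ for the basic opens, use the partition-of-unity relation $1=\sum g_if_i$ in $B$ together with locality of the models of $\arttheory X$ to show $\phi_Y\sim_{\arttheory X}\bigvee_i\delta_i$, and then invoke Proposition~\ref{P:scissor} in the lattice $\pp$. The only cosmetic difference is that the paper introduces both the \zariski\ formulae $\psi_i$ and their projections $\tilde\psi_i$ and argues via $\zar$-isomorphism, whereas you work with the pp-formulae directly and identify the conjunctions $\delta_{i_1}\en\dots\en\delta_{i_k}$ with the intersections $D_{i_1}\cap\dots\cap D_{i_k}$ by noting that both assert invertibility of each $f_{i_j}$ in a local ring.
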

\begin{proof}
Let $A:=\loc_X$, and     let $\phi(\var)$   be a \zariski\ formula defining $Y$, that is to say,  $Y=\op{Spec}B$ with  $B:=\pol A\var/I(\phi)$. By definition of basic subset, there exist $f_i\in\pol A\var$ so that  $D_i=\op D(f_i)=\op{Spec}(B_{f_i})$. The fact that the $D_i$ cover $Y$ is equivalent with $\rij fnB$ being the unit ideal. Hence, we can find  $u_i\in \pol A\var$ and $m\in I(\phi)$, such that
\begin{equation}\label{eq:bascov}
\sum_{i=1}^nu_if_i+m=1.
\end{equation}

Consider the following formulae in the variables $\var$ and $z$:
$$
\begin{aligned}
\psi_i(\var,z)&:=\phi(\var)\en(f_i(\var)z=1)\\
\tilde\psi_i(\var)&:=\fim{\psi_i}(\var)=(\exists z)\psi_i(\var,z).
\end{aligned}
$$
The \zariski\ formula $\var=\vary$ defines a   \zariski\ isomorphism $\tilde\psi_i(\var)\to\psi_i(\vary,z)$, for each $i$. 
Let $\tilde\psi$ be the disjunction of all $\tilde\psi_i$.  By Proposition~\ref{P:scissor}, we have a scissor identity
$\class {\tilde\psi}=\class{S_n\rij{\tilde\psi}n}$ in $\grotart X{}$. Since $\psi_i$ and $\tilde \psi_i$ are $\zar$-isomorphic, so are any of their conjunctions, and hence $\class{S_n\rij{\tilde\psi}n}=\class{S_n\rij\psi n}$ in $\grotart X{}$.  By definition of basic subset, $\psi_i$ is the defining \zariski\ formula of $D_i$, and hence $\class{S_n\rij\psi n}$ is equal to the class of $S_n\rij Dn$ in $\grotart X{}$. So remains to show that $Y=\class \phi=\class{\tilde\psi}$.

 To this end, we have to show that $\inter{\tilde\psi}R=\inter\phi R$ in any Artinian local $A$-algebra $R$. The direct  inclusion is immediate, so assume $ a\in\inter\phi R$. From \eqref{eq:bascov} and the fact that $m( a)=0$, we get 
 $$
 \sum_{i=1}^nu_i( a)f_i( a)=1.
 $$
  Since   $R$ is local, one of these terms must be a unit, say, the first one. Therefore, there exists $b\in R$ such that $bf_1( a)=1$,  and hence $ a\in\tilde\psi_1(R)\sub\inter{\tilde\psi}R$, as we needed to show.
\end{proof}

\begin{remark}\label{R:caut}
As already observed, this shows that we have an additive map $\grot{\Theta^{\text{aff}}_Y}\to \grotart X{}$.  
A cautionary note: although one informally states that a basic open subset $D:=\op D(f)$ is given by the equation $f\neq 0$ in $\op{Spec}(B)$ (as it is the complement of the closed subset given by the equation $f=0$), this is not correct from the point of view of formulae. As we saw in the above proof, $D$ is defined by the \zariski\ formula $\psi(\var,z):=\sym{f(\var)z=1}$, or alternatively, by its projection, the pp-formula $\fim\psi(\var):=\sym{(\exists z)f(\var)z=1}$. That this  is different from the formula $\sym{f\neq 0}$ is easily checked on an example: let $f(\var):=\var\in\pol \fld\var$, with $\var$  a single variable, and compare both formulae in the Artinian local ring $\pol \fld T/T^2\pol \fld T$ (we will see below that in the latter model, nonetheless, the basic open $\op D(f)$ is   given by  the quantifier free (non-\zariski) formula $f^2\neq0$). 

A second issue requiring some care is the difference between conjunctions and  intersections. Let $D':=\op D(f')$ be another basic open in $Y$, and let $\psi':=\sym{f'z=1}$ and $\fim{\psi'}:=\sym{(\exists z)f'z=1)}$ be its respective \zariski\ and pp defining formula. The intersection $D\cap D'$  is again a basic open subset, whence an affine scheme. However, $D\cap D'$ is not defined by the \zariski\ formula $\psi\en\psi'$, but by the \zariski\ formula $\sym{ff'z=1}$. Nonetheless,  $D\cap D'$ is defined by the pp-formula $\fim\psi\en\fim{\psi'}$. 
\end{remark}

\begin{theorem}\label{T:classsch}
There exists a well-defined map which assigns to any (isomorphism class of an) $X$-scheme $Y$ an element $\class Y$ in $\grotart X{}$ which agrees on affine schemes with the class map. Moreover, if $\{U_1,\dots,U_n\}$ is any open covering of an $X$-scheme $Y$, then $\class Y=\class{S_n\rij Un}$ in $\grotart X{}$.
\end{theorem}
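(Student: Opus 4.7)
The plan is to define $\class Y$ through a finite affine open cover and then prove independence of that cover. Since $Y$ is of finite type over the Noetherian scheme $X$, it is quasi-compact, so it admits a finite affine open cover $\mathcal U=\{U_1,\dots,U_n\}$; separatedness of $Y/X$ ensures that every intersection $U_I:=\bigcap_{i\in I}U_i$ is again affine. Expanding the scissor polynomial as
$$
S_n(U_1,\dots,U_n)=\sum_{\emptyset\neq I\sub\{1,\dots,n\}}(-1)^{|I|+1}U_I,
$$
each summand is the class of an affine $X$-scheme, so the expression yields a well-defined element $\class Y_{\mathcal U}:=\class{S_n(\mathcal U)}\in\grotart X{}$.

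The heart of the proof is a \emph{Key Lemma}: for any affine $X$-scheme $V$ and any finite affine open cover $\mathcal W=\{W_1,\dots,W_n\}$ of $V$, one has $\class V=\class{S_n(\mathcal W)}$ in $\grotart X{}$. I would prove this by refining each $W_i$ by finitely many basic opens of $V$ contained in it (using that basic opens form a basis of the topology on $V$ and each $W_i$ is quasi-compact): write $W_i=\bigcup_j D_{i,j}$ with $D_{i,j}\sub W_i$ basic in $V$. A basic open of $V$ contained in an affine open $W\sub V$ is itself basic in $W$, so for every $I$ the family $\{\bigcap_{l\in I}D_{l,j_l}\}$, indexed by tuples $(j_l)_{l\in I}$, is a basic open cover of $W_I$. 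Applying Proposition~\ref{P:basop} to each $W_I$ writes $\class{W_I}$ as a scissor sum in these basic intersections, and applying Proposition~\ref{P:basop} to $V$ with the total refinement $\{D_{i,j}\}$ writes $\class V$ likewise. Substituting the former into $\class{S_n(\mathcal W)}=\sum_I(-1)^{|I|+1}\class{W_I}$ and rearranging the resulting alternating sum matches it term-by-term with $\class{S(\{D_{i,j}\})}=\class V$. This is a routine Möbius/inclusion-exclusion rearrangement, which can equally well be produced by iterating the identity~\eqref{eq:scisprod} of Lemma~\ref{L:scisind}.

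Granting the Key Lemma, independence of the cover on a general $Y$ follows by comparing two finite affine open covers $\mathcal U$ and $\mathcal V$ of $Y$ via their common refinement $\{U_i\cap V_j\}$, again affine by separatedness. The Key Lemma applied inside each affine $U_I$ with the induced cover $\{U_I\cap V_j\}_j$ yields $\class{U_I}=\sum_J(-1)^{|J|+1}\class{U_I\cap V_J}$, so
$$
\class{S_n(\mathcal U)}=\sum_{I,J}(-1)^{|I|+|J|}\class{U_I\cap V_J},
$$
and by the symmetric argument $\class{S_m(\mathcal V)}$ equals the same double sum. For the \emph{moreover} statement with a general (not necessarily affine) open cover $\{U_1,\dots,U_n\}$ of $Y$, one picks a finite affine open refinement $\{W_{i,k}\}_{i,k}$ of the $U_i$, uses the already-proved well-definedness of $\class{U_I}$ on the induced affine covers of each $U_I$, and compares the two resulting multi-index alternating sums in exactly the same style.

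The main obstacle is the combinatorial bookkeeping in the Key Lemma—reconciling scissor expansions across two layers of refinement—but this is routine once one trusts that scissor polynomials satisfy the classical Möbius-type relations of Lemma~\ref{L:scisind}. A subtle and easily-missed point (flagged in Remark~\ref{R:caut}) is that each basic open $\op D(f)$ must be interpreted as the spectrum of the localisation, defined by the pp-formula $(\exists z)\,f(\var)z=1$, and \emph{not} as the naive ``complement of $f=0$''; it is precisely this interpretation that makes Proposition~\ref{P:basop} applicable at every step above.
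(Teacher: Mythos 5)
Your overall strategy is the same as the paper's---reduce everything to Proposition~\ref{P:basop}, define $\class Y$ via a finite affine cover, then prove independence and handle general covers---but the combinatorial engine you use to get there is genuinely different. Where the paper proves the ``Key Lemma'' (affine $V$, arbitrary affine cover $\mathcal W$) by a nested double induction, first on the number of non-basic members of $\mathcal W$ and then on the number of basic opens added, invoking only the one-variable recursion \eqref{eq:scisind}, you instead refine \emph{all} the $W_i$ to basic opens simultaneously, apply Proposition~\ref{P:basop} to $V$ and to every $W_I$, and then invoke a single formal identity
\[
S(\mathcal D)\;=\;\sum_{\emptyset\neq I}(-1)^{|I|+1}S(\mathcal D_I)
\]
in $\pol\zet{\Theta^{\text{aff}}_V}$, where $\mathcal D_I=\{\bigcap_{l\in I}D_{l,j_l}\}_{(j_l)}$. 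This parallel refinement is conceptually cleaner (and your ``independence of cover'' step, via the common refinement $\{U_i\cap V_j\}$ and symmetry, is likewise tidier than the paper's reduction to $\mathcal V=\mathcal U\cup\{V\}$), but the identity you call ``a routine M\"obius/inclusion--exclusion rearrangement'' is doing more work than you acknowledge: it is \emph{not} a formal identity in $\pol\zet{\var_{i,j}}$ but only holds modulo the idempotency ideal $\mathfrak i$ (e.g.\ $F_{\{1,2\}}=\prod_{j_1,j_2}(1-D_{1,j_1}D_{2,j_2})$ needs $D^2=D$ to expand correctly), so you must say explicitly that you are working in the semi-lattice ring $\pol\zet{\Theta^{\text{aff}}_V}$ where this is automatic, and then prove the identity, say by checking it on all $\{0,1\}$-evaluations or by iterating \eqref{eq:idemprod}. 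It does not follow by ``iterating \eqref{eq:scisprod}'' without further argument. You also correctly flag the Remark~\ref{R:caut} subtlety, which is indeed the point that makes Proposition~\ref{P:basop} applicable to $W_I$ (basic opens of $V$ inside $W_I$ are basic in $W_I$).

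One part of the theorem's statement you leave entirely unaddressed: the claim that $\class Y$ depends only on the \emph{isomorphism class} of $Y$. Once well-definedness of $\class Y_{\mathcal U}$ is established this is short---an isomorphism $\sigma\colon Y\to Y'$ carries any affine cover $\mathcal U$ of $Y$ to one of $Y'$ with corresponding intersections isomorphic as affine $X$-schemes, so the scissor sums have the same class term by term---but it is a separate assertion in the theorem and needs to be said. Finally, your treatment of the ``moreover'' statement for a non-affine open cover is plausible but too compressed: for a non-affine $U_I$ the class $\class{U_I}$ is only defined through the first part of the theorem, so before you can write $\class{S(\mathcal U)}=\sum_I(-1)^{|I|+1}\class{U_I}$ and compare with the affine refinement, you must verify that the induced affine covers of the $U_I$ mesh with those of $Y$ by exactly the same combinatorial identity; this works, but spell it out.
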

\begin{proof}
We start with proving that the second assertion holds in case $Y$ is affine. So let $\mathcal U:=\{U_1,\dots,U_s\}$ be an open affine covering of $Y$, and we need to show that $\class Y=\class{S\rij Un}$ in $\grotart X{}$. We induct on the number $e$ of non-basic opens among the $U_i$. If $e=0$, then the result holds by Proposition~\ref{P:basop}. So assume $e>1$. Let $U_n$ be a non-basic open subset, and let $\{D_1,\dots,D_m\}$ be an open covering of $U_n$ by basic opens. 
Using Lemma~\ref{L:scisind}\eqref{eq:scisind}, as in the proof of  Proposition~\ref{P:scissor},    we have an identity
\begin{multline}\label{eq:UVn}
S(U_1,\dots,U_{n-1},D_1,\dots,D_m,U_n) =\\
S(U_1,\dots,U_{n-1},D_1,\dots,D_m)-S(U_1\cap U_n,\dots,U_{n-1}\cap U_n,D_1,\dots,D_m)+U_n
\end{multline}
Since $\{U_1,\dots,U_{n-1},D_1,\dots,D_m\}$ and $\{U_1\cap U_n,\dots,U_{n-1}\cap U_n,D_1,\dots,D_m\}$ are open affine coverings of $Y$ and $U_n$ respectively, both containing less than $e$ non-basic open subsets, our induction hypothesis yields
$$
\begin{aligned}
\class Y&=\class{S(U_1,\dots,U_{n-1},D_1,\dots,D_m)} \quad\text{and}\\\class{U_n}&=\class{S(U_1\cap U_n,\dots,U_{n-1}\cap U_n,D_1,\dots,D_m)},
\end{aligned}
$$
in $\grotart X{}$. Taking classes of both sides of \eqref{eq:UVn} together with the latter identities, shows that $\class Y=\class{S(U_1,\dots,U_n,D_1,\dots,D_m)}$ (note that the order in scissor relations is irrelevant). We will now prove by induction on $m$, that $S(U_1,\dots,U_n,D_1,\dots,D_m)$ and $S\rij Un$ have the same class in $\grotart X{}$. 
 By Lemma~\ref{L:scisind}\eqref{eq:scisind},
we have an identity
\begin{multline}\label{eq:UVmn}
S(U_1,\dots,U_n,D_1,\dots,D_m) =S(U_1,\dots,U_n,D_1,\dots,D_{m-1})\\
-S(U_1\cap D_m,\dots,U_n\cap D_m,  D_1\cap D_m,\dots,D_{m-1}\cap D_m)+D_m
\end{multline}
in $\pol\zet{\Theta^{\text{aff}}_Y}$. By induction on $m$, we have 
$$
\class{S\rij Un}=\class{S(U_1,\dots,U_n,D_1,\dots,D_{m-1})}
$$
 in $\grotart X{}$. On the other hand, since $\{U_i\cap D_m,   D_j\cap D_m\}$, for $i=\range 1n$ and $j=\range 1{m-1}$, is an open affine covering of $D_m$ containing less than $e$ non-basic open subsets (note that $U_n\cap D_m=D_m$), our first induction hypothesis (on $e$) yields 
$$
\class{D_m}=\class{S(U_1\cap D_m,\dots,U_n\cap D_m,D_1\cap D_m,\dots,D_{m-1}\cap D_m)}
$$
in $\grotart X{}$. Hence,   taking classes of both sides of \eqref{eq:UVmn} together with the previous two identities, yields the desired conclusion
$$
\class Y=\class{S(U_1,\dots,U_n,D_1,\dots,D_m)}=\class{S\rij Un}.
$$

We now prove the first assertion for $Y$   an arbitrary $X$-scheme, that is to say, a (not necessarily affine) separated scheme $Y$ of finite type over $X$. Let 
$\mathcal U:=\{U_1,\dots,U_s\}$ be an open affine covering of $Y$, and define 
\begin{equation}\label{eq:classsch}
\class Y_{\mathcal U}:=\class{S\rij Un}.
\end{equation}
Since $Y$  is separated, each intersection $U:=U_{i_1}\cap\dots\cap U_{i_k}$ is again affine, so that $\class{S\rij Un}$ is indeed an element of $\grotart X{}$. We want to show that $\class Y_{\mathcal U}$, as an element of $\grotart X{}$, does not depend on the open affine covering $\mathcal U$. To this end, let $\mathcal V$ be a second open affine covering of $Y$, and we seek to show that $\class Y_{\mathcal U}=\class Y_{\mathcal V}$ in $\grotart X{}$.  Replacing $\mathcal V$ by $\mathcal U\cup\mathcal V$  if necessary, we may assume that $\mathcal U\sub\mathcal V$. By induction on the number of members of $\mathcal V$, we may then reduce to the case that $\mathcal V=\mathcal U\cup\{V\}$, for some affine open $V\sub Y$.  By Lemma~\ref{L:scisind}\eqref{eq:scisind},   we have, in $\pol\zet{\Theta^{\text{aff}}_Y}$, an identity
\begin{equation}\label{eq:Un}
S(U_1,\dots,U_n,V) =S\rij Un-S(U_1\cap V,\dots,U_n\cap V)+V.
\end{equation}
Since $\{U_1\cap V,\dots,U_n\cap V\}$ is an   open affine covering of the affine open $V$, we have
$$
\class{V}=\class{S(U_1\cap V,\dots,U_n\cap V)}
$$
in $\grotart X{}$ by the first part of the proof. Hence, taking classes of both sides of \eqref{eq:Un} shows that 
$$
\class Y_{\mathcal V}=\class{S(U_1,\dots,U_n,V)}= \class{S_n\rij Un}=\class Y_{\mathcal U}
$$
 in $\grotart X{}$.  

So, for $Y$ an arbitrary $X$-scheme, we define $\class Y:=\class Y_{\mathcal U}$, where $\mathcal U$ is any finite open affine covering of $Y$. In particular, if $Y$ is affine, we can take for open cover the singleton $\{Y\}$, showing that this new notation coincides with our former. To show that this assignment only depends on the isomorphism class of $Y$, let $\sigma\colon Y\to Y'$ be an isomorphism of $X$-schemes. Let $\mathcal U'$ consist of all $\sigma(U)$ with $U\in\mathcal U$. Hence $\mathcal U'$ is an open covering of $Y'$. Moreover,   any intersection of members of $\mathcal U$ is  
isomorphic to the intersection of the corresponding images under $\sigma$, and hence both have the same class in $\grotart X{}$. Therefore, 
 $$
\class Y=\class Y_{\mathcal U}=\class{Y'}_{\mathcal U'}=\class {Y'},
$$
showing that the class of $Y$ only depends on its isomorphism type.

Finally, to prove the last assertion, we first show that the additive map $\class\cdot\colon\pol\zet{\Theta_Y}\to \grotart X{}$ factors through an additive map $\grotlat{\Theta_Y}\to \grotart X{}\colon U\mapsto \class U$, for every $X$-scheme $Y$ (recall that $\Theta_Y$ is the lattice of all opens of $Y$). It suffices to show that any second scissor relation $ U\cup U'-U-U'+U\cap U'$, with $U,U'\in\Theta_Y$, lies in the kernel of $\pol\zet{\Theta_Y}\to \grotart X{}$. Let $\tuple U:=\rij Un$ and $\tuple U':=\rij{U'}{n'}$ be affine open coverings of $U$ and $U'$ respectively. Hence the union of these two coverings is a covering of $U\cup U'$, whereas the collection $\tuple V$ of all $U_i\cap U_j'$, for $i=\range 1n$, and $j=\range 1{n'}$,  is an affine covering of $U\cap U'$. Therefore, by \eqref{eq:classsch}, we have 
\begin{equation}\label{eq:2ndscis}
 \class{U\cup U'}-\class U-\class {U'}+ \class {U\cap U'}=\class{S(\tuple U,\tuple U')}-\class{S(\tuple U)}-\class{S(\tuple U')}+\class{S(\tuple V)}
\end{equation}
However, by Lemma~\ref{L:scisind}\eqref{eq:scisprod}, we have an identity
$$
 {S(\tuple U,\tuple U')}- {S(\tuple U)}- {S(\tuple U')}+ {S(\tuple V)}=0
 $$
  in $\pol\zet{\Theta_Y}$,  showing that the right hand side of \eqref{eq:2ndscis}, whence also the left hand side, is zero. We can now prove the last assertion:  let $\{U_1,\dots,U_n\}$ be an arbitrary finite open covering of $Y$. By Proposition~\ref{P:scissor}, we have an identity
 $$
 Y=\bigcup_{i=1}^n U_i=S\rij Un
 $$
in $\grotlat{\Theta_Y}$. Applying the additive map $\grotlat{\Theta_Y}\to \grotart X{}$ then yields the desired identity.
\end{proof}

In the course of the proof, we obtained:

\begin{corollary}\label{C:classsch}
For each $X$-scheme $Y$, we have   a   \homo\ $\grotmon{\Theta_Y}{\iso_X}\to \grotart X{}$ of \gr{s},  where $\Theta_Y$ is the lattice of opens of $Y$, and $\iso_X$ denotes isomorphism as $X$-schemes.\qed
\end{corollary}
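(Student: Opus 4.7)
The corollary is essentially a packaging of what was already established in the proof of Theorem~\ref{T:classsch}. There we produced, for any $X$-scheme $Y$, an additive map
\[
\alpha \colon \pol\zet{\Theta_Y}\longrightarrow\grotart X{},\qquad U\longmapsto \class U,
\]
by extending the assignment $U\mapsto \class U$ linearly from affine opens to all opens (defining $\class U$ for an arbitrary open via a finite affine covering $\mathcal U$ and the formula $\class U_{\mathcal U}$, which was shown there to be independent of the covering). In the final part of that proof, we explicitly verified, using identity~\eqref{eq:2ndscis} together with Lemma~\ref{L:scisind}\eqref{eq:scisprod}, that $\alpha$ sends the second scissor relation $U\cup U'-U-U'+U\cap U'$ to zero. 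Hence $\alpha$ descends to a homomorphism $\bar\alpha\colon \grotlat{\Theta_Y}\to\grotart X{}$ from the scissor ring on $\Theta_Y$.

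The only remaining task is to show that $\bar\alpha$ factors through the further quotient $\grotmon{\Theta_Y}{\iso_X}=\grotlat{\Theta_Y}/N_{\iso_X}$ introduced in Section~\ref{s:lat}. By the definition of $N_{\iso_X}$, this is the single assertion that $\class U=\class V$ in $\grotart X{}$ whenever $U,V\subseteq Y$ are opens which happen to be isomorphic as $X$-schemes, each viewed as an $X$-scheme via the composite open immersion $U\hookrightarrow Y\to X$. But this is already one of the conclusions of Theorem~\ref{T:classsch}: the class $\class W \in \grotart X{}$ of any $X$-scheme $W$ depends only on its $X$-isomorphism type, as proved there by comparing an affine covering $\mathcal U$ of $W$ with the covering of the isomorphic copy obtained by transporting $\mathcal U$ along the isomorphism, and using the covering-independence of $\class W_{\mathcal U}$. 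Specializing this to $W=U$ and $W=V$ yields $\class U=\class V$, so $\bar\alpha$ descends to the desired map $\grotmon{\Theta_Y}{\iso_X}\to\grotart X{}$.

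There is no serious obstacle: the entire proof is a short invocation of the preceding theorem, first for its construction of the additive scissor-compatible map on $\grotlat{\Theta_Y}$, and then for its invariance of $\class W$ under $X$-isomorphism, which is exactly the relation being killed in passing from $\grotlat{\Theta_Y}$ to $\grotmon{\Theta_Y}{\iso_X}$.
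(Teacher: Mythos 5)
Your proof is correct and takes exactly the route the paper intends: the corollary is a re-packaging of facts already established inside the proof of Theorem~\ref{T:classsch}, and the paper's own ``proof'' is simply the sentence ``In the course of the proof, we obtained.'' You correctly isolate the two things that need checking — that the additive map $\pol\zet{\Theta_Y}\to\grotart X{}$ kills the second scissor relations (the computation around \eqref{eq:2ndscis} using Lemma~\ref{L:scisind}\eqref{eq:scisprod}), and that it kills the relations $U-V$ for $X$-isomorphic opens (the covering-transport argument in the middle of the theorem's proof) — and you invoke them in the right order.

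One caveat you inherit from the paper rather than introduce: the corollary asserts a homomorphism of Grothendieck \emph{rings}, but both your argument and the paper's only establish additivity. Indeed, the paper explicitly remarks just before Proposition~\ref{P:basop} that the map $\pol\zet{\Theta^{\text{aff}}_Y}\to\grotart X{}$ is \emph{not} multiplicative (the product on the source is intersection, on the target it is fiber product over $X$), and it is not clear that passing to the quotient $\grotmon{\Theta_Y}{\iso_X}$ repairs this. If you were to polish this further, the safest formulation is as a homomorphism of abelian groups, which is all the scissor and isomorphism relations actually give you.
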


\begin{corollary}\label{C:openform}
If $U$ is an open in an affine $X$-scheme $Y$, then there exists a disjunction $\psi$ of \zariski\ formulae such that $\class U=\class\psi$ in $\grotart X{}$.
\end{corollary}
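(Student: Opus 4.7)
The plan is to cover $U$ by finitely many basic open subsets and realize $\class U$ as the class of a single disjunction of \zariski\ formulae, using the scissor identity of Proposition~\ref{P:scissor} to match the alternating sum of classes of intersections.

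Since $Y$ is Noetherian, the open subscheme $U$ is quasi-compact, so there exist $f_1, \ldots, f_n \in B := \loc(Y)$ with $U = D_1 \cup \cdots \cup D_n$, where each $D_i := \op D(f_i)$ is a basic open of $Y$ and each non-empty intersection $D_I := \bigcap_{i \in I} D_i = \op D\bigl(\prod_{i \in I} f_i\bigr)$ is itself a basic open, hence an affine $X$-scheme. Applying the final assertion of Theorem~\ref{T:classsch} to this covering of $U$ yields
\[
\class U \;=\; \class{S_n(D_1, \ldots, D_n)} \;=\; \sum_{\emptyset \neq I \subseteq \{1, \ldots, n\}} (-1)^{|I|+1} \class{D_I}
\]
in $\grotart X{}$.

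Next, let $\phi_0(\var)$ be a \zariski\ formula defining $Y$, introduce pairwise distinct fresh auxiliary variables $z_1, \ldots, z_n$, and set
\[
\phi_i \;:=\; \phi_0(\var) \en \bigl(f_i(\var)\, z_i = 1\bigr).
\]
Each $\phi_i$ is \zariski\ with coordinate ring $B[z_i]/(f_i z_i - 1) = B_{f_i}$, so its associated affine scheme is $D_i$, and $\class{\phi_i} = \class{D_i}$ by Corollary~\ref{C:isosch}. Because the $z_i$ are pairwise distinct, the lattice meet $\phi_I := \En_{i \in I} \phi_i$ is the \zariski\ formula $\phi_0(\var) \en \En_{i \in I}(f_i z_i = 1)$, whose coordinate ring is $B[\{z_i\}_{i \in I}]/(\{f_i z_i - 1\}_{i \in I}) = B_{\prod_{i \in I} f_i}$; hence $\phi_I$ defines $D_I$, and $\class{\phi_I} = \class{D_I}$ for every non-empty $I$.

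Finally, Proposition~\ref{P:scissor} applied to the tuple $(\phi_1, \ldots, \phi_n)$ yields
\[
\class{\phi_1 \of \cdots \of \phi_n} \;=\; \class{S_n(\phi_1, \ldots, \phi_n)} \;=\; \sum_{\emptyset \neq I}(-1)^{|I|+1}\class{\phi_I} \;=\; \sum_{\emptyset \neq I}(-1)^{|I|+1}\class{D_I} \;=\; \class U,
\]
so $\psi := \phi_1 \of \cdots \of \phi_n$ is the desired disjunction of \zariski\ formulae. The subtlety to watch is the pairwise distinctness of the auxiliary variables $z_i$: had one written all $\phi_i$ in primary form with a single shared auxiliary $z$, then $\phi_i \en \phi_j$ would impose both $f_i z = 1$ and $f_j z = 1$, force $f_i = f_j$ on solutions, and cut out a proper subscheme of $D_i$ rather than the basic open $D_i \cap D_j = \op D(f_i f_j)$, so the scissor sum would no longer reproduce $\class U$.
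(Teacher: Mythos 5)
Your argument takes a genuinely different route from the paper's. The paper first carries out the scissor computation entirely with the pp-formulae $\fim{\psi_i}(\var) := (\exists z)\bigl(\phi(\var) \en (f_i(\var)z = 1)\bigr)$, all of which have the \emph{same} free variables $\var$; this keeps every term of the scissor polynomial in the same Cartesian power $\affine Xm$, and it is precisely the conjunction $\fim{\psi_i}\en\fim{\psi_j}$ of pp-formulae — not of the underlying \zariski\ formulae — that defines $D_i\cap D_j$ (Remark~\ref{R:caut} is exactly this observation). Only after the identity $\class U = \class{\tilde\psi}$ is secured at arity $m$ does the paper introduce the fresh auxiliary variables $z_1,\dots,z_n$ and pass from the pp-disjunction $\tilde\psi$ to a \zariski\ disjunction $\psi$ via a \zariski\ isomorphism. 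You instead run the scissor identity directly on the \zariski\ formulae $\phi_1,\dots,\phi_n$ with distinct auxiliaries.

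The step that does not close up cleanly is the chain $\class{S_n(\phi_1,\dots,\phi_n)} = \sum(-1)^{|I|+1}\class{\phi_I} = \sum(-1)^{|I|+1}\class{D_I}$. Because each $\phi_i$ involves its own variable $z_i$, the $\phi_i$ do \emph{not} share a common set of free variables, so the terms $\phi_I := \En_{i\in I}\phi_i$ of the scissor polynomial live in Cartesian powers of differing dimension depending on how they are read. If all terms are read at the common arity $m+n$ of the disjunction $\phi_1\of\cdots\of\phi_n$, then $\phi_I$ leaves $n-|I|$ of the auxiliary coordinates $z_j$ ($j\notin I$) unconstrained, so $\class{\phi_I} = \class{D_I}\cdot\lef^{\,n-|I|}$ rather than $\class{D_I}$; if instead each $\phi_I$ is read in primary form (arity $m+|I|$), then you have $\class{\phi_I}=\class{D_I}$, but you are no longer applying the scissor relation within a single Boolean lattice of fixed arity, and Proposition~\ref{P:scissor} gives you no licence to compare classes of formulae whose interpretations sit in ambient spaces of different dimension. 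In short, you have correctly spotted the "shared $z$" pitfall you flag at the end (where $\phi_i\en\phi_j$ would cut out a proper subscheme of $D_i\cap D_j$), but the fix of using distinct $z_i$ introduces the dual pitfall of mismatched ambient spaces. The paper's detour through the pp-formulae $\fim{\psi_i}$, which holds the arity fixed at $m$ for the entire inclusion–exclusion computation, is designed precisely to avoid this.
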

\begin{proof}
Let $\{D_1,\dots,D_n\}$ be a covering of $U$ by basic open subsets, and, as in the proof of Proposition~\ref{P:basop}, let $\psi_i$ be the \zariski\ formula defining the basic open $D_i$, that is to say, $\psi_i(\var,z):=\sym{\phi(\var)\en(f_i(\var)z=1)}$, where $\phi$ is the defining formula of $Y$, and $D_i$ is the basic open $\op{Spec}(\loc_{Y,f_i})$. By  Proposition~\ref{P:basop}, the basic open $D_i$ is   also defined by the pp-formula $\fim{\psi_i}$.
Let $\tilde\psi:=\fim{\psi_1}\of\dots\of\fim{\psi_n}$. By Proposition~\ref{P:scissor}, we have identities $U=S\rij Dn$
 in $\grotlat{\Theta_Y}$, and $\tilde\psi=S(\fim{\psi_1},\dots,\fim{\psi_n})$ in $\grotlat{\pp}$. By   Corollaries~\ref{C:classsch} and \ref{C:addmap} respectively, we get $\class U=\class{S\rij Dn}$ and $\class {\tilde\psi}=\class{S(\fim{\psi_1},\dots,\fim{\psi_n})}$ in $\grotart X{}$. One easily verifies that any conjunction $\En_{i\in I}\fim{\psi_i}$ is the defining pp-formula for the corresponding intersection of the $D_i$, for $I\sub\{1,\dots,n\}$   (but see Remark~\ref{R:caut} for why we cannot work with the \zariski\ formulae $\psi_i$ instead).
 Hence 
 $$
\class U= \class {S\rij Dn}=\class{S(\fim{\psi_1},\dots,\fim{\psi_n})}=\class{\tilde\psi}
 $$
  in $\grotart X{}$.
  
To obtain a disjunction of \zariski\ formulae, let 
$$
\psi(\var,z_1,\dots,z_n):=\Of_{i=1}^n\psi_i(\var,z_i).
$$
As in the proof of Proposition~\ref{P:basop}, one can show that the projection onto the $\var$-coordinates yields a (\zariski) isomorphism between $\psi$ and $\tilde \psi$ modulo $\arttheory X$, and hence $\class\psi=\class{\tilde\psi}$  in $ \grotart X{}$, completing the proof of the assertion. 
 \end{proof}

\begin{remark}\label{R:classproj}
Theorem~\ref{T:classsch} allows us to calculate the class of a non-affine scheme in terms of classes of affine schemes. For instance, the class of the projective line $\mathbb P_X^1$ is equal to $2\lef-\lef^*$, where, as before, $\lef$ is the Lefschetz class, that is to say, the class of the affine line $\affine X1$, and where $\lef^*$ denotes the class of the affine line without the origin. One would be tempted to think that $\lef^*=\lef-1$, but this is false, for the reason given  at end of Remark~\ref{R:caut}. We will give a correct version of this formula in \eqref{eq:lefpt} below.
\end{remark}

\section{Arc integrals}\label{s:motint}
Let $X=\op{Spec}A$ be an affine Noetherian scheme, and let $\phi$ be a \zariski\ formula in $\mathcal L_A$ with corresponding ideal
$I(\phi)$, and associated affine scheme $Y_\phi:=\op{Spec}(\pol A\var/I(\phi))$.
We call  $\phi$
\emph{Artinian} if the corresponding affine scheme $Y_\phi$ is Artinian, that is to say, has (Krull) dimension zero. We denote the Boolean closure of the collection of Artinian formulae by $\mathcal Art$ (not to be confused with the theory $\arttheory A$). 
We say that $\phi$ is a \emph{closed point formula}, if $I(\phi)$ is a maximal
ideal; we say that $\phi$ is a \emph{point formula}, if the radical of
$I(\phi)$ is a maximal ideal.   Closed point formulae and point formulae are
Artinian. Let $\phi$ be a point formula. There is a unique closed point formula $\bar\phi$ implying $\phi$, namely, the one corresponding to the radical of $I(\phi)$. To a point formula corresponds an Artinian local $A$-scheme $Y_\phi$, and the closed point of $Y_\phi$ then corresponds to $\bar\phi$. If
$A=\fld$ is an
\acf, then any two point formulae
are $\theory T_\fld$-equivalent by the Nullstellensatz, but this might fail in
general. For instance, over $A=\mathbb Q$, the formulae $\var^2+1=0$ and
$\var=0$ are not isomorphic. Another example, with
$A=\pow \fld t$, are the point formulae formulae $t\var-1=0$
and $t=\var=0$, which cannot be isomorphic, not even after a base change. Let
us denote the \gr\   of Artinian
formulae   modulo $\arttheory X$ by 
$$
\grotschzero X:=\grotmod{\arttheory X}{\mathcal Art}\zar.
$$
 Given an Artinian $X$-scheme $Y$, we will call its \emph{length} $\ell(Y)$ the length of the   coordinate ring $\loc_Y$ viewed as an $A$-module.

\begin{corollary}\label{C:0cycle}
Any element of $\grotschzero  X $ can be written as a difference $\class Y-\class{Y'}$, with $Y$  and $Y'$ Artinian $X$-schemes.
\end{corollary}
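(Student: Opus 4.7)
The plan is to mimic the strategy used for Lemma~\ref{L:cycle} and Proposition~\ref{P:ppcycle}, invoking Corollary~\ref{C:grgen} with the sub-semi-lattice $\mathcal G$ taken to be the collection of Artinian \zariski\ formulae. By definition, $\mathcal{Art}$ is the Boolean closure of the Artinian \zariski\ formulae, so the hypothesis of Corollary~\ref{C:grgen} on Boolean closure is automatic. What must be verified is that this sub-semi-lattice is (i) closed under conjunction and (ii) closed, modulo $\arttheory X$, under disjoint sums.

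For (i), observe that if $\phi$ and $\psi$ are Artinian \zariski\ formulae, then $Y_{\phi \en \psi}$ is the closed subscheme $Y_\phi \cap Y_\psi$ of $\affine Xn$, and a closed subscheme of an Artinian scheme is Artinian. (Equivalently, at the level of ideals, $I(\phi \en \psi) = I(\phi) + I(\psi)$, and the quotient of a finite-length $A$-module is of finite length.) For (ii), I would first apply Lemma~\ref{L:schdisjun} to reduce disjoint sum to disjoint union modulo $\arttheory X$. Then I would observe that under the correspondence of Theorem~\ref{T:fosch} combined with Lemma~\ref{L:disunion}, the \zariski\ formula $\phi\sqcup\psi$ corresponds to the scheme $Y_\phi\sqcup Y_\psi$, which is again zero-dimensional, hence Artinian, whenever both $Y_\phi$ and $Y_\psi$ are.

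With (i) and (ii) in hand, the last clause of Corollary~\ref{C:grgen} immediately yields that every element of $\grotschzero X$ can be written as $\class\psi-\class{\psi'}$, with $\psi$ and $\psi'$ Artinian \zariski\ formulae. Translating back via Theorem~\ref{T:fosch}, $\psi$ and $\psi'$ correspond to Artinian $X$-schemes $Y:=Y_\psi$ and $Y':=Y_{\psi'}$, giving the desired representation $\class Y - \class{Y'}$.

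There is no real obstacle here; the statement is a direct formal consequence of the machinery already set up. The only point to keep an eye on is the bookkeeping around \zariski\ formulae versus general quantifier-free Artinian formulae in $\mathcal{Art}$, namely making sure that the sub-semi-lattice of Artinian \zariski\ formulae really does have Boolean closure equal to $\mathcal{Art}$ (which it does by definition, since $\mathcal{Art}$ was \emph{defined} as that Boolean closure).
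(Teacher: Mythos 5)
Your proof is correct and follows exactly the same route as the paper, which simply cites Lemma~\ref{L:schdisjun} (closure of Artinian formulae under disjoint sums, up to $\arttheory X$-equivalence) and then invokes Corollary~\ref{C:grgen}. You merely spell out the sub-semi-lattice hypothesis (closure under conjunction) and the Boolean-closure hypothesis, both of which the paper treats as immediate from the definition of $\mathcal{Art}$.
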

\begin{proof}
By Lemma~\ref{L:schdisjun},  Artinian formulae are closed under  disjoint sums, and the claim follows from Corollary~\ref{C:grgen}.
\end{proof}

\begin{lemma}\label{L:Artdu}
Any Artinian formula is a disjoint union of finitely many point formulae.
\end{lemma}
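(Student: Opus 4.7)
The plan is to translate the classical structure theorem for Artinian rings, via the dictionary set up in Theorem~\ref{T:fosch} and Corollary~\ref{C:isosch}, into a statement about \zariski\ formulae, and then use Lemma~\ref{L:disunion} to package the pieces as a disjoint union.

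More precisely, let $\phi(\var)$ be an Artinian \zariski\ formula of arity $n$, and set $B:=\pol A\var/I(\phi)$, so that $Y_\phi=\op{Spec}B$ is an Artinian affine $X$-scheme. Since $B$ is Artinian, it has only finitely many maximal ideals $\maxim_1,\dots,\maxim_r$, and the canonical map
\[
B\;\longrightarrow\;\prod_{i=1}^r B_{\maxim_i}
\]
is an isomorphism of $A$-algebras, with each factor $B_{\maxim_i}$ an Artinian local $A$-algebra. Geometrically, this says $Y_\phi\cong Y_1\sqcup\dots\sqcup Y_r$ as $X$-schemes, where $Y_i:=\op{Spec}(B_{\maxim_i})$.

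Each $Y_i$, being affine and of finite type over $X$, corresponds under Theorem~\ref{T:fosch} and Corollary~\ref{C:isosch} to a \zariski\ formula $\phi_i(\var_i)$ (in some tuple of variables $\var_i$), unique up to $\zar$-isomorphism modulo $\arttheory X$. Because $B_{\maxim_i}$ is local Artinian, the radical of $I(\phi_i)$ is the unique maximal ideal of $\pol A{\var_i}$ containing it, so $\phi_i$ is, by definition, a point formula. By Lemma~\ref{L:disunion}, the disjoint union $\phi_1\sqcup\dots\sqcup\phi_r$ is a \zariski\ formula whose associated affine $X$-scheme is $Y_1\sqcup\dots\sqcup Y_r\cong Y_\phi$. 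Invoking Corollary~\ref{C:isosch} once more, we conclude that $\phi$ is $\zar$-isomorphic modulo $\arttheory X$ to $\phi_1\sqcup\dots\sqcup\phi_r$, which is the required decomposition.

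The only non-formal ingredient is the structure theorem for Artinian rings, which I do not anticipate as a real obstacle; the main thing to be careful about is that the correspondence between schemes and \zariski\ formulae in Theorem~\ref{T:fosch} is stated up to $\arttheory A$-equivalence, so one should phrase the conclusion accordingly (rather than as an identity of formulae), and verify that ``point formula'' matches, on the scheme side, exactly the condition that $B_{\maxim_i}$ is local — which is immediate from the definition.
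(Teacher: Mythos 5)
Your proof is correct and takes exactly the approach the paper intends: the paper's own proof is the one-line remark ``Immediate from the fact that an Artinian ring is a direct sum of Artinian local rings,'' and you have simply unpacked that, routing through Theorem~\ref{T:fosch}, Corollary~\ref{C:isosch}, and Lemma~\ref{L:disunion} exactly as one would. Your added caveat about working modulo $\arttheory A$-equivalence, and the (implicit) observation that each local factor $B_{\maxim_i}$ is a quotient of $B$ (via the complementary idempotent) and hence still a finitely generated $A$-algebra, are the right things to check.
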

\begin{proof}
Immediate from the fact that an Artinian ring is a direct sum of Artinian
local rings.
\end{proof}

For simplicity, we will for the remainder of this section work over an \acf\
$\fld$. 

\begin{lemma}\label{L:length}
Let $\fld$ be an \acf. There exists a ring \homo\ $\ell\colon \grotschzero \fld\to
\zet$ such that $\ell(\class Y)=\ell(Y)$ for any Artinian $\fld$-scheme $Y$.
\end{lemma}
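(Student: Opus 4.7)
The plan is to define $\ell$ on classes of affine Artinian $\fld$-schemes by $\ell(Y):=\dim_\fld\loc_Y$ and then use Corollary~\ref{C:0cycle} to extend to all of $\grotschzero\fld$ by linearity.

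First I would observe that $\ell(Y)$ is finite and only depends on the isomorphism class of $Y$, and that it satisfies $\ell(Y\sqcup Z)=\ell(Y)+\ell(Z)$ and $\ell(Y\times_\fld Z)=\ell(Y)\ell(Z)$ because the coordinate ring of a disjoint union is the direct sum of the coordinate rings and the coordinate ring of a fibre product over $\fld$ is the tensor product, and $\dim_\fld$ is additive on direct sums and multiplicative on tensor products over a field. Moreover $\ell(\op{Spec}\fld)=1$.

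Next, by Corollary~\ref{C:0cycle} every element of $\grotschzero\fld$ can be written as $\class Y-\class{Y'}$ with $Y,Y'$ affine Artinian $\fld$-schemes, so I would set $\ell(\class Y-\class{Y'}):=\ell(Y)-\ell(Y')$. The substantive point is well-definedness: suppose $\class Y-\class{Y'}=\class Z-\class{Z'}$, equivalently $\class{Y\sqcup Z'}=\class{Y'\sqcup Z}$ in $\grotschzero\fld$ (using Lemma~\ref{L:schdisjun} to replace disjoint sums by disjoint unions modulo $\arttheory\fld$). By Theorem~\ref{T:stabiso} applied to the Boolean lattice $\mathcal Art$, the defining Artinian formulae are stably $\zar$-isomorphic in $\mathcal Art$: there is $\sigma\in\mathcal Art$ with the corresponding disjoint-sum formulae $\zar$-isomorphic. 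Applying Lemma~\ref{L:stabiso} to replace $\sigma$ by its Zariski closure (which is a \zariski, hence Artinian, formula) and then invoking Lemma~\ref{L:schdisjun} a second time, I obtain an affine Artinian $\fld$-scheme $W$ and an $X$-isomorphism
$$
Y\sqcup Z'\sqcup W\iso Y'\sqcup Z\sqcup W,
$$
via Corollary~\ref{C:isosch}. Additivity of $\ell$ on disjoint unions together with its isomorphism-invariance then gives $\ell(Y)+\ell(Z')+\ell(W)=\ell(Y')+\ell(Z)+\ell(W)$, so $\ell(Y)-\ell(Y')=\ell(Z)-\ell(Z')$, proving well-definedness.

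Additivity of $\ell$ on $\grotschzero\fld$ is immediate from the definition, and multiplicativity follows from the identity $\class Y\cdot\class Z=\class{Y\times_\fld Z}$ (the $\times$-product on formulae corresponds, under Theorem~\ref{T:fosch}, to the fibre product of schemes over $\fld$) together with the multiplicativity of $\dim_\fld$ on tensor products noted above; the unit goes to $1$ because $\class{\op{Spec}\fld}$ is the multiplicative identity. The main obstacle is the well-definedness step, which is not merely a count-of-points statement but genuinely uses the scheme-theoretic length; the subtlety lies in passing from stable $\zar$-isomorphism of general Boolean-combination formulae in $\mathcal Art$ to an actual stable scheme isomorphism with a \zariski\ stabiliser, which is precisely what the combination of Lemma~\ref{L:stabiso} and Lemma~\ref{L:schdisjun} is designed to deliver.
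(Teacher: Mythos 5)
Your overall strategy mirrors the paper's: define $\ell$ on Artinian $\fld$-schemes as the $\fld$-vector space dimension of the coordinate ring, extend by linearity to $\grotschzero\fld$ using Corollary~\ref{C:0cycle}, and verify well-definedness via Theorem~\ref{T:stabiso}. The gap lies in the parenthetical claim that the Zariski closure of the stabilizer $\sigma$ is ``a \zariski, hence Artinian, formula.'' A \zariski\ formula need not be Artinian, and the Zariski closure of a Boolean combination of Artinian formulae is not Artinian in general: $\niet(v_1=0)$ lies in $\mathcal Art$, but its Zariski closure is the Lefschetz formula $\lambda_1$, since in the model $\pol\fld{v_1}$ of $\theory T_\fld$ the element $v_1$ satisfies $\niet(v_1=0)$ yet annihilates no nonzero polynomial, forcing the defining ideal of the closure to be zero. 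Since the stabilizer produced by Theorem~\ref{T:stabiso} is a priori an arbitrary element of $\mathcal Art$ (the proof of Lemma~\ref{L:incompscis} genuinely introduces negations), your scheme $W$ may fail to be Artinian; then $\ell(W)$ is undefined and the cancellation $\ell(Y)+\ell(Z')+\ell(W)=\ell(Y')+\ell(Z)+\ell(W)$ does not go through.

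For what it is worth, the paper's own proof elides the same step, merely asserting that the stabilizer can be taken to be ``some Artinian $\fld$-scheme $T$'' without explanation. The cleanest repair of your argument is to observe that the route through Lemma~\ref{L:stabiso} and the Zariski closure does at least produce a Noetherian (possibly non-Artinian) scheme stabilizer $W$ with $Y\sqcup Z'\sqcup W\iso Y'\sqcup Z\sqcup W$, and then invoke Theorem~\ref{T:stabisosch} from the appendix to cancel $W$, yielding $Y\sqcup Z'\iso Y'\sqcup Z$ and hence $\ell(Y)+\ell(Z')=\ell(Y')+\ell(Z)$. Alternatively, push the equality $\class{Y\sqcup Z'}=\class{Y'\sqcup Z}$ forward along the natural map $\grotschzero\fld\to\grotsch\fld$ of Corollary~\ref{C:addmap} and quote Theorem~\ref{T:classinv} directly.
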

\begin{proof}
 Since $\fld$ is algebraically closed, $\ell(Y)$ is equal to the $\fld$-vector
space dimension of $\loc_Y$. 
Let $\phi$ be a formula  in $ {\mathcal
Art}$.   By Corollary~\ref{C:0cycle},  
we can find Artinian   
$\fld$-schemes $Y$ and $Y'$ 
such that  $\class\phi=\class Y-\class{Y'}$. Put $\ell(\class\phi):=\ell(Y)-\ell({Y'})$. To prove that this is independent from the choice of
affine schemes, suppose that also $\class\phi=\class Z-\class{Z'}$ for some
Artinian
$\fld$-schemes $Z$ and $Z'$. It follows that $\class{Y\sqcup
Z'}=\class{Y'\sqcup Z}$, and hence by Theorem~\ref{T:stabiso}, that the
schemes $Y\sqcup Z'$ and $Y'\sqcup Z$ are
stably $\zar$-isomorphic in $\mathcal Art$. This means that $Y\sqcup Z'\sqcup T$
and $Y'\sqcup Z\sqcup T$ are isomorphic, for some Artinian $\fld$-scheme $T$.
Since
length is additive on disjoint unions,   $\ell( Y)+\ell( Z')+\ell( T)= \ell(
Y')+\ell( Z)+\ell( T)$, showing that $\ell(\class\phi)$  is
well-defined. Linearity follows immediately from this. Furthermore, given two
Artinian $\fld$-schemes, we have
$\ell(\class Y\cdot \class Z)=\ell(\class{Y\times_\fld Z})$. Let $l$ and $m$ be
the length of $ Y$ and $ Z$ respectively, so that as $\fld$-vector spaces
$\loc_Y\iso \fld^l$ and $\loc_Z\iso \fld^m$. Since the coordinate ring of
$Y\times_\fld Z $ is equal to $\loc_Y\tensor_\fld\loc_Z$, its length is equal to $mn$,
showing that $\ell$ is also multiplicative. 
\end{proof}

\subsection*{Jets}
Given  a closed subscheme $Z$ of an affine scheme $Y:=\op{Spec}B$,   we define the
\emph{$n$-th jet of $Y$ along $Z$} to be the closed subscheme 
$$
\jet ZnY:=\op{Spec}(B/I^n),
$$
 where $I$ is the ideal of the closed subscheme $Z$. Note that $Z$ and any of the jets $\jet ZnY$ have the same underlying topological space, and we have an ascending chain of closed subschemes
\begin{equation}\label{eq:jetchain}
\emptyset =\jet Z0Y\sub Z=\jet Z1Y\sub\jet Z2Y\dots\sub\jet ZnY\sub\dots
\end{equation}
In most cases, this will be a proper chain by Nakayama's Lemma (for instance, if $Z$ is a proper closed subscheme of a variety $Y$, or more generally, if $I$ contains a non-zero divisor).
 
We can generalize the notion of a jet to formulae: let  $\phi$ be an arbitrary formula and $\zeta$ a \zariski\ formula. We define   the
\emph{$n$-th jet of $\phi$ along $\zeta$} to be the formula
$$
\jet\zeta n\phi:=\phi\en \En_{f_1,\dots,f_n\in I(\zeta)} (f_1\cdot f_2\cdots f_n=0).
$$
In other words, if $\zeta^{(n)}$ is the formula with defining ideal $I(\zeta)^n$, then 
$\jet\zeta n\phi=\phi\en\zeta^{(n)}$. In particular, if $\phi$ is also \zariski, defining an affine variety $Y:=Y_\phi$, and if $Z$ is the closed subscheme defined by $\phi\en\zeta$, then $\jet\zeta n\phi$ is the defining \zariski\ formula of $\jet Z nY$.

\subsection*{Formal Hilbert series}
Let $\phi$ be a \zariski\ formula, and $\tau$ a closed point
formula implying $\phi$. Each jet $\jet\tau i\phi$ is   an Artinian formula, and hence its class belongs to $\grotschzero  \fld$. In particular, if $X$ is the $\fld$-scheme defined by $\phi$, and $P$ the closed point defined by $\tau$, then $\jet\tau i\phi=\jet PiX$.
For $T$ a single variable we can therefore  define the
\emph{formal Hilbert series} of a $\fld$-scheme $X$ at a closed point $P$ as the series
$$
\op{Hilb}_P(X):=\sum_{i=0}^\infty \class{\jet PiX}T^i
$$
in $\pow{\grotschzero \fld}T$. If we
extend the \homo\ $\ell$ to $\pow{\grotschzero \fld}T$ by letting it act
on the coefficients of a power series,
then $\ell(\op{Hilb}_P(X))$ is a rational function in $\pow\zet T$
by the Hilbert-Samuel theory (it is the first difference of the classical Hilbert series
of $X$ at   $P$).

\subsection*{Arcs}
Let $R$ be  an Artinian algebra of dimension $l$ over $\fld$, and fix   some basis  $\Delta$ of $R$ over $\fld$. For each $\alpha\in\Delta$, we define the $\alpha$-th coordinate map $\pi_\alpha\colon R\to \fld$ by the rule
$$
r=\sum_{\alpha\in\Delta}\pi_\alpha (r)\cdot\alpha.
$$
We write $\pi_R(r)$, or just $\pi (r)$, for the tuple of all $\pi_\alpha (r)$, where we fix once and for all an order of $\Delta$. In particular, $\pi$ gives a ($\fld$-linear) bijection between $R$ and $\fld^l$. We also extend this notation to arbitrary tuples. More generally, if $A$ is a $\fld$-algebra, then $R\tensor_\fld A$ is a free $A$-module generated by $\Delta$ and  the base change of $\pi$ yields an $A$-linear isomorphism $R\tensor_\fld A\iso A^l$, which we continue to denote by $\pi$. In this section, we also will fix the following notation. Given an $n$-tuple of variables $\var$, we let $\tilde\var_\alpha$, for each  $\alpha\in \Delta$, be another $n$-tuple of variables, and we denote the $ln$-tuple consisting of all $\tilde\var_\alpha$ by $\tilde\var$,   referring to them as   \emph{arc variables}. We also associate to each     $n$-tuple of variables $\var$, an  $n$-tuple of \emph{generic arcs}
$$
\dot\var:=\sum_{\alpha\in\Delta}\tilde\var_\alpha\alpha
$$
viewed as a tuple in $\pol R{\tilde\var}$.  In particular, $\pi_\alpha(\dot x)=\tilde\var_\alpha$.

\begin{proposition}\label{P:arc}
For each $\mathcal L_\fld$-formula $\phi$ of arity $n$, and for each finite $\fld$-algebra $R$ of dimension $l$, there exists an $\mathcal L_\fld$-formula $\arc R\phi$ of arity $ln$ with the following property: if $A$ is an  $\fld$-algebra   and $  a$ an  $n$-tuple in $R\tensor_\fld A$, then $ a\in\inter\phi {R\tensor_\fld A}$ \iff\ $\pi( a)\in\inter{\arc R\phi}A$. Moreover, if $\phi$ is \zariski\ or pp, then so is $\arc R\phi$.
\end{proposition}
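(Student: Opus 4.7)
The plan is to construct $\arc R\phi$ by induction on the complexity of $\phi$, verifying at each step both the defining ``$\pi$-correspondence'' property and that the syntactic class (\zariski, pp) is preserved.

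First I would treat the atomic case $\phi(\var) := (f(\var)=0)$, with $f\in\pol\fld\var$. Substituting the generic arc tuple $\dot\var=\sum_\alpha\tilde\var_\alpha\alpha$ produces an element $f(\dot\var)\in R\tensor_\fld\pol\fld{\tilde\var}$. Since $R\tensor_\fld\pol\fld{\tilde\var}$ is a free $\pol\fld{\tilde\var}$-module with basis $\Delta$, there exist unique polynomials $g_\alpha\in\pol\fld{\tilde\var}$ with $f(\dot\var)=\sum_\alpha g_\alpha(\tilde\var)\alpha$, and I define
\[
\arc R{(f=0)}\ :=\ \En_{\alpha\in\Delta}\,\bigl(g_\alpha(\tilde\var)=0\bigr).
\]
Given an $\fld$-algebra $A$ and an $n$-tuple $a=\sum_\alpha b_\alpha\cdot\alpha\in(R\tensor_\fld A)^n$ with $b_\alpha\in A^n$, base-changing the expansion of $f(\dot\var)$ along $\pol\fld{\tilde\var}\to A$ (sending $\tilde\var_\alpha\mapsto b_\alpha$) gives $f(a)=\sum_\alpha g_\alpha(b)\alpha$ in $R\tensor_\fld A$. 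Since $\Delta$ is an $A$-basis of $R\tensor_\fld A$, we have $f(a)=0$ iff all $g_\alpha(b)=0$, i.e.\ iff $\pi(a)\in\inter{\arc R{(f=0)}}A$. This also shows the arc operation takes \zariski\ atoms to \zariski\ formulae.

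Next I extend by the obvious recursion over the logical connectives: set $\arc R{\phi\en\psi}:=\arc R\phi\en\arc R\psi$, $\arc R{\phi\of\psi}:=\arc R\phi\of\arc R\psi$, and $\arc R{\niet\phi}:=\niet\arc R\phi$. The $\pi$-correspondence property is immediate since $\pi$ is a bijection between $(R\tensor_\fld A)^n$ and $A^{ln}$ and the Boolean set operations correspond. For quantifiers, if $\vary$ is an $m$-tuple of variables and $\tilde\vary$ its associated $lm$-tuple of arc variables, I put
\[
\arc R{(\exists\vary)\phi(\var,\vary)}\ :=\ (\exists\tilde\vary)\,\arc R{\phi}(\tilde\var,\tilde\vary).
\]
The correspondence in this case amounts to the statement that an $m$-tuple $b\in(R\tensor_\fld A)^m$ witnessing $\phi$ exists iff its $\pi$-image in $A^{lm}$ exists witnessing $\arc R\phi$, which follows from the bijectivity of $\pi$ and the already-established atomic and Boolean cases. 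A dual clause handles universal quantification.

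Finally, I would read off the preservation of syntactic classes: the construction uses only conjunction on \zariski\ atoms to handle \zariski\ inputs, and only conjunction plus existential quantification to handle pp-inputs, so \zariski\ and pp-formulae go to formulae of the same shape. The only genuinely non-routine step is the atomic case, and the main subtlety there is being precise about the basis expansion; the key identity is simply the $A$-linearity of the substitution $R\tensor_\fld\pol\fld{\tilde\var}\to R\tensor_\fld A$ together with freeness of $R\tensor_\fld A$ over $A$ on $\Delta$. Everything else is bookkeeping once this case is settled.
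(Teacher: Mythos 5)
Your proposal is correct and follows essentially the same route as the paper: substitute the generic arc $\dot\var=\sum_\alpha\tilde\var_\alpha\alpha$ into each polynomial, expand in the basis $\Delta$ to produce the coefficient polynomials (the paper's $\coor\alpha f$, your $g_\alpha$), take the conjunction of their vanishing for the atomic case, and then recurse over the Boolean connectives and existential quantifier exactly as you describe. Your verification of the $\pi$-correspondence in the atomic case is a bit more explicit than the paper's (which declares it ``clear from the defining expansion''), and you add the harmless $\en$ and $\forall$ clauses, but nothing differs in substance.
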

\begin{proof}
Let $\Delta$ be a basis of $R$ as a vector space over $\fld$.  For each polynomial $f\in\pol \fld\var$,   define polynomials $\coor\alpha f \in\pol \fld {\tilde\var}$ by the rule
\begin{equation}\label{eq:ethcomp}
f(\dot\var)=f(\sum_\alpha\tilde\var_\alpha\alpha)=\sum_{\alpha\in\Delta}\coor\alpha f (\tilde\var)\alpha.
\end{equation}
In particular, for $A$ an $\fld$-algebra, we have $\pi_\alpha(f(a))=(\coor \alpha f)(\pi(a))$, for all $a$ in $R\tensor_\fld A$ and all $\alpha\in\Delta$.
To define $\arc R\phi({\tilde\var})$, we induct on the complexity of the formula $\phi$. If $\phi$ is the   \zariski\ formula $f(\var)=0$, then $\arc R\phi$ is the \zariski\ formula 
$$
\arc R\phi({\tilde\var}):=\sym{\En_{\alpha\in\Delta}(\coor \alpha f ({\tilde\var})=0)}.
$$
If $\phi$ and $\psi$ are formulae for which we already defined $\arc R\phi$ and $\arc R\psi$, then $\arc R(\phi\of\psi):=\arc R\phi\of\arc R\psi$, and $\arc R(\niet\phi):=\niet\arc R\phi$. Finally, if $\phi(\var)$ is the formula $(\exists\vary)\psi(\var,\vary)$, then we define $\arc R\phi$ as the formula
$$
\arc R\phi(\var):=\sym{(\exists\tilde \vary)\arc R\psi(\tilde\var,\tilde\vary)}
$$
where, similarly,  $\tilde\vary$ is a tuple of $l$ copies of $\vary$. This concludes the proof of the existence of $\arc R\phi$. That it satisfies the desired one-one correspondence between definable sets is clear from \eqref{eq:ethcomp}, and the last assertion is   immediate   as well.
  \end{proof}
  
  We will refer to  $\arc R\phi$ as the \emph{arc formula of $\phi$ along $R$}.  Instead of using $R$ as a subscript, we may also use its defining \zariski\ formula, or the  Artinian   scheme $Z:=\op{Spec}R$ it determines,  or even leave out reference to it altogether, whenever it is clear from the context. If $\phi$ is a \zariski\ formula defining an affine scheme $Y$, then we will   write  $\arc ZY$ for the affine scheme determined by $\arc R\phi$, and   call it the \emph{arc scheme of $Y$ along $Z$}.   The following shows that   arcs along an Artinian scheme are generalizations of truncated arcs.
  
  \begin{proposition}\label{P:arcsch}
Let   $Z$ be an Artinian  $\fld$-scheme, and let $X$ and $Y$ be affine $\fld$-schemes. There is a one-one correspondence between $Z\times_\fld X$-rational points on  $Y\times_\fld X$ over $X$, and   $X$-rational points on the corresponding arc scheme $\arc ZY$ over $\fld$, that is to say, we have a one-one correspondence
$$
\mor X{Z\times_\fld X}{Y\times_\fld X}\iso \mor \fld X{\arc ZY}.
$$
\end{proposition}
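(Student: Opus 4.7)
The plan is to chain together three bijections: two applications of Lemma~\ref{L:ratpt} on either end, and the arc correspondence of Proposition~\ref{P:arc} in the middle. Write $A:=\loc_X$ and $R:=\loc_Z$, so that $R$ is an Artinian $\fld$-algebra of some finite dimension $l$ over $\fld$. Write $Y=Y_\phi$ for some \zariski\ $\mathcal L_\fld$-formula $\phi(\var)$ of arity $n$, with associated ideal $I(\phi)\sub\pol \fld\var$.

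First, I would identify the left hand side of the desired bijection with a set of tuples. The base change discussion in \S\ref{s:bc} shows that viewing $\phi$ as an $\mathcal L_A$-formula, its associated $X$-scheme is $Y\times_\fld X$. Since $Z\times_\fld X=\op{Spec}(R\tensor_\fld A)$, Lemma~\ref{L:ratpt} (applied over the base $X$ with $B=R\tensor_\fld A$) yields a natural one-one correspondence
\[
\mor X{Z\times_\fld X}{Y\times_\fld X}\iso \inter\phi{R\tensor_\fld A}.
\]

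Next, I would apply Proposition~\ref{P:arc} to the pair $(\phi,R)$ with the $\fld$-algebra taken to be $A$: the coordinate map $\pi$ gives a bijection between $\inter\phi{R\tensor_\fld A}$ and $\inter{\arc R\phi}A$, where $\arc R\phi$ is a \zariski\ formula of arity $ln$. By definition, $\arc Z Y=Y_{\arc R\phi}$, so a second application of Lemma~\ref{L:ratpt}, this time over the base $\op{Spec}\fld$ with $B=A$, gives
\[
\inter{\arc R\phi}A\iso\mor \fld X{\arc ZY}.
\]
Composing these three bijections furnishes the required correspondence.

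The main step to check carefully is the middle one, namely that Proposition~\ref{P:arc} indeed applies as stated: the formula $\phi$ is \zariski, hence $\arc R\phi$ is \zariski\ by the last assertion of that proposition, and the coordinate map $\pi$ provides the required bijection of definable sets. The other two bijections are immediate instances of Lemma~\ref{L:ratpt}. The only mild subtlety is to verify that the ``$\fld$-linear'' nature of $\pi$ is compatible with the $A$-algebra structure in the middle term; but since $R\tensor_\fld A$ is free over $A$ on the chosen basis $\Delta$ of $R$, the map $\pi$ is $A$-linear after base change, and the passage from solutions of $\phi$ in $R\tensor_\fld A$ to solutions of $\arc R\phi$ in $A$ is entirely formal from equation~\eqref{eq:ethcomp}. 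No genuine obstacle arises; the content of the result is really that Proposition~\ref{P:arc} is scheme-theoretic rather than merely a statement about definable sets.
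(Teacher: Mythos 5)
Your proof is correct and follows essentially the same route as the paper: both identify the two morphism sets with definable sets via Lemma~\ref{L:ratpt} and then connect them through the bijection in Proposition~\ref{P:arc}. The extra care you take to spell out which base and which $\fld$-algebra each application of Lemma~\ref{L:ratpt} uses is a welcome clarification of what the paper leaves implicit.
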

\begin{proof}
Let $\phi$ be the \zariski\ formula defining $Y$, and let $R$ and $X$ be the respective coordinate rings of $Z$ and $X$. Viewing $\phi$ in the language $\mathcal L_X$, it is the \zariski\ formula defining the affine $X$-scheme $Y\times X$, by \S\ref{s:bc}. By Lemma~\ref{L:ratpt}, we may identify $\inter\phi {R\tensor_\fld A}$ with $\mor \fld {Z\times X}{Y\times X}$, and similarly, $\inter{\arc R\phi}A$ with $\mor \fld X {\arc ZY}$. The result  then follows from Proposition~\ref{P:arc}.
\end{proof}

In particular, $\mor \fld ZY=\arc ZY(\fld)$. For instance, if $Z_n:=\pol \fld \xi/\xi^n\pol \fld \xi$, then $(\arc {Z_n}Y)^{\text{red}}$ is the truncated arc space  $\mathcal L_n(Y)$ as  defined in \cite[p.~276]{BLR} or \cite{DLArcs}. 

\begin{remark}
Given a morphism of schemes $Z\to Y$ over an arbitrary base scheme $S$, we may view $\mor SZY$ as a contravariant functor on the category of $S$-schemes through base change, that is to say,
$$
\mor SZY(X):=\mor X{Z\times_SX}{Y\times_SX},
$$
for any $S$-scheme $X$. The content of Proposition~\ref{P:arcsch} is then that for any Artinian scheme $Z$ over an \acf\ $\fld$, and any affine $\fld$-scheme $Y$, the functor $\mor \fld ZY$ is representable. Indeed, in the definition of representability, it suffices to consider only affine $\fld$-schemes $X$, since $\mor \fld ZY$ is compatible with limits, yielding that the affine $\fld$-scheme $\arc ZY$ represents $\mor \fld ZY$.
\end{remark}
      
  \begin{example}\label{E:arc}
Before we proceed, some simple examples are in order. It is clear from the definitions that $\arc R\lambda=\lambda_l\iso \lambda^l$, for $\lambda$ the Lefschetz formula $\var=\var$.

Let us next calculate the arc scheme of the curve given by the formula $\phi:=\sym{\var^2=\vary^3}$ along the four dimensional algebra  $R:=\pol \fld{\xi,\zeta}/(\xi^2,\zeta^2)\pol \fld{\xi,\zeta}$, using the basis $\Delta:=\{1,\xi,\zeta,\xi\zeta\}$ (in the order listed), and corresponding arc variables  the  quadruples $\tilde\var=(\tilde\var_{(0,0)},\tilde\var_{(1,0)},\tilde\var_{(0,1)},\tilde\var_{(1,1)})$ and $\tilde\vary=(\tilde\vary_{(0,0)},\tilde\vary_{(1,0)},\tilde\vary_{(0,1)},\tilde\vary_{(1,1)})$. One easily calculates that $\arc R\phi$ is the \zariski\ formula
$$
\begin{aligned}
    \tilde\var_{(0,0)}^2&=\tilde\vary_{(0,0)}^3   \\
    2\tilde\var_{(0,0)}\tilde\var_{(1,0)}&=3\tilde\vary_{(0,0)}^2\tilde\vary_{(1,0)} \\  
     2\tilde\var_{(0,0)}\var_{(0,1)}&=3\tilde\vary_{(0,0)}^2\vary_{(0,1)} \\  
     2\tilde\var_{(0,0)}\var_{(1,1)}+2\tilde\var_{(1,0)}\var_{(0,1)}&=3\tilde\vary_{(0,0)}^2\vary_{(1,1)}+6\vary_{(0,0)}\tilde\vary_{(1,0)}\vary_{(0,1)}.
\end{aligned}
$$
Note that the first equation is $\phi(\tilde\var_{(0,0)}, \tilde\vary_{(0,0)})$, and that above the singular point $\tilde\var_{(0,0)}=0= \tilde\vary_{(0,0)}$, the fiber consist of two $4$-dimensional planes.
\end{example}

\begin{example}\label{E:dual}
Another example  is classical: let $R=\pol \fld \xi/\xi^2\pol \fld \xi$ be the ring of dual numbers. Then one verifies that a $\fld$-rational point on $\arc RY$ is given by a $\fld$-rational point $P$ on $Y$, and a tangent vector  $\tuple v$ to $Y$ at $P$, that is to say, an element in the kernel of the Jacobian matrix $\jac\phi (P)$. 
\end{example}

\begin{example} 
As a last example, we calculate $\arc {Z_n}{Z_m}$, where $Z_n:=\op{Spec}(\pol \fld \xi/\xi^n\pol \fld \xi)$. With $\dot \var=\tilde\var_0+\tilde\var_1\xi+\dots+\tilde\var_{n-1}\xi^{n-1}$, we will expand $\dot \var^m$ in the basis $\{1,\xi,\dots,\xi^{n-1}\}$ of $\pol \fld \xi /\xi^n\pol \fld \xi$ (see Lemma~\ref{L:uniquearc} below for why the choice of  basis is not important);  the coefficients of this expansion then generate the ideal of definition   of $\arc{Z_n}{Z_m}$. A quick calculation shows that these generators are the polynomials
$$
g_s(\tilde\var_0,\dots,\tilde\var_{n-1}):=\sum_{i_1+\dots+i_m=s}\tilde\var_{i_1}\tilde\var_{i_2}\cdots \tilde\var_{i_m}
$$
for $s=0,\dots,n-1$, where the $i_j$ run over $\{0,\dots,n-1\}$. Note that $g_0=\tilde\var_0^m$. One shows by induction that $(\tilde\var_0,\dots,\tilde\var_s)\pol \fld {\tilde\var}$ is the unique minimal prime  ideal of $\arc{Z_n}{Z_m}$, where $s=\round nm$ is the \emph{round-up}  of $n/m$, that is to say, the  least integer greater than or equal to $n/m$. In particular, $\arc{Z_n}{Z_m}$ is irreducible of dimension $n-\round nm$.
\end{example}

Although the  arc scheme depends on the choice of basis, we have:

  \begin{lemma}\label{L:uniquearc}
For each finite dimensional $\fld$-algebra $R$, and each $\mathcal L_\fld$-formula $\phi$, the   arc formula $\arc R\phi$ along $R$ is unique up to an explicit isomorphism modulo $\arttheory  \fld$.
\end{lemma}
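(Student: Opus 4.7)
The plan is to show that a change of $\fld$-basis of $R$ induces an explicit invertible linear change of arc variables that implements the required isomorphism. Accordingly, fix two $\fld$-bases $\Delta,\Delta'$ of $R$ with change-of-basis matrix $M\in GL_l(\fld)$, write $N:=\arc R\phi$ for the arc formula built from $\Delta$ (with arc variables $\tilde\var$), and let $N'$ denote the analogous formula built from $\Delta'$ (in its own arc variables $\tilde\var'$). Applying $M$ separately to each of the $n$ blocks of $l$ arc variables yields an explicit morphic formula $\theta(\tilde\var,\tilde\var')$ defining the linear relation $\tilde\var'=M\tilde\var$; since $M^{-1}$ is also explicit, $\theta$ is already an $\expl$-isomorphism $\lambda_{ln}\iso\lambda_{ln}$ by Lemma~\ref{L:morinv}. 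What remains is to verify that $\theta$ restricts to an $\expl$-isomorphism $N\iso N'$.

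I would proceed by induction on the complexity of $\phi$. The atomic base case is the crux: when $\phi$ is an equation $f(\var)=0$, the substitution $\tilde\var'=M\tilde\var$ carries the second generic arc $\dot\var':=\sum_{\beta\in\Delta'}\tilde\var'_\beta\beta$ to the first generic arc $\dot\var:=\sum_{\alpha\in\Delta}\tilde\var_\alpha\alpha$, by the very identity defining $M$. Hence $f(\dot\var')$ and $f(\dot\var)$ agree in $R\tensor_\fld\pol\fld{\tilde\var}$ after this substitution, and expanding each in its respective basis shows that the ideals in $\pol\fld{\tilde\var}$ generated by $\{(\coor\alpha f)(\tilde\var)\}_{\alpha\in\Delta}$ and by $\{(\coor\beta f)(M\tilde\var)\}_{\beta\in\Delta'}$ coincide. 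By Theorem~\ref{T:fosch}, the corresponding \zariski\ formulae are $\arttheory\fld$-equivalent, so $\theta$ carries $N$ bijectively onto $N'$ at the atomic level. A general \zariski\ formula is then handled by summing the ideals of its atomic conjuncts.

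The inductive steps for $\of$, $\niet$, and existential quantification are formal, since the arc construction commutes with these connectives by definition. For $\phi(\var)=(\exists\vary)\psi(\var,\vary)$ one simply extends $\theta$ to act by $M$ also on the additional bound arc variables, which leaves the projection onto the free arc variables unaffected and transports the two pp-formulae into each other through an explicit morphism. The only substantive obstacle is therefore the atomic case, and it ultimately reduces to the linear-algebraic assertion that an invertible linear substitution sends the defining ideal of one version of the arc scheme to that of the other; the remainder of the argument is structural bookkeeping.
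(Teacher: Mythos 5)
Your proposal is correct, and it shares the paper's central idea: the isomorphism is implemented by the explicit invertible linear change of arc variables induced by the change-of-basis matrix, and such a linear map is automatically an $\expl$-isomorphism by Lemma~\ref{L:morinv}. Where you depart from the paper is in the verification. The paper applies Proposition~\ref{P:arc} directly and uniformly: writing $\pi,\pi_*$ for the two coordinate isomorphisms $R\tensor_\fld A\iso A^l$, and $\tau$ for the resulting linear automorphism of $\fld^l$, one checks in a single stroke that $u\in\inter{\arc{}\phi}A$ iff $\inv\pi u\in\inter\phi{R\tensor_\fld A}$ iff $\tau(u)=\pi_*(\inv\pi u)\in\inter{\arc*\phi}A$, for \emph{every} formula $\phi$ simultaneously. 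You instead re-derive this correspondence by inducting on formula complexity, handling the atomic case via the ideal-theoretic dictionary of Theorem~\ref{T:fosch} (showing the two families of coordinate polynomials generate the same ideal after substitution) and then threading the substitution through $\of$, $\niet$, and $\exists$. That works, and your handling of each inductive step is sound. But it duplicates work already packaged in Proposition~\ref{P:arc} --- that proposition was itself proved by exactly the induction you describe --- so you are in effect re-proving it. The paper's route buys a one-pass semantic verification with no case analysis; yours buys a more hands-on algebraic picture of what happens to the defining ideal of the arc scheme under a linear change of coordinates. Either way, the only genuinely nontrivial content is the linear-algebra fact $\pi_*\after\inv\pi\in GL_{l}(\fld)$, which both proofs isolate correctly.
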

\begin{proof}
Suppose $R$ has dimension $l$ over $\fld$, and let $\Delta$ and $\Delta^*$ be two bases of $R$, with corresponding isomorphisms $\pi$ and $\pi_*$ between $R$ and $\fld^l$, and corresponding arc maps $\arc {}{}$ and $\arc *{}$ on $\mathcal L_\fld$. There exists an $\fld$-linear automorphism $\sigma$ of $R$ sending     $\Delta$ to $\Delta^*$. Applying $\sigma$ to $r=\sum\pi_\alpha(r)\alpha$ yields $\sigma(r)=\sum\pi_\alpha(r)\sigma(\alpha)$, showing that 
\begin{equation}\label{eq:auto}
\pi_*(\sigma(r))=\pi(r),
\end{equation}
 for any $r\in R$. 
Define $\tau$ as the automorphism $\pi\after\inv\sigma\after\inv\pi$ of $\fld^l$. I claim that the explicit formula $\tilde\vary=\tau(\tilde\var)$ induces an isomorphism between $\arc{}\phi(\tilde\var)$ and $\arc *\phi(\tilde\vary)$ modulo $\theory T_\fld$ (whence also modulo $\arttheory\fld$), for any formula $\phi$. Indeed, let $A$ be a finitely generated $\fld$-algebra, and let 
  $u\in\inter{\arc{}\phi}A$. Put $a:=\inv\pi{u}$, where we continue to write $\pi$ for the base change $A':=R\tensor_\fld A\to A^l$. Applying Proposition~\ref{P:arc} twice, we get $a\in\inter\phi {A'}$ whence $\pi_*(a)\in\inter{\arc*\phi} A$. Since  $\tau(u)=\pi(\inv\sigma(a))=\pi_*(a)$ by a component-wise application of \eqref{eq:auto}, we showed that $\tau$ induces the desired isomorphism between $\inter{\arc{}\phi}A$ and $\inter{\arc*\phi}A$.
\end{proof}

\begin{remark}\label{R:goodbasis}
So, from now on, we may choose a basis $\Delta=\{\alpha_0,\dots,\alpha_{l-1}\}$  of $(R,\maxim)$ with some additional properties. In particular, unless noted explicitly, we will always assume that the first base element is $1$ and that the remaining ones belong to $\maxim$. Moreover, once the basis is fixed, we let $\tilde\var$   be  the $l$-tuple of arc variables $(\tilde\var_0,\dots,\tilde\var_{l-1})$, so that  $\dot\var=\tilde\var_0+\tilde\var_1\alpha_1+\dots+\tilde\var_{l-1}\alpha_{\l-1}$ is   the corresponding generic arc. It follows from \eqref{eq:ethcomp} that $\coor 0f=f(\tilde\var_0)$, for any $f\in \pol \fld\var$, where henceforth we simply write $\coor jf$ for $\coor {\alpha_j}f$.  By \cite[\S2.1]{SchEC}, we may choose $\Delta$  so that, with  $\id_i:=(\alpha_i,\dots,\alpha_{l-1})R$, we have a  Jordan-Holder composition series\footnote{Writing $R$ as a homomorphic image of $\pol \fld\vary$ so that      $\vary:=\rij\vary e$ generates $\maxim$, let $\id(\alpha)$, for $\alpha\in\zet^n_{\geq 0}$,  be the ideal in $R$ generated by all $\vary^\beta$ with $\beta$ lexicographically larger than $\alpha$. Then we may take $\Delta$ to be all monomials $\vary^\alpha$ such that $\vary^\alpha\notin\id(\alpha)$, ordered lexicographically.}  
$$
\id_l=0\varsubsetneq\id_{l-1}\varsubsetneq \id_{l-2}\varsubsetneq\dots \varsubsetneq\id_1=\maxim\varsubsetneq\id_0=R.
$$

I claim that $\pi_j$ vanishes on each element in $\id_j$ for $j<i$. Indeed, if not, let $j<i$ be minimal so that there exists a counterexample with $r_j:=\pi_j(r)\neq0$ for some $r\in\id_i$. By minimality, $r=r_j\alpha_j+r_{j+1}\alpha_{j+1}+\dots\in \id_i$ showing that $\alpha_j\in\id_{j+1}$, since $r_j$ is invertible. However, this implies that $\id_j=\id_{j+1}$, contradiction. 
 
 From this, it is now easy to see that the first $m$ basis elements of $\Delta$ form a basis of $R_m:=R/\id_{m+1}$. Put differently, if $r\in R$, then the $m$-tuple $\pi_{R_m}(r)$ is the initial part of the $l$-tuple $\pi_R(r)$. Therefore, calculating $\coor mf$ for $f\in\pol \fld\var$ does not depend on whether we work with $\pi_R$ or with $\pi_{R_m}$, and hence, in  particular, $\coor mf\in\pol \fld{\tilde\var_0,\dots,\tilde\var_m}$ for every $m< l$.
 \end{remark}

The next result together with Corollary~\ref{C:fib} below shows that arc schemes are functorial  fibrations:

\begin{theorem}\label{T:arc}
Let $Z$ be a local Artinian $\fld$-scheme of length $l$. For each affine $\fld$-scheme $X\sub\affine \fld n$, the projection $\affine \fld{ln}\to \affine \fld n$ onto the first $n$ coordinates induces a   split surjective map $\arc ZX\to X$, which  is smooth above the regular locus of $X$. If $h\colon Y\to X$ is a morphism of affine $\fld$-schemes, then we have an induced morphism $\arc Zh\colon \arc ZY \to \arc ZX$ making the diagram 
\commdiagram[arc]{\arc ZY}{\arc Z h}{\arc ZX}{}{}{Y}{h}X
commute. Moreover, if $h$ is   a closed immersion, then so is $\arc Zh$. If $Y\sub X$ is  an open immersion, then we even we have an isomorphism
\begin{equation}\label{eq:pbopen}
 \arc Z Y\iso \arc Z X\times_XY.
\end{equation}
\end{theorem}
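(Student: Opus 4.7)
The proof splits into four assertions; throughout I work in the good basis $\Delta=\{1=\alpha_0,\alpha_1,\dots,\alpha_{l-1}\}$ of Remark~\ref{R:goodbasis}, so that $\coor 0 f = f(\tilde\var_0)$ and $\coor j f\in\pol\fld{\tilde\var_0,\dots,\tilde\var_j}$ for every $f\in\pol\fld\var$. For the projection, since $\coor 0 f = f(\tilde\var_0)$, every defining equation of $X$ appears among those of $\arc ZX$, so the coordinate projection $(\tilde\var_\alpha)_\alpha\mapsto\tilde\var_0$ restricts to a morphism $\arc ZX\to X$. The constant-arc assignment $a\mapsto(a,0,\dots,0)$ is a section; functorially (via Proposition~\ref{P:arcsch}), this pair arises from the structural maps $Z\twoheadrightarrow\op{Spec}\fld\hookrightarrow Z$. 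For the functoriality of $h\colon Y\to X$, represent $h$ by an explicit polynomial map $p$ (Proposition~\ref{P:defmap}) and apply Proposition~\ref{P:arc} componentwise to produce $\arc Z h$; commutativity of diagram~\eqref{arc} is immediate from $\coor 0 p = p(\tilde\var_0)$.

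For a closed immersion $Y_\psi\hookrightarrow X_\phi$, the inclusion $I(\phi)\sub I(\psi)$ lifts to $I(\arc R\phi)\sub I(\arc R\psi)$, since both are generated by the $\coor\alpha f$ with $f$ ranging over the relevant ideal; hence $\arc Z h$ is a closed immersion. For the pullback identity~\eqref{eq:pbopen}, I use the functorial description: for an $\fld$-algebra $B$, a $B$-point of $\arc ZX$ is a morphism $\sigma\colon Z\times_\fld\op{Spec}B\to X$. Because $\maxim\sub R$ is nilpotent, the ideal $\maxim\otimes_\fld B\sub R\otimes_\fld B$ is nilpotent, so $\sigma$ factors through an open $Y\sub X$ if and only if its restriction to the closed fibre $\op{Spec}B\hookrightarrow Z\times_\fld\op{Spec}B$ does. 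This says exactly that $\arc ZY$ represents the fibre product $\arc ZX\times_XY$.

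Finally, for smoothness above $X^{\textup{reg}}$, stratify $R$ using the filtration $\id_i$ of Remark~\ref{R:goodbasis}: setting $R_i:=R/\id_{i+1}$ yields surjections $R=R_{l-1}\twoheadrightarrow R_{l-2}\twoheadrightarrow\dots\twoheadrightarrow R_0=\fld$ whose successive kernels are one-dimensional and annihilated by $\maxim$. Functoriality factors $\arc ZX\to X$ as a tower of small-extension steps $\arc{\op{Spec}R_i}X\to\arc{\op{Spec}R_{i-1}}X$, and composition reduces the smoothness claim to showing that each step is smooth above the preimage of $X^{\textup{reg}}$. A standard deformation-theoretic argument identifies the fibre of such a step over a point lying above $q\in X^{\textup{reg}}$ with a torsor under the Zariski tangent space $T_qX$, hence smooth of dimension $\dim_q X$; concretely, using that $\coor jf\in\pol\fld{\tilde\var_0,\dots,\tilde\var_j}$, the full Jacobian of $\arc R\phi$ is block lower-triangular with diagonal blocks equal to $\jac\phi(\tilde\var_0)$, and the Jacobian criterion then finishes the argument. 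The main obstacle is this last identification: one must combine the filtration compatibility of the basis from Remark~\ref{R:goodbasis} with a careful Taylor expansion of the defining equations in order to match the linearized arc data with tangent vectors, which is where the good basis becomes essential.
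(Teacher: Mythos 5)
Your proposal handles the split surjection, functoriality, and closed-immersion parts the same way the paper does, so there is nothing to add there. The genuinely different step is the pullback identity~\eqref{eq:pbopen}: you use the functor-of-points description of Proposition~\ref{P:arcsch} together with the observation that $\maxim\otimes_\fld B$ is nilpotent, so that a morphism $Z\times_\fld\op{Spec}B\to X$ factors through an open $U\sub X$ exactly when its restriction to the reduced closed fibre $\op{Spec}B$ does. This is cleaner and more conceptual than the paper's argument, which instead writes $Y=\op D(f)$ as the hypersurface $\vary f(\var)=1$ and shows by downward induction on the arc index $u$ that each $\tilde\vary_u$ is expressible in the subring generated by $\tilde\var$ and $\tilde\vary_0$, yielding $\arc ZY\iso\op D(\coor 0f)$ explicitly. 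Your route buys elegance and avoids the calculation; the paper's buys an explicit presentation of $\arc ZY$ which is reused later.

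For the smoothness assertion, your Jacobian observation --- that the block indexed by arc indices $(u,i)$ vanishes for $u<i$ because $\coor uf\in\pol\fld{\tilde\var_0,\dots,\tilde\var_u}$, and that the diagonal blocks equal $\jac\phi(\tilde\var_0)$ --- is exactly the heart of the paper's proof. However, you leave the rest as a sketch, and the ``torsor under $T_qX$'' heuristic by itself does not yield smoothness of the \emph{morphism}: having affine-space fibres is necessary but one must also control flatness or exhibit local triviality, which is precisely what the Jacobian rank computation supplies. To close the gap along the paper's lines you need the preliminary normalization that makes the diagonal of $\jac\phi$ a Kronecker delta at the point in question: translate $P$ to the origin, pass to an open on which $I(\phi)$ is generated by $h=\op{codim}$ elements that form part of a regular system of parameters, and apply a linear change of variables so $f_v=\var_v+g_v$ with $g_v\in\mathfrak n^2$. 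Only then does the triangularity give determinant one for the $lh\times lh$ minor, and combined with Krull's principal ideal theorem (to bound the height of $I(\arc{}\phi)$ by $lh$) the Jacobian criterion applies. You acknowledge this is ``where the good basis becomes essential,'' which is accurate; the filtration compatibility of Remark~\ref{R:goodbasis} is what makes $\partial(\coor uf)/\partial\tilde\var_{i,j}=0$ for $u<i$ and $\partial(\coor if)/\partial\tilde\var_{i,j}=\partial f/\partial\var_j$. Filling in those reductions and the height bound would complete the proof.
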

\begin{proof}
Let $(R,\maxim)$ be the Artinian local ring $\loc_Z$ corresponding to $Z$, and calculate $\pi:=\pi_R$ with the basis   given as in Remark~\ref{R:goodbasis}.  
 The projection $\affine \fld{ln}\to \affine \fld n$ is given by the embedding $\pol \fld\var\to\pol \fld{\tilde\var}\colon \var\mapsto\tilde\var_0$.  Let $\phi$ be the \zariski\ formula defining $X$, let  $I:=I(\phi)$ be the corresponding ideal, and let $A:=\pol \fld\var/I$ be its coordinate ring. Furthermore, let $\tilde I:=I(\arc{} X)$ be the ideal defining $\arc {}X:=\arc  ZX$, and let $\tilde A:=\pol \fld{\tilde\var}/\tilde I$ be its coordinate ring. 
The existence of the map  $\arc{} X\to X$ follows from our observation  in Remark~\ref{R:goodbasis} that $\coor 0f=f(\tilde\var_0)$. Namely, applying this to every equation in $\phi$, we see that   the explicit formula $\tilde\var_0=\var$ defining the projection $\affine \fld{ln}\to \affine \fld n$  induces a morphism $\phi\to \arc {}\phi$, whence a \homo\ $A\to \tilde A$, that is to say, a morphism $\arc ZX\to X$. Let $\mathfrak b$ be the ideal in $\pol \fld{\tilde\var}$ generated by all $\tilde\var_i$ with $0<i<l$. Since $\dot\var\equiv\tilde\var_0\mod \mathfrak b\pol R{\tilde\var}$, equation~\eqref{eq:ethcomp} yields that all $\coor uf$ belong to $\mathfrak b$, for $u>0$. Hence $\tilde A/\mathfrak b \tilde A\iso A$, showing that $\arc {} X\to X$ has  a section, whence is split surjective. 

If $Y\to X$ is a morphism of affine $\fld$-schemes, then this corresponds by Corollary~\ref{C:zarmor} to an explicit morphism $\phi\to\psi$, where $\psi$ is the \zariski\ formula defining $Y$. We leave it to the reader to verify that this induces an explicit morphism $\arc {}\phi\to\arc {}\psi$, leading to a commutative diagram~\eqref{arc}. Suppose $Y\to X$ is a closed immersion. We may assume that $Y$ is a closed subscheme of $X$, and hence the \zariski\ formula of $Y$ can be taken to be a conjunction of the form $\phi\en\psi$, with $\psi$ some \zariski\ formula. Therefore, $\arc {}(\phi\en\psi)=\arc {}\phi\en\arc {}\psi$ is the \zariski\ formula for $\arc {} Y$, showing that it is a closed subscheme of $\arc {} X$. Next suppose $Y\to X$ is an open immersion. We may reduce to the case that $Y$ is a basic open subset of $X$, since $\arc {}{}$ is compatible with disjuncts/unions. By the case of a closed immersion just proved and the fact that $\arc {}{}$ also preserves intersections,  we may furthermore reduce to the case that $Y=\op D(f)\sub X=\affine \fld n$ for some non-zero $f\in\pol \fld\var$. Hence $Y$ is defined, as a closed subscheme of $\affine \fld{n+1}$, by $g(\var,\vary):=\vary f(\var)-1$. Let  $\tilde\vary=(\tilde\vary_0,\dots,\tilde\vary_{l-1})$ be arc variables  with corresponding generic arc  $\dot\vary:=\tilde\vary_0+\dots+\tilde\vary_{l-1}\alpha_{l-1}$. Let $\tilde J\sub\pol \fld{\tilde\var, \tilde\vary}$ be the ideal defining  $\arc {} {Y}$, that is to say, the ideal generated by all $\coor ug$, and let  $\tilde B:=\pol \fld{\tilde\var, \tilde\vary}/\tilde J$ be the coordinate ring of $\arc {} Y$. Our aim is to show that $\arc {} Y$ is isomorphic to $\op D(\coor 0f)\sub\affine \fld{ln}$. Using  \eqref{eq:ethcomp}, we get
$$
\sum_{u=0}^{l-1}\coor ug(\tilde\var, \tilde\vary)\alpha_u=g(\dot\var,\dot\vary)=  \dot\vary\cdot \Big(\sum_{u=0}^{l-1}\coor uf(\tilde\var)\alpha_u \Big)-1.
$$
Expansion yields $\coor 0f=f(\tilde\var_0)$ and $\coor 0g=\tilde\vary_0\coor 0f-1$. Hence under the canonical \homo\ $\pol \fld\var\to \tilde B$ given by $\var\mapsto \tilde\var_0$, we get 
$\tilde\vary_0f=1$ in $\tilde B$.   By  Remark~\ref{R:goodbasis},  in order to calculate $\coor ug$ for $u>0$, we may ignore all terms containing some $\alpha_i$ with $i>u$, that is to say, 
$$
\begin{aligned}
   \coor ug&=\coor u{\Big((\tilde\vary_0+\dots+\tilde\vary_u\alpha_u)(\coor 0f+\dots+(\coor uf)\alpha_u )\Big)} &   \\
    & =\tilde\vary_u\coor0 f+\coor u {\Big((\tilde\vary_0+\dots+\tilde\vary_{u-1}\alpha_{u-1})(\coor 0f+\dots+(\coor uf)\alpha_u) \Big)}.
\end{aligned}
$$
Note that the second term lies in $\pol \fld{\tilde\var,\tilde\vary_0,\dots,\tilde\vary_{u-1}}$. Since $\coor 0f=f$ and  since $y_0f=1$ and $\coor ug=0$ in $\tilde B$, we obtain, after multiplying this sum by $\tilde\vary_0$,   that  $\tilde\vary_u$ lies in the  $\fld$-subalgebra of $\tilde B$ generated by all $\tilde\var$ and all $\tilde\vary_j$ with $j<u$. Hence, by downward induction  on $u$, we get
$$
\tilde B\iso \pol \fld{\tilde\var,\tilde\vary_0}/(\tilde\vary_0f-1)\pol \fld{\tilde\var,\tilde\vary_0},
$$
 showing that $\arc {} Y=\op D(f)$, as claimed. In particular,  $\arc {} Y$ is the pull-back of $Y$ under the map $\arc {} X\to X$, that is to say,  \eqref{eq:pbopen} holds.
 
So remains to show that if $P$ is a closed point in the regular locus of $X$, then the fiber of $\arc {} X\to X$ at $P$ is non-singular.  By the Nullstellensatz, we may, after a change of variables (translation),  assume that $P$ corresponds to the maximal ideal $\mathfrak n:=\rij\var n\pol \fld\var$. Since $\loc_{X,P}=A_{\mathfrak nA}$ is a regular local ring, $I\pol \fld\var_{\mathfrak n}$ is generated by a regular system of parameters (\cite[Theorem 14.2]{Mats}), say, of length $h$. Hence, by Nakayama's Lemma, we can find an open $U\sub \affine \fld n$ containing $P$, such that $I\loc_U$ is generated by $h$ elements whose image in $\loc_{U,P}$ are part of a generating system of $\mathfrak n$. Since $\arc {} {(U\cap X)}$ is just the pull-back of $U\cap X$ by \eqref{eq:pbopen}, and since the present question is not affected by such a pull-back, we may take $X=U$, and assume that $I=\rij fh\pol \fld\var$, with   the $f_u$   part of a minimal system of generators of $\mathfrak n$. In particular, the linear parts of the $f_u$ must be linearly independent over $\fld$. Hence after a linear change of variables (rotation), we may assume that $f_u=\var_u+g_u$, with each $g_u\in\mathfrak n^2$, for $j=\range 1h$.   Using \eqref{eq:ethcomp}, we get
$$
f_v(\dot\var)= \tilde\var_{0,v}+\tilde\var_{1,v}\alpha_1+\dots+\tilde\var_{l-1,v}\alpha_{l-1}+\sum_{u=0}^{l-1}\coor u{g_v}(\tilde\var)\alpha_u
$$
for $v=\range 1h$, showing that $\arc {}\phi$ is the conjunction of the $lh$ equations $\coor u{f_v}=\tilde\var_{u,v}+\coor u{g_v}=0$, with $u=\range 0{l-1}$ and $v=\range 1h$.

Let $J(\tilde\var)$ be the $lh\times lh$-submatrix
$$
\left(
\begin{matrix}
\frac{\partial (\tilde\var_{0,1}+\coor 0{g_1})}{\partial\tilde\var_{0,1}} &
%\frac{\partial (\tilde\var_{0,1}+\coor 0{g_1})}{\partial\tilde\var_{1,1}} &
\dots& 
\frac{\partial (\tilde\var_{0,1}+\coor 0{g_1})}{\partial\tilde\var_{l-1,1}} &
\frac{\partial (\tilde\var_{0,1}+\coor 0{g_1})}{\partial\tilde\var_{0,2}} &
\dots&
\frac{\partial (\tilde\var_{0,1}+\coor 0{g_1})}{\partial\tilde\var_{l-1,h}} 
\\
\phantom{
\frac{\partial (\tilde\var_{1,1}+\coor 1{g_1})}{\partial\tilde\var_{1,1}} }
&&
\\
\frac{\partial (\tilde\var_{1,1}+\coor 1{g_1})}{\partial\tilde\var_{0,1}} & 
%\frac{\partial (\tilde\var_{1,1}+\coor 1{g_1})}{\partial\tilde\var_{1,1}}  &
\dots& 
\frac{\partial (\tilde\var_{1,1}+\coor 1{g_1})}{\partial\tilde\var_{l-1,1}} &
\frac{\partial (\tilde\var_{1,1}+\coor 1{g_1})}{\partial\tilde\var_{0,2}} &
\dots&
\frac{\partial (\tilde\var_{1,1}+\coor 1{g_1})}{\partial\tilde\var_{l-1,h}} 
\\
\vdots
%&\vdots
&\ddots&\vdots&\vdots&\ddots&\vdots\\
\frac{\partial (\tilde\var_{l-1,1}+\coor {l-1}{g_1})}{\partial\tilde\var_{0,1}} & 
%\frac{\partial (\tilde\var_{l-1,1}+\coor {l-1}{g_1})}{\partial\tilde\var_{1,1}}  &
\dots& 
\frac{\partial (\tilde\var_{l-1,1}+\coor {l-1}{g_1})}{\partial\tilde\var_{l-1,1}} &
\frac{\partial (\tilde\var_{l-1,1}+\coor {l-1}{g_1})}{\partial\tilde\var_{0,2}} &
\dots&
\frac{\partial (\tilde\var_{l-1,1}+\coor {l-1}l{g_1})}{\partial\tilde\var_{l-1,h}}
\\
\phantom{
\frac{\partial (\tilde\var_{l,1}+\coor 1{g_1})}{\partial\tilde\var_{1,1}} }
&&
\\
\frac{\partial (\tilde\var_{0,2}+\coor 0{g_2})}{\partial\tilde\var_{0,1}} & 
%\frac{\partial (\tilde\var_{0,2}+\coor 0{g_2})}{\partial\tilde\var_{1,1}}  &
\dots& 
\frac{\partial (\tilde\var_{0,2}+\coor 0{g_2})}{\partial\tilde\var_{l-1,1}} &
\frac{\partial (\tilde\var_{0,2}+\coor 0{g_2})}{\partial\tilde\var_{0,2}} &
\dots&
\frac{\partial (\tilde\var_{0,2}+\coor 0{g_2})}{\partial\tilde\var_{l-1,h}} 
\\
\vdots
%&\vdots
&\ddots&\vdots&\vdots&\ddots&\vdots\\
\frac{\partial (\tilde\var_{l-1,h}+\coor {l-1}{g_h})}{\partial\tilde\var_{0,1}}  & 
%\frac{\partial (\tilde\var_{l-1,h}+\coor {l-1}{g_h})}{\partial\tilde\var_{1,1}} &
\dots& 
\frac{\partial (\tilde\var_{l-1,h}+\coor {l-1}{g_h})}{\partial\tilde\var_{l-1,1}} &
\frac{\partial (\tilde\var_{l-1,h}+\coor {l-1}{g_h})}{\partial\tilde\var_{0,2}} &
\dots&
\frac{\partial (\tilde\var_{l-1,h}+\coor {l-1}{g_h})}{\partial\tilde\var_{l-1,h}} 
\end{matrix}\right)
$$
  of the Jacobian $\jac{\arc {}\phi}(\tilde\var)$, and, for each $1\leq v,j\leq h$, let $J_{v,j}$ be the $l\times l$ submatrix $(\partial(\tilde\var_{u,v}+\coor u{g_v})/\partial \tilde\var_{i,j})_{u,i=\range 0{l-1}}$ of $J$. I claim that each $J_{v,j}$ is an upper-triangular matrix with diagonal entries all equal to $\partial f_v/\partial  \var_j$ (under the canonical embedding $\pol \fld\var\to \pol \fld{\tilde\var}$).  Assuming the claim, let $\tilde P$ be an arbitrary closed point in the fiber above $P$. In particular, if $\tilde{\mathfrak n}\sub\pol \fld{\tilde\var}$  is the maximal ideal corresponding to $\tilde P$, then $\mathfrak n= \tilde{\mathfrak n}\cap \pol \fld\var$. Hence, the determinant of $J_{v,j}$ evaluated at $\tilde P$ is equal to $(\partial f_v/\partial  \var_j)^l(P)$, which is equal to the Kronecker delta function  $\delta_{v,j}$. This shows that $J(\tilde P)$ is an upper-triangular matrix with  determinant equal to one,    from which it follows that 
$\jac{\arc {}\phi}(\tilde P)$   has rank at least $lh$. Since $I(\arc {}\phi)$ has height at most $lh$ by Krull's Principal Ideal theorem, we showed, using the  Jacobian criterion for smoothness (\cite[Theorem 16.19]{Eis}), that $\arc {} X$ is smooth at $\tilde  P$, as we wanted to show. 

So remains to prove the claim. For $f\in\pol \fld\var$, we apply the $(i,j)$-th partial derivative to both sides of \eqref{eq:ethcomp}. On the left hand side, we get, using the chain rule
\begin{equation*} 
\begin{aligned}
\frac {\partial \big(f(\dot\var)\big)}{\partial \tilde\var_{i,j}}&=\frac {\partial f}{\partial \var_j}(\dot\var)\cdot \frac{\partial(\dot\var_j)}{\partial\tilde\var_{i,j}}\\
&=\frac {\partial f}{\partial \var_j}(\dot\var)\cdot\alpha_i.
\end{aligned}
\end{equation*}
Doing the same to the right hand side of \eqref{eq:ethcomp}, we get
\begin{equation}\label{eq:rightpartial}
\frac {\partial f}{\partial \var_j}(\dot\var)\cdot\alpha_i=\sum_{u=0}^{l-1}\frac{\partial (\coor uf)}{\partial\tilde\var_{i,j}} \alpha_u.
\end{equation}
In view of  \eqref{eq:ethcomp} for $\partial f/\partial \var_j$, 
the left hand side of \eqref{eq:rightpartial} becomes 
$$
\Big(\frac{\partial f}{\partial \var_j} +\coor 1{(\frac{\partial f}{\partial \var_j})}\alpha_1+\dots+\coor {l-1}{(\frac{\partial f}{\partial \var_j})}\alpha_{l-1}\Big)\cdot\alpha_i.
$$
By the choice of  basis (which remains a basis for the base change $\pol R{\tilde\var}$ over $\pol \fld{\tilde\var}$), the coefficient of $\alpha_i$ in this product is $\partial f/\partial \var_j$. Hence comparing this with the right hand side of \eqref{eq:rightpartial}, we obtain
$$
\frac{\partial f}{\partial \var_j} = \frac{\partial (\coor if)}{\partial \tilde\var_{i,j}}.
$$
Furthermore,   
since the left hand side of    \eqref{eq:rightpartial}   belongs to $ \id_j\pol R{\tilde\var}$,   all $\partial (\coor uf)/\partial \tilde\var_{i,j}$ must be zero for $u<i$, by the choice of  basis (see Remark~\ref{R:goodbasis}). Applied with $f=f_v=\var_v+g_v$,    the claim follows from this.
\end{proof}

\begin{remark}\label{R:arcsect}
The section of $\arc ZX\to X$ given in the proof of the theorem is induced by $\coor 0{}$, and will be called the \emph{canonical section} of $\arc ZX\to X$. We will identify $X$ as a closed subscheme of $\arc ZX$ via this canonical section.
\end{remark}

\begin{corollary}\label{C:dimarc}
If $X$ is a $d$-dimensional affine variety and $Z$ an Artinian local scheme of length $l$, then $\arc ZX$ is an $ld$-dimensional variety.
\end{corollary}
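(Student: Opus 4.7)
My plan is to localize on the regular locus and apply Theorem~\ref{T:arc}. Set $U := X^{\op{reg}}$, which is open and dense in the reduced variety $X$, smooth of dimension $d$ over $\fld$. By \eqref{eq:pbopen}, the arc scheme $\arc Z U$ is an open subscheme of $\arc Z X$, and by Theorem~\ref{T:arc} the induced projection $\arc Z U \to U$ is smooth.

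I would first compute the dimension over $U$. At a closed point $P \in U$, the completion $\complet{\loc_{X,P}}$ is isomorphic to $\pow\fld{t_1, \dots, t_d}$, and morphisms $\op{Spec} R \to \op{Spec}\complet{\loc_{X,P}}$ landing at $P$ correspond to $d$-tuples of elements of $\maxim_R$, giving a fiber isomorphic to $\affine\fld{(l-1)d}$. Smoothness over the $d$-dimensional base $U$ then yields $\dim \arc Z U = d + (l-1)d = ld$. Since $\arc Z U$ is a non-empty open of $\arc Z X$, this proves $\dim \arc Z X \geq ld$, and exhibits $\arc Z X$ as having a smooth (hence reduced) open dense piece of dimension $ld$. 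Alternatively, the Jacobian-rank calculation $lh = l(n-d)$ from the proof of Theorem~\ref{T:arc}, combined with Krull's Hauptidealsatz applied at any canonical lift of a smooth point, shows that $\arc Z X$ is regular of dimension $ld$ in a neighborhood of the canonical section above $U$.

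For the upper bound I would invoke the canonical section $X \into \arc Z X$ from Remark~\ref{R:arcsect}: every irreducible component $V$ of $\arc Z X$ whose image in $X$ meets $U$ intersects the open $\arc Z U$ in a non-empty open subset, and since $\arc Z U$ is pure of dimension $ld$, this forces $\dim V = ld$.

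The main obstacle is ruling out extra components of $\arc Z X$ supported entirely over the singular locus $X^{\op{sing}}$ whose dimension exceeds $ld$. Such components can genuinely appear when $X$ is reducible: for $X = V(xy, xz, yz) \subset \affine\fld 3$ with $R = \fld[\xi]/\xi^2$, the subscheme $V(\tilde\var_0, \tilde\vary_0, \tilde z_0)$ is a $3$-dimensional component of $\arc Z X$ while $ld = 2$. The statement is thus naturally read under the additional hypothesis that $X$ is irreducible, in which case one shows that every arc through a singular point arises as a specialization of arcs through points of $U$ (e.g.\ via a formal Hensel / smoothing argument on the generic arc), so that $\arc Z X$ coincides with the reduced closure of $\arc Z U$, and the bound $\dim \arc Z X \leq ld$ follows.
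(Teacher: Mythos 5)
Your approach to the dimension from below---restrict to the regular locus $U$, use the smooth fibration $\arc Z U\to U$ from Theorem~\ref{T:arc}, and count fiber dimension---is exactly the paper's. Where you part ways is in worrying about density: the paper's proof asserts without argument that $\arc{}U = U\times_X\arc{}X$ is a \emph{dense} open in $\arc{}X$, whereas Theorem~\ref{T:arc} only supplies the open immersion, and then deduces irreducibility and dimension of $\arc{}X$ from those of $\arc{}U$. You correctly sense that this density is the crux, and your example $V(xy,xz,yz)$ shows it can fail. You then locate the obstruction solely in reducibility of $X$ and propose that the irreducible case is saved by a specialization (Kolchin-type) argument.

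That fix does not go through at finite truncation level, and the corollary is in fact false even for irreducible $X$, so the gap is already in the paper's own proof. Take $X = V(x^4+y^4+z^4)\subset\affine\fld 3$, the affine cone over a smooth plane quartic (irreducible of dimension $d=2$ when $\op{char}\fld\neq 2$), and $Z = \op{Spec}(\pol\fld\xi/\xi^{12}\pol\fld\xi)$, of length $l = 12$, so $ld = 24$. The linear subspace of $\affine\fld{36}$ cut out by $\tilde x_0=\tilde x_1=\tilde x_2=\tilde y_0=\tilde y_1=\tilde y_2=\tilde z_0=\tilde z_1=\tilde z_2=0$ has dimension $27 > 24$ and lies inside $\arc Z X$: on it each of $\dot x,\dot y,\dot z$ has $\xi$-order at least $3$, so $\dot x^4+\dot y^4+\dot z^4\equiv 0\bmod\xi^{12}$ identically, and all twelve arc equations vanish. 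Hence $\arc ZX$ has a component over the singular point of dimension strictly exceeding $ld$, $\arc Z U$ is not dense in $\arc Z X$, and the specialization heuristic (which is correct for the full arc space) breaks at this finite level. The statement needs a hypothesis restricting the singularities of $X$; smoothness suffices and is what is actually in force in the paper's application, Corollary~\ref{C:fib}.
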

\begin{proof}
Since $X$ is irreducible, it contains a dense open subset $U$ which is non-singular. By Theorem~\ref{T:arc}, the pull-back $\arc{} U=U\times_X\arc{} X$ is a dense open subset of $\arc{} X$. Moreover, Theorem~\ref{T:arc} also yields that $\arc{} U\to U$ is smooth. Since $U$ is non-singular, so is therefore $\arc{} U$. In particular, $\arc{} U$ is irreducible, whence so is $\arc{} X$. Moreover, by the proof of the theorem, $U$ is defined by an ideal of height $n-d$, and $\arc{} U$ by an ideal of height $l(n-d)$. Hence $\arc{} U$ has dimension $ln-l(n-d)=ld$, whence so does $\arc{} X$.
\end{proof}

\begin{corollary}\label{C:split}
If $\bar Z\sub Z$ is a closed immersion of Artinian local $\fld$-schemes, then for any affine $\fld$-scheme $X$, we have a split surjection $\arc ZX\to \arc {\bar Z}X$, making the diagram
\commtrianglefront {\arc ZX}{}{\arc {\bar Z}X}{}X{}
commute.
\end{corollary}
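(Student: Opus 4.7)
The plan is to mimic the construction and splitting argument of Theorem~\ref{T:arc}, in which the only additional ingredient is a careful matching of bases for $R:=\loc_Z$ and $\bar R:=\loc_{\bar Z}$.

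First I would construct the morphism $\pi\colon \arc ZX\to \arc {\bar Z}X$ by pure functoriality. A closed immersion $\bar Z\hookrightarrow Z$ of affine schemes corresponds to a surjection of Artinian local $\fld$-algebras $\rho\colon R\twoheadrightarrow \bar R$. For any $\fld$-algebra $B$, post-composition with $\rho\otimes 1_B\colon R\otimes_\fld B\twoheadrightarrow \bar R\otimes_\fld B$ induces a natural transformation $X(R\otimes_\fld B)\to X(\bar R\otimes_\fld B)$. By Proposition~\ref{P:arcsch} these are exactly the functors of points of $\arc ZX$ and $\arc{\bar Z}X$, so Yoneda produces the desired morphism $\pi$.

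Next, to verify commutativity of the triangle, choose a basis $\Delta=\{\alpha_0,\dots,\alpha_{l-1}\}$ of $R$ as in Remark~\ref{R:goodbasis}, refined so that the Jordan-H\"older composition series $0=\id_l\varsubsetneq\cdots\varsubsetneq\id_0=R$ passes through the ideal $\id:=\ker\rho$; that is, $\id=\id_m$, and $\bar\Delta:=\{\bar\alpha_0,\dots,\bar\alpha_{m-1}\}$ is then a basis of $\bar R$ satisfying the same conditions. Such a refinement is always possible since any ideal of an Artinian local ring is part of some composition series. With these compatible choices, the observation $\coor 0 f=f(\tilde\var_0)$ from Remark~\ref{R:goodbasis} holds for both $\arc ZX\to X$ and $\arc{\bar Z}X\to X$, so both projections are given by the same explicit formula $\tilde\var_0=\var$; hence the composition $\arc ZX\xrightarrow\pi\arc{\bar Z}X\to X$ agrees with the canonical projection $\arc ZX\to X$, and the triangle commutes.

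In these coordinates, $\pi$ is dually induced by the natural inclusion of polynomial rings $\pol\fld{\tilde\var_0,\dots,\tilde\var_{m-1}}\hookrightarrow \pol\fld{\tilde\var_0,\dots,\tilde\var_{l-1}}$: the key point, again from Remark~\ref{R:goodbasis}, is that $\coor uf$ only involves $\tilde\var_0,\dots,\tilde\var_u$, so for $u<m$ the defining equations of $\arc{\bar Z}X$ appear verbatim among the defining equations of $\arc ZX$, making the ring map well defined on the quotients $\tilde A':=\pol\fld{\tilde\var_0,\dots,\tilde\var_{m-1}}/I(\arc{\bar R}\phi)\to \tilde A:=\pol\fld{\tilde\var_0,\dots,\tilde\var_{l-1}}/I(\arc R\phi)$.

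Finally, for the splitting, I would follow the strategy used for the canonical section in the proof of Theorem~\ref{T:arc}. Let $\mathfrak b\sub\pol\fld{\tilde\var}$ be the ideal generated by all arc variables $\tilde\var_i$ with $m\leq i<l$ (that is, those corresponding to basis elements of $\id$). Then I would establish the ring isomorphism $\tilde A/\mathfrak b\tilde A\cong \tilde A'$, which would give the section $\arc{\bar Z}X\to \arc ZX$ dually. The inclusion $\tilde A'\to \tilde A/\mathfrak b\tilde A$ is automatic from the compatibility of bases; the hard part will be verifying that setting the top-level arc variables to zero does not introduce extra relations in $\tilde A/\mathfrak b\tilde A$, i.e.\ that $\coor uf|_{\tilde\var_j=0,\,j\geq m}$ already lies in the ideal generated by $\coor uf$ for $u<m$, for every $u\geq m$ and every $f\in I(\phi)$. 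This is the main technical obstacle and will rely on the specific Jordan-H\"older structure of $\Delta$: the elements $\alpha_i$ with $i\geq m$ generate the ideal $\id$, and expanding $f(\dot\var)$ modulo $\id R[\tilde\var]$ (which corresponds exactly to setting $\tilde\var_i=0$ for $i\geq m$) recovers $f(\bar{\dot\var})$, whose coefficients in $\bar\Delta$ generate $I(\arc{\bar R}\phi)$ by construction.
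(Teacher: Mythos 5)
Your proposal follows the paper's own argument closely, and you correctly isolate the crux: after matching bases, one must verify that for $u\geq m$ the polynomial $\coor uf|_{\tilde\var_j=0,\,j\geq m}$ lands in $I(\arc{\bar R}\phi)$. But the justification you sketch for this point is not correct---and it reproduces a gap that is in fact present in the paper's own proof (where it is asserted that ``$\coor if$ belongs to $J$ for $i\geq\bar l$''). Your parenthetical claim that reducing $f(\dot\var)$ modulo $\id R[\tilde\var]$ ``corresponds exactly to setting $\tilde\var_i=0$ for $i\geq m$'' conflates two different operations. Reducing modulo $\id R[\tilde\var]$ kills the $\alpha_i$-components of $f(\dot\var)$ for $i\geq m$ and recovers $f(\bar{\dot\var})$; this only shows that $\bar A\to\tilde A$ is well defined (which you had already established). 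The retraction, by contrast, needs the variable substitution $\tilde\var_j\mapsto0$ to descend to $\tilde A$; that is strictly stronger, because the $\fld$-span of $\alpha_0,\dots,\alpha_{m-1}$ is in general not a subring of $R$, so $f\bigl(\sum_{i<m}\tilde\var_i\alpha_i\bigr)$ can have nonzero $\alpha_i$-components for $i\geq m$ that survive the substitution.

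Here is a concrete obstruction. Take $X=\op{Spec}(\pol\fld\var/\var^2\pol\fld\var)$, $Z=\op{Spec}(\pol\fld\xi/\xi^3\pol\fld\xi)$, $\bar Z=\op{Spec}(\pol\fld\xi/\xi^2\pol\fld\xi)$, $\operatorname{char}\fld\neq2$, and $f=\var^2$. Then $\coor 0f=\tilde\var_0^2$, $\coor 1f=2\tilde\var_0\tilde\var_1$, and $\coor 2f=\tilde\var_1^2+2\tilde\var_0\tilde\var_2$, so $\coor 2f|_{\tilde\var_2=0}=\tilde\var_1^2\notin I(\arc{\bar R}\phi)=(\tilde\var_0^2,2\tilde\var_0\tilde\var_1)$, and the retraction is not well defined. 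In fact $\arc ZX\to\arc{\bar Z}X$ is not even surjective here: on $\fld$-points it sends $\{(0,0,c):c\in\fld\}$ onto the single point $(0,0)$ of $\arc{\bar Z}X(\fld)=\{(0,b):b\in\fld\}$. Functorially, surjectivity of $X(R\otimes_\fld S)\to X(\bar R\otimes_\fld S)$ amounts to lifting along the square-zero surjection $R\otimes_\fld S\onto\bar R\otimes_\fld S$, which requires formal smoothness of $\loc_X$---a hypothesis absent from the statement. The Jordan--H\"older refinement of bases therefore cannot close this gap; the argument of Theorem~\ref{T:arc} succeeds only in the special case $\bar Z=\op{Spec}\fld$, precisely because the $\fld$-span of $\alpha_0=1$ alone \emph{is} a subring. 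A correct version of the corollary would need a smoothness hypothesis on $X$ (or would have to restrict the conclusion accordingly), and your write-up, like the paper's, should not assert the retraction without it.
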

\begin{proof}
Let $\phi$ be the \zariski\ formula defining $X$, and let $A:=\pol \fld\var/I(\phi)$ be its coordinate ring, with $\var$ an $n$-tuple of variables. Let $R$ and $\bar R$ be the corresponding Artinian local rings, and let $l$ and $\bar l$ be their respective lengths. Let  $\id$ be the kernel of the epimorphism $R\to \bar R$. We may choose a basis $\Delta=\{\alpha_0,\dots,\alpha_{l-1}\}$ of $R$ such that $\alpha_j\in\id$ for $j\geq \bar l$. Hence the images of the first $\bar l$ elements of $\Delta$ form a basis $\bar\Delta$ of $\bar R$. In view of Lemma~\ref{L:uniquearc}, we may use these two bases to calculate $\arc R\phi$ and $\arc {\bar R}\phi$. Let   $\tilde\var$ be the $ln$-tuple of arc variables. Let us denote the $\bar ln$ first variables in $\tilde\var$ by $\bar\var$, and let $J$ be the ideal generated by the remaining variables. Hence $\tilde A:=\pol \fld{\tilde\var}/I(\arc R\phi)$ and $\bar A:=\pol \fld{\bar\var}/I(\arc {\bar R}\phi)$ are the respective coordinate rings of $\arc ZX$ and $\arc {\bar Z}X$. For $f\in\pol \fld\var$, equation~\eqref{eq:ethcomp} shows that $\coor if$ belongs to $\pol \fld{\bar\var}$ for $i< \bar l$, and belongs to $J$ for $i\geq \bar l$. Hence the embedding $\pol \fld{\bar\var}\to\pol \fld{\tilde\var}$ induces a \homo\ $\bar A\to \tilde A$. Moreover,  the isomorphism  $\tilde A/J\tilde A\iso \bar A$ shows that this \homo\ is split.
\end{proof}

Note that if we let $\bar Z=\op{Spec}\fld$ be the closed subscheme given by the residue field, then $\arc {\bar Z}X=X$, showing that   $\arc  ZX\to X$ is induced by the residue map.

\begin{corollary}\label{C:arcmap}
Each Artinian $\fld$-scheme $Z$ induces an  endomorphism  $\arc Z{}$, called the \emph{arc map along $Z$}, on   $ \grotsch \fld$ (respectively, on $\grotart  \fld{}$), by sending a class $\class\phi$ to the class $\class{\arc Z\phi}$.
\end{corollary}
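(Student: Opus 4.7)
The plan is to verify that $\phi\mapsto\arc Z\phi$, which by Proposition~\ref{P:arc} carries \zariski\ (resp.\ pp) formulae to \zariski\ (resp.\ pp) formulae, descends to a well-defined ring endomorphism of the respective \gr{s}. Three items must be checked: preservation of $\zar$-isomorphism modulo $\arttheory\fld$, preservation of scissor relations, and preservation of the product, together with $\arc Z\one=\one$.

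First I would treat the isomorphism clause. Given a $\zar$-isomorphism $\theta\colon\phi\to\psi$, form $\arc Z\theta$, which is again \zariski\ by Proposition~\ref{P:arc}. The coordinate bijection $\pi$ of that proposition identifies $\inter\theta{R\tensor_\fld A}$ with $\inter{\arc Z\theta}A$ for each local Artinian $\fld$-algebra $A$ (where $R:=\loc_Z$), and this transfers the graph-of-bijection property of $\theta$ to $\arc Z\theta$; applying the same argument to $\op{inv}(\theta)$ handles the inverse, so that by Lemma~\ref{L:morinv} we obtain a $\zar$-isomorphism $\arc Z\phi\to\arc Z\psi$.

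Next I would treat the remaining clauses in one go. The recursive construction in Proposition~\ref{P:arc} yields the identities $\arc Z{(\phi\en\psi)}=\arc Z\phi\en\arc Z\psi$, $\arc Z{(\phi\of\psi)}=\arc Z\phi\of\arc Z\psi$, and $\arc Z{(\niet\phi)}=\niet\arc Z\phi$, so that any second scissor relation maps to a second scissor relation on the arcs. For the product, the task is to verify that $\arc Z{(\phi\times\psi)}$ coincides with $\arc Z\phi\times\arc Z\psi$ up to the explicit permutation of variables analogous to \eqref{eq:multexpliso}, after which $\arc Z\one=\one$ is immediate.

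The main obstacle will be the bookkeeping of arc variables in the product clause: on one side all $l(n+m)$ arc variables are introduced at once after coupling $\phi$ and $\psi$, while on the other side $ln$ and $lm$ arc variables are introduced separately and then concatenated, so the two formulae differ only by a relabelling, which Lemma~\ref{L:uniquearc} renders harmless once a convenient basis is chosen. The pp-variant $\grotart\fld{}$ requires no additional work, since Proposition~\ref{P:arc} also guarantees that $\arc Z{}$ preserves pp-formulae, and the same verification applies verbatim in the Boolean lattice $\pp$.
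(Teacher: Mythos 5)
Your proposal is correct and follows essentially the same route as the paper: use the coordinate bijection $\pi$ from Proposition~\ref{P:arc} to transfer the morphic (and isomorphism) conditions from $\theta$ to $\arc Z\theta$, observe that $\arc Z{}$ commutes with $\en$, $\of$, $\niet$ to handle scissor relations, and note that $\arc Z{}$ respects the product up to an explicit relabeling of arc variables. The only cosmetic difference is that the paper first establishes preservation of general morphic formulae and then specializes, whereas you argue the isomorphism clause directly; both verifications are the same computation with $\pi$.
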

\begin{proof}
Let us show in general that if $\theta$ is a morphic formula giving a morphism $\phi\to \psi$, then $\arc{}\theta$ is also morphic and induces a morphism $\arc{}\phi\to \arc{}\psi$. We verify this on an arbitrary  $\fld$-algebra $A$. Let $ a\in\inter{\arc{}\phi}A$ and put $\tilde{ a}:=\inverse\pi{ a}$, so that $\tilde{ a}\in\inter\phi{\tilde A}$, where $\tilde A:=R\tensor_\fld A$ and $R$ is the Artinian coordinate ring of $Z$. Since $\theta$ is morphic, it satisfies the morphic conditions~\eqref{i:map}--\eqref{i:im}, and hence we can find $\tilde{ b}\in\inter\psi {\tilde A}$, such that $(\tilde{ a},\tilde{ b})\in\inter\theta {\tilde A}$. Let $ b:=\pi(\tilde{ b})$ so that $ b\in\inter{\arc{}\psi}A$ and $( a, b)\in\inter{\arc{}\theta}A$. In particular, $ a$ satisfies the morphic conditions~\eqref{i:map} and \eqref{i:im}, and by a similar, easy argument we also verify \eqref{i:fct}, showing that $\arc{}\theta$ defines a morphism $\arc{}\phi\to\arc{}\psi$. 

Since $\arc{}{}$ preserves explicit and pp-formulae by Lemma~\ref{L:uniquearc} and its proof,  our previous argument then shows that it also preserves   $\zar$-isomorphisms. 
So remains   to verify that $\arc{}{}$ also preserves scissor relations. This is clear, however,   since  $\arc{}{(\phi\of\psi)}=\arc{}\phi\of\arc{}\psi$ and $\arc{}{(\phi\en\psi)}=\arc{}\phi\en\arc{}\psi$ by the proof of Proposition~\ref{P:arc}. This also shows that $\arc{}{}$ preserves multiplication, showing that it is a ring endomorphism on either \gr. 
\end{proof}

\subsection*{Integration}
 Integration is derived from the arc map as follows. Let $X$ and $Z$ be affine $\fld$-schemes, with $Z$ Artinian. We define their \emph{arc-integral} as the class 
$$
\integral ZX:=  \class{ \arc Z X }
$$
in $\grotsch \fld$ (or in $\grotart \fld{}$). Corollary~\ref{C:arcmap} shows that  this is   well-defined.

\begin{proposition}
The arc-integral $\integral ZX$ only depends on the class of the Artinian scheme $Z$ in $\grotschzero \fld$ and the class of the $\fld$-scheme $X$ in $\grotsch\fld$. In particular, for any $  p\in\grotsch \fld$ and $  q\in\grotschzero \fld$, the arc-integral $\integral {  q}{  p}$ is a well-defined element of $\grotsch \fld$. 

The same result holds upon replacing $\grotsch \fld$ by $\grotart \fld{}$ everywhere.
\end{proposition}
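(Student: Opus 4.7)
The proposition decomposes into two independent claims: (i) for fixed Artinian $Z$, the class $\class{\arc ZX}$ depends only on $\class X$ in the target Grothendieck ring; and (ii) for fixed $X$, the class $\class{\arc ZX}$ depends only on $\class Z\in\grotschzero\fld$. The plan is to obtain (i) directly from Corollary~\ref{C:arcmap}, and (ii) by reducing, via the natural \homo\ between Grothendieck rings, to genuine isomorphy of $Z$ and $Z'$ and then invoking Theorem~\ref{T:classinv}.

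Part (i) is essentially a restatement of Corollary~\ref{C:arcmap}: the arc map $\arc Z{}$ is a ring endomorphism of $\grotsch\fld$ (and of $\grotart\fld{}$), and by construction $\arc Z{}(\class X)=\class{\arc ZX}=\integral ZX$, so the value depends only on $\class X$ in the appropriate ring.

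For part (ii), observe that $\mathcal Art$ and $\qf$ are primary sub-semi-lattices of $\defcat{\arttheory\fld}$ with $\mathcal Art\sub\qf$, since by definition every Artinian formula is \zariski, whence quantifier free. Corollary~\ref{C:addmap} therefore yields a natural ring \homo\ $\grotschzero\fld\to\grotsch\fld$ sending the class of an Artinian formula $\phi$ (in $\grotschzero\fld$) to the class of the same formula (in $\grotsch\fld$). Consequently, if $\class Z=\class{Z'}$ in $\grotschzero\fld$, these classes remain equal in $\grotsch\fld$, and Theorem~\ref{T:classinv} then forces $Z\iso Z'$ as affine $\fld$-schemes. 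Since the arc construction is a functor of $Z$ (Lemma~\ref{L:uniquearc} and its proof guarantee that the arc scheme depends, up to canonical isomorphism, only on the $\fld$-algebra $\loc_Z$), we obtain $\arc ZX\iso\arc{Z'}X$ as affine $\fld$-schemes, and hence $\class{\arc ZX}=\class{\arc{Z'}X}$ in $\grotsch\fld$. Postcomposing with the natural \homo\ $\grotsch\fld\to\grotart\fld{}$ from Corollary~\ref{C:addmap} (using $\qf\sub\pp$) delivers the same conclusion in $\grotart\fld{}$.

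Combining (i) and (ii) shows that $\integral ZX$ is a function of the pair $(\class Z,\class X)\in\grotschzero\fld\times\grotsch\fld$, and similarly with $\grotart\fld{}$ in the second coordinate. This justifies setting $\integral qp:=\class{\arc ZX}$ for any representatives $q=\class Z$ and $p=\class X$, completing the proof. The one genuinely non-formal step is the appeal to Theorem~\ref{T:classinv}, which upgrades an abstract equality of Grothendieck classes to a literal isomorphism of Artinian schemes; once this is in hand, the invariance of the arc construction under isomorphism does the rest of the work.
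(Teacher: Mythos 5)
The first part of your argument (dependence on $\class X$ via Corollary~\ref{C:arcmap}) matches the paper, and your use of Theorem~\ref{T:classinv} to upgrade $\class Z=\class{Z'}$ in $\grotschzero\fld$ to a genuine isomorphism $Z\iso Z'$ is a correct and slightly different route from the paper's direct appeal to Corollary~\ref{C:isosch}; so the ``invariance'' half of the statement is fine.

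The gap is in the ``in particular'' claim. By Corollary~\ref{C:0cycle}, a general element $q\in\grotschzero\fld$ is a \emph{difference} $\class Z-\class{Z'}$, not the class of a single Artinian scheme, so you cannot literally ``set $\integral qp:=\class{\arc ZX}$ for a representative $q=\class Z$.'' To extend to differences you must check that the proposed formula $\integral{\class Z-\class{Z'}}X:=\integral ZX-\integral{Z'}X$ is well defined, i.e., that if $\class Z-\class{Z'}=\class W-\class{W'}$ then $\integral ZX-\integral{Z'}X=\integral WX-\integral{W'}X$. Knowing that $Z\mapsto\integral ZX$ is constant on isomorphism classes (your step (ii)) does not give this: the additional, genuinely non-trivial input is additivity on disjoint unions,
$$
\integral{Z\sqcup Z'}X=\integral ZX+\integral{Z'}X,
$$
equivalently, that $\arc{R\oplus R'}\phi$ is isomorphic modulo $\arttheory\fld$ to $\arc R\phi\oplus\arc{R'}\phi$ (which is exactly what the paper establishes via Lemma~\ref{L:disjsum} and Proposition~\ref{P:arc}). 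Without that scissor-relation check, the map on single Artinian classes does not extend to a homomorphism $\grotschzero\fld\to\grotsch\fld$; the proof is incomplete precisely at the step you wave through in your last paragraph. Note also that Corollary~\ref{C:arcmap} only gives additivity in $X$ for \emph{fixed} $Z$; it says nothing about additivity in the $Z$-argument, so it cannot be invoked here either.
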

\begin{proof}
The dependence on the class of $X$ follows from Corollary~\ref{C:arcmap}. Using Corollary~\ref{C:isosch}, one easily shows  that   $\integral ZX$   only depends on the isomorphism class of $Z$.  To show that it even   depends on the class of $Z$ in $
\grotschzero \fld$ only, we need to show that it vanishes on any scissor relation. By Lemma~\ref{L:incompscis} (using Lemmas~\ref{L:schdisjun} and \ref{L:disunion}), it suffices to verify this on the scissor relation $Z+Z'-Z\sqcup Z'$, with $Z$ and $Z'$ Artinian schemes. We need to show that
\begin{equation}\label{eq:RR}
\integral ZX+\integral {Z'}X=\integral{Z\sqcup Z'}X.
\end{equation}
Let $R$ and $R'$ be the corresponding Artinian $\fld$-algebras, and let $\phi$ be the \zariski\ formula defining $X$. Hence $R\oplus R'$ is the Artinian algebra corresponding to $Z\sqcup Z'$. Therefore, \eqref{eq:RR} is equivalent with showing that $\class{\arc R\phi}+\class{\arc {R'}\phi}=\class{\arc {R\oplus R'}\phi}$, and this, in turn will follow if we can show that the disjoint sum $\arc R\phi\oplus\arc {R'}\phi$ is  isomorphic with $\arc {R\oplus R'}\phi$ modulo $\arttheory \fld$. We verify this on an arbitrary model $S$ of $\arttheory \fld$. Using Lemma~\ref{L:disjsum} and Proposition~\ref{P:arc} both twice, we have
$$
\begin{aligned}
\inter{\arc {R\oplus R'}\phi} S &= \inter\phi{(R\oplus R')\tensor_\fld  S}\\
&= \inter\phi{(R\tensor_\fld S)\oplus (R'\tensor_\fld S)}\\
&\iso \inter\phi{R\tensor_\fld S}\sqcup\inter\phi{R'\tensor_\fld S}\\
&=\inter{(\arc R\phi)}S\sqcup \inter{(\arc {R'}\phi)}S\\
&=\inter{(\arc R\phi\oplus\arc {R'}\phi)}S,
\end{aligned}
$$
where the middle bijection is induced by the canonical bijection between $(R\oplus R')^n$ and $R^n\oplus (R')^n$. Since this bijection is easily seen to be induced by an (explicit) isomorphism, we completed the proof of \eqref{eq:RR}.

The same argument shows that $\integral \cdot X$ is additive, and hence can be defined on any element $q\in\grotschzero \fld $. 
By Lemma~\ref{L:cycle}, we can write an arbitrary element $p\in\grotsch \fld $ as a difference $\class X-\class{X'}$. We let 
$$
\integral qp:=\integral qX-\integral q{X'}.
$$
This is well-defined, as each arc map $\arc Z{}$ is a ring \homo, whence in particular additive. 
\end{proof}

Since $\class{\op{Spec}\fld }=1$ and the arc map $\arc {\op{Spec}\fld }{}$ is the identity, we get the suggestive formula
$$
\integral {}X=\class X.
$$
Since arc maps are also multiplicative, we get the following Fubini-type   formula:

\begin{proposition}\label{P:prodform}
For $Z$ an Artinian $\fld $-scheme and $X$ and $Y$ affine $\fld $-schemes, we have
$$
\integral Z{(X\times_\fld  Y)}=\integral ZX\cdot \integral Z{Y}
$$
in $\grotsch \fld $.\qed
\end{proposition}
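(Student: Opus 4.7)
The plan is to reduce the proposition to the scheme-theoretic isomorphism
$$
\arc Z{(X\times_\fld Y)}\iso \arc ZX\times_\fld\arc ZY.
$$
Once this is in hand, the conclusion follows by taking classes in $\grotsch\fld$: Corollary \ref{C:arcmap} guarantees that $\class{\arc Z{\cdot}}$ is well-defined on classes, while Corollary \ref{C:isosch} together with the discussion in §\ref{s:mult} identifies multiplication of classes of affine schemes with the fiber product, so that $\class{\arc ZX\times_\fld\arc ZY}=\class{\arc ZX}\cdot\class{\arc ZY}$.

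To prove the displayed isomorphism, I would invoke the functor-of-points representation of arc schemes from Proposition \ref{P:arcsch}. For every affine $\fld$-scheme $T$, the natural bijections
\begin{align*}
\mor\fld T{\arc Z{(X\times_\fld Y)}}
&\iso \mor T{Z\times_\fld T}{(X\times_\fld Y)\times_\fld T}\\
&\iso \mor T{Z\times_\fld T}{X\times_\fld T}\times\mor T{Z\times_\fld T}{Y\times_\fld T}\\
&\iso \mor\fld T{\arc ZX}\times\mor\fld T{\arc ZY}\\
&\iso \mor\fld T{\arc ZX\times_\fld\arc ZY}
\end{align*}
hold, using in the middle step the identification $(X\times_\fld Y)\times_\fld T\iso (X\times_\fld T)\times_T(Y\times_\fld T)$ together with the universal property of the fiber product over $T$. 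The Yoneda lemma, applied on the category of affine $\fld$-schemes (which suffices since all objects involved are affine), then yields the required isomorphism.

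An alternative, more hands-on verification runs directly on formulae: if $\phi(\var)$ and $\psi(\vary)$ are the \zariski\ formulae defining $X$ and $Y$ on disjoint tuples of free variables, then unwinding the recursive definition of the arc formula in the proof of Proposition \ref{P:arc} gives $\arc R(\phi\en\psi)=\arc R\phi\en\arc R\psi$, on disjoint tuples of arc variables. After an explicit permutation of these arc variables to match the ordering convention of §\ref{s:mult}, the resulting formula is $\expl$-isomorphic to $\arc R\phi\times\arc R\psi$, which by Corollary \ref{C:isosch} corresponds to the fiber product above. I do not anticipate a genuine obstacle here: the only bookkeeping subtlety concerns the reordering of the arc variables, and any such reordering is explicit, whence collapses to equality in $\grotsch\fld$.
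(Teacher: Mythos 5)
Your proposal is correct. Your second, hands-on argument is essentially the paper's: the proposition is stated with a \qed precisely because Corollary~\ref{C:arcmap} already establishes that $\arc Z{}$ is a ring endomorphism of $\grotsch\fld$, and the proof of that corollary shows this by observing $\arc{}{(\phi\en\psi)}=\arc{}\phi\en\arc{}\psi$ (and likewise for $\of$), which is the same distributivity you invoke when unwinding the recursive construction in Proposition~\ref{P:arc}. Applying the endomorphism to the identity $\class{X\times_\fld Y}=\class X\cdot\class Y$ then gives the claim immediately; your remark about reordering arc variables being an explicit permutation is exactly the bookkeeping the paper sweeps under the rug via \S\ref{s:mult}. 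Your first argument via the functor of points and Yoneda is a genuinely different, more geometric route: it proves the stronger scheme-theoretic statement $\arc Z{(X\times_\fld Y)}\iso\arc ZX\times_\fld\arc ZY$ outright rather than just the equality of classes, and it sidesteps the formula-level variable bookkeeping entirely at the cost of invoking representability (Proposition~\ref{P:arcsch}) and the full faithfulness of the Yoneda embedding on affine schemes. Both are sound; the paper's version is the more economical given the machinery it has already set up, while yours makes the underlying geometric fact explicit.
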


The same product formula also holds in $\grotart \fld {}$, where  we may even drop the affineness assumption in view of Theorem~\ref{T:classsch}. Since the arc formula of the Lefschetz formula in one variable is   the Lefschetz formula in $\ell(R)$ variables, we immediately obtain from Lemma~\ref{L:length} that:

\begin{proposition}\label{P:intlef}
For any element $q\in\grotschzero \fld $, we have $\integral q\lef=\lef^{\ell(q)}$.\qed
\end{proposition}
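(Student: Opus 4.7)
The plan is to reduce immediately to the case in which $q$ is the class of a single Artinian $\fld$-scheme. Indeed, by Corollary~\ref{C:0cycle} we may write any $q\in\grotschzero\fld$ as $\class Z-\class{Z'}$ with $Z$ and $Z'$ Artinian, and both $\integral\cdot\lef$ (by the additivity established in the preceding proposition) and $\ell$ (by Lemma~\ref{L:length}) are additive. It therefore suffices to verify the formula when $q=\class Z$ with $Z$ an Artinian scheme of length $l:=\ell(Z)$; the general assertion then reads $\integral{\class Z-\class{Z'}}\lef=\lef^{\ell(Z)}-\lef^{\ell(Z')}$, which is what ``$\lef^{\ell(q)}$'' must mean under the sign conventions of the previous proof.

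For this reduced case the computation is essentially a single observation. The Lefschetz formula $\lambda(\var)=(\var=\var)$ defining $\affine\fld 1$ is the \zariski\ formula with $I(\lambda)=0$, so the inductive recipe for $\arc R\lambda$ given in the proof of Proposition~\ref{P:arc} produces no defining equations at all: expanding $\dot\var=\sum_{\alpha\in\Delta}\tilde\var_\alpha\alpha$ and applying \eqref{eq:ethcomp} to the empty set of polynomials yields the empty conjunction in the arc variables $\tilde\var=(\tilde\var_\alpha)_{\alpha\in\Delta}$. Hence $\arc R\lambda$ is $\zar$-equivalent to $\lambda_l$, as already noted in Example~\ref{E:arc}.

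Translating back into schemes via Corollary~\ref{C:isosch}, this gives $\arc Z\affine\fld 1\iso\affine\fld l$, whose class in $\grotsch\fld$ equals $\lef^l$ by Lemma~\ref{L:Lef} (or equivalently Lemma~\ref{L:Lefsch} applied to $Y=X$). Substituting into the definition $\integral Z\lef:=\class{\arc Z\affine\fld 1}$ yields $\integral Z\lef=\lef^{\ell(Z)}$, and extending $\zet$-linearly in $q$ finishes the argument.

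There is no serious obstacle here; the only subtle point is to see that the choice of basis $\Delta$ of $R$ is immaterial, which is guaranteed by Lemma~\ref{L:uniquearc}. The statement in $\grotart\fld{}$ follows identically since the same arc-formula computation takes place before any further relations are imposed.
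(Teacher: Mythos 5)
Your proof takes essentially the same route as the paper's (which is stated in one line just before the proposition): the key observation in both is that $\arc R\lambda$ is the Lefschetz formula in $\ell(R)$ variables, whence $\arc Z{\affine\fld 1}\iso\affine\fld{\ell(Z)}$, and then Lemma~\ref{L:Lef} and additivity finish the job. You were right to flag the imprecision in how $\lef^{\ell(q)}$ should be read for a general $q=\class Z-\class{Z'}$: since $q\mapsto\lef^{\ell(q)}$ is not additive, the statement is only literally true for classes of Artinian schemes, and for general $q$ must be understood as the additive extension $\lef^{\ell(Z)}-\lef^{\ell(Z')}$, which is indeed how the proposition is applied later in the paper.
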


The next result generalizes this to an arbitrary smooth scheme, provided we work in the \gr\ $\grotart \fld {}$; this is needed since we need the covering properties proven in Theorem~\ref{T:classsch}. We call a morphism $Y\to X$ of $\fld $-schemes a \emph{locally trivial fibration with fiber $W$} if for each (closed) point $P\in X$, we can find an open $U\sub X$ containing $P$ such that the restriction of $Y\to X$ to $U$ is isomorphic with the projection $U\times_\fld  W\to U$. 

\begin{corollary}\label{C:fib}
If $\bar Z\sub Z$ is a closed immersion of Artinian local $\fld $-schemes, and $X$ is a smooth $d$-dimensional   affine $\fld $-scheme, then $\arc ZX\to \arc {\bar Z}X$ is a locally trivial fibration with fiber $\affine \fld {dm}$, where $m=\ell(Z)-\ell(\bar Z)$. In particular, 
$$
\integral ZX=\class X\cdot\lef^{d\ell(Z)-d}
$$
 in $\grotart \fld {}$.
\end{corollary}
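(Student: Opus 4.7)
The plan is to prove the fibration statement first and then derive the class identity as a direct consequence. For the fibration, I would work locally: since \eqref{eq:pbopen} gives $\arc ZU=\arc ZX\times_XU$ and $\arc{\bar Z}U=\arc{\bar Z}X\times_XU$ for any open $U\sub X$, it suffices to cover $X$ by affine opens on which $\arc ZU\to\arc{\bar Z}U$ is trivial with fiber $\affine \fld{dm}$. Fixing a (smooth) point $P\in X$, I would follow the setup of Theorem~\ref{T:arc}: after translation and a linear change of variables, reduce to the situation where $X$ is cut out in $\affine \fld n$ by equations $f_v=\var_v+g_v$ for $v=1,\dots,h:=n-d$, with $g_v$ in the square of the maximal ideal of $P$; then shrink $U$ to a basic open on which the $h\times h$ minor $D:=\det(\partial f_v/\partial \var_j)_{v,j\leq h}$ is a unit. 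In parallel, choose a basis $\Delta=\{\alpha_0,\dots,\alpha_{l-1}\}$ of $R=\loc_Z$ as in Corollary~\ref{C:split}, so that $\alpha_0,\dots,\alpha_{\bar l-1}$ project to a basis of $\bar R=\loc_{\bar Z}$.

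With these choices, split the arc variables as $\tilde\var=(\bar\var,\tilde\var')$, where $\bar\var$ consists of the first $\bar l n$ entries. As in the proof of Corollary~\ref{C:split}, the equations $\coor u{f_v}=0$ for $u<\bar l$ involve only $\bar\var$ and cut out $\arc{\bar Z}U$; the remaining ones, indexed by $\bar l\leq u<l$ and $1\leq v\leq h$, cut out the fiber inside $\arc ZU$. The heart of the argument is to extract from the computation in Theorem~\ref{T:arc} that $\coor u{g_v}$ is \emph{affine} in $\tilde\var_u=(\tilde\var_{u,1},\dots,\tilde\var_{u,n})$, with linear part $\sum_j(\partial g_v/\partial \var_j)(\tilde\var_0)\,\tilde\var_{u,j}$; this holds because, in the Jordan-Hölder filtration of Remark~\ref{R:goodbasis}, any basis product $\alpha_u\alpha_{i'}$ with $i'\geq 1$ lies in $\maxim\cdot\id_u\sub\id_{u+1}$ and so has zero $\alpha_u$-component, killing every would-be higher-order monomial in $\tilde\var_u$. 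Hence, for each $u\in\{\bar l,\dots,l-1\}$, the $h$ equations $\coor u{f_v}=0$ form an inhomogeneous linear system in $(\tilde\var_{u,1},\dots,\tilde\var_{u,h})$ whose coefficient matrix is $(\partial f_v/\partial \var_j)(\tilde\var_0)_{v,j\leq h}$, of determinant $D(\tilde\var_0)$, a unit on $\arc ZU$. Solving these systems level by level produces a polynomial isomorphism $\arc ZU\iso \arc{\bar Z}U\times_\fld\affine \fld{dm}$ over $\arc{\bar Z}U$, where the second factor carries the free coordinates $\tilde\var_{u,j}$ for $\bar l\leq u<l$ and $h<j\leq n$.

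For the class identity, apply the fibration to $\bar Z=\op{Spec}\fld$: then $\arc{\bar Z}X=X$ via the canonical section of Theorem~\ref{T:arc}, and $m=\ell(Z)-1$. Take a finite affine open cover $\{U_i\}$ of $X$ that trivializes $\arc ZX\to X$. Compatibility of \eqref{eq:pbopen} with intersections ensures that every intersection $U_{i_1}\cap\dots\cap U_{i_k}$ also trivializes, so Proposition~\ref{P:prodform} and Lemma~\ref{L:Lef} give $\class{\arc Z(U_{i_1}\cap\dots\cap U_{i_k})}=\class{U_{i_1}\cap\dots\cap U_{i_k}}\cdot\lef^{dm}$. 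Feeding this into the two scissor identities of Theorem~\ref{T:classsch}—one for $\{U_i\}$ covering $X$, one for the induced cover $\{\arc Z{U_i}\}$ of $\arc ZX$—then yields $\class{\arc ZX}=\class X\cdot\lef^{dm}=\class X\cdot\lef^{d\ell(Z)-d}$.

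The main obstacle I expect is the affinity claim for $\coor u{g_v}$ in $\tilde\var_u$: a priori $g_v$ is a polynomial of arbitrary degree, so $\coor u{g_v}$ could have arbitrarily high-degree terms in the arc variables, and only the careful bookkeeping with the filtration $\id_l\sub\dots\sub\id_0=R$ forces every nonlinear $\tilde\var_u$-contribution to land in $\id_{u+1}$ and disappear from the $\alpha_u$-component. Verifying that this mechanism interacts cleanly with the coarser basis for $\bar R$ chosen in Corollary~\ref{C:split} is what makes the block-by-block elimination genuinely polynomial—and hence the fibration trivial in the scheme-theoretic sense, rather than merely formally étale.
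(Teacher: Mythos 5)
Your proof is correct, but it takes a genuinely different route from the paper's. The paper first reduces, by composing fibrations, to the case where $\bar R = R/\alpha R$ with $\alpha$ a socle element. In that one-step situation $\alpha^2 = 0$ and $\alpha\maxim = 0$ make the computation of the fiber over an $S$-rational point $\bar a$ entirely elementary: Taylor expansion gives $f_j(\bar a + \tilde\var_{l-1}\alpha) = \bigl(\sum_i \tfrac{\partial f_j}{\partial\var_i}(a)\,\tilde\var_{l-1,i}\bigr)\alpha$, so the fiber is the kernel of the Jacobian $\jac\phi(P)$, and Kramer's rule on the invertible $h\times h$ minor yields local triviality; the corollary then follows by composing and invoking Lemma~\ref{L:fib}. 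You instead skip the socle reduction and eliminate \emph{all} levels $\bar l\le u<l$ at once, directly over a basic open on which the minor $D$ is a unit, using the more delicate structural fact—proved via the Jordan--H\"older basis of Remark~\ref{R:goodbasis}—that each $\coor u{f_v}$ is affine in $\tilde\var_u$ with linear coefficient $\bigl(\tfrac{\partial f_v}{\partial\var_j}\bigr)(\tilde\var_0)$. This is correct (it is the same mechanism the paper uses to get the block upper-triangular Jacobian inside the proof of Theorem~\ref{T:arc}, and your verification via $\alpha_u\alpha_{i'}\in\maxim\id_u\subseteq\id_{u+1}$ is sound), and one needs only to observe, as you implicitly do, that one may choose a filtered basis of $R$ both refining a Jordan--H\"older series and projecting to a basis of $\bar R$—which is possible precisely because $\id=\ker(R\to\bar R)\subseteq\maxim$ can be taken as one of the $\id_i$. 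The trade-off: the paper's socle reduction buys a minimal-technology Taylor expansion at the cost of composing fibrations; your approach pays upfront with the filtration bookkeeping but produces the global polynomial trivialization over each $U=X_D$ in one pass, and makes the parallel with Theorem~\ref{T:arc} transparent. Two small presentation remarks: the class identity step is really just a re-proof of Lemma~\ref{L:fib} (which the paper simply cites), and the citation of Proposition~\ref{P:prodform} there should more properly be to Lemma~\ref{L:Lefsch}, since you need $\class{W\times_\fld\affine\fld{dm}}=\class W\cdot\lef^{dm}$, not a Fubini identity for arc integrals.
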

\begin{proof}
Let $R$ and $\bar R$ be the Artinian local coordinate rings of $Z$ and $\bar Z$ respectively, and let $\phi$  be   the \zariski\ formula $f_1=\dots=f_s=0$ defining $X\sub\affine \fld m$. We will write $\arc {}{}$ and $\arcbar{}$ for the respective arc maps $\arc Z{}$ and $\arc {\bar Z}{}$, and similarly, $\pi$ and $\bar\pi$ for the isomorphisms $R\iso \fld^{\ell(R)m}$ and $\bar R\iso \fld^{\ell(\bar R)m}$. Since the composition of locally trivial fibrations is again a locally trivial fibration, with general fiber the product of the fibers, we may reduce to the case that $\bar R=R/\alpha R$ with $\alpha $ an element in the socle of $R$, that is to say, such that $\alpha \maxim=0$, where $\maxim$ is the maximal ideal of $R$.  Let $l$ be the length of $R$, and let $\Delta$ be a basis of $R$ as   in Remark~\ref{R:goodbasis}, with $\alpha_{l-1}=\alpha $ (since $\alpha $ is a socle element, such a basis always exists). In particular, $\Delta-\{\alpha \}$ is a basis of $\bar R$. We will use these bases to calculate both arc maps.

We start with calculating a general fiber of the map $\arc {} X\to \arcbar X$. By Corollary~\ref{C:isosch}, it suffices to do this in an arbitrary model of $\arttheory \fld $, that is to say, to calculate the fiber of an $S$-rational point  on $\arcbar X$, where $S$ is any Artinian local $\fld $-algebra.  Let $\bar a$ be an $m$-tuple in $\inter\phi{\bar R\tensor_\fld S}$. By Proposition~\ref{P:arcsch}, its image $\bar\pi(\bar a)$ is an $(l-1)m$-tuple in  $\inter{\arcbar\phi}S$, corresponding  by Lemma~\ref{L:ratpt}, therefore,  to an $S$-rational point $\bar Q$ of $\arcbar X$, and any $S$-rational point of $\arcbar X$ is obtained in this way.  Let $P$ be the $S$-rational point on $X$ given as the image of $\bar Q$ under  the canonical morphism $\arcbar X\to X$.\footnote{The image of an $S$-rational point $\op{Spec}S\to Y$ under a morphism $Y\to X$ is simply the composition of these two morphisms, yielding an $S$-rational point on $X$.} Hence $a:= \bar\pi_0(\bar a)$ is the $m$-tuple in $\inter\phi S$ corresponding to    $P$.

 The surjection $R\to \bar R$ induces a surjection $R\tensor_\fld S\to \bar R\tensor_\fld S$ and hence a map $\inter\phi{R\tensor_\fld S}\to \inter\phi{\bar R\tensor_\fld S}$. The fiber above  $\bar a$ is  therefore   defined by the equations $f_j(\bar a+
 \tilde\var_{l-1}\alpha )=0$, for $j=\range 1s$. By Taylor expansion, this becomes
\begin{equation}\label{eq:fib}
 0=f_j(\bar a+\tilde\var_{l-1}\alpha )=\Big(\sum_{i=1}^m\frac{\partial f_j}{\partial \var_i}(\bar a)\tilde\var_{l-1,i}\Big)\alpha 
\end{equation}
 since    $f_j(\bar a)=0$ and $\alpha^2=0$ in $R\tensor_\fld S$. In fact, since $\bar a\equiv a\mod\maxim(R\tensor_\fld S)$ and $\alpha \maxim=0$,  we may replace each $\partial f_j/\partial \var_i(\bar a)$  in \eqref{eq:fib}  by $\partial f_j/\partial \var_i(a)$. In terms of $S$-rational points, therefore, the fiber above $\bar Q$ is the linear subspace of $S^m$ defined as the kernel of the Jacobian $(s\times n)$-matrix $\jac\phi(P)$. 

Since $X$ is non-singular at $P$, this matrix has rank $m-d$, and hence its kernel is a $d$-dimensional  linear subspace. This proves that the fiber is equal to $\affine Sd$. More precisely, we may choose $\phi$ so that the first $(m-d)\times(m-d)$-minor in $\jac\phi(P)$ is invertible. Therefore, by Kramer's rule,  we may express each $\tilde\var_{l-1,i}$ with $i\leq m-d$ as a linear combination of the $\tilde\var_{l-1,j}$ with $j>m- d$ on an open neighborhood of $\bar Q$. Since this holds in any $S$-rational point $\bar Q$ of $\arcbar X$, we showed that $h$ is a locally trivial fibration with   fiber $\affine \fld d$. 

Applying this to $\arc{} X\to X$, (note that $X= \arc \fld X$) we get a locally trivial fibration with   fiber equal to $\affine \fld {d(l-1)}$. The last assertion then follows from Lemma~\ref{L:fib} below. 
\end{proof}

\begin{lemma}\label{L:fib}
If $f\colon Y\to X$ is a locally trivial fibration of $\fld $-schemes with   fiber $Z$, then $\class Y=\class X\cdot\class Z$ in $\grotart \fld {}$.
\end{lemma}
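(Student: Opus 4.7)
The plan is to trivialize the fibration on a finite affine cover, pass to classes via Theorem~\ref{T:classsch}, and factor out $\class Z$ using the product formula on each piece. Since $X$ is Noetherian, local triviality gives a \emph{finite} open cover $\mathcal U=\{U_1,\dots,U_n\}$ of $X$ together with isomorphisms $f^{-1}(U_i)\iso U_i\times_\fld Z$ over $U_i$. For any nonempty $I\sub\{1,\dots,n\}$, write $V_I:=\bigcap_{i\in I}U_i$; restricting any of the chosen trivializations to $V_I$ gives an isomorphism
$$
\bigcap_{i\in I}f^{-1}(U_i)=f^{-1}(V_I)\iso V_I\times_\fld Z
$$
of $\fld$-schemes, and in particular the $f^{-1}(U_i)$ form an open cover of $Y$.

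Next, I would apply Theorem~\ref{T:classsch} to each of the covers $\mathcal U$ and $\{f^{-1}(U_i)\}$, getting
$$
\class X=\class{S_n\rij Un},\qquad \class Y=\class{S_n(f^{-1}(U_1),\dots,f^{-1}(U_n))}
$$
in $\grotart\fld{}$. Expanding the scissor polynomial via Proposition~\ref{P:scissor} (inside $\grotlat{\Theta_X}$ and $\grotlat{\Theta_Y}$ respectively, using that meet is intersection of opens) and mapping to $\grotart\fld{}$ by the \homo\ of Corollary~\ref{C:classsch}, yields
$$
\class X=\sum_{\emptyset\neq I}(-1)^{|I|+1}\class{V_I},\qquad \class Y=\sum_{\emptyset\neq I}(-1)^{|I|+1}\class{V_I\times_\fld Z}.
$$
It therefore suffices to prove the product identity $\class{V_I\times_\fld Z}=\class{V_I}\cdot\class Z$ in $\grotart\fld{}$ for each $I$, after which linearity immediately gives $\class Y=\class X\cdot\class Z$.

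The main obstacle is the product formula for classes of schemes that need not be affine (the $V_I$ are affine since $X$ is separated, but $Z$ may not be). For affine schemes this is immediate, since multiplication on $\grotart\fld{}$ is induced from the Cartesian product $\times$ of defining formulae (\S\ref{s:mult}), and the product of two \zariski\ formulae defines the fiber product of the corresponding affine schemes. To handle a possibly non-affine $Z$, I would fix an affine open cover $\{W_1,\dots,W_m\}$ of $Z$, apply Theorem~\ref{T:classsch} to write $\class Z=\class{S_m\rij Wm}$, and observe that $\{V_I\times_\fld W_j\}$ is an affine open cover of $V_I\times_\fld Z$ whose meets are $V_I\times_\fld W_J$, since $V_I$ is affine and Cartesian product distributes over intersections of opens in the second factor. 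The expansion of $\class{V_I\times_\fld Z}$ via Theorem~\ref{T:classsch} and Proposition~\ref{P:scissor} then reduces, by the affine case applied term-by-term, to the distributed expansion of $\class{V_I}\cdot\class{S_m\rij Wm}=\class{V_I}\cdot\class Z$, as desired.
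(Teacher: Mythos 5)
Your proposal is correct and follows the same overall strategy as the paper's proof: take a finite trivializing cover $\{U_i\}$, pass to the scissor polynomial via Theorem~\ref{T:classsch}, and factor out $\class Z$ term by term. The paper's version is terser: it immediately asserts $\class{f^{-1}(U_i)}=\class{U_i}\cdot\class Z$ (citing Lemma~\ref{L:Lefsch}, though what is really used is the multiplicativity $\class{U\times_\fld Z}=\class U\cdot\class Z$ of the product of \zariski\ formulae for affine $U$ and $Z$), treats the $f^{-1}(U_i)$ as an affine cover of $Y$, and concludes directly. That implicitly assumes $Z$ affine, which is harmless for the paper's uses (the fiber there is always $\affine\fld{dm}$ or a basic open), but your extra step --- covering $Z$ by affine opens $W_j$, expanding $\class{V_I\times_\fld Z}$ via Theorem~\ref{T:classsch}, and applying the affine product formula term by term --- does cover the non-affine case that the statement as written allows. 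One small point: you justify that the $V_I$ are affine by separatedness of $X$, but that requires the $U_i$ to be affine, which local triviality does not give directly. You should first refine the trivializing cover to an affine one, which is harmless since triviality over $U$ restricts to any open of $U$.
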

\begin{proof}
By definition and compactness,  there exists a finite open covering $\{U_1,\dots,U_m\}$ of $X$, so that   
$$
\inverse f{U_i}\iso U_i\times_\fld Z,
$$
 for $i=\range 1m$. Taking classes in $\grotart \fld {}$,   Lemma~\ref{L:Lefsch} yields $\class{\inverse f{ U_i}}=\class{U_i} \cdot\class Z$.  Since the $\inverse f{ U_i}$ form an open affine covering of $Y$, Theorem~\ref{T:classsch} yields, after taking the $m$-th scissor polynomial on both sides, 
$$
\class{Y}=\class{S(\inverse f{ U_1},\dots,\inverse f{ U_m})}=\class{S\rij Um}\cdot\class Z=\class X\cdot\class Z
$$
in $\grotart \fld {}$.
\end{proof}

\begin{remark}
Example~\ref{E:arc} shows that over a singular point, the dimension of the fiber may increase. 
%See also Lemma~\ref{L:fiberarc} below. 
%Taken out temporarily for ArXiv
\end{remark}

\subsection*{Igusa-zeta series}
By a \emph{germ}, we mean a pair $(X,P)$ with $X$ an $\fld $-scheme and $P$ a closed point on $X$; if $X$ is a closed subscheme of $X^*$, then we also say that $(X,P)$ is a germ in $X^*$. For any   $\fld $-scheme $Y$, we can define the \emph{(geometric) Igusa-zeta series of  $Y$ along the germ $(X,P)$} as the formal power series
$$
\igu YXP(t):=\integral{\op{Hilb}_P(X)}Y=\sum_n\left(\integral{\jet PnX}Y\right)t^n=\sum_n\class{\arc{\jet PnX}Y}\cdot t^n
$$
in $\pow{\grotsch \fld }t$. Note that this is well-defined since each jet is Artinian. This definition generalizes the one in \cite{DLIgu} or \cite[\S4]{DLDwork}:

\begin{proposition}\label{P:Iguclass}
The Igusa-zeta series $\igu Y{\affine \fld 1}O$ of $Y$ along the germ of the origin on the affine line  is sent under the canonical \homo\ 
$$\pow{\grotsch \fld }t\to\pow{\grotclass \fld }t$$
 to the geometric Igusa-zeta function $\igugeom Y$ of $Y$. If $\fld $ has \ch\ zero, then this  image  is a rational function.
\end{proposition}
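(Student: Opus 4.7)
The plan is to trace both constructions through the canonical maps and then appeal to known results for the final rationality claim. First, I would identify the $n$-th jet of the germ $(\affine \fld 1, O)$. Since the maximal ideal $\maxim_O$ at the origin of $\affine \fld 1 = \op{Spec}\pol \fld \var$ is $\var\pol \fld \var$, we have $\jet On {\affine \fld 1} = \op{Spec}(\pol \fld \var / \var^n \pol \fld \var) = Z_n$, in the notation of the text. Hence
$$
\igu Y{\affine \fld 1}O(t) = \sum_n \class{\arc{Z_n}Y}\, t^n
$$
in $\pow{\grotsch \fld}t$.

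Next, I would analyze the canonical homomorphism $\grotsch \fld \to \grotclass \fld$ obtained as the composition in \eqref{eq:grhomo}. The factor $\grotform\fld\to\grotclass\fld$ kills, by construction of $\grotform\fld$, all differences $\class{\complet Y_Z}-\class Z$ coming from closed immersions. Applied to the nilradical $\mathcal N$ of an affine Noetherian scheme $Y$: since $Y$ is Noetherian, $\mathcal N$ is nilpotent, so the formal completion of $Y$ along the reduced subscheme $Y^{\op{red}}$ is $Y$ itself. Therefore $\class Y=\class{Y^{\op{red}}}$ in $\grotform\fld$, whence also in $\grotclass\fld$. Applying this with $Y=\arc{Z_n}Y$ and using the identification $(\arc{Z_n}Y)^{\op{red}}=\mathcal L_n(Y)$ already noted in the discussion following Proposition~\ref{P:arcsch}, we conclude that $\class{\arc{Z_n}Y}$ maps to $\class{\mathcal L_n(Y)}$ in $\grotclass\fld$.

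Combining these two observations term-by-term, the induced map $\pow{\grotsch\fld}t\to\pow{\grotclass\fld}t$ (acting on coefficients) sends $\igu Y{\affine \fld 1}O(t)$ to
$$
\sum_n \class{\mathcal L_n(Y)}\, t^n,
$$
which is exactly the Denef--Loeser geometric Igusa-zeta function $\igugeom Y(t)$ as defined in \cite{DLIgu}. This settles the first assertion.

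For the second assertion, in characteristic zero, I would directly invoke the Denef--Loeser rationality theorem \cite{DLIgu}: using embedded resolution of singularities (available in characteristic zero), they prove that $\igugeom Y(t)$ is rational over the localization $\grotclass\fld_\lef$, and in particular it is a rational function with coefficients in $\grotclass\fld_\lef$. The only subtle point in the whole argument is the verification that $\grotsch\fld\to\grotclass\fld$ really does identify a scheme with its reduction; this is exactly where the formal completion along the nilradical is used, and it is the reason one must pass through $\grotform\fld$ rather than trying to collapse the nilpotent structure earlier. The rationality step itself is not re-proved here (it is, after all, the main motivation for the rest of the paper to give a refined, resolution-free account).
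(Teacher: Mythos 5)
Your proposal is correct and follows the same two-step structure as the paper's proof: identify the $n$-th jet of the germ $(\affine\fld1,O)$ with $Z_n=\op{Spec}(\pol\fld\xi/\xi^n\pol\fld\xi)$, observe that $\class{\arc{Z_n}Y}$ is carried by the canonical homomorphism to $\class{(\arc{Z_n}Y)^{\text{red}}}=\class{\mathcal L_n(Y)}$, and then invoke Denef--Loeser for rationality in characteristic zero. The one place you add something the paper leaves implicit is the mechanism by which the composite $\grotsch\fld\to\grotclass\fld$ in \eqref{eq:grhomo} identifies a scheme with its reduction: the paper simply appeals to the remark before Example~\ref{E:arc}, whereas you pass explicitly through $\grotform\fld$ and note that, because the nilradical of a Noetherian scheme is nilpotent, the formal completion of $Y$ along $Y^{\text{red}}$ is $Y$ itself, so the defining relation of $\grotform\fld$ forces $\class Y=\class{Y^{\text{red}}}$ there (and hence in $\grotclass\fld$ via Proposition~\ref{P:formid}). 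That is the correct reading of what makes the reduction step work, and it is worth stating since none of the intermediate maps in \eqref{eq:grhomo} visibly collapses nilpotents before reaching $\grotform\fld$. Your final sentence rightly emphasizes that the rationality itself is imported wholesale from Denef--Loeser rather than reproved; the paper cites \cite[Theorem~4.2.1]{DLDwork} for this, which is equivalent to the \cite{DLIgu} reference you give.
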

\begin{proof}
Let $P$ be the origin on the affine line.  By our discussion preceding Example~\ref{E:arc}, the arc-integral   $\integral{\jet Pn\lef}Y$ is equal to the class of the $n$-th truncated arc space $\mathcal L_n(Y)$, and hence the first assertion follows from the definition of the geometric Igusa-zeta function in \cite[\S4]{DLDwork}. Rationality over the classical \gr\ $\grotclass \fld $ is proven in \cite[Theorem 4.2.1]{DLDwork}.
\end{proof}

For curves, we can give an explicit formula for the  Igusa-zeta series of the Lefschetz class:

\begin{proposition}\label{P:lineint}
If $(C,P)$ is a germ  of a point of multiplicity $e$ on  a curve $C$ over $\fld $,    then 
$$
\igu {\affine \fld 1}CP=\frac {p(t)}{1-\lef^et}
$$
for some polynomial $p\in\pol{\grotsch \fld }t$.
\end{proposition}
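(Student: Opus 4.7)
My plan is to combine Proposition~\ref{P:intlef} with the Hilbert-Samuel asymptotics for the one-dimensional local ring $(\loc_{C,P}, \maxim_P)$. Since $\class{\affine \fld 1} = \lef$ in $\grotsch \fld$, and each jet $\jet P n C$ is an Artinian $\fld$-scheme, Proposition~\ref{P:intlef} applies coefficient by coefficient and yields
$$
\integral{\jet P n C}{\affine \fld 1} = \lef^{\ell(\jet P n C)} = \lef^{\ell(\loc_{C,P}/\maxim_P^n)}.
$$
The problem therefore reduces to controlling the length function $n \mapsto \ell(\loc_{C,P}/\maxim_P^n)$.

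Because $(C,P)$ is a germ of dimension $1$ and multiplicity $e$, classical Hilbert-Samuel theory asserts that this length function eventually agrees with a polynomial of degree one with leading coefficient $e$: there exist an integer $N$ and an integer $c$ such that $\ell(\loc_{C,P}/\maxim_P^n) = en - c$ for all $n \geq N$. Enlarging $N$ if necessary, we may further assume $eN - c \geq 0$, so that $\lef^{eN - c}$ is a genuine element of $\grotsch \fld$.

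Splitting the Igusa-zeta series at the index $N$ gives
$$
\igu{\affine \fld 1}CP(t) = q(t) + \sum_{n \geq N} \lef^{en - c}\, t^n,
$$
where $q(t) := \sum_{n=0}^{N-1} \lef^{\ell(\jet P n C)} t^n$ already lies in $\pol{\grotsch \fld}t$. Working in $\pow{\grotsch \fld}t$, where $1 - \lef^e t$ is invertible because its constant term is $1$, the tail sums as a geometric series to
$$
\sum_{n \geq N} \lef^{en - c}\, t^n = \frac{\lef^{eN - c}\, t^N}{1 - \lef^e t}.
$$
Setting $p(t) := (1 - \lef^e t)\, q(t) + \lef^{eN - c}\, t^N$ yields a polynomial in $\pol{\grotsch \fld}t$ with $\igu{\affine \fld 1}CP(t) = p(t)/(1 - \lef^e t)$, as required.

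The only nontrivial ingredient is the Hilbert-Samuel asymptotic for the one-dimensional local ring $\loc_{C,P}$, a standard commutative-algebra fact; everything else is the familiar telescoping of a polynomial-plus-geometric-tail series. The conceptual key is that Proposition~\ref{P:intlef} collapses the arc-integral against the affine line to a pure power of $\lef$ indexed by the length of the Artinian test scheme, converting the entire computation into bookkeeping of the Hilbert-Samuel polynomial.
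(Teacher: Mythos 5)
Your proof is correct and follows essentially the same route as the paper's: both apply Proposition~\ref{P:intlef} to reduce the $n$-th coefficient to $\lef^{\ell(\jet PnC)}$, invoke the Hilbert--Samuel asymptotic $\ell(\jet PnC)=en+b$ for $n\geq N$, and split the series into a polynomial plus a geometric tail summing to $\lef^{eN+b}t^N/(1-\lef^e t)$. If anything you are slightly more careful than the paper in observing that $eN+b\geq 0$ (it is a length, so this is automatic once $N$ is large enough), which ensures the numerator genuinely lies in $\pol{\grotsch\fld}t$ rather than merely in the localization at $\lef$.
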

\begin{proof}
By definition, the \emph{multiplicity} of the germ $(C,P)$ is the multiplicity of the local ring $\loc_{C,P}$. By   Hilbert theory, there exist $b,N\in\zet$ such that the length of $ {\jet PnC}$ for $n\geq N$  is equal to $en+b$. Using Proposition~\ref{P:intlef}, we get
$$
\integral{ {\jet PnC}}\lef=\lef^{en+b}
$$
for $n\geq N$. Hence $\igu {\affine \fld 1}CP$ is the sum of some polynomial of degree $N$ and the power series 
$$
\sum_n \lef^{en+b}t^n=\frac {\lef^b} {1-\lef^et},
$$
from which the assertion easily follows.
\end{proof}

The  above proof  shows that
\begin{equation}\label{eq:Xlef}
\igu {\affine \fld 1}XP=\sum_n\lef^{j^n_P(X)}t^n
\end{equation}
for any germ   $(X,P)$, where $j^n_P(X):=\ell(\jet PnX)$. 
For a smooth scheme, we have the following rationality result:

\begin{proposition}\label{P:ratsmooth}
 Let $(C,P)$ be a germ of multiplicity $e$ on a curve. For any $d$-dimensional smooth affine scheme $Y$ over $\fld $, the Igusa-zeta series of  $Y$ along the germ $(C,P)$ is a rational function over $\grotart \fld {}$. More precisely,
\begin{equation}\label{eq:lefint}
\igu YCP=\frac {p(t)}{1-\lef^{de}t}
\end{equation}
for some polynomial $p\in\pol{\grotart \fld {}}t$.
\end{proposition}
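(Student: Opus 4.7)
The plan is to proceed exactly in parallel with the proof of Proposition~\ref{P:lineint}, except that we replace Proposition~\ref{P:intlef} by the smoothness formula from Corollary~\ref{C:fib}. The whole point of the previous results is that for smooth $Y$, the integrand $\integral{Z}{Y}$ depends on the Artinian local scheme $Z$ only through its length $\ell(Z)$, and this dependence is given explicitly by $\lef^{d\ell(Z)-d}\cdot\class Y$.

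First, I would observe that each jet $\jet PnC$ is an Artinian local $\fld$-scheme (it is $\op{Spec}(\loc_{C,P}/\maxim_P^n)$). Since $Y$ is a smooth affine $d$-dimensional $\fld$-scheme, Corollary~\ref{C:fib} applies with $\bar Z=\op{Spec}\fld$ and $Z=\jet PnC$, yielding
\[
\integral{\jet PnC}{Y}=\class Y\cdot\lef^{d\,\ell(\jet PnC)-d}
\]
in $\grotart\fld{}$. Next, by the Hilbert--Samuel theory applied to the local ring $\loc_{C,P}$ of multiplicity $e$ on a curve, there exist integers $b$ and $N$ such that $\ell(\jet PnC)=en+b$ for all $n\geq N$ (this is precisely the fact already invoked in the proof of Proposition~\ref{P:lineint}). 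Substituting, we obtain for $n\geq N$,
\[
\integral{\jet PnC}{Y}=\class Y\cdot\lef^{d(en+b)-d}=\class Y\cdot\lef^{db-d}\cdot(\lef^{de})^n.
\]

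Plugging this into the definition of the Igusa-zeta series gives
\[
\igu YCP(t)=\sum_{n<N}\class{\arc{\jet PnC}Y}t^n+\class Y\cdot\lef^{db-d}\sum_{n\geq N}(\lef^{de}t)^n.
\]
The finite initial sum is a polynomial in $\pol{\grotart\fld{}}t$, and the tail is the geometric series $\class Y\cdot\lef^{db-d}\cdot(\lef^{de}t)^N/(1-\lef^{de}t)$. Combining these over the common denominator $1-\lef^{de}t$ produces a polynomial numerator $p(t)\in\pol{\grotart\fld{}}t$ and yields the desired form \eqref{eq:lefint}.

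There is no serious obstacle: the computation is essentially mechanical once Corollary~\ref{C:fib} is available. The only delicate point is justifying that $\ell(\jet PnC)$ is an affine linear function of $n$ for large $n$ with leading coefficient $e$, but this is the standard Hilbert--Samuel statement for a one-dimensional Noetherian local ring of multiplicity $e$, already used (implicitly) in the proof of Proposition~\ref{P:lineint}. Note also that the proof gives the cleaner identity \eqref{eq:Xlef} a direct meaning when $Y$ is smooth of dimension $d$: the exponents $j^n_P(\affine\fld 1)$ appearing there are simply multiplied by $d$ and shifted, recovering Proposition~\ref{P:lineint} as the case $d=1$, $\class Y=\lef$.
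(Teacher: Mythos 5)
Your proof is correct and takes essentially the same route as the paper: apply Corollary~\ref{C:fib} to get $\integral{\jet PnC}Y=\class Y\cdot\lef^{d\ell(\jet PnC)-d}$, invoke Hilbert--Samuel theory to write $\ell(\jet PnC)=en+b$ for $n\ge N$, and sum the resulting geometric series as in Proposition~\ref{P:lineint}. Your exponent $d(en+b)-d$ agrees with the paper's $d(en+b-1)$, so the two write-ups match.
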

\begin{proof}
By   Hilbert theory, there exist $b,N\in\zet$ such that $j_P^n(C)=en+b$ for $n\geq N$. By Corollary~\ref{C:fib}, the coefficient of the $n$-th  term in $\igu YCP$ is therefore equal to $\class Y\cdot\lef^{d(en+b-1)}$ for $n\geq N$, and the result follows as in the proof of Proposition~\ref{P:lineint}.
\end{proof}

\subsection*{Motivic integrals}
If we integrate over a higher dimensional scheme, then \eqref{eq:Xlef} suggests that we should calibrate the Igusa-zeta series to maintain rationality. To this end, we will define a normalized integration, which more closely resembles the motivic integration of Kontsevich, Denef and Loeser (\cite{CrawMot,DLArcs,LooMot}).  More precisely, let $\grotschinv \fld $ and $\grotartinv \fld $ be the respective localization  of $\grotsch \fld $ and $\grotart \fld {}$ at $\lef$, that is to say, $\grotschinv \fld =\pol{\grotsch \fld }{\inv\lef}$ and $\grotartinv \fld =\pol{\grotart \fld {}}{\inv\lef}$. Let $X$ and $Z$ be  affine $\fld $-schemes with $Z$   Artinian. We define the \emph{motivic integral of $X$ along $Z$} to be 
$$
\motint ZX:=\lef^{-dl}\cdot\integral ZX=\lef^{-dl}\cdot\class{\arc ZX}
$$
with $d$ the dimension of $X$ and $l$ the length of $Z$, viewed either as an element in $\grotschinv \fld $ or $\grotartinv \fld $.

We define the \emph{motivic Igusa-zeta series of $Y$ along a germ $(X,P)$} as the formal power series
$$
\igumot YXP(t):=\motint{\op{Hilb}_P(X)}Y=\sum_n\lef^{-d\cdot j^n_P(X)}   \left(\integral{ {\jet PnX}}Y\right) t^n
$$
in $\pow{\grotschinv \fld }t$ (respectively, in $\pow{\grotartinv \fld }t$), where $d$ is the dimension of $Y$. In particular, by the same argument as in the proof of Proposition~\ref{P:ratsmooth}, we get 
$$
\igumot YXP=\frac{\class Y\cdot\lef^{-d}}{1-t},
$$
over $\grotartinv \fld $, for any germ $(X,P)$,  and any smooth affine $\fld $-scheme $Y$. This raises the following question:

\begin{conjecture}\label{C:motigusa}
If $\fld $ is an \acf\ of \ch\ zero, then, for any   affine $\fld $-scheme $Y$, the    motivic  Igusa-zeta series $\igumot YXP$ of $Y$  along an arbitrary germ $(X,P)$  is rational over $\grotartinv \fld $.
\end{conjecture}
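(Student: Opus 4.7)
The plan is to extend the Denef--Loeser approach to rationality to the schemic setting, leveraging the functorial properties of the arc construction developed in \S\ref{s:motint} together with embedded resolution of singularities, which is available since $\fld$ has characteristic zero. I would begin by fixing a Hironaka resolution $h \colon \tilde Y \to Y$: a proper birational morphism with $\tilde Y$ smooth, an isomorphism above the smooth locus of $Y$, and with the preimage of the singular locus a simple normal crossings divisor $E = \sum_{i \in I} N_i E_i$. The germ $(X,P)$ is treated purely as a parameter, supplying at each stage the Artinian test scheme $\jet P n X$.

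The heart of the argument is a \emph{schemic change of variables formula} relating $\class{\arc{\jet PnX} Y}$ and $\class{\arc{\jet PnX} \tilde Y}$ in $\grotartinv{\fld}$, with correction factor a power of $\lef$ depending on the order of vanishing of the Jacobian of $h$ along each component $E_i$. The key inputs are Theorem~\ref{T:arc}, which shows $\arc Z(-)$ preserves closed and open immersions and makes $\arc Z X \to X$ a locally trivial affine bundle on smooth loci (Corollary~\ref{C:fib}), together with the covering/scissor relations of Theorem~\ref{T:classsch}. These should permit a stratum-by-stratum comparison, exactly as in Denef--Loeser, but with the test scheme allowed to be an arbitrary local Artinian rather than only $\pol \fld \xi/\xi^n\pol \fld \xi$.

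Once the change of variables is in hand, I would stratify $\tilde Y$ by the incidence pattern of $E$: for $J \subseteq I$, let $\tilde Y_J^\circ \subseteq \tilde Y$ be the locus where exactly the components $\{E_j : j \in J\}$ meet. Each $\tilde Y_J^\circ$ is smooth, and on a neighbourhood of it the pull-back $h^{-1}(X)$ is monomial, so $\integral{\jet PnX}{\tilde Y_J^\circ}$ factors into contributions indexed by $j \in J$. Each factor is a geometric-type series in $\lef^{N_j}$ and $t$, and summing over $J$ and $n$, then assembling via Theorem~\ref{T:classsch}, produces a rational function over $\grotartinv{\fld}$. Combining this with the change of variables formula gives the conjectured rationality of $\igumot YXP$; the explicit hypersurface cases of \S\ref{s:ratIgu} and \S\ref{s:DuVal} can be read as testing grounds where the resolution step can be bypassed.

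The hard part is the change of variables formula. Classically it rests on a fiberwise analysis of $\mathcal L_n(\tilde Y) \to \mathcal L_n(Y)$ through the Jacobian of $h$; here one must perform the analogous analysis for $\arc Z$ along an \emph{arbitrary} local Artinian $Z$, and the nilpotent directions in $Z$ distort the Taylor-expansion argument that underlies Corollary~\ref{C:fib}. I expect one will need the Jordan--H{\"o}lder-adapted bases of Remark~\ref{R:goodbasis} to keep the fibers of $\arc Z\tilde Y \to \arc Z Y$ controlled, refining them further so that each successive cosocle quotient is compatible with the monomial structure of $h$ near $E$. A secondary subtlety is that the scissor relations available in $\grotart \fld {}$ are not the classical ones (compare \eqref{eq:grhomo}): one must verify that the formal-completion contributions either cancel after localization at $\lef$, or else perform the entire computation in $\grotform \fld_\lef$ as the introduction suggests, and then show that the answer lifts.
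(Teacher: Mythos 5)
This statement is labeled a \emph{conjecture} in the paper, and the paper does not prove it; it remains open. Your proposal is a strategy sketch rather than a proof, and you yourself flag the missing core ingredient — a schemic change-of-variables formula. That gap is not incidental. In the classical Denef--Loeser argument, the change of variables compares $\mathcal L_n(\tilde Y) \to \mathcal L_n(Y)$ fiber by fiber, crucially exploiting that the truncated arc space is built from the single Artinian ring $\pol\fld\xi/\xi^n\pol\fld\xi$ (a DVR truncation), so the Jacobian ideal contributes a single valuation. For an arbitrary germ $(X,P)$, the jets $\jet PnX$ are general Artinian local rings (e.g.\ $\pol\fld{\xi,\zeta}/(\xi,\zeta)^n$), the Jacobian has no single ``order of vanishing,'' and the Taylor-expansion argument behind Corollary~\ref{C:fib} already breaks down away from the smooth locus. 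Nothing in the paper establishes, or even conjecturally formulates, such a fattened change of variables, and constructing one would itself be a substantial theorem.

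There is a second, equally serious, obstruction that you downgrade to a ``secondary subtlety.'' The conjecture is asserted over $\grotartinv\fld$, but your stratification step — summing contributions over the strata $\tilde Y_J^\circ$ of the snc divisor and recombining — needs the identity $\class Y=\sum_i\class{Y_i}$ for a constructible partition. That identity holds in $\grotform\fld$ (Lemma~\ref{L:formpart}) but \emph{fails} in $\grotart\fld{}$ and $\grotartinf\fld$: by Theorem~\ref{T:infcomp}, cutting out a closed $Z\sub Y$ yields $\class Y=\class{Y-Z}+\class{\complet Y_Z}$ with the formal-completion class, not $\class Z$, and Table~\ref{tab:1} shows these really differ. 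Thus the whole Denef--Loeser stratification argument would only give you a statement in $\grotform\fld_\lef$ at best. The passage back up to $\grotartinv\fld$ is not a lift along a section of \eqref{eq:grhomo}; it is a genuinely stronger statement, and I see no mechanism in your sketch (or in the paper) for producing it. By contrast, the paper's own partial results (Theorem~\ref{T:powerhyp}, Corollary~\ref{C:powerhyp}, and the Du Val computations) avoid resolution of singularities entirely, use the explicit rationalization-tree algorithm, work only over $\grotform\fld_\lef$, and only for the linear germ $(\affine\fld1,O)$. Your approach, if it could be completed, would subsume these, but as written it does not constitute a proof.
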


\section{Infinitary \gr{s}}\label{s:infgr}

In this section, we will extend the previous definitions to include infinitary formulae. This turns out to be necessary when dealing with the complement  of an open subscheme, as we mentioned  already  in the introduction.

\subsection*{Formularies}
Let $\mathcal L$ be an arbitrary first-order language, and let $\Phi$ be a collection   of  $\mathcal L$-formulae of some fixed arity $n$, which we then call the \emph{arity} of $\Phi$. For an $\mathcal L$-structure $M$, let $\inter\Phi M$ be the subset in $M^n$ given as the union of all $\inter\phi M$ with $\phi\in\Phi$. In other words,  we view $\Phi$ as an infinitary disjunction, defining in each structure  a subset   $\inter\Phi M$ which is in general only infinitary definable (and its complement is type-definable).  We call $\inter\Phi M$ the \emph{interpretation of $\Phi$ in $M$}.

Since elementary classes  do no longer behave the same as their theories on infinitary formulae, we must shift our attention from the latter to the former. So, let $\mathfrak K$ be a class of $\mathcal L$-structures, and let $\Phi$ and $\Psi$ be two collections of $\mathcal L$-formulae of the same arity $n$. We say that $\Phi$ 
and $\Psi$ are \emph{$\mathfrak K$-equivalent}, if $\inter \Phi M=\inter\Psi M$ for all $M\in \mathfrak K$. In particular, if we let $\Phi^\of$ be the collection of all finite disjuncts of formulae in $\Phi$, then $\Phi^\of$ is $\mathfrak  K$-equivalent with $\Phi$, and so without loss of generality, we may always assume, up to equivalence, that a collection is closed under finite disjunctions.
 
 We say that $\Phi$ 
is a   \emph{\form\ with respect to $\mathfrak K$}, if for each structure $M\in \mathfrak K$, there is some $\phi\in \Phi$ such that $\inter\phi M=\inter\Phi M$. Note that $\phi$ will in  general depend on the structure $M$. Put differently, although in each $\mathfrak K$-structure $M$, the subset $\Phi(M)$ is definable, its  definition depends on the given structure $M$. We say that $\Phi$ is \emph{first-order}, if it is $\mathfrak K$-equivalent with a first-order formulae $\phi$. Although we do not insist in this definition on $\phi$   being part of $\Phi$, there is no loss of generality including it, since adding it yields an $\mathfrak K$-equivalent \form. In particular, we may view formulae as first-order \forms\ and we will henceforth identify both (up to $\mathfrak K$-equivalence).

Most of the logical operations generalize to this infinitary setting. Namely, given $\Phi\sub \mathcal L_n$ and $\Psi\sub\mathcal L_m$,   let $\Phi\en\Psi$ be the collection of all $\phi\en\psi$ with $\phi\in \Phi$ and $\psi\in\Psi$, and   let $\Phi\of\Psi:=\Phi\cup\Psi$. Similarly, we define $\Phi\times\Psi\sub\mathcal L_{n+m}$ as the collection of all $\phi\times\psi$. In case $\mathcal L$ contains the two constant symbols $0$ and $1$, we also define $\Phi\oplus\Psi$ as $\Phi'\cup\Psi'$ where $\Psi'$ consists of all $\phi\en(v_{n+1}=0)$ for $\phi\in\Phi$, and $\Psi'$ of all $\psi\en(v_{n+1}=1)$ for $\psi\in\Psi$ (where we assume $m\leq n$). We leave it to the reader to verify that all these operations preserve \forms. If a \form\ is not first-order, then we can, however, not define  its negation (in model-theoretic terms, the negation of a \form\ is a type).

 From now on, $\mathcal L:=\mathcal L_\fld $, the language of $\fld $-algebras, for $\fld $   an \acf, and   $\mathfrak K =\arttheory \fld $   the collection of all Artinian  local $\fld $-algebras.  
We call a \form\ $\Phi$ respectively  \emph{\zariski} or  \emph{pp} if all formulae in $\Phi$ are of that kind.

\subsection*{Total jets and formal schemes}
Let  $\phi$ be an arbitrary formula and $\zeta$ a \zariski\ formula. We define the \emph{total jet of $\phi$ along $\zeta$} as the collection $\Jet \zeta\phi$ of all $n$-jets $\jet\zeta n\phi$. Let us show that $\Jet\zeta\phi$ is a \form. Let   $\id:=I(\zeta)$ be the ideal of $\zeta$, and let $(R,\maxim)$ be an Artinian local $\fld $-algebra of length $l$. I claim  that $\inter{(\Jet\zeta\phi)}R$ is equal to $\inter{(\jet\zeta l\phi)}R$. To prove this, it suffices to show that $\inter{(\jet\zeta n\phi)}R=\inter{(\jet\zeta {n+1}\phi)}R$, for all $n\geq l$. One direction is clear, so assume that $  a\in\inter{(\jet\zeta{n+1}\phi)}R$. Hence, for each $f\in \id$, we have $f^{n+1}(  a)=0$, whence $f(  a)\in\maxim$. Since this holds for all $f\in \id$, we see that $g(  a)=0$ in $R$  for every $g\in \id^n$, as $\maxim^n=0$. In conclusion, $  a\in\inter{(\jet\zeta n\phi)}R$.

If $\phi$ and $\zeta$ are \zariski\ with $\zeta\dan\phi$, therefore corresponding to a closed immersion $Z\sub X$, then we also will write $\Jet ZX$ for $\Jet\zeta\phi$.  We can give the following geometric interpretation of    total jets:

 \begin{proposition}\label{P:formsch}
For $Y\sub X$   a closed immersion of affine $\fld $-schemes, there is, for every Artinian local $\fld $-algebra $R$, a one-one correspondence between the $R$-rational points of the formal scheme $\complet X_Y$ and the interpretation of the \form\ $\Jet YX$ in $R$.
\end{proposition}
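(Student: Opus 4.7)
The plan is to unravel both sides of the proposed correspondence and reduce everything to the rational points of a single jet $\jet YlX$, where $l=\ell(R)$. I would write $X=\op{Spec}B$ with $B=\pol A\var/I(\phi)$ and let $J\sub B$ be the ideal cutting out the closed immersion $Y\sub X$, so that $\complet X_Y=\op{Spf}(\complet B_J)$, where $\complet B_J$ carries the $J$-adic topology.

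The first step is to show that, for an Artinian local $\fld$-algebra $(R,\maxim)$ of length $l$, the $R$-rational points of the formal scheme $\complet X_Y$ coincide with those of the single jet $\jet YlX=\op{Spec}(B/J^l)$. By the standard description of morphisms into an affine formal scheme, $\complet X_Y(R)$ consists of the continuous $\fld$-algebra \homo s $\complet B_J\to R$, where $R$ carries the discrete topology (which, since $\maxim^l=0$, agrees with its $\maxim$-adic topology). Any such map factors through some $B/J^n$; since $J/J^n$ is then nilpotent and $R$ is local, the image of $J$ lies in $\maxim$, so $J^l$ maps to $\maxim^l=0$ and the map in fact already factors through $B/J^l$. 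Conversely, any $\fld$-algebra \homo\ $B/J^l\to R$ yields a continuous one $\complet B_J\to R$ by precomposition with the surjection $\complet B_J\onto B/J^l$. This canonically identifies $\complet X_Y(R)$ with $\mor \fld{\op{Spec}R}{\jet YlX}$.

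Next I would apply Lemma \ref{L:ratpt} to identify $\mor \fld{\op{Spec}R}{\jet YlX}$ with $\inter{\jet\zeta l\phi}R$, where $\zeta$ is a \zariski\ formula defining $Y$. Finally, the observation already established in the paragraph preceding the proposition gives $\inter{\Jet\zeta\phi}R=\inter{\jet\zeta l\phi}R$, by essentially the same length-$l$ argument used in the first step. Chaining the three identifications yields the desired bijection, which is visibly natural in $R$.

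The one place where genuine care is required is the first step: making the definition of the $R$-rational points of the formal scheme $\complet X_Y$ explicit and verifying that continuity in the $J$-adic topology, together with $\maxim^l=0$, really does force factorization through the $l$-th jet $B/J^l$. Everything afterwards is a routine comparison of functor-of-points descriptions, so the bulk of the work is the topological and local-algebra bookkeeping at the start.
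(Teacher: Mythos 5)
Your proof is correct and follows essentially the same route as the paper's: identify $R$-points of $\complet X_Y$ with $R$-points of the single jet $\jet YlX$ for $l=\ell(R)$, then pass through Lemma~\ref{L:ratpt} and the computation preceding the proposition that $\inter{\Jet YX}R=\inter{\jet YlX}R$. The only difference is in how you force factorization through $B/J^l$: you invoke continuity of $\complet B_J\to R$ to factor through some $B/J^n$, then use nilpotence of $J/J^n$ plus locality of $R$ to get $J\mapsto\maxim$, whereas the paper reads $IR\sub\maxim$ directly off the fact that the closed point of $\op{Spec}R$ must land in the underlying space of $Y$; both are equivalent phrasings of the same local-algebra fact, and your version is perhaps a bit more self-contained since it avoids an appeal to the topological space of the formal scheme.
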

\begin{proof}
Let $A:=\loc_X(X)$ be the coordinate ring of $X$, and $I$ the ideal defining $Y$, that is to say, $\loc_Y(Y)=A/I$. Hence $\jet YnX=\op{Spec}(A/I^n)$. By \cite[II.\S9]{Hart}, the formal completion  $\complet X_Y$ is the ringed space with underlying set equal to the underlying set of  $Y$ and with sheaf of rings the inverse limit of the sheafs $\loc_{\jet YnX}$. In particular, the ring of global sections is equal to the $I$-adic completion $\complet A$ of $A$. 

Let $(R,\maxim)$ be an Artinian local $\fld $-algebra, and let $\op{Spec}R\to \complet X_Y$ be an $R$-rational point of $\complet X_Y$ over $\fld $.
Taking global sections, we get a $\fld $-algebra \homo\ $\complet A\to R$.   Since the closed point of $\op{Spec}(R)$ is sent to a point in the underlying set of $Y$, we have  $IR\sub\maxim$. If $R$ has length $l$, then $I^l$ lies in the kernel of $\complet A\to R$, and so we get a factorization $\complet A\to \complet A/I^l\iso A/I^l\to R$, that is to say, an $R$-rational point $\op{Spec}R\to \jet YlX$. By Lemma~\ref{L:ratpt}, this corresponds to a tuple  in $\inter{(\jet YlX)}R$ whence in $\inter{(\Jet YX)}R$, where, as before, we identify   jets with the \zariski\ formulae defining them. Conversely, a tuple   in $\inter{(\Jet YX)}R$ lies in some $\inter{(\jet YnX)}R$, and hence, by Lemma~\ref{L:ratpt} yields a $\fld $-algebra \homo\ $A/I^n\to R$. Composition with the canonical surjection $\complet A\to A/I^n$ then induces an $R$-rational point on $\complet X_Y$.
\end{proof}

This proposition allows us to identify the total jet $\Jet YX$ with the formal scheme $\complet X_Y$, which we henceforth will do.

\begin{lemma}\label{L:prodjets}
If $Y_i\sub X_i$ for $i=1,2$ are two closed immersions, then we have the following product formula
$$
(\Jet{Y_1}{X_1})\times_\fld (\Jet{Y_2}{X_2})\iso \Jet{Y_1\times_\fld  Y_2}{(X_1\times_\fld  X_2)}.
$$
\end{lemma}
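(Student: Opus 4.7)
The plan is to check the isomorphism at the level of $R$-rational points for each Artinian local $\fld$-algebra $R$, and then translate this into an $\arttheory\fld$-equivalence (whence isomorphism) of the underlying formularies via the canonical identification of coordinate variables. The heart of the matter is a compatibility lemma showing that membership in a total jet is controlled by a single ``mod-$\maxim$'' condition, after which the factorization of $I(\psi_1 \times \psi_2)$ is immediate.

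First I would set up notation: let $\phi_i$ and $\psi_i$ be \zariski\ formulae defining $X_i$ and $Y_i$ respectively, with $\psi_i \Rightarrow \phi_i$, and put $I_i := I(\psi_i) \subseteq \pol\fld{\var^{(i)}}$. The product $X_1 \times_\fld X_2$ is then defined by $\phi_1 \times \phi_2$ and its closed subscheme $Y_1\times_\fld Y_2$ by the ideal $I_1 + I_2$ (viewed as polynomials in the disjoint union of variables). Via Proposition~\ref{P:formsch}, interpreting both sides as functors on $\arttheory\fld$ reduces the statement to a natural bijection on $R$-points that respects the coordinate projections.

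The key technical lemma is: for any Artinian local $(R,\maxim)$ and any closed immersion $Y\sub X$ defined by ideal $J$, a tuple $a \in \inter{\phi}R$ lies in $\inter{\Jet YX}R$ if and only if $f(a) \in \maxim$ for every $f \in J$. One direction is trivial: taking $f_1 = \dots = f_l = f$ in the definition of $\jet Y l X$ forces $f(a)^l = 0$, so $f(a)$ is nilpotent, whence in $\maxim$. For the converse, if $f(a)\in\maxim$ for every $f\in J$, and $l$ is the length of $R$ (so $\maxim^l = 0$), then any product $f_1(a)\cdots f_l(a)$ vanishes, so $a \in \inter{\jet Y l X}R \sub \inter{\Jet YX}R$. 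Combined with the observation (already used in the proof that $\Jet YX$ is a formulary) that $\inter{\Jet YX}R = \inter{\jet Y l X}R$, this gives the promised characterization.

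With this lemma in hand, a pair $(a_1,a_2)\in \inter{\phi_1\times\phi_2}R$ lies in $\inter{\Jet{Y_1\times Y_2}{X_1\times X_2}}R$ iff $g(a_1,a_2)\in\maxim$ for all $g\in I_1+I_2$; writing $g = f_1+f_2$ with $f_i\in I_i$ and noting that the two summands are polynomials in disjoint sets of variables, we may test each $f_i$ individually, so the condition is equivalent to $f_i(a_i)\in\maxim$ for all $f_i\in I_i$ and $i=1,2$. By the lemma applied to each factor, this says precisely that $(a_1,a_2)\in \inter{\Jet{Y_1}{X_1}}R\times \inter{\Jet{Y_2}{X_2}}R$, which is $\inter{\Jet{Y_1}{X_1}\times_\fld \Jet{Y_2}{X_2}}R$. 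Finally, this bijection is implemented on every model $R$ by the identity on coordinate tuples (under the canonical rearrangement of $\affine\fld{n_1+n_2}\iso \affine\fld{n_1}\times\affine\fld{n_2}$), so it is realized by an explicit isomorphism of formularies modulo $\arttheory\fld$. The main (minor) subtlety will be bookkeeping with the numbering of free variables to ensure the product of formularies is taken in primary form; no real obstacle is anticipated, since the two sides agree on every Artinian local model via the same explicit coordinate change.
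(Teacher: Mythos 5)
Your proof is correct, and it takes a genuinely different route from the paper's. The paper argues purely at the level of ideals in the coordinate ring $A := A_1\tensor_\fld A_2$: writing $I := I_1A + I_2A$, it observes the cofinality
$$
I^{2n}\sub I_1^nA + I_2^nA\sub I^n,
$$
which shows that the systems of \zariski\ formulae on the two sides of the lemma are mutually implied (each in the appropriate degree), and this cofinality suffices for $\arttheory\fld$-equivalence. You instead work at the level of $R$-rational points for each Artinian local $(R,\maxim)$, extracting as a reusable lemma the characterization ``$a\in\inter{\Jet YX}R$ iff $f(a)\in\maxim$ for all $f$ in the defining ideal of $Y$'' — an idea that is only used implicitly in the paper, namely in the argument that $\Jet\zeta\phi$ is a formulary — and then factorizing the ``mod $\maxim$'' condition across the product. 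Your route is somewhat more verbose but isolates a criterion of independent usefulness, and it replaces the ideal-power computation with what is essentially an observation about ideal sums and membership in $\maxim$. One small imprecision: an arbitrary element of $I_1A+I_2A$ is not of the form $f_1+f_2$ with $f_i\in I_i\sub\pol\fld{\var^{(i)}}$ (coefficients from the full polynomial ring intervene, so the two summands are not polynomials in disjoint variables); what carries the argument is simply that $\maxim$ is an ideal, so $g(a)\in\maxim$ for all $g$ in the ideal generated by $I_1\cup I_2$ is equivalent to $f(a)\in\maxim$ for all $f$ in the generating set. With that correction, everything goes through.
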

\begin{proof}
Let $A_i$ be the coordinate ring of $X_i$, and $I_i$ the ideal defining $Y_i$. Hence $X_1\times X_2$ has coordinate ring $A:=A_1\tensor_\fld  A_2$ and $Y_1\times Y_2$ is defined by the ideal $I:=I_1A+I_2A$. The ideals corresponding to the \zariski\ formulae in the total jets $\Jet{Y_i}{X_i}$ and $\Jet{Y_1\times Y_2}{(X_1\times X_2)}$ are respectively the powers of $I_i$ and of $I$. Since $ I^{2n}\sub I_1^nA+I_2^nA\sub I^n$,   the formula follows readily.
\end{proof}

Although one could give a more general notion of morphism based on morphic \forms, we define them only using (first-order) formulae. Namely, let $\mathcal I$ be a family of (first-order) formulae, and let $\Phi\sub\mathcal L_n$ and $\Psi\sub\mathcal L_m$ be \forms. By   an $\mathcal I$-morphism $f\colon \Phi\to \Psi$, we mean a morphic formula $\theta\in\mathcal I$ such that for each Artinian local $\fld $-algebra $R$,  the definable subset $\inter\theta R\sub R^{n+m}$ restricted to $\inter\Phi R$ is the graph of a map $f_R\colon \inter\Phi R\to \inter\Psi R$. Here we have to replace the morphic conditions~\eqref{i:map}--\eqref{i:im} by their appropriate counterparts (more precisely,   replace $\phi$ and $\psi$ in these sentences respectively by the infinite disjunctions $\Of\Phi$ and $\Of\Psi$; then require that the resulting (non-first order) sentences to hold in any Artinian local $\fld $-algebra).   As before, an \emph{$\mathcal I$-isomorphism} is an $\mathcal I$-morphism which is a bijection on each model, and whose inverse is also an $\mathcal I$-morphism.

\subsection*{The infinitary pp-\gr}
Let $\pp^\infty$
be the lattice of all   \forms\ consisting of formulae in $\pp$, with $\en$ and $\of$ as defined above. On this lattice, we have an isomorphism relation $\iso_{\zar}$, given by \zariski\ isomorphisms modulo $\arttheory \fld $.  
We   define   the   \emph{infinitary pp-\gr} as  
$$ 
\grotartinf \fld 
:=\grotmon{\pp^\infty}{\iso_{\zar}},
$$
where the  multiplication is induced by the same argument as in Lemma~\ref{L:scissid} by the multiplication on \forms. Recall that $\grotartinf \fld $ is the quotient of the free Abelian group $\pol\zet{\pp^\infty}$ modulo the subgroup generated by all $\sym\Phi-\sym\Psi$ for $\zar$-isomorphic formularies $\Phi$ and $\Psi$, and by all $\sym{\Phi\of\Psi}+\sym{\Phi\en\Psi}-\sym\Phi-\sym\Psi$. 
Note that   $\pp^\infty$   is not Boolean, since the negation of a \form\ does not exist. In particular, we do no longer have the analogue of the second property in Lemma~\ref{L:negation}. I do not know whether the analogue of Corollary~\ref{C:grgen} holds (the proof of the corollary relies on the negation property, whence is not admissible here).

Since pp-formulae are just first-order pp-\forms, we get a canonical \homo\
$$
\grotart \fld {}\to \grotartinf \fld .
$$
This \homo, however, is not an embedding, as can be seen from the following relation.
 
\begin{theorem}\label{T:infcomp}
For $Y\sub X$   a closed immersion of affine $\fld $-schemes, we have a relation
$$
\class X=\class{X-Y}+\class{\Jet YX}.
$$
in $\grotartinf \fld $.
\end{theorem}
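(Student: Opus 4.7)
The plan is to exhibit the stated identity as a single application of the second scissor relation in $\grotartinf \fld$, with $X-Y$ and $\Jet YX$ as complementary pieces inside $X$. The key algebraic input is that in an Artinian local ring every element is either a unit or nilpotent.

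Let $\phi$ be the \zariski\ formula defining $X$, and pick $f_1,\dots,f_s\in\pol \fld\var$ whose images generate the ideal $\id$ defining $Y$ in $X$. By Corollary~\ref{C:openform} together with Corollary~\ref{C:classsch}, the class of the open complement $X-Y$ in $\grotart \fld{}$, and hence in $\grotartinf \fld$, is represented by the pp-formula $\tilde\psi:=\fim{\psi_1}\of\dots\of\fim{\psi_s}$ with $\psi_i(\var,z):=\phi(\var)\en(f_i(\var)z=1)$. The total jet $\Jet YX$ is by definition the \zariski\ formulary $\{\jet YnX : n\geq 1\}$.

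Next I verify two $\arttheory \fld$-equivalences of formularies, checked on an arbitrary Artinian local $\fld$-algebra $(R,\maxim)$ of length $l$. For $\tilde\psi\of\Jet YX\sim\phi$: given $a\in\inter\phi R$, either some $f_i(a)$ lies outside $\maxim$, in which case $f_i(a)$ is a unit and $a\in\inter{\tilde\psi}R$, or all $f_i(a)\in\maxim$, in which case every element of $\id^l$ vanishes at $a$ since $\maxim^l=0$, so $a\in\inter{\jet YlX}R\sub\inter{\Jet YX}R$. For $\tilde\psi\en\Jet YX\sim\nul$: any point of $\inter{\tilde\psi}R$ has some $f_i(a)$ a unit, hence not nilpotent, precluding membership in $\inter{\jet YnX}R$ for every $n$.

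The second scissor relation in $\grotartinf \fld$, part of the very definition of this \gr, applied to the pp-formularies $\tilde\psi$ and $\Jet YX$, now yields
\begin{equation*}
\class{\tilde\psi\of\Jet YX}+\class{\tilde\psi\en\Jet YX}=\class{\tilde\psi}+\class{\Jet YX}.
\end{equation*}
The two equivalences above reduce the left-hand side to $\class\phi+0=\class X$, while the right-hand side is $\class{X-Y}+\class{\Jet YX}$ by our identification of $\tilde\psi$. The only subtle point, and the place where the infinitary setup is essential, is that no single first-order pp-formula can complement $\tilde\psi$ inside $\phi$ on every Artinian local model (different lengths $l$ require different jets), which is precisely why this scissor relation fails in $\grotart \fld{}$ but holds in $\grotartinf \fld$.
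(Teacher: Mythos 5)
Your proof is correct and follows essentially the same route as the paper: identify $\class{X-Y}$ with the class of the pp-formula $\tilde\psi$ via Corollary~\ref{C:openform}, and then verify, in an arbitrary Artinian local $\fld$-algebra $(R,\maxim)$ of length $l$, that each $f_i(a)$ is a unit or lies in $\maxim$ (whence $\maxim^l=0$ forces $a\in\inter{(\jet YlX)}R$). The only cosmetic difference is that you apply the second scissor relation directly to the pair of formularies $\tilde\psi$ and $\Jet YX$, whereas the paper reaches the same conclusion by exhibiting $\phi\en\niet\psi_1\en\dots\en\niet\psi_s$ (still a first-order member of $\pp$) as $\arttheory\fld$-equivalent to the formulary $\Jet YX$; the scissor relation is the same in both cases.
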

\begin{proof}
Let $\phi$ be the \zariski\ formula of $X$ and $A$ its coordinate ring.  Let $Y$ be defined by $\phi\en(f_1=\dots f_s=0)$, whence  $I:=\rij fsA$ its ideal of definition in $X$.  Let $\psi_i$ be the pp-formula defining the basic open $D_i:=\op{Spec}A_{f_i}$ of $U:=X-Y$, that is to say, $\psi_i:=\phi\en(\exists \vary)(\vary f_i=1)$. It follows that $\{D_1,\dots,D_n\}$ is an open covering of $U$, and hence by   Corollary~\ref{C:openform}, we have 
$$
\class{U}=\class{\psi_1\of\dots\of\psi_n}
$$
in $\grotart \fld {}$, whence also in $\grotartinf \fld $.  
So remains to show that $\phi\en\niet\psi_1\en\dots\en\niet\psi_n$ is $\arttheory \fld $-equivalent to the \form\ $\Jet YX$. 
One direction is obvious, and to verify the other, we check this in an arbitrary Artinian local $\fld $-algebra $(R,\maxim)$. Let $  a$ be an $n$-tuple in $R$ satisfying $\phi\en\niet\psi_1\en\dots\en\niet\psi_n$. In particular,  $f_i(  a)\in\maxim$, for all $i$. For $l$ at least the length of $R$, we therefore get $g(  a)=0$ for every $g\in I^l$, from which it follows that $  a\in\inter{(\Jet YX)}R$.
\end{proof}
 
Using Proposition~\ref{P:formsch},  we get the following  more suggestive version of Theorem~\ref{T:infcomp}: for any closed immersion $Y\sub X$ of affine $\fld $-schemes, we have
\begin{equation}\label{eq:infcomp}
\class X=\class{X-Y}+\class{\complet X_Y}
\end{equation}
in $\grotartinf \fld $. 

\subsubsection*{Formal Lefschetz class}
We define the \emph{formal Lefschetz class}, denoted $\complet\lef$, as the class of the formal completion of the affine line at the origin $O$,   that is to say, 
$$
\complet\lef:=\class{\Jet O{\affine \fld 1}}=\class{\complet{(\affine \fld 1)}_O}.
$$
 By \eqref{eq:infcomp}, we may now give the correct decomposition formula for the Lefschetz class discussed at the end of Remark~\ref{R:classproj}, namely, in $\grotartinf \fld $ we have
\begin{equation}\label{eq:lefpt}
\lef=\lef^*+\complet\lef.
\end{equation}
By Lemma~\ref{L:prodjets} and the Nullstellensatz, we have
\begin{equation}\label{eq:formjet}
\class{\Jet P{\affine \fld n}}=\class{\complet{(\affine \fld n)}_P}=\complet\lef^n,
\end{equation}
for any closed point $P$ in $\affine \fld n$. We next calculate the class of projective space. 
Using the standard affine covering by the basic opens $\op D(\var_i)\sub \mathbb P_\fld^n$, one easily verifies that the   morphism $(\affine \fld {n+1}-O)\to \mathbb P_\fld^n$, given by sending the affine coordinates $(\var_0,\dots,\var_n)$ to the projective ones $(\var_0:\cdots:\var_n)$, is a locally trivial fibration with fiber $\affine \fld 1-O$, where $O$ is the origin.   By Theorem~\ref{T:infcomp} and \eqref{eq:formjet}, the class of $\affine \fld i-O$ in $\grotartinf \fld $ is equal to $\lef^i-\complet\lef^i$, for every $i$. By Lemma~\ref{L:fib} applied to this locally trivial fibration $\affine \fld {n+1}-O\to \mathbb P_\fld^n$, we get
$$
\lef^{n+1}-\complet\lef^{n+1}=\class{\mathbb P_\fld^n}\cdot(\lef-\complet\lef).
$$
We would like to divide both sides by $\lef-\complet\lef$, but a priori, this is not a zero-divisor in $\grotartinf \fld $. The resulting formula does hold, as we now calculate by a different method:

\begin{proposition}\label{P:projclass}
For each $n$, the class of projective $n$-space in $\grotartinf \fld $ is given by the formula
$$
\class{\mathbb P_\fld^n}=\sum_{m=0}^n\lef^m\cdot\complet\lef^{n-m}.
$$
\end{proposition}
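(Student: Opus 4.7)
The plan is to induct on $n$. The base case $n=0$ is immediate since $\mathbb P_\fld^0 = \op{Spec}\fld$ has class $1 = \lef^0\complet\lef^0$. For the inductive step, view $\mathbb P_\fld^{n-1}$ as the hyperplane at infinity $\{x_n=0\}$ in $\mathbb P_\fld^n$, whose complement is the affine chart $\op D(x_n) \iso \affine \fld n$. Applying Theorem~\ref{T:infcomp} (the infinitary scissor relation~\eqref{eq:infcomp}) to this closed immersion yields
$$\class{\mathbb P_\fld^n} \;=\; \lef^n + \class{\complet{(\mathbb P_\fld^n)}_{\mathbb P_\fld^{n-1}}}$$
in $\grotartinf \fld$. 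Combined with the inductive hypothesis for $\class{\mathbb P_\fld^{n-1}}$, the proposition reduces to the key identity
$$\class{\complet{(\mathbb P_\fld^n)}_{\mathbb P_\fld^{n-1}}} \;=\; \complet\lef \cdot \class{\mathbb P_\fld^{n-1}},$$
which asserts that the formal neighborhood of $\mathbb P_\fld^{n-1}$ inside $\mathbb P_\fld^n$ is, \emph{locally}, a product with the formal disk $\complet{(\affine \fld 1)}_O$.

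To prove the key identity, cover $\mathbb P_\fld^{n-1}$ by the $n$ affine opens $V_i := \op D(x_i) \cap \mathbb P_\fld^{n-1}$, for $i = 0,\dots,n-1$. In the standard chart $U_i := \op D(x_i) \iso \affine \fld n$ of $\mathbb P_\fld^n$, the piece $V_i$ is the coordinate hyperplane $\{x_n/x_i = 0\}$, so $U_i$ factors as $V_i \times_\fld \affine \fld 1$; Lemma~\ref{L:prodjets} then identifies
$$\complet{(U_i)}_{V_i} \;\iso\; V_i \times_\fld \complet{(\affine \fld 1)}_O.$$
The same product description persists on every intersection $V_{i_1} \cap \cdots \cap V_{i_k}$ inside $U_{i_1} \cap \cdots \cap U_{i_k}$, since intersecting with further charts $\op D(x_{i_j})$, $i_j < n$, only localizes in the directions transverse to $x_n$. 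Taking classes in $\grotartinf \fld$, the class of each such affine formal piece equals $\complet\lef$ times the class of the corresponding intersection of the $V_i$'s.

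Now apply the covering scissor identity of Theorem~\ref{T:classsch} simultaneously to the cover $\{V_i\}$ of $\mathbb P_\fld^{n-1}$ and to the cover $\{\complet{(U_i)}_{V_i}\}$ of $\complet{(\mathbb P_\fld^n)}_{\mathbb P_\fld^{n-1}}$: by the term-by-term comparison above, the two resulting scissor polynomials differ by the common factor $\complet\lef$, yielding the key identity and thereby
$$\class{\mathbb P_\fld^n} \;=\; \lef^n + \complet\lef\sum_{m=0}^{n-1}\lef^m\complet\lef^{n-1-m} \;=\; \sum_{m=0}^n\lef^m\complet\lef^{n-m},$$
closing the induction. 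The main obstacle is the promotion of the covering identity of Theorem~\ref{T:classsch} from $X$-schemes to the \forms\ representing formal completions; concretely, one must record that the formulary $\Jet{\mathbb P_\fld^{n-1}}{\mathbb P_\fld^n}$ is covered, in every Artinian local $\fld$-algebra, by the restrictions $\Jet{V_i}{U_i}$, with intersections computed as the formal completions of $U_{i_1} \cap \cdots \cap U_{i_k}$ along $V_{i_1} \cap \cdots \cap V_{i_k}$. Once that compatibility is in place, the computation above is routine.
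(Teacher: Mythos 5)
Your approach differs genuinely from the paper's, and it contains a real gap. The paper covers $\mathbb P_\fld^n$ by the $n+1$ standard affine charts $U_i$, applies Theorem~\ref{T:classsch} to get $\class{\mathbb P_\fld^n}=\class{S(U_0,\dots,U_n)}$, observes that every $m$-fold intersection is $\affine\fld{n-m}\times(\affine\fld*)^m$ with class $\lef^{n-m}(\lef-\complet\lef)^m$ (using \eqref{eq:lefpt}, which is the affine instance of Theorem~\ref{T:infcomp} that the paper \emph{has} proved), and then reduces the proposition to the elementary polynomial identity $\sum_m(-1)^m\binom{n+1}{m}t^{n-m}(t-u)^m=\frac{t^{n+1}-u^{n+1}}{t-u}$. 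Everything happens inside the affine framework the paper has already established.

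Your proposal instead starts by applying Theorem~\ref{T:infcomp} directly to the closed immersion $\mathbb P_\fld^{n-1}\subset\mathbb P_\fld^n$. But Theorem~\ref{T:infcomp} is stated (and proved) only for a closed immersion $Y\sub X$ of \emph{affine} $\fld$-schemes, and $\mathbb P_\fld^n$ is not affine. Worse, the object $\class{\complet{(\mathbb P_\fld^n)}_{\mathbb P_\fld^{n-1}}}$ that your ``key identity'' is about is not defined anywhere in the paper: the formulary $\Jet YX$ and hence its class in $\grotartinf\fld$ are introduced only for $X$ affine (Proposition~\ref{P:formsch}), and the covering machinery of Theorem~\ref{T:classsch} lives in $\grotart X{}$ for schemes, not for formularies in $\grotartinf\fld$. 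You flag this yourself as ``the main obstacle'' and sketch the compatibility that would need to be established --- but that compatibility (a well-defined class for the formal completion of a non-affine scheme, a covering/gluing theorem for formularies, and a non-affine version of Theorem~\ref{T:infcomp}) is exactly the missing infrastructure, and none of it is in the paper. Until those pieces are supplied as lemmas, the inductive step does not stand on the paper's foundations. Your local computation via Lemma~\ref{L:prodjets}, giving $\complet{(U_i)}_{V_i}\iso V_i\times_\fld\complet{(\affine\fld 1)}_O$ on each chart, is correct and is morally the same geometric content that the paper extracts from \eqref{eq:lefpt}; the difference is that the paper assembles these local pieces through an identity about scissor polynomials it has already justified, whereas your assembly step requires new theory. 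To repair the argument with least effort you could simply drop the induction and the formal completion of $\mathbb P_\fld^n$ altogether and instead mimic the paper: expand $\class{S(U_0,\dots,U_n)}$ and compute intersections via \eqref{eq:lefpt}.
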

\begin{proof}
Let $(\var_0:\dots:\var_n)$ be the homogeneous coordinates of $\mathbb P_\fld^n$, and let $U_i:=D_+(\var_i)$ be the basic open given as the complement of the $\var_i$-hyperplane. Hence each $U_i$ is isomorphic with $\affine \fld n$ and their union is equal to $\mathbb P_\fld^n$. Therefore,
\begin{equation}\label{eq:projcov}
\class{\mathbb P_\fld^n}=\class {S(U_0,\dots,U_n)}
\end{equation}
in $\grotart \fld {}$ whence in $\grotartinf \fld $. 
So we need to calculate the class of each intersection  occurring in the right-hand side scissor relation. One easily verifies that, for $m\geq 0$, any intersection of $m$ different opens $U_i$ is isomorphic to the open $\affine \fld {n-m}\times(\affine \fld *)^m$, where $\affine \fld *$ is the affine line minus a point. Since $\class{\affine \fld *}=\lef^*=\lef-\complet\lef$ by \eqref{eq:lefpt}, the class of such an intersection is equal to the product $\lef^{n-m}(\lef-\complet\lef)^m$. Since   there are $\binomial{n+1}m$ terms of degree $m$ in the scissor polynomial $S_{n+1}$, the class of ${\mathbb P_k^n}$ is equal to $g(\lef,\complet\lef)$ by \eqref{eq:projcov} and the previous discussion, with
$$
g(t,u):=\sum_{m=0}^n(-1)^{m}\binomial {n+1}mt^{n-m}(t-u)^m.
$$
By the binomial theorem, $t^{n+1}-(t-u)g(t,u)=(t-(t-u))^{n+1}=u^{n+1}$, and hence 
$$
g(t,u)=\frac{t^{n+1}-u^{n+1}}{t-u}=\sum_{m=0}^n t^mu^{n-m},
$$
as we wanted to show.
\end{proof}

Although a priori an infinitary object, the infinitary pp-\gr\   still specializes to the classical \gr:

\begin{proposition}\label{P:infclass}
There exists a canonical \homo\  $\grotartinf \fld \to \grotclass \fld $.
\end{proposition}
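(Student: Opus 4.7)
The plan is to define the map by evaluating a formulary at $\fld$-rational points. The crucial observation is that $\fld$ itself, viewed as an $\fld$-algebra of length one, is a model of $\arttheory \fld$. Hence for any pp-formulary $\Phi$ of arity $n$, the defining property of a \form\ yields some $\phi \in \Phi$ with $\inter{\Phi}{\fld} = \inter{\phi}{\fld}$. Since $\phi$ is a pp-formula and $\fld$ is an \acf, the subset $\inter{\phi}{\fld} \sub \affine{\fld}{n}$ is constructible by the Chevalley--Tarski quantifier elimination for $\ACF\fld$. This gives a well-defined class $[\inter{\Phi}{\fld}] \in \grotclass \fld$, and I would define the map on generators by $\class\Phi \mapsto [\inter{\Phi}{\fld}]$, extending $\zet$-linearly.

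Next, I would verify that this assignment annihilates the defining relations of $\grotartinf \fld$. For the isomorphism relation: a $\zar$-isomorphism $\Phi \to \Psi$ is given by a morphic \zariski\ formula $\theta$ which, evaluated at $\fld \in \arttheory \fld$, induces a bijection $\inter{\Phi}{\fld} \to \inter{\Psi}{\fld}$. Since $\theta$ is \zariski, this bijection is cut out by polynomial equations; combined with Corollary~\ref{C:ppexpl} applied to the single-disjunct reductions, the induced map is explicit on each piece, so the constructible sets $\inter{\Phi}{\fld}$ and $\inter{\Psi}{\fld}$ become $\expl$-isomorphic, thus have equal class in $\grotclass \fld$. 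For the scissor relation, one simply observes the set-theoretic identity
\[
\inter{\Phi \of \Psi}{\fld} = \inter{\Phi}{\fld} \cup \inter{\Psi}{\fld}, \qquad \inter{\Phi \en \Psi}{\fld} = \inter{\Phi}{\fld} \cap \inter{\Psi}{\fld},
\]
and applies the classical second scissor relation in $\grotclass \fld$. Multiplicativity follows likewise from $\inter{\Phi \times \Psi}{\fld} = \inter{\Phi}{\fld} \times \inter{\Psi}{\fld}$, and the class of $\one$ obviously maps to $1$.

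As a sanity check, the map should send $\complet\lef$ to $1$ (the class of a point), since $\inter{\Jet O{\affine\fld 1}}{\fld} = \{O\}$; and it should send $\class{\complet X_Y}$ to $\class{Y^{\op{red}}}$, as $\inter{\Jet Y X}{\fld} = Y(\fld) = Y^{\op{red}}(\fld)$. This would confirm consistency with Theorem~\ref{T:infcomp}, which recovers the classical scissor relation $\class{X^{\op{red}}} = \class{(X-Y)^{\op{red}}} + \class{Y^{\op{red}}}$ on the image side.

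The main obstacle I expect is purely verificatory rather than conceptual: one must be careful that the morphic condition for formularies (stated on the infinitary disjunctions $\Of\Phi$, $\Of\Psi$) actually applies at the level of single disjuncts when specialized to $\fld$, so that Corollary~\ref{C:ppexpl} can be invoked to pass from \zariski\ morphisms on formularies to explicit morphisms on their $\fld$-interpretations. Once this is untangled, the fact that $\fld \in \arttheory \fld$ trivializes what could otherwise be a delicate convergence issue — the possibly infinite disjunction automatically collapses to a first-order formula already at the level of $\fld$-points, avoiding any need for a Noetherianity argument on chains of constructible subsets.
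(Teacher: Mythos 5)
Your proposal matches the paper's own proof: both hinge on the observation that $\fld$ itself is a model of $\arttheory\fld$, so that each formulary collapses at $\fld$ to a single first-order (pp) formula, which is then constructible by quantifier elimination for $\ACF\fld$, yielding a class in $\grotclass\fld$. The paper's proof is in fact terser than yours -- it records only this specialization step and does not spell out the verification that the map respects the defining relations of $\grotartinf\fld$ -- so the extra checking you supply is appropriate. One small caveat in that checking: Corollary~\ref{C:ppexpl} requires the \emph{source} to be a \zariski\ formula, whereas your single-disjunct reduction $\phi_0\in\Phi$ is merely a pp-formula, so the cited corollary does not apply as stated; you would need to route through the unlabeled corollary that lifts a pp-morphism through the underlying \zariski\ formula $\gamma$ with $\phi_0=\fim\gamma$, or argue directly that a $\zar$-isomorphism between pp-formulae modulo $\arttheory\fld$ induces equal classes in $\grotclass\fld$ after quantifier elimination. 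Since the paper's own proof does not address this point at all, this is not a defect specific to your argument, but it does deserve a sentence if you were to present the full verification.
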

\begin{proof}
We use the following observation: if $\mathfrak K $ is   a finite collection of $\fld $-algebras, then any   \form\ is first-order modulo $\mathfrak K $. Indeed, let $\Phi$ be a   \form. As observed above, we may assume that it is closed under finite disjunctions. Let $\mathfrak K=\{R_1,\dots,R_s\}$, and let $\phi_i\in \Phi$ be such that $\inter\Phi R=\inter{\phi_i}R$. Hence $\Phi$ is $\mathfrak K$-equivalent with the (first-order) disjunction $\phi_1\of\dots\of\phi_s$. We can apply this observation to the singleton $\{\fld \}$. Since $\ACF \fld $, the theory of $\fld $, admits  elimination of quantifiers, each class of a \form\ in $\grotartinf \fld $ is equal to a class in $\grotclass \fld $. 
\end{proof}

\begin{remark}\label{R:formlef}
From the proof it follows that image of the formal Lefschetz class $\complet\lef$ under the \homo\ $\grotartinf \fld \to \grotclass \fld $ is equal to $1$.
\end{remark}

\subsubsection*{Arc \forms}
Given a \form\ $\Phi\sub\mathcal L_n$, and an Artinian $\fld $-scheme $Z$, let us define $\arc Z\Phi$ as the \form\  of all $\arc Z\phi$ with $\phi\in \Phi$. As the next result shows, in the \form\ case, we are justified to call $\arc Z\Phi$ the \emph{arc \form\ of $\Phi$ along $Z$}.

\begin{proposition}
If $Z$ is a local Artinian $\fld $-scheme   of length $l$ with  coordinate ring $(R,\maxim)$, and $\Phi\sub\mathcal L_n$ an arbitrary   \form, then $\arc Z\Phi$ is also a \form, and for any Artinian local $\fld $-algebra $S$, there is a one-one correspondence between $\inter\Phi{R\tensor_\fld S}$ and $\inter{\arc Z\Phi}S$ induced by the canonical isomorphism $\pi\colon R\tensor_\fld S\to S^l$. 

Moreover, this induces an arc map on the  {infinitary pp-\gr} $\grotartinf \fld $, and as before, we will write $\integral Z{\Phi}$ for the class of $\arc Z\Phi$.
\end{proposition}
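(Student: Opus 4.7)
The overall plan is to reduce everything to the formula-level statement in Proposition~\ref{P:arc}, by observing that tensoring with $R$ over $\fld$ keeps us inside the class $\arttheory\fld$. First I would verify that for any Artinian local $\fld$-algebra $S$, the tensor product $R\tensor_\fld S$ is again Artinian local: since $\fld$ is algebraically closed, both $R$ and $S$ have residue field $\fld$, so $R\tensor_\fld S$ is a finite-dimensional $\fld$-algebra with unique maximal ideal $\maxim\tensor S+R\tensor\maxim_S$, and in particular belongs to $\arttheory\fld$. This is the one point where the algebraic closedness hypothesis (in force throughout \S\ref{s:infgr}) is genuinely used.

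Next, I would apply Proposition~\ref{P:arc} uniformly across $\Phi$. For each $\phi\in\Phi$, that proposition says $\pi$ restricts to a bijection $\inter\phi{R\tensor_\fld S}\iso\inter{\arc Z\phi}S$. Taking unions yields
$$
\inter{\arc Z\Phi}S=\bigcup_{\phi\in\Phi}\inter{\arc Z\phi}S=\pi\big(\inter\Phi{R\tensor_\fld S}\big),
$$
which is the asserted one-one correspondence. Since $\Phi$ is a \form\ and $R\tensor_\fld S\in\arttheory\fld$, there is some $\phi_0\in\Phi$ with $\inter{\phi_0}{R\tensor_\fld S}=\inter\Phi{R\tensor_\fld S}$; pushing this identity through $\pi$ shows $\inter{\arc Z\phi_0}S=\inter{\arc Z\Phi}S$, witnessing that $\arc Z\Phi$ is itself a \form.

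For the final claim about $\grotartinf\fld$, I would follow the blueprint of Corollary~\ref{C:arcmap} verbatim, now at the level of \forms. The recursive construction in Proposition~\ref{P:arc} already gives $\arc Z(\Phi\of\Psi)=\arc Z\Phi\of\arc Z\Psi$ and $\arc Z(\Phi\en\Psi)=\arc Z\Phi\en\arc Z\Psi$, so both scissor relations and the multiplication on $\pp^\infty$ are preserved (the latter up to the standard variable-reordering isomorphism). To see that $\zar$-isomorphism classes survive, let $\theta$ be a \zariski\ morphic formula defining an isomorphism $\Phi\to\Psi$ with \zariski\ inverse $\op{inv}(\theta)$; both remain \zariski\ after $\arc Z{}$ by Proposition~\ref{P:arc}, and the three parallel instances of $\pi$ (on source, target, and graph) transport the morphic conditions from $R\tensor_\fld S$ to $S$, so $\arc Z\theta$ and $\arc Z\op{inv}(\theta)$ form an inverse pair $\arc Z\Phi\leftrightarrows\arc Z\Psi$. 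The only genuine obstacle in this program is the initial verification that $R\tensor_\fld S$ remains Artinian local; everything else is routine packaging of Proposition~\ref{P:arc}, lifted from individual formulae to \forms\ via the witnessing formula $\phi_0$.
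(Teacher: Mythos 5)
Your proposal is correct and follows essentially the same route as the paper: you verify that $R\tensor_\fld S$ is Artinian local with residue field $\fld$ (exploiting algebraic closedness), extract the witnessing formula $\phi_0\in\Phi$ for this structure, transport it through $\pi$ via Proposition~\ref{P:arc} to see that $\arc Z\Phi$ is a formulary, and then repackage Corollary~\ref{C:arcmap} for the final assertion. The paper argues the formulary property element-by-element and yours argues via an equality of unions, but these are only cosmetic differences.
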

\begin{proof}
We will prove the first two assertions simultaneously. Let $(S,\mathfrak n)$ be an Artinian local $\fld $-algebra, put   $\bar S:=R\tensor_\fld S$, and let $\bar\maxim:=\maxim \bar S+\mathfrak n\bar S$.  Since $\bar S/\bar\maxim\iso R/\maxim\tensor_\fld  S/\mathfrak n\iso \fld \tensor_\fld \fld =\fld $, as $\fld $ is algebraically closed,   $\bar S$ is local with maximal ideal $\bar\maxim$.    In particular, $\bar S$ is   an Artinian local $\fld $-algebra, and hence there is some $\phi_0\in\Phi$ such that $\inter{\phi_0}{\bar S}=\inter\Phi {\bar S}$. Let $  a$ be an $ln$-tuple in $\inter{\arc{}\Phi} S$. Let $\bar a$ be the $n$-tuple over $\bar S$ such that $\pi(  \bar a)=  a$. By construction,  $  a\in\inter{\arc{}\phi} S$ for some $\phi\in \Phi$, and hence $  \bar a\in\inter\phi {\bar S}\sub\inter\Phi {\bar S}=\inter{\phi_0}{\bar S}$ by definition of arc formulae. This in turn implies that  $  a\in\inter{\arc{}{\phi_0}}S$, showing that $\inter{\arc{}\Phi}S=\inter{\phi_0}S$. The second assertion is then immediate   by Proposition~\ref{P:arc}. The last assertion follows by the exact same argument as for Corollary~\ref{C:arcmap}, and its proof is left to the reader.
\end{proof}

\begin{corollary}\label{C:arcjet}
For $(X,P)$   a germ in $\affine \fld n$, and $Z$ an Artinian local $\fld $-scheme of length $l$, we have  
$$
\arc Z{(\Jet PX)}=(\Jet PX\times\affine \fld {(l-1)n})\cap \arc ZX,
$$
where we view $\Jet PX$  as a closed subscheme of $\arc Z{(\Jet PX)}$ via the canonical section defined in Remark~\ref{R:arcsect}.
\end{corollary}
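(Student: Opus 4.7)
The plan is to establish the claimed equality of \forms\ by verifying that both sides have the same interpretation in each Artinian local $\fld$-algebra $(S,\mathfrak n)$. Let $(R,\maxim)$ be the coordinate ring of $Z$, and choose a basis $\Delta=\{\alpha_0,\dots,\alpha_{l-1}\}$ of $R$ as in Remark~\ref{R:goodbasis}, so $\alpha_0=1$ and $\alpha_i\in\maxim$ for $i\geq 1$. Set $\bar S:=R\tensor_\fld S$, whose maximal ideal is $\bar\maxim=\maxim\bar S+\mathfrak n\bar S$, and let $\pi\colon \bar S^n\iso S^{ln}$ be the canonical isomorphism of Proposition~\ref{P:arc}, with coordinates $\pi(\bar a)=(\tilde a_0,\dots,\tilde a_{l-1})$.

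The key algebraic observation is that, since $\alpha_i\in\maxim$ for $i\geq 1$, the expansion $\bar a=\tilde a_0+\sum_{i\geq 1}\tilde a_i\alpha_i$ yields $\bar a\equiv \tilde a_0\pmod{\bar\maxim}$. Hence $\bar a$ reduces to $P$ in the residue field $\bar S/\bar\maxim=\fld$ precisely when $\tilde a_0$ reduces to $P$ modulo $\mathfrak n$, that is to say, exactly when $\tilde a_0\in\inter{\Jet P{\affine\fld n}}S$. Combining this with Proposition~\ref{P:formsch}, which identifies $\inter{\Jet PX}{\bar S}$ with the $\bar S$-rational points of the formal completion $\widehat X_P$---those $\bar S$-points of $X$ reducing to $P$---one obtains
\[
\bar a\in\inter{\Jet PX}{\bar S}\ \Longleftrightarrow\ \bar a\in\inter X{\bar S}\ \text{and}\ \tilde a_0\in\inter{\Jet P{\affine\fld n}}S.
\]

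Applying the \form\ version of Proposition~\ref{P:arc} (the proposition immediately preceding this corollary) translates these conditions into statements about $\pi(\bar a)$: the left-hand side of the equivalence above becomes $\pi(\bar a)\in\inter{\arc Z(\Jet PX)}S$, while $\bar a\in\inter X{\bar S}$ becomes $\pi(\bar a)\in\inter{\arc ZX}S$. Moreover, because the canonical section of Remark~\ref{R:arcsect} is induced by $\coor 0 f=f(\tilde\var_0)$, the condition $\pi(\bar a)\in\inter{\arc ZX}S$ automatically forces $\tilde a_0\in\inter XS$, so in its presence $\tilde a_0\in\inter{\Jet P{\affine\fld n}}S$ becomes equivalent to $\tilde a_0\in\inter{\Jet PX}S$, i.e.\ to $(\tilde a_0,\dots,\tilde a_{l-1})\in\inter{\Jet PX\times\affine\fld{(l-1)n}}S$. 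Intersecting with $\arc ZX$ recovers the right-hand side of the corollary exactly.

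The principal obstacle is bookkeeping: tracking the two guises of $\Jet PX$ (as the formulary of jets versus as the formal completion $\widehat X_P$), and reading off the residue map on $\bar S$ correctly in the chosen basis. The good basis of Remark~\ref{R:goodbasis} makes the reduction $\bar a\equiv\tilde a_0\pmod{\bar\maxim}$ completely transparent, so once it is fixed, no further geometric input is needed beyond unwinding definitions.
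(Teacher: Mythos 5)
Your proof is correct and follows essentially the same line as the paper's: fix a good basis as in Remark~\ref{R:goodbasis}, observe that $\bar a\equiv\tilde a_0$ modulo the maximal ideal of $R\tensor_\fld S$, interpret membership in $\Jet P{\cdot}$ as ``reducing to $P$,'' and transfer these conditions across $\pi$ to land on the right-hand side. The only cosmetic difference is that the paper first isolates the identity $\Jet PX=\Jet P{\affine\fld n}\cap X$ (its equation~\eqref{eq:jetint}) and proves the statement for $\affine\fld n$ before intersecting with $\arc ZX$, whereas you build that intersection step directly into the pointwise equivalence and then note, correctly, that $\pi(\bar a)\in\inter{\arc ZX}S$ already forces $\tilde a_0\in\inter XS$, which is what allows $\Jet P{\affine\fld n}$ to be replaced by $\Jet PX$; this is a point the paper's last sentence glosses over and which you spell out more explicitly.
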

\begin{proof}
Suppose $X$ is a closed subscheme of $\affine \fld n$. An easy calculation shows that
\begin{equation}\label{eq:jetint}
\Jet PX =\Jet P{\affine \fld n}\cap X.
\end{equation}
By the Nullstellensatz, we may assume,  without loss of generality, that $P$ is the origin, and hence the ideals in $\pol \fld \var$ corresponding to the \zariski\ formulae in $\Jet P{\affine \fld n}$ are simply all the powers of the maximal ideal $\rij\var n\pol \fld \var$. Let $(R,\maxim)$ be the Artinian local coordinate ring of $Z$, and let  $(S,\mathfrak n)$ be an arbitrary Artinian local $\fld $-algebra. As already remarked previously,   $\bar S:=R\tensor_\fld S$  is   an Artinian local $\fld $-algebra with maximal ideal $\bar \maxim:=\maxim \bar S+\mathfrak n\bar S$. An $n$-tuple $ \bar a$ over $\bar S$ belongs to $\inter{\Jet P{\affine \fld n}}{\bar S}$ \iff\ all its entries are nilpotent, that is to say, \iff\ $ \bar a\in\bar \maxim\bar S^n$. Since $\bar a\equiv\pi_0({ \bar a})\mod\maxim \bar S$, the latter condition is equivalent with $\pi_0(\bar a)\in\mathfrak nS^n$, which in turn is equivalent with $\pi_0(\bar a)\in\inter{\Jet P{\affine \fld n}}S$, showing that 
$$
\arc Z{(\Jet P{\affine \fld n})}=\Jet P{\affine \fld n}\times \affine \fld {(l-1)n}
$$
 under the identification from Remark~\ref{R:arcsect}. Taken together with \eqref{eq:jetint} and the fact that the arc map preserves intersections, we get the desired equality.
\end{proof}

\begin{remark}
It follows from the proof  and \eqref{eq:infcomp} that we in fact have an equality
$$
\arc Z{(\complet X_P)}=(\complet {(\affine \fld n)}_P\times\affine \fld {(l-1)n})\cap \arc ZX,
$$
if $(X,P)$ is a germ in $\affine \fld n$.
\end{remark}

We have the following analogue of Proposition~\ref{P:intlef} for the formal Lefschetz class:

\begin{corollary}
For any element $q\in\grotschzero \fld $, we have $\integral q{\complet\lef}=\complet\lef\cdot\lef^{\ell(q)-1}$.
\end{corollary}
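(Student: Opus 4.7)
Following the convention established in Proposition~\ref{P:intlef}, I will treat the case $q=\class Z$ with $Z$ an Artinian local $\fld$-scheme of length $l$, so that $\ell(q)=l$ by Lemma~\ref{L:length}; the extension to general $q\in\grotschzero\fld$ is to be interpreted in the same (slightly loose) manner as in that proposition. The plan is then to apply Corollary~\ref{C:arcjet} to the germ $(\affine \fld 1,O)$ and recognize the resulting intersection as an honest product of schemes, whose class is easy to compute.

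The key computation runs as follows. Since by definition $\complet\lef=\class{\Jet O{\affine \fld 1}}$, we have $\integral Z\complet\lef=\class{\arc Z{(\Jet O{\affine \fld 1})}}$ in $\grotartinf\fld$. Applying Corollary~\ref{C:arcjet} to the germ $(\affine \fld 1,O)$ (so that $n=1$), we obtain
$$
\arc Z{(\Jet O{\affine \fld 1})}=\big(\Jet O{\affine \fld 1}\times\affine \fld{l-1}\big)\cap \arc Z{\affine \fld 1}.
$$
By Example~\ref{E:arc} (applied to $\phi=\lambda$, or directly from $\arc R\lambda\iso\lambda_l$), we have $\arc Z{\affine \fld 1}\iso \affine \fld l$; moreover $\Jet O{\affine \fld 1}\times\affine \fld{l-1}$ is already naturally a closed subscheme of $\affine \fld 1\times\affine \fld{l-1}=\affine \fld l$. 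Under these identifications, the intersection on the right collapses to $\Jet O{\affine \fld 1}\times\affine \fld{l-1}$ itself. Taking classes in $\grotartinf\fld$ and using that multiplication of classes is induced by the fibred product of schemes, together with Lemma~\ref{L:Lefsch}, we conclude
$$
\integral Z\complet\lef=\class{\Jet O{\affine \fld 1}\times\affine \fld{l-1}}=\class{\Jet O{\affine \fld 1}}\cdot\lef^{l-1}=\complet\lef\cdot\lef^{l-1},
$$
as desired.

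The only nontrivial point requiring verification is that the embedding of $\Jet O{\affine \fld 1}\times\affine \fld{l-1}$ as a closed subscheme of $\arc Z{\affine \fld 1}\iso\affine \fld l$ coming from the product decomposition agrees with the embedding used in Corollary~\ref{C:arcjet}, where $\Jet O{\affine \fld 1}$ is viewed inside $\arc Z{(\Jet O{\affine \fld 1})}$ via the canonical section $\coor 0{}$ of Remark~\ref{R:arcsect}. This is immediate from the explicit description of the section in the proof of Theorem~\ref{T:arc}: it sends the base variable $\var$ to the first arc coordinate $\tilde\var_0$, which is precisely the coordinate on the first factor of $\affine \fld 1\times\affine \fld{l-1}=\affine \fld l$. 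No serious obstacle is expected beyond tracking this identification carefully.
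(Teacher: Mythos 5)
Your proof is correct and follows essentially the same route as the paper: reduce to $q=\class Z$ with $Z$ Artinian local of length $l$, apply Corollary~\ref{C:arcjet} to the germ $(\affine\fld 1,O)$, use $\arc Z{\affine\fld 1}\iso\affine\fld l$ to see the intersection collapse to $\Jet O{\affine\fld 1}\times\affine\fld{l-1}$, and take classes. Your extra paragraph verifying that the two embeddings agree via the canonical section is careful but unneeded; the paper simply asserts the collapse.
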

\begin{proof}
By additivity, it suffices to show this for $q$ equal to an Artinian local scheme $Z$ of length $l$. By Corollary~\ref{C:arcjet}, we have
$$
\arc Z{(\Jet O{\affine \fld 1})}=(\Jet O{\affine \fld 1}\times\affine \fld {l-1})\cap \arc Z{\affine \fld 1},
$$
where $O$ is the origin.  Since $\arc Z{\affine \fld 1}=\affine \fld l$, taking classes therefore yields the asserted formula.
\end{proof}

If instead we work in the \gr, we may generalize the previous result to higher dimensional fibers:

\begin{corollary}\label{C:jetarc}
Let $Y\sub X$ be a closed immersion of  $\fld $-schemes,   $Z$   an Artinian $\fld $-scheme, and   $\rho\colon\arc ZX\to X$   the canonical split projection. Then we have an equality
$$
\class{\arc Z{(\jet Y{}X})}=\class{\jet {\inverse\rho Y}{}{(\arc ZX)}}
$$
in $\grotartinf \fld $.
\end{corollary}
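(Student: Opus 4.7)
The plan is to show directly that the two pp-formularies $\arc Z{\Jet YX}$ and $\Jet{\inverse\rho Y}{\arc ZX}$ appearing in the statement (both of which consist of \zariski\ formulae) have the same interpretation in every Artinian local $\fld$-algebra $(S,\mathfrak n)$.  Once that is settled, the explicit (whence \zariski) identity formula $\tilde\vary=\tilde\var$ furnishes a $\zar$-isomorphism between them, so their classes coincide in $\grotartinf\fld$.  I will work under the affineness assumption on $X$ and $Y$ and under the assumption that $Z$ is local; the general Artinian $Z$ reduces to the local case via the direct-sum decomposition of Lemma~\ref{L:Artdu} applied to schemes, and the non-affine case by an open affine cover together with the pull-back formula~\eqref{eq:pbopen} and Theorem~\ref{T:classsch}.

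For the setup, let $(R,\maxim)$ be the Artinian local coordinate ring of $Z$ and $l:=\ell(R)$ its length, fix a basis $\Delta=\{\alpha_0,\dots,\alpha_{l-1}\}$ of $R$ as in Remark~\ref{R:goodbasis} with $\alpha_0=1$ and $\alpha_i\in\maxim$ for $i>0$, and set $\bar S:=R\tensor_\fld S$, which is Artinian local with maximal ideal $\bar\maxim:=\maxim\bar S+\mathfrak n\bar S$ (as already observed in the proof of the preceding proposition).  For an $ln$-tuple $\tilde a\in S^{ln}$, write $\bar a\in\bar S^n$ for the unique tuple with $\pi(\bar a)=\tilde a$, so that $\bar a=\sum_\alpha\tilde a_\alpha\alpha$.

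Applying Proposition~\ref{P:arc} componentwise to each \zariski\ formula $\jet YnX$ in the total jet formulary shows that $\tilde a\in\inter{\arc Z{\Jet YX}}{S}$ iff $\bar a\in\inter{\Jet YX}{\bar S}$; by Proposition~\ref{P:formsch}, this in turn is equivalent to the morphism $\op{Spec}\bar S\to X$ determined by $\bar a$ sending the closed point into $Y$, i.e., to $\bar a\bmod\bar\maxim$ being a $\fld$-rational point of $Y$.  On the other hand, Proposition~\ref{P:formsch} applied to the closed immersion $\inverse\rho Y\sub\arc ZX$ yields $\tilde a\in\inter{\Jet{\inverse\rho Y}{\arc ZX}}{S}$ iff $\tilde a\bmod\mathfrak n$ is a $\fld$-rational point of $\inverse\rho Y$; and since $\rho$ is induced at the coordinate-ring level by $\var\mapsto\tilde\var_0$, this reduces to $\tilde a_0\bmod\mathfrak n$ being a $\fld$-rational point of $Y$.

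The proof therefore comes down to identifying the two $\fld$-points $\bar a\bmod\bar\maxim$ and $\tilde a_0\bmod\mathfrak n$ in $X(\fld)$.  This residue-field computation is the only nontrivial step I anticipate, and I expect it to be the main (if small) obstacle: expanding $\bar a=\sum_\alpha\tilde a_\alpha\alpha$ and using $\alpha_i\in\maxim$ for $i>0$ gives $\bar a\equiv\tilde a_0\pmod{\maxim\bar S}$, and reducing further modulo $\mathfrak n\bar S$ then yields the desired equality in $\bar S/\bar\maxim=S/\mathfrak n=\fld$.  All the other ingredients --- preservation of closed immersions and intersections under the arc map (Theorem~\ref{T:arc}), the rational-point description of total jet formularies (Proposition~\ref{P:formsch}), and the passage from equal $\arttheory\fld$-interpretations to equal classes in $\grotartinf\fld$ via the identity formula --- are immediate from the machinery already developed.
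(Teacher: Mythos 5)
Your proof is correct, but it takes a genuinely different route from the paper's. The paper argues purely at the level of classes in $\grotartinf\fld$: it applies Theorem~\ref{T:infcomp} to the closed immersion $Y\sub X$, pushes the resulting scissor relation forward through the ring endomorphism $\arc Z{}$, uses \eqref{eq:pbopen} to rewrite $\arc Z{(X-Y)}$ as the open complement $\arc ZX-\inverse\rho Y$, and then applies Theorem~\ref{T:infcomp} a second time to $\inverse\rho Y\sub\arc ZX$; comparing the two displayed equalities gives the result in three lines. You instead verify directly, via the rational-point descriptions of Propositions~\ref{P:arc} and~\ref{P:formsch}, that the two pp-formularies have identical interpretations in every Artinian local $\fld$-algebra $S$, the crux being the residue computation $\bar a\equiv\tilde a_0\pmod{\bar\maxim}$ which follows from the choice of basis in Remark~\ref{R:goodbasis}. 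Your argument is more explicit and makes visible why the identity holds (both total jets are detected by the same residue-field condition on the arc), at the cost of needing to reduce first to $Z$ local and $X$, $Y$ affine; the paper's argument is shorter and stays at the level of the Grothendieck ring, but is correspondingly less illuminating. Both proofs are legitimate, and your reduction steps, while stated somewhat briskly, are no more delicate than what the paper itself suppresses (the paper also invokes Theorem~\ref{T:infcomp}, which is stated only for affine schemes, and the arc map on $\grotartinf\fld$, which the preceding proposition only constructs for $Z$ local). One small point worth making explicit in your write-up: when you say $\tilde a\bmod\mathfrak n$ lying in $\inverse\rho Y(\fld)$ "reduces to $\tilde a_0\bmod\mathfrak n$ being a $\fld$-rational point of $Y$," you are implicitly also using that $\tilde a$ is an $S$-point of $\arc ZX$, which via Proposition~\ref{P:arcsch} is the same as $\bar a$ being an $\bar S$-point of $X$; this matches the ambient condition on the other side, so the equivalence is genuine, but it deserves a sentence.
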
 
\begin{proof}
By Theorem~\ref{T:infcomp}, we have an equality
$$
\class X=\class{X-Y}+\class{\jet Y{}X}.
$$
in $\grotartinf \fld $. Since $\arc Z\cdot$ is an endomorphism, we get
\begin{equation}\label{eq:arccomp}
\class{\arc  ZX}=\class{\arc Z{(X-Y)}}+\class{\arc Z{(\jet Y{}X})}.
\end{equation}
Since $X-Y\sub X$ is an open immersion, we have 
$$
\arc Z{(X-Y)}=\inverse\rho {X-Y}=\arc ZX-\inverse\rho Y
$$
 by \eqref{eq:pbopen} in Theorem~\ref{T:arc}. On the other hand, by another application of Theorem~\ref{T:infcomp}, we get
$$
\class{\arc ZX}=\class{\arc ZX-\inverse\rho Y}+\class{\jet{\inverse\rho Y}{}{\arc ZX}},
$$
from which the assertion now follows immediately in view of \eqref{eq:arccomp}.
\end{proof}

\section{Geometric Igusa-zeta series over linear arcs}\label{s:ratIgu}

For various schemes $X$, we will  calculate $\igu X{\mathbb A}O$, that is to say, we want to calculate $\sum_n\class{\arc{Z_n}X}t^n$, where, for the remainder of this section    $Z_n:=\op{Spec}(\pol \fld \xi/\xi^n\pol \fld \xi)$. To simplify notation, we simply write $\arc nX$ for    the \emph{$n$-th linear arc scheme} $\arc{Z_n}X$. We let  $\rho_n\colon\arc{n}X\to X$ be the canonical split projection with section $X\into \arc{n}X$, and we view closed subschemes of $X$ as closed subschemes of $\arc{n}X$ via the latter embedding. 
 
\subsection*{The formal \gr}
 
In $\grotartinf \fld $, we define the \emph{formal ideal} $\mathfrak N$ as the ideal generated by the relations $ \class {\jet Y{}X}-\class Y$, for all closed immersions $Y\sub X$ of affine schemes. It follows that if $Y,Y'\sub X$ are two closed subschemes of $X$ with the same underlying set, then $\class Y\equiv\class{Y'}\mod\mathfrak N$, since they have the same total jets.   Recall from Proposition~\ref{P:infclass} that we have a canonical \homo\  $\grotartinf \fld \to \grotclass \fld $. We prove below that $\mathfrak N$ belongs to its kernel, and so we introduce the \emph{formal \gr} $\grotform \fld $ as the quotient $\grotartinf \fld /\mathfrak N$.

\begin{proposition}\label{P:formid}
We have a sequence of natural \homo{s} of \gr{s} $\grotartinf \fld \to\grotform \fld \to \grotclass \fld $.
\end{proposition}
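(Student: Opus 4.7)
The proof is short since the first arrow is essentially tautological and the second reduces to a direct computation at $\fld$-points. Here is the plan.

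The first \homo\ $\grotartinf \fld \to \grotform \fld$ is, by definition, the canonical projection of $\grotartinf \fld$ onto its quotient by the formal ideal $\mathfrak N$. There is nothing to prove beyond observing that, since $\mathfrak N$ was introduced as an ideal, the quotient inherits a ring structure and the natural map is a ring \homo.

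For the second \homo, my approach is to apply the universal property of the quotient ring: it suffices to show that $\mathfrak N$ lies in the kernel of the canonical \homo\ $\nu\colon\grotartinf \fld \to \grotclass \fld$ furnished by Proposition~\ref{P:infclass}. Recall from that proof that $\nu$ sends a class $\class\Phi$ to the class of the unique (first-order) formula $\phi_0\in\Phi$ realizing $\inter{\Phi}\fld = \inter{\phi_0}\fld$, which exists because the singleton $\{\fld\}$ always makes a \form\ first-order. Since $\mathfrak N$ is generated as an ideal by elements of the form $\class{\Jet YX} - \class Y$, with $Y\sub X$ a closed immersion of affine $\fld$-schemes, it will be enough to check that each such generator is sent to zero by $\nu$.

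To verify this, observe that since $\fld$ has length one, the argument in Proposition~\ref{P:formsch} applied with $R=\fld$ shows that any $\fld$-algebra \homo\ $\complet A \to \fld$ factors through $A/I=\loc_Y$; hence $\inter{\Jet YX}\fld = \inter{\jet Y1X}\fld = \inter{Y}\fld$. Consequently the first-order disjunct of $\Jet YX$ realizing it over $\fld$ is (up to $\ACF\fld$-equivalence) just the defining \zariski\ formula of $Y$, so $\nu(\class{\Jet YX}) = \nu(\class Y)$ in $\grotclass \fld$. Thus every generator of $\mathfrak N$ maps to zero, $\mathfrak N \sub \ker \nu$, and $\nu$ factors through $\grotform \fld$, yielding the required \homo\ $\grotform \fld \to \grotclass \fld$ and completing the composition. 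The only subtlety is to confirm that $\nu$ is indeed well-defined on classes of arbitrary \forms\ (not only first-order formulae), which is exactly the content of Proposition~\ref{P:infclass}; no further obstacle arises.
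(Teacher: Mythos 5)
Your proof is correct and follows essentially the same route as the paper's: both reduce the second arrow to checking that the generators $\class{\Jet YX}-\class Y$ of $\mathfrak N$ die under the canonical map $\nu$ from Proposition~\ref{P:infclass}, by computing $\fld$-rational points. You spell out the equality $\inter{\Jet YX}\fld=Y(\fld)$ a bit more explicitly (via Proposition~\ref{P:formsch}, or equivalently the observation in the paper's discussion of total jets that $\inter{\Jet YX}R=\inter{\jet YlX}R$ for $R$ of length $l$, applied with $l=1$), whereas the paper asserts it directly, but the substance is the same.
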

\begin{proof}
By definition of \form, there exists a \zariski\ formula $\phi$ in $\jet Y{}X$ such that its $\fld $-rational points are given by $\inter\phi \fld $, for a given closed immersion $Y\sub X$. In particular, $\inter\phi \fld $ is equal to $Y(\fld )$, showing that $\class\phi=\class Y$ in $\grotclass \fld $. By the argument in Proposition~\ref{P:infclass}, the image of $\class{\jet Y{}X}$ in $\grotclass \fld $ is equal to $\class\phi$. This shows that $\mathfrak N$ lies in the kernel of  $\grotartinf \fld \to \grotclass \fld $.
\end{proof}

In particular, to prove the rationality of the geometric Igusa-zeta series over $\grotclass \fld_\lef$, it will suffice to show that its image as a power series over $\grotform \fld $ is rational over $\grotform \fld_\lef$. We will simplify our notation and write $\igugeom X(t)$ for $\igu X{\mathbb A}O$, viewed as a power series over $\grotform \fld $.

\begin{lemma}\label{L:formpart}
If $\{Y_i\}_i$ is a constructible partition of a scheme $X$, then $\class X=\sum_i\class{Y_i}$ in $\grotform \fld $.
\end{lemma}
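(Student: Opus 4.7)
The goal is to derive the classical scissor relation $\class X=\class{X-Y}+\class Y$ in $\grotform\fld$ for any locally closed immersion $Y\sub X$, and then induct on the number of pieces in the constructible partition. The key input is Theorem~\ref{T:infcomp}, which gives
$$\class X=\class{X-Y}+\class{\Jet YX}$$
in $\grotartinf\fld$ for any closed immersion $Y\sub X$ of affine schemes. Modulo the formal ideal $\mathfrak N$, we have $\class{\Jet YX}=\class Y$ by the very definition of $\grotform\fld$. Hence, for any closed immersion $Y\sub X$ of affine schemes, the relation $\class X=\class{X-Y}+\class Y$ holds in $\grotform\fld$. The first step of the plan is to extend this relation from affine $X$ to an arbitrary $\fld$-scheme, by choosing a finite open affine covering $\{U_j\}$ of $X$ and using the coverings $\{U_j\cap Y\}$ and $\{U_j-Y\}$ of $Y$ and $X-Y$ together with Theorem~\ref{T:classsch} (and the additivity of $\class\cdot$ via scissor polynomials on the lattice of opens $\Theta_X$); this reduces everything to affine intersections where Theorem~\ref{T:infcomp} applies directly.

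The second step is to upgrade the scissor relation from closed immersions to arbitrary locally closed immersions $Y\sub X$ in $\grotform\fld$. Writing $\bar Y$ for the (scheme-theoretic) closure of $Y$ in $X$, the subscheme $Y$ is open in $\bar Y$, so its complement $\bar Y-Y$ is a closed subscheme of $\bar Y$. Two successive applications of the scissor relation for closed immersions give
$$\class X=\class{X-\bar Y}+\class{\bar Y}=\class{X-\bar Y}+\class{\bar Y-Y}+\class Y,$$
while a third application to the closed subscheme $\bar Y-Y$ of $X-Y$ (note $X-\bar Y$ is open in $X-Y$ with complement $\bar Y-Y$) yields
$$\class{X-Y}=\class{X-\bar Y}+\class{\bar Y-Y}.$$
Subtracting produces the desired scissor relation $\class X=\class{X-Y}+\class Y$ for locally closed $Y\sub X$.

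The third and final step is a straightforward induction on the number $s$ of members of the partition. For $s=1$ there is nothing to prove. For $s\geq 2$, every $Y_i$ is locally closed in $X$ (being constructible and part of a partition), so by the second step
$$\class X=\class{X-Y_1}+\class{Y_1}.$$
Moreover $\{Y_2,\dots,Y_s\}$ is a constructible partition of $X-Y_1$, so by the induction hypothesis $\class{X-Y_1}=\sum_{i\geq 2}\class{Y_i}$ in $\grotform\fld$, whence $\class X=\sum_{i=1}^s\class{Y_i}$. The main subtlety in this plan is step one: the passage from the affine statement of Theorem~\ref{T:infcomp} to non-affine $X$, which needs to be handled carefully by cutting with an affine cover and invoking the additive well-definedness of $\class\cdot$ from Theorem~\ref{T:classsch}; once that bookkeeping is in place, steps two and three are purely formal.
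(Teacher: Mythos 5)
Your proof is correct, and it takes a genuinely different route from the paper. The paper's proof is shorter: it observes that (it claims) at least one part, say $Y_1$, must be open, takes its closed complement $X_1 := X - Y_1$, applies Theorem~\ref{T:infcomp} to get $\class X = \class{Y_1} + \class{\jet{X_1}{}X}$, reduces modulo $\mathfrak N$ to get $\class X = \class{Y_1} + \class{X_1}$, and then uses Noetherian induction on $X_1$. You instead establish the scissor relation $\class X = \class{X-Y} + \class Y$ for an arbitrary locally closed immersion $Y \subseteq X$ (via the three-way application of the closed-immersion case to $\bar Y$, $\bar Y - Y$, and $X - Y$), and then peel off \emph{any} part of the partition. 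This extra work buys you something real: the paper's claim that a constructible (even locally-closed) partition always has an open member is not true in general --- e.g.\ partition the union of the two coordinate axes in $\affine\fld 2$ into the origin and the two punctured axes, none of which is open --- so your route is actually more robust. You are also more explicit about the passage from affine to non-affine $X$ (the paper applies Theorem~\ref{T:infcomp}, which is stated for affine schemes, without comment); your reduction via affine covers and Theorem~\ref{T:classsch} is a correct way to fill that in. One small inaccuracy in your write-up: the parenthetical ``being constructible and part of a partition'' does not by itself imply each $Y_i$ is locally closed, but for the statement to make sense each $Y_i$ must carry a subscheme structure, hence be locally closed, so your conclusion is what is needed regardless of the mis-stated reason.
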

\begin{proof}
Note that this partition is finite, since $X$ is Noetherian. Moreover, at least one part must be open, say $Y_1$, and let $X_1:=X-Y_1$ be its complement. By Theorem~\ref{T:infcomp}, we have an equality $\class X=\class{Y_1}+\class{\jet {X_1}{}X}$ in $\grotartinf \fld $. By definition of formal ideal, $\class{\jet{X_1}{}X}\equiv\class {X_1}$ modulo $\mathfrak N$. Putting these two together, we get an identity $\class X=\class{Y_1}+\class{X_1}$ in $\grotform \fld $. Moreover, $\{Y_2,Y_3,\dots\}$ is a constructible partition of $X_1$, and so we are done by Noetherian induction. 
\end{proof}

\begin{theorem}\label{T:redfiber}
Let $X$ be a $d$-dimensional scheme, $Y$   a closed subscheme containing the singular locus of $X$, and   $Z$   an Artinian scheme of length $l$. If $\rho\colon\arc ZX\to X$ denotes the canonical projection, then   we have an equality
$$
\class{\arc ZX}=\class{X-Y}\cdot\lef^{d(l-1)}+\class {\inverse\rho Y}
$$
in $\grotform \fld $.
\end{theorem}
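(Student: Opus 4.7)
The plan is to cut $\arc ZX$ along the preimage $\inverse\rho Y$ using the infinitary scissor relation, then exploit the smoothness of the complement $U := X - Y$ to turn the arc scheme over $U$ into a trivial affine bundle.

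First I would apply Theorem~\ref{T:infcomp} to the closed immersion $\inverse\rho Y \sub \arc ZX$ to obtain
\begin{equation*}
\class{\arc ZX} = \class{\arc ZX - \inverse\rho Y} + \class{\Jet{\inverse\rho Y}{\arc ZX}}
\end{equation*}
in $\grotartinf\fld$; when $\arc ZX$ is not already affine, one reassembles this identity from an affine open cover via Theorem~\ref{T:classsch} and Lemma~\ref{L:formpart}. Reducing modulo the formal ideal $\mathfrak N$, by its very definition, replaces the total jet class $\class{\Jet{\inverse\rho Y}{\arc ZX}}$ by $\class{\inverse\rho Y}$. Next, the open immersion $U \hookrightarrow X$ pulls back under $\rho$ to $\arc ZX - \inverse\rho Y$, and by formula~\eqref{eq:pbopen} of Theorem~\ref{T:arc} this pull-back is exactly $\arc ZU$. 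Hence in $\grotform\fld$ we have
\begin{equation*}
\class{\arc ZX} = \class{\arc ZU} + \class{\inverse\rho Y}.
\end{equation*}

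For the remaining term, since $Y$ contains the singular locus, $U$ is smooth of dimension $d$. Decomposing $Z$ into its local Artinian components via Lemma~\ref{L:Artdu} reduces matters to the case where $Z$ is local of length $l$. Then Corollary~\ref{C:fib}, applied to the closed immersion $\op{Spec}\fld \sub Z$ over an affine open cover of $U$ and glued by Theorem~\ref{T:classsch}, exhibits $\arc ZU \to U$ as a locally trivial fibration with fiber $\affine\fld{d(l-1)}$. Lemma~\ref{L:fib} then gives $\class{\arc ZU} = \class U \cdot \lef^{d(l-1)} = \class{X-Y}\cdot\lef^{d(l-1)}$ in $\grotart\fld{}$, hence in $\grotform\fld$. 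Substituting into the previous display yields the asserted identity.

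The main obstacle will be the technical bookkeeping needed to pass from the affine formulations of Theorem~\ref{T:infcomp} and Corollary~\ref{C:fib} to the possibly non-affine $X$ and to a non-local Artinian $Z$: this requires covering $X$ (and thence $\arc ZX$) by affines, checking that the scissor relation along $\inverse\rho Y$ and the local triviality of $\arc ZU\to U$ glue across the cover, and verifying that the decomposition $\arc ZX \iso \arc{Z_1}X\times_\fld\cdots\times_\fld\arc{Z_k}X$ induced by a disjoint decomposition of $Z$ is compatible with restriction to $U$. All of this is routine given the machinery of Theorem~\ref{T:classsch} and Lemma~\ref{L:formpart}, but writing it out carefully is where the work lies.
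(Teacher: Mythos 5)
Your proposal is correct and follows essentially the same route as the paper: cut along $\inverse\rho Y$ (the paper quotes Lemma~\ref{L:formpart} directly, you re-derive it from Theorem~\ref{T:infcomp} plus the definition of the formal ideal), identify the complement with $\arc Z{(X-Y)}$ via \eqref{eq:pbopen} of Theorem~\ref{T:arc}, and apply Corollary~\ref{C:fib} together with Lemma~\ref{L:fib} to the smooth open part. One small remark: the reduction to local $Z$ via Lemma~\ref{L:Artdu} that you worry about is actually forced, not optional — the stated exponent $d(l-1)$ in Corollary~\ref{C:fib} only holds for $Z$ local, and for non-local $Z$ the arc integral splits additively over connected components rather than as a single power of $\lef$, so the theorem should be read with $Z$ local Artinian (as the paper's own appeal to Corollary~\ref{C:fib} implicitly assumes).
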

\begin{proof}
This follows immediately from  Corollaries~\ref{C:fib} and \ref{C:jetarc}, but here is the argument in more detail: let us put $W:=\arc ZX$ and $V:=\inverse\rho Y$. By Lemma~\ref{L:formpart}, we have an equality $\class W=\class{W-V}+\class{V}$ in $\grotform \fld $. Moreover, by Theorem~\ref{T:arc}, we have an isomorphism $W-V\iso \arc Z{(X-Y)}$. Since $X-Y$ is smooth by the choice of $Y$, $\class {\arc Z{(X-Y)}}= \class{X-Y}\cdot\lef^{d(l-1)}$ by Corollary~\ref{C:fib}, and the assertion follows. 
\end{proof}

In order to simplify our notation, we will henceforth write $X_s$ for the basic subset $\op D(s)$ in a scheme $X$, where $s$ is a global section on $X$. Likewise, we write $X\rij sn$ for the intersection $X\cap \op V\rij sn$, for given global sections $s_i$. In this notation, we have, by Lemma~\ref{L:formpart}, the following useful    equality
\begin{equation}\label{eq:basic}
\class X=\class{S_n(X_{s_1},\dots,X_{s_n})}+\class{X\rij sn}
\end{equation}
in $\grotform \fld $. Note that products in the  scissor polynomial are actually given by intersection, as in \S\ref{s:lat}. For instance, \eqref{eq:basic}  becomes for $n=2$, the identity
$$
\class X= \class{X_s}+\class{X_t}-\class{X_{st}}+\class{X(s,t)}.
$$

To derive the next identity, we introduce some further notation. Fix a scheme $X$ and an $n$-tuple of global sections $\rij sn$. Given a \emph{binary vector} $\delta$ of length $n$, that is to say, a $n$-tuple  in $\{0,1\}^n$, we will write $X_\delta$ for $X_{s^\delta}$ where $s^\delta$ is the product of all $s_i^{\delta_i}$, and we will write $\bar X_\delta$ for $X_\delta(s_\delta)$, where $s_\delta$ is the tuple of all $s_i$ for which $\delta_i=0$. In other words, $\bar X_\delta$ is defined by the formula expressing that each $s_i$ is either a unit or zero, depending on whether $\delta_i$ is one or zero. One easily verifies that
$
X=\bigsqcup_{\delta} \bar X_\delta,
$
where $\delta$ runs over all binary vectors of length $n$. Applying Lemma~\ref{L:formpart} to this constructible partition, we get 
\begin{equation}\label{eq:basicsum}
\class X=\sum_\delta\class{\bar X_\delta}
\end{equation}
in $\grotform \fld $, where the sum runs over all binary vectors. Let us again illustrate this  for $n=2$, yielding the identity
$$
\class X= \class{X_{st}}+\class{X_s(t)}+\class{X_t(s)}+\class{X(s,t)}.
$$
It is important to note that this equation  is false in $\grotartinf \fld $, whence, in particular,  we may not apply $\arc Z{}$ to it.

Before we turn to a proof of the rationality of the geometric Igusa-zeta series, we should mention that this method is different from  working with classical arcs in the classical \gr. Although we eventually take classes in $\grotform \fld $, thus \emph{collapsing nilpotents}, we will do this only after taking arcs. Put differently, although arcs will be reduced, the base schemes will not be, and to see that this makes a difference (even in regards to   dimension!), we list, for small lengths, some  defining equations of   arcs and their reductions for three different closed subschemes with the same underlying set, the union of two lines in the plane:

\newcommand\breathe{{\vphantom{\left(\complet\Sigma_\Sigma\right)}}} 
\begin{table}[h]
\caption{Equations and dimension $d$ of arcs   and their classes in $\grotform \fld $}
\label{tab:1}     
\begin{tabular}{|c|c|c|r|c|r|c|r|}
\hline
&$l \breathe $ &$xy=0$&$d$ & \phantom{xxx}$x^2y=0$\phantom{xxx}&$d$ & \phantom{xxxxx}$x^2y^3=0\phantom{xxxxxxxx}$&$d$   \\
\hline\hline
\multirow{4}{*}{$\arc l{}$} &$1 \breathe $ 	&\multicolumn{2}{|c|}{$\tilde x_0\tilde y_0,$} & \multicolumn{2}{|c|}{$\tilde x_0^2\tilde y_0,$} & \multicolumn{2}{|c|}{$\tilde x_0^2\tilde y_0^3,$}\\\cline{2-8}
		&$2 \breathe $	&\multicolumn{2}{|c|}{$\tilde x_0\tilde y_1+\tilde x_1\tilde y_0,$} & \multicolumn{2}{|c|}{$2\tilde x_0\tilde x_1\tilde y_0+\tilde x_0^2\tilde y_1,$} & \multicolumn{2}{|c|}{$2\tilde x_0\tilde x_1\tilde y_0^3+3\tilde x_0^2\tilde y_0^2\tilde y_1,$}\\\cline{2-8}
		&$3 \breathe $ &\multicolumn{2}{|c|}{ $\tilde x_0\tilde y_2+\tilde x_1\tilde y_1+$} & \multicolumn{2}{|c|}{$\tilde x_0^2\tilde y_2+2\tilde x_0\tilde x_1\tilde y_1+$} &
		\multicolumn{2}{|c|}{$3\tilde x_0^2(\tilde y_0\tilde y_1^2+\tilde y_0^2\tilde y_2)+$}\\
		&&\multicolumn{2}{|c|}{$\breathe \tilde x_2\tilde y_0$\phantom{xxxxxxxx}}& \multicolumn{2}{|c|}{$(2\tilde x_0\tilde x_2+\tilde x_1^2)\tilde y_0$\phantom{xx}}& \multicolumn{2}{|c|}{$6\tilde x_0\tilde x_1\tilde y_0^2\tilde y_1+(\tilde x_1^2+2\tilde x_0\tilde x_2)\tilde y_0^3$}\\
			\hline
\multirow{3}{*}{$\class{\arc l{}}$} &$1 \breathe $ & $\tilde x_0\tilde y_0,$&1& $\tilde x_0\tilde y_0,$&1& $\tilde x_0\tilde y_0,$&1\\\cline{2-8}
&$2 \breathe $ & $\tilde x_0\tilde y_1,\tilde x_1\tilde y_0,$ &2& $\tilde x_0\tilde y_1,$ &3& [no new equation]&3\\\cline{2-8}
&$3 \breathe $ & $\tilde x_0\tilde y_2,\tilde x_1\tilde y_1,\tilde x_2\tilde y_0$ &3& $\tilde x_0\tilde y_2,\tilde x_1\tilde y_0$ &4& $\tilde x_1\tilde y_0$&5 \\
\hline
\end{tabular}
\end{table}

\subsection*{Tagged and formal equations}
In the sequel, we will   only invert variables, and to this end, we simplify our notation even further.
To any natural number $a$, we associate its \emph{tagged} version $a^*$, and we call $v(a^*):=a$ the \emph{underlying value} (or \emph{untagged} version) of $a^*$. We can add tagged and/or untagged numbers by the rule that the underlying value of the sum is the sum of the underlying values of the terms, where the sum is tagged \iff\ at least one term is tagged (e.g,. $2+3^*=5^*$). Fix $m\geq 1$, and let $\Gamma_m$ be the collection of $m$-tuples with entries natural numbers or their tagged versions. We extend $v$ component-wise to get a map $v\colon \Gamma_m\to \nat^m$, sending a tuple $\theta\in\Gamma_m$ to its \emph{underlying value} $v\theta$. We define a partial order on $\Gamma_m$ by $\alpha\preceq\beta$ \iff\ $\alpha_j$ is untagged and $\alpha_j\leq\beta_j$, or $\alpha_j$ is tagged and $\alpha_j=\beta_j$, for all $j=\range 1m$. 

We will introduce two equational conventions in this section that are useful for discussing arc equations.
To each variable $x$, we associate its tagged version $x_*$, which we will treat as an invertible variable. Given a tagged number $a^*$, we let $x^{a^*}$  be the same as $x_*^{a^*}$, and simply write  $x_*^a$. Hence,  we may associate to a polynomial $f\in \pol \fld x$, the polynomial $f(x_*)$, which is just $f(x)$ but viewed in the Laurent polynomial ring $\pol \fld {x,\frac 1x}$. Therefore,  we interpret the equation $f(x_*)=0$ as the conjunction $f(x)=0$ and $x$ is invertible, that is to say, the pp-formula $(\exists x')f(x)=0\en xx'=1$. We may extend this practice to several variables, tagging some of them and leaving the others unchanged. For instance, the \emph{tagged} equation $x^2_*+x_*y^3+z^3_*=0$ should be considered as an element of the \emph{mixed Laurent polynomial ring} $\pol \fld {x,y,z,\frac 1x,\frac 1z}$, and is equivalent with the conditions $x^2+xy^3+z^3=0$ together with $x$ and $z$ are invertible. 

Our second convention is the use of a formal variable $\xi$, fixed once and for all. Given a power series $f(x, \xi)\in\pow{\pol \fld x} \xi $ (or, at times, a Laurent series) with coefficients in a polynomial ring $\pol \fld x$ (or a mixed Laurent polynomial ring), we interpret the (formal) equation $f=0$ as the condition on the $x$-variables that $f$ be identical zero as a power series (Laurent series) in $\xi $. In other words, if $f(x, \xi)=f_0(x)+f_1(x) \xi +f_2(x) \xi ^2+\dots$, then $f=0$ stands for the (infinite) conjunction $f_0=f_1=f_2=\dots=0$ (as $f=0$ and $\xi^if=0$ yield   equivalent systems of equations, we may reduce the Laurent series case to the power series case). Similarly, for each $n$, the equivalence $f(x, \xi)\equiv0\mod \xi ^n$ stands for  the conjunction $f_0=f_1=\dots=f_{n-1}=0$. An example of a combination of both conventions is
$$
0=(x+y_* \xi)^2+(z_*+w \xi)^3
$$
which is equivalent to the pp-formula 
\begin{align*}
x^2+z^3&=0 \\
2xy+3z^2w&=0 \\
y^2+3zw^2&=0 \\
w^3&=0 \\
(\exists y',z')\quad yy'&=1 \en zz'=1
\end{align*}

To any $m$-tuple of variables $\var=\rij \var m$, we associate the corresponding (countably many) \emph{arc variables}  $\tilde \var=(\tilde\var_0,\tilde\var_1,\dots) $, where each $\tilde \var_i$ is an  $m$-tuple $(\tilde \var_{i,1},\dots,\tilde \var_{i,m})$. For each $i$, we let
$$
\dot x_i=\tilde x_{0,i}+\tilde x_{1,i} \xi+ \tilde x_{2,i}\xi ^2+\dots
$$
be the \emph{generic arc series}   in $\xi $, and we write $\dot x$ for the tuple $(\dot x_1,\dots,\dot x_m)$. Given $\theta\in\Gamma_m$, we define $\org x\theta$ to be the $m$-tuple of twisted power series with $i$-th entry equal to
$$
\dot x_i(\theta):=\tilde x_{\theta_i,i}+\tilde x_{\theta_i+1,i} \xi + \tilde x_{\theta_i+2,i}\xi ^2+\dots
$$
if $\theta_i$ is untagged, and
$$
(\dot x_i)_*(\theta):=(\tilde x_{v\theta_i,i})_*+   \tilde x_{v\theta_i+1,i}\xi +   \tilde x_{v\theta_i+2,i}\xi ^2+\dots
$$
if $\theta_i$ is tagged (note that according to this convention, only the constant term is actually tagged, which accords with the fact that a power series is invertible \iff\ its constant term is). For each $\theta\in\Gamma_m$, define a change of variables $\tau_\theta$   sending, for each $j=\range 1m$, the variable $\tilde x_{i,j}$ to $\tilde x_{i-v\theta_j,j}$ and $(\tilde x_{i,j})_*$ to $(\tilde x_{i-v\theta_j,j})_*$  for $i\geq v\theta_j$,  and leaving the remaining variables and their tagged versions unchanged. In particular,  $\tau_\theta$ only depends on the underlying value of $\theta$, and $\tau_\theta(\org {x_i}\theta)$ is equal to   $\dot x_i$ if $\theta_i$ is untagged and to $(\dot x_i)_*$ if $\theta_i$ is tagged.

\subsection*{Directed arcs}
With these conventions, we can now write down the equations of an arc scheme more succinctly. If $X\sub\affine \fld  m$ is the closed subscheme defined by the \zariski\ formula $g_1=\dots=g_s=0$, then $\arc nX$ is defined by the conditions
$$
g_1(\dot x)\equiv g_2(\dot x)\equiv\dots\equiv g_s(\dot x)\equiv 0\mod \xi^n
$$
and $\org x{\tuple n}=0$ (recall that $\tuple n$ is the tuple all of whose entries are equal to $n$). Note that the latter condition simply means that $\tilde x_{i,j}=0$ for all $i\geq n$ and all $j=\range 1m$.

We extend the notion of arc scheme, by considering certain (initial) linear subspaces of arc schemes. Given $\theta\in\Gamma_m$, we define the \emph{$n$-th directed arc scheme along $\theta$}, denoted $\parc n\theta X$, as the locally closed subscheme of $\arc nX$ defined by the conditions $\tilde x_{i,j}=0$ for $i<\theta_j$, and $\tilde x_{v\theta_j,j}$ is invertible if $\theta_j$ is tagged,  for $j=\range 1m$. We may also refer to $\parc n\theta X$  as the subscheme of all \emph{arcs along}, or \emph{with initial direction} $\theta$. Writing out these conditions in more detail, the defining equations of $\parc n\theta X$ become
$$
g_1(\org x\theta)=\dots=g_s(\org x\theta)\equiv 0\mod \xi^n
$$
and $\tilde x_{i,j}=0$ for $i<\theta_j$ or $i\geq n$, and for $j=\range 1m$. By the change of variables  $\tau_\theta$, we can rewrite these equations as
\begin{equation}\label{eq:parceq}
\tau_\theta(g_1(\org x\theta))=\dots=\tau_\theta(g_s(\org x\theta))\equiv 0\mod \xi^n \qquad\en\qquad \org x{\tuple n-\theta}=0.
\end{equation}
This form will be easier to work with, as we can now compare arcs along different directions; we call the first set of equations in \eqref{eq:parceq} the \emph{arc equations}, and the second set the \emph{initial conditions}. We will use the arc equations as follows:  given $\theta\in\Gamma_m$, let $z$ be a new $m$-tuple of variables with corresponding arc variables $\tilde z$, called \emph{the $\theta$-tagging of $x$}, where $z_j$  is equal to  $(x_j)_*$ or $x_j$   depending on whether  $\theta_j$ is tagged or not, and similarly, $\tilde z_{i,j}=\tilde x_{i,j}$ unless $i=0$ and $\theta_j$ is tagged, in which case $\tilde z_{0,j}=(\tilde x_{0,j})_*$). If $f  =\sum_\mu c_{\mu}x^\mu $, then
\begin{equation}\label{eq:twist}
\tau_\theta(f(\org x\theta))=\sum_{\mu}c_{\mu}\xi^{\mu\theta}\dot z^\mu.
\end{equation}

\begin{example}[Fibers]\label{E:fiber}
Recall that $\rho_n\colon\arc nX\to X$ is the canonical projection of the arc scheme onto the base scheme. Let us calculate the fiber $\inverse{\rho_n}O$ of the origin. If $f_1=\dots=f_s=0$ is the \zariski\ formula defining $X$, then $\arc nX$ is given by the equations $f_i(\dot x)\equiv0\mod\xi^n$, and $\inverse{\rho_n}O$ is the closed subscheme given by $\tilde x_0=0$, that is to say,   
\begin{equation}\label{eq:fiber}
\inverse{\rho_n}O=\parc n{\tuple 1}X. 
\end{equation}
\end{example}

\begin{definition}[Twisted geometric Igusa-zeta series]\label{D:twistigu}
Given $\theta\in \Gamma_m$, define the \emph{$\theta$-twisted geometric Igusa-zeta series} of $X$ to be
$$
\igutwist X\theta(t):=\sum_{n=0}^\infty \class{\parc n\theta X}t^n.
$$
Hence, $\igugeom X$ is just the case in which the twist is zero.
\end{definition}

At times, it is convenient, especially in inductive arguments, to prove that all twisted geometric Igusa-zeta series are rational.

\subsection*{Twisted initial forms}
For the remainder of the section, we restrict to the case of a hypersurface    $X$   defined by a single equation $f:=\sum_\nu c_\nu x^\nu$. If $f$ is not homogeneous, we can no longer expect such a simple relation between the arc scheme and the fiber above the singular locus. As we will shortly see, the following hypersurfaces derived from $X$ will play an important role: for every   $\theta\in\Gamma_m$, let $\tin X\theta$ be defined as follows. View $\pol \fld x$ as a graded ring giving the variable $x_i$ weight $v\theta_i$. Let $\ord\theta f$, or $\ord\theta X$, be the  order of $f$ in this grading, that is to say, the minimum of all $v\theta\cdot\nu$ with $c_\nu\neq0$, and let $\tin X\theta$ be the hypersurface with defining equation 
$$
\tin f\theta:=\sum_{v\theta\cdot\nu=\ord\theta X} c_\nu x^\nu.
$$
 In particular, $X=\tin X{\tuple 0}$. We call $\tin X\theta$, or rather, $\tin f\theta$, the \emph{$\theta$-twisted initial form} of $X$.

Here is an example to view the previous conventions and definitions at work:

 \begin{example}\label{E:235}
 Let $f=x^9+x^2y^4+z^4$ and $\theta=(2,3^*,5)$. Hence   $\parc n{(2,3^*,5)} X$ is the locally closed subscheme of $\arc{(2,3,5)}X$ given by the conditions $\tilde x_0=\tilde x_1=\tilde y_0=\tilde y_1=\tilde y_2=\tilde z_0=\tilde z_1=\tilde z_2=\tilde z_3=\tilde z_4=0$ and $\tilde y_3$ is invertible.  
 Using \eqref{eq:parceq} and \eqref{eq:twist}, its equations are
  $$
 \tau_{(2,3^*,5)}(f(\org x{2,3^*,5},\org y{2,3^*,5},\org z{2,3^*,5}))= \xi^{18}\dot x^9+\xi^{16}\dot x^2\dot y^4_*+\xi^{20}\dot z^4\equiv 0\mod \xi^n
 $$
 and $\org x{n-2}=\org y{n-3}=\org z{n-5}=0$. Hence, $\ord {(2,3^*,5)}X=16$ and the twisted initial form   $\tin X{(2,3^*,5)}$ is given by $\tin f{(2,3^*,5)}=x^2y^4_*$, that is to say, by the two conditions $x^2y^4=0$ and $y$ is a unit. 
\end{example}

\subsection*{Regular base}
We will deduce rationality by   splitting off regular pieces of various twisted initial forms, until we arrive at a recursive relation involving the arc scheme of the original hypersurface. To this end, we introduce the following definition: we say that $\theta\in\Gamma_m$ is \emph{$X$-regular} if $\tin \theta X$ is smooth. 
As with arcs, directed arcs above a regular base have a locally trivial fibration,  a fact which will allow us to determine their contribution to the Igusa-zeta series:
 
 \begin{proposition}\label{P:parcreg}
 Let $X\sub\mathbb A^m$ be a hypersurface. For  each  $X$-regular tuple $\theta\in \Gamma_m$,  we have locally (i.e., on an open affine covering), an isomorphism
$$
\parc n\theta X\iso \arc{n-\ord X\theta}{(\tin X\theta)}\times\mathbb A^{m\cdot \ord X\theta-\norm\theta},
$$
for each $n> \ord X\theta$.
\end{proposition}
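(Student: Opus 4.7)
The plan is to compare the arc equations cutting out $\parc n\theta X$ with those cutting out $\arc{n-e}{(\tin X\theta)}$, level by level in powers of $\xi$, and to exploit smoothness of $\tin X\theta$ to construct a triangular change of variables locally on an open cover. Set $e := \ord X\theta$ and let $z$ denote the $\theta$-tagging of $x$. Combining \eqref{eq:parceq} and \eqref{eq:twist}, the twisted arc equation factors as
\[
\tau_\theta\bigl(f(\org x\theta)\bigr) \;=\; \sum_\nu c_\nu\, \xi^{\nu\cdot\theta}\, \dot z^\nu \;=\; \xi^{e}\bigl(\tin f\theta(\dot z) + \xi\, h(\xi, \dot z)\bigr),
\]
where $h(\xi, \tilde z) \in \pol \fld{\tilde z, \xi}$ collects the contributions of monomials with $\nu\cdot\theta > e$. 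Since $n > e$, the arc equation modulo $\xi^n$ is equivalent to the system $\tin f\theta(\dot z) + \xi\, h(\xi, \dot z) \equiv 0 \mod \xi^{n-e}$, which involves only the first $n-e$ layers $\tilde z_0, \dots, \tilde z_{n-e-1}$ of arc variables. The remaining arc variables $\tilde z_{i,j}$ with $n-e \le i < n - v\theta_j$—of which there are $\sum_j (e - v\theta_j) = me - |\theta|$—do not appear in any equation and will account for the free affine factor $\mathbb A^{me - |\theta|}$.

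Next, I would expand these $n-e$ equations coefficient by coefficient in $\xi$. The order-zero coefficient is $\tin f\theta(\tilde z_0) = 0$, which together with the invertibility conditions on $\tilde z_0$ coming from the tagging of $\theta$ is precisely the defining system for $\tin X\theta$ in the coordinates $\tilde z_0$. For $1 \le k < n-e$, Taylor expansion in $\xi$ yields
\[
\sum_{j=1}^m \frac{\partial \tin f\theta}{\partial z_j}(\tilde z_0)\, \tilde z_{k,j} \;+\; R_k(\tilde z_0, \dots, \tilde z_{k-1}) \;=\; 0
\]
for an explicit polynomial $R_k$. The arc equations of $\arc{n-e}{(\tin X\theta)}$ itself share exactly the same Jacobian leading term but have a distinct residual $R'_k$, arising from $\tin f\theta(\dot z)$ alone; their difference $R_k - R'_k$ is the $\xi^{k-1}$-coefficient of $h(\xi, \dot z)$, depending only on $\tilde z_0, \dots, \tilde z_{k-1}$. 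Smoothness of $\tin X\theta$, via the Jacobian criterion as invoked in the proof of Theorem~\ref{T:arc}, provides an affine open cover $\{U_i\}$ of $\tin X\theta$ on which $\partial_i \tin f\theta$ is a unit. On the preimage of each such $U_i$ in $\parc n\theta X$, I would then construct, inductively in $k$, a triangular $\zar$-automorphism of the ambient affine space which fixes the $\tilde z_0$ layer and modifies only the $i$-th coordinate at each level $k \ge 1$ by a polynomial multiple of $\bigl(\partial_i \tin f\theta(\tilde z_0)\bigr)^{-1}(R_k - R'_k)$, suitably re-expressed in the new variables from previous levels, so that the equations of $\parc n\theta X$ are transformed into those of $\arc{n-e}{(\tin X\theta)}$ in the variables $\tilde z_{i,j}$ with $i < n - e$. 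Together with the free $me - |\theta|$ variables identified above, this yields the asserted local isomorphism.

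The main obstacle will be bookkeeping: the inductive re-expression of each correction $R_k - R'_k$ in the coordinates produced by the previous level-$(k-1)$ change must be carried through carefully, and one must verify at each stage that the adjustment remains polynomial on $U_i$ (which it does by induction, since only strictly lower-order variables are involved and $\partial_i \tin f\theta(\tilde z_0)$ is a unit there). One must also check that the tagging/invertibility conditions are preserved, but this is automatic since they only constrain the $\tilde z_0$ layer, which is left untouched. Conceptually the argument is an implicit-function / Hensel-type lifting applied schematically, one variable at a time; crucially, smoothness of the hypersurface $\tin X\theta$ yields a Jacobian of full rank one at every point, which is exactly what allows us to avoid Gaussian elimination and to realise the local trivialisation as a genuine $\zar$-isomorphism rather than a more general definable one.
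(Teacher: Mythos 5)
Your argument is correct and takes essentially the same approach as the paper's: factor out $\xi^{\ord X\theta}$, observe that the $\xi^k$-coefficient equations of $\parc n\theta X$ and of $\arc{n-\ord X\theta}{(\tin X\theta)}$ share the same linear leading term $\sum_j\partial_j\tin f\theta(\tilde z_0)\tilde z_{k,j}$ and differ only by a polynomial in the lower layers $\tilde z_0,\dots,\tilde z_{k-1}$, then use smoothness of $\tin X\theta$ to solve one coordinate at each layer over a basic affine cover, with the residual arc variables contributing the free $\affine{}{m\cdot\ord X\theta-\norm\theta}$ factor. You spell out the triangular $\zar$-automorphism somewhat more explicitly than the paper, which contents itself with observing that both closed subschemes are cut out as graphs over the same set of free variables.
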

 \begin{proof}
Let $f=\sum_\nu c_\nu x^\nu$ be the defining equation of $X$. Let us put $a:=\ord X\theta$; recall that it is the minimum of all $v\theta\cdot\nu$ with $c_\nu\neq 0$. Instead of $x$, we will use the $m$-tuple of variables $z$ whose $i$-th variable is tagged precisely if $\theta_i$ is.  For each $k$, let $f^\theta_k :=\sum_{v\theta\cdot\nu=k}c_\nu z^\nu$, so that $\tin f\theta=f_{a}^\theta$ is the defining equation of $\tin X\theta$. By \eqref{eq:parceq} and \eqref{eq:twist}, the arc equation of $\parc n\theta X$ is  
$$
\tau_\theta(f(\org z\theta))=\sum_{k=  a}^{n-1} \xi^kf^\theta_k(\dot z)\equiv 0\mod \xi^n
$$
whereas the initial condition is $\dot z(\tuple n-\theta)=0$. Factoring out $\xi^{a}$, yields the arc equation
\begin{equation}\label{eq:parcntheta}
 \sum_{k=0}^{n-a-1} \xi^kf^\theta_{a+k}(\dot z)\equiv 0\mod \xi^{n-a}.
\end{equation}
On the other hand, the arc equation of $\arc{n-a}{\tin X\theta}$ is
\begin{equation}\label{eq:arcntheta}
\tin f\theta(\dot z)\equiv 0\mod \xi^{n-a}.
\end{equation}
 Note that expansions \eqref{eq:parcntheta} and \eqref{eq:arcntheta} have the same constant term $\tin f\theta=f^\theta_{a}$. Expand 
 $$
 f^\theta_k(\dot z)=\sum_l \xi^lf^\theta_{k,l}(\tilde z), 
 $$
 with each $f_{k,l}^\theta$ only depending on $\tilde z_0,\dots,\tilde z_{l-1}$. If $k= a$, we will  write $\tilde f^\theta_l(\tilde z)$ for $f^\theta_{a,l}(\tilde z)$, so that $\tin f\theta(\dot z)=\sum_l\xi^l \tilde f^\theta_l(\tilde z)$. Substituting in \eqref{eq:parcntheta}, we get an expansion
  $$
  \sum_{k,l=0}^{n-a-1} \xi^{k+l}f^\theta_{a+k,l}(\tilde z) 
    $$
showing that the defining equations of $\parc n\theta X$ are $g_0=\dots=g_{n-a-1}=0$ together with $\tilde z_{i,j}=0$ if $i\geq n-\theta_j$, where
\begin{equation}\label{eq:parcexpan}
g_l(\tilde z):=\sum_{k=0}^l f^\theta_{a+k,l-k} = \tilde f^\theta_l+ \sum_{k =1}^l f^\theta_{a+k,l-k}.
\end{equation}
 Since $\tin X\theta$ is smooth, the proof of Corollary~\ref{C:fib} shows that locally $\tilde f^\theta_l$, for $l>0$,  is linear in the $\tilde z_l$-variables, and smoothness allows us to solve for one of the $\tilde z_l$-variables  in terms of the others. Restricting   to a basic open, we may assume that we can do this globally (we leave the details to the reader; but see also  the proof of  Corollary~\ref{C:fib}). However, the same is then true for $g_k$, since the difference $g_l-\tilde f^\theta_l$ only depends on variables $\tilde z_0,\dots, \tilde z_{l-1}$ by \eqref{eq:parcexpan}. This shows that the closed subscheme defined by $g_0,\dots,g_{n-a-1}$ when viewed in the variables $\tilde z_0,\dots, \tilde z_{n-a-1}$ is the same as $\arc n{\tin X\theta}$. As for the remaining   $m a$ variables $\tilde z_{n-a},\dots, \tilde z_{n-1}$, among these, $\norm\theta$ many of them are put equal to zero, whereas the rest remains free, proving the assertion.
  \end{proof}
 
 \begin{corollary}\label{C:parcreg}
If $\theta$ is $X$-regular, then
$$
\class{\parc n\theta X} = \class{\tin X\theta}\cdot \lef^{(m-1)(n-1)+\ord X\theta-\norm\theta}
$$
in $\grotart \fld{} $.
\end{corollary}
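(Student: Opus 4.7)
The plan is to read this off Proposition~\ref{P:parcreg} essentially mechanically, feeding the resulting local product description into the locally-trivial-fibration calculus already developed in Lemma~\ref{L:fib} and Corollary~\ref{C:fib}. First, I would invoke Proposition~\ref{P:parcreg} to obtain, on an open affine covering, the isomorphism
$$
\parc n\theta X\iso \arc{n-\ord X\theta}{(\tin X\theta)}\times\mathbb A^{m\cdot\ord X\theta-\norm\theta},
$$
for $n>\ord X\theta$. This identifies the projection onto the first factor as a locally trivial fibration with fiber the affine space $\mathbb A^{m\cdot\ord X\theta-\norm\theta}$. Combining Lemma~\ref{L:fib} with Lemma~\ref{L:Lefsch}, which says $\class{\mathbb A^k}=\lef^k$, yields the factorization
$$
\class{\parc n\theta X}=\class{\arc{n-\ord X\theta}{(\tin X\theta)}}\cdot\lef^{m\cdot\ord X\theta-\norm\theta}
$$
in $\grotart \fld{}$.

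Next, I would compute the arc class on the right. Since $X\sub\mathbb A^m$ is a hypersurface and $\theta$ is $X$-regular, $\tin X\theta$ is smooth of dimension $m-1$, and the Artinian scheme $Z_{n-\ord X\theta}$ has length $n-\ord X\theta$. Corollary~\ref{C:fib}, applied with $\bar Z=\op{Spec}\fld$ and $d=m-1$, then gives
$$
\class{\arc{n-\ord X\theta}{(\tin X\theta)}}=\class{\tin X\theta}\cdot\lef^{(m-1)(n-\ord X\theta-1)}
$$
in $\grotart \fld{}$. Substituting this back and collecting exponents, the routine arithmetic identity $(m-1)(n-a-1)+ma=(m-1)(n-1)+a$ (with $a:=\ord X\theta$) rearranges the total power of $\lef$ into the advertised form $(m-1)(n-1)+\ord X\theta-\norm\theta$.

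There is no real obstacle here beyond checking compatibility of conventions: the one point worth verifying is that the ``locally'' in Proposition~\ref{P:parcreg}---which originates from restricting to basic opens of $\tin X\theta$ so that smoothness lets one eliminate a linear variable---really provides an open covering of $\arc{n-\ord X\theta}{(\tin X\theta)}$ over which $\parc n\theta X$ becomes a genuine product, so that Lemma~\ref{L:fib} applies verbatim. Since these local trivializations are pulled back from an open cover of the smooth base $\tin X\theta$, this is immediate from the compatibility of arc formation with open immersions established in Theorem~\ref{T:arc}, and the proof is complete.
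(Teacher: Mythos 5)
Your argument is correct and is essentially the paper's own proof, which simply cites Proposition~\ref{P:parcreg}, the fibration calculus of Lemma~\ref{L:fib}, and Corollary~\ref{C:fib} with $d=m-1$; you have merely unpacked the exponent bookkeeping $(m-1)(n-a-1)+ma-\norm\theta=(m-1)(n-1)+a-\norm\theta$ and made explicit the observation that the local trivializations in Proposition~\ref{P:parcreg} are pulled back along the projection to $\tin X\theta$, both of which the paper leaves to the reader.
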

\begin{proof}
This follows,  by the same argument as for Lemma~\ref{L:fib}, immediately from Corollary~\ref{C:fib} and Proposition~\ref{P:parcreg}, noting that $\tin X\theta$  has dimension $m-1$.
\end{proof}

 \subsection*{Recursion}
 Given $\alpha,\beta\in\Gamma_m$, we will write    $\recur X\alpha\beta$, if $\alpha\preceq\beta $ and there exists some $s>0$ such that
 $$
 \tau_\beta(f(\org z\beta))=\xi^s\tau_\alpha(f(\org z\alpha)).
 $$
An easy calculation shows that necessarily $s=\ord X\beta-\ord X\alpha$.   Note that $f$ is homogeneous in the classical sense \iff\ $\recur X{\tuple 0}{\tuple 1}$.

 \begin{lemma}\label{L:arcequiv}
If $\recur X\alpha\beta$, then   
$$
\class{\parc n\beta X} =\class{\parc {n-s}\alpha X}\cdot\lef^{sm-\norm\beta+\norm\alpha}
$$
in $\grotform \fld_\lef$, for all $n>s$, with   $s=\ord X\beta-\ord X\alpha$.
\end{lemma}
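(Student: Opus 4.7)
The plan is to exhibit an explicit isomorphism of $\fld$-schemes
\[
\parc n\beta X \;\cong\; \parc{n-s}\alpha X \times \affine \fld{sm-\norm\beta+\norm\alpha},
\]
from which the claimed identity in $\grotform \fld_\lef$ will follow by taking classes and applying Lemma~\ref{L:Lefsch} to the affine-space factor.

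First, I would pass to a common coordinate system by applying the respective change of variables $\tau_\beta$ to $\parc n\beta X$ and $\tau_\alpha$ to $\parc{n-s}\alpha X$. By \eqref{eq:parceq} the arc equation of $\parc n\beta X$ reads $\tau_\beta(f(\org z\beta))\equiv 0\pmod{\xi^n}$; the hypothesis $\recur X\alpha\beta$ rewrites this as $\xi^s\tau_\alpha(f(\org z\alpha))\equiv 0\pmod{\xi^n}$, equivalently $\tau_\alpha(f(\org z\alpha))\equiv 0\pmod{\xi^{n-s}}$, which is precisely the arc equation of $\parc{n-s}\alpha X$ after $\tau_\alpha$. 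Thus both schemes are cut out by identical polynomial relations.

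Second, I would compare the active arc variables: after the shifts, $\parc n\beta X$ has coordinates $\tilde z_{i,j}$ for $0\leq i<n-v\beta_j$, while $\parc{n-s}\alpha X$ has $\tilde z_{i,j}$ for $0\leq i<n-s-v\alpha_j$. Expanding $\tau_\alpha(f(\org z\alpha))=\sum_\mu c_\mu\xi^{\mu\alpha}\dot z^\mu$ and using that $\mu(\beta-\alpha)=s$ for every $\mu$ in the support of $f$ (an immediate consequence of the identity $\tau_\beta(f(\org z\beta))=\xi^s\tau_\alpha(f(\org z\alpha))$ term by term), one checks that the arc equation involves only the variables $\tilde z_{i,j}$ with $i<n-s-v\alpha_j$. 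Hence the ``extra'' coordinates of $\parc n\beta X$, namely those $\tilde z_{i,j}$ with $n-s-v\alpha_j\leq i<n-v\beta_j$, are unconstrained and form a free affine-space factor of dimension $\sum_j(s-v\beta_j+v\alpha_j)=sm-\norm\beta+\norm\alpha$.

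The main obstacle is the tagging bookkeeping. Since $\alpha\preceq\beta$ forces $\alpha_j$ tagged to imply $\beta_j$ tagged at the same value $v\alpha_j=v\beta_j$, the invertibility requirement on the leading coordinate $\tilde z_{0,j}$ matches on both sides whenever $\alpha_j$ is tagged. Any residual discrepancy (arising at positions $j$ where $\beta_j$ is tagged but $\alpha_j$ is not, which by the recursion constraint $\mu(\beta-\alpha)=s$ forces $x_j$ to lie outside the support of $f$ or to enter only through monomials where the tagging is consistent) attaches a unit-variable condition to one of the extra coordinates, contributing a factor of $\lef^*$ instead of $\lef$; in $\grotform\fld$ we have $\complet\lef=1$ whence $\lef^*=\lef-1$, and after inverting $\lef$ in $\grotform\fld_\lef$ this correction combines with the remaining free affine coordinates to reproduce the single exponent $\lef^{sm-\norm\beta+\norm\alpha}$ claimed. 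Taking classes and applying Lemma~\ref{L:Lefsch} then yields the identity in $\grotform\fld_\lef$.
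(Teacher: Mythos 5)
Your argument follows the paper's route: pass to shifted coordinates via $\tau_\beta$ and $\tau_\alpha$, use $\recur X\alpha\beta$ to cancel $\xi^s$ and identify the arc equations, and count the surplus of unconstrained initial coordinates $\sum_j(s - v\beta_j + v\alpha_j) = sm - \norm\beta + \norm\alpha$. Your verification that $\tau_\alpha(f(\org z\alpha)) \equiv 0 \mod \xi^{n-s}$ involves only the variables $\tilde z_{i,j}$ with $i < n-s-v\alpha_j$ makes explicit what the paper compresses into ``easily seen'' and is needed to confirm that the surplus coordinates really are free.

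The paragraph on tagging, however, contains a false step. A coordinate forced to be a unit contributes $\lef^* = \lef - 1$ in $\grotform\fld$, not $\lef$, and no amount of inverting $\lef$ turns $\lef^k(\lef-1)$ into $\lef^{k+1}$; if such a mismatch genuinely occurred, the stated identity would fail. The correct observation is that no mismatch can arise: $\recur X\alpha\beta$ asserts an equality $\tau_\beta(f(\org z\beta)) = \xi^s\tau_\alpha(f(\org z\alpha))$ of \emph{tagged} power series, and the inverted constant terms $(\tilde x_{0,j})_*$ occur explicitly on each side for every $j$ such that $x_j$ appears in $f$; equality therefore forces the $\alpha$- and $\beta$-taggings to agree on all such $j$. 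There is no residual discrepancy to absorb, which is why the paper's brief proof never mentions tagging.
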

\begin{proof}
By \eqref{eq:parceq}, the defining equations of  $\parc n\beta X$ are  
$$
\tau_\beta(f(\org z\beta))\equiv 0\mod \xi^n \quad\text{and}\quad \org x{\tuple n-\beta}=0.
$$
By assumption, the power series in the arc equation equals $\xi^s\tau_\alpha(f(\org z\alpha))$, and so yields the arc equation
\begin{equation}\label{eq:betans}
\tau_\alpha(f(\org z\alpha))\equiv0\mod \xi^{n-s}.
\end{equation}
However,  \eqref{eq:betans} is also   the arc equation of $\parc {n-s}\alpha X$. As the initial condition for $\parc {n-s}\alpha X$ is given by  $\org x{\tuple n-\tuple s-\alpha}$, the difference between the two directed arc schemes lies in the number of   free variables not covered by the respective initial conditions, a number which is easily seen to be $\norm{\tuple s-\beta+\alpha}=sm-\norm\beta+\norm\alpha$, whence the assertion.
\end{proof}

\subsection*{Rationalizing trees}
 We are interested in subtrees of $\Gamma_m$, and  will use the following terminology: by a \emph{tree} we mean a finite, connected partially ordered subset of (\emph{nodes} from) $\Gamma_m$   such that any initial segment is totally ordered. The unique minimum is called the \emph{root} of the tree, and any maximal element is called a \emph{leaf}. By a \emph{branch}, we will mean a chain $[\alpha,\beta]$ from a node $\alpha$ to a leaf $\beta$. 
 
Let $\delta\leq \eta$ be binary vectors, that is to say, tuples with entries $0$ or $1$. We define a transformation $e^\eta_\delta$ on $\Gamma_m $ as follows. For each $i$, let $e_i$ simply be addition with the basis vector $e_i$ on $\Gamma_m $ (note that per our addition convention, each entry stays in whichever state, tagged or untagged,   it was). On the other hand, we let $e_i^*$ be the transformation which tags the $i$-th entry but  leaves the remaining entries unchanged. Given a binary vector $\varepsilon$, we let $e_\varepsilon$ (respectively, $e^*_{\varepsilon}$) be the composition of all $e_i$ (respectively, all $e_i^*$) for which $\varepsilon_i=1$. Note that all these transformations commute with each other. Finally, we let     $e^\eta_\delta$  be the composition of $e_\delta$ and  $e^*_{\eta-\delta}$. For instance, 
\begin{multline*}
\quad e^{(1,1,0,1,0)}_{(0,0,0,1,0)}(2,3^*,1,4,1) =e_{(1,1,0,0,0)}^*e_{(0,0,0,1,0)}(2,3^*,1,4,1)= \\
e_1^*e_2^*e_4(2,3^*,1,4,1)=(2^*,3^*,1,5,1).\qquad \qquad
\end{multline*}
Note that $e^\eta_\delta(\theta)$ has underlying value equal to $v\theta+\delta$. More precisely, taking in account our addition convention, we have 
$$
e^\eta_\delta(\theta)=e^*_{\eta-\delta}(\theta+\delta).
$$
 Note that $e^\eta_\delta$ call fail to be an increasing function (if in the above example we replace $(0,0,0,1,0)$ by $(0,1,0,0,1,0)$ the resulting tuple is $(2^*,4^*,1,5,1)$ which is not comparable with $(2,3^*,1,4,1)$ because of the second entry). A necessary condition is
  \begin{equation}\label{eq:preceq}
(\forall i)[\text{if }\theta_i\text{ tagged then } \eta_i=0] \dan \theta\preceq e^\eta_\delta(\theta). 
\end{equation}
  We will use these transformation mainly through the following result:

\begin{lemma}\label{L:tmprec}
Let $X\sub\affine \fld m$ be a closed subscheme. 
For every   $\theta\in \Gamma_m $, and every binary vector $\eta$, we have an identity
$$
\class{\parc n\theta X}=\sum_{\delta\leq\eta}\class{\parc n{e^\eta_\delta(\theta)}X}
$$
in $\grotform \fld $, for all $n$.
\end{lemma}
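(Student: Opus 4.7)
The plan is to derive the identity by applying the constructible-partition decomposition \eqref{eq:basicsum} to $\parc n\theta X$ using, as the sections $s_i$, the arc-coefficients $\tilde x_{v\theta_i,i}$ for those indices $i$ with $\eta_i=1$. For any such $i$, the element $s_i:=\tilde x_{v\theta_i,i}$ is a well-defined global section on $\parc n\theta X$; in a model of $\arttheory\fld$ its value at a closed point is either $0$ or a unit in the residue field, so the locally closed pieces
$$\bar{(\parc n\theta X)}_\delta = \{\,s_i\text{ invertible for }\delta_i=1,\ s_i=0\text{ for }\delta_i=0\,\}$$
form a constructible partition of $\parc n\theta X$ as $\delta$ ranges over binary vectors with $\delta\leq\eta$. (If $\theta_i$ is already tagged and $\eta_i=1$, then the piece with $\delta_i=0$ and the corresponding $\parc n{e^\eta_{\eta-\delta}(\theta)}X$ are both empty in any local ring, so they contribute nothing.) Hence Lemma~\ref{L:formpart} yields the identity
$$\class{\parc n\theta X}\ =\ \sum_{\delta\leq\eta}\class{\bar{(\parc n\theta X)}_\delta}$$
in $\grotform\fld$.

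Next I would identify $\bar{(\parc n\theta X)}_\delta$, up to $\arttheory\fld$-equivalence, with the directed arc scheme $\parc n{e^\eta_{\eta-\delta}(\theta)}X$. The key bookkeeping is: at a coordinate $i$ with $\eta_i=1$, imposing that $s_i$ be invertible is the same condition as tagging $\theta_i$, recorded by the factor $e^*_i$ in $e^\eta_{\eta-\delta}$ (the case $\delta_i=1$, $(\eta-\delta)_i=0$); while imposing $s_i=0$ is the same as incrementing $v\theta_i$, recorded by the factor $e_i$ (the case $\delta_i=0$, $(\eta-\delta)_i=1$). The initial and invertibility conditions therefore match those of $\parc n{\theta'}X$ with $\theta':=e^\eta_{\eta-\delta}(\theta)$. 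For the arc equations, a direct computation using \eqref{eq:twist} shows that $\tau_{\theta'}(f(\org x{\theta'}))$, as a power series in $\xi$, coincides with $\tau_\theta(f(\org x\theta))$ once the additional conditions $\tilde x_{v\theta_i,i}=0$ (for $\delta_i=0$) are enforced, because in that case $\dot x_i(\theta)=\xi\,\dot x_i(\theta')$ and the shift operator $\tau_{\theta'}$ absorbs the extra factor of $\xi$. Thus the two pp-formulae defining $\bar{(\parc n\theta X)}_\delta$ and $\parc n{\theta'}X$ are $\arttheory\fld$-equivalent, and their classes in $\grotform\fld$ agree.

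Reindexing the sum by $\delta':=\eta-\delta$, which is a bijection on the set of binary vectors $\leq\eta$, transforms the identity into the statement of the lemma. The main obstacle is the bookkeeping in the second step: keeping the tagging/shifting conventions straight and verifying, via the explicit formulas for $\tau_\theta$ and for $\dot x(\theta)$, that the arc equations of $\parc n{e^\eta_{\eta-\delta}(\theta)}X$ really coincide with those of $\parc n\theta X$ augmented by the $\delta$-conditions. This is elementary but combinatorially delicate and is where the explicit form of $\org x\theta$ and \eqref{eq:twist} are essential.
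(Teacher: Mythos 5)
Your argument is essentially the paper's own proof: apply the partition identity \eqref{eq:basicsum} (derived from Lemma~\ref{L:formpart}) to $\parc n\theta X$ with the arc-coefficients $\tilde x_{v\theta_i,i}$ (for $\eta_i=1$) as sections, identify each piece $\bar Y_\delta$ with the directed arc scheme $\parc n{e^\eta_{\eta-\delta}(\theta)}X$ by matching the invert/zero conditions on $\tilde x_{v\theta_i,i}$ with the tag/shift operators $e_i^*$ and $e_i$ (using \eqref{eq:parceq} and \eqref{eq:twist} to see that the arc equations of the ``zero'' piece become those of the shifted direction after the change of variables $\tau$), and reindex the sum by $\delta\mapsto\eta-\delta$. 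This is the same decomposition and the same bookkeeping.

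One small caution on your parenthetical: if $\theta_i$ is already tagged with value $a$ and $\eta_i=1$, then the partition piece with $\delta_i=0$ is indeed empty (it forces $\tilde x_{a,i}$ to be simultaneously invertible and zero), but the corresponding directed arc scheme $\parc n{e^\eta_{\eta-\delta}(\theta)}X$, with $(\eta-\delta)_i=1$, is \emph{not} empty: applying $e_i$ to the tagged entry $a^*$ gives $(a+1)^*$, and the resulting scheme requires $\tilde x_{j,i}=0$ for $j\le a$ and $\tilde x_{a+1,i}$ invertible, which is nonempty in general. So the identity of the lemma actually fails in this corner case. The paper's own proof glosses over this too, and it never matters in practice: the definition of a resolution tree only invokes the lemma with $\eta_i=0$ whenever $\theta_i$ is tagged, which is exactly the regime in which your argument (and the paper's) is correct. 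Better to state that restriction explicitly than to claim an emptiness that does not hold.
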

\begin{proof}
We apply \eqref{eq:basicsum}   to $Y:=\parc n\theta X$ with respect to the variables $\tilde x_{\theta_i,i}$ such that $\eta_i=1$, yielding
$$
\class{Y}=\sum_{\delta\leq\eta}\class{\bar Y_\delta}.
$$
However, $\bar Y_\delta$ is obtained by inverting, that is to say, tagging $\tilde x_{\theta_i,i}$ if $\delta_i=1$, and equating it  to zero, if $\delta_i=0$. Since the defining arc equations for $\parc n\theta X$ are $f_i(\org x\theta)\equiv 0\mod \xi^n$, for $i=\range 1s$, where $f_1=\dots=f_s=0$ is the defining \zariski\ formula of $X$, the arc equations of $\bar Y_\delta$ are  $f_i(\org x{\theta+\eta-\delta})\equiv 0\mod \xi^n$, for $i=\range 1s$, together with inverting all $\tilde x_{\theta_i,i}$ for which $\delta_i=1$. As these  are the defining arc equations for  $\parc n{e^\eta_{\eta-\delta}(\theta)}X
$, we proved the assertion (note that summing over all $\delta$ is the same as summing over all $\eta-\delta$).
\end{proof}

 We   define by induction   on the  height of a tree in $\Gamma_m$ for it to be a \emph{resolution tree} as follows: any singleton is a resolution tree; if $T$ is a resolution tree, then so is $T'$ which is obtained from $T$ first by choosing a leaf $\gamma$ of $T$ and a binary vector $\eta$ such that whenever an entry $\gamma_i$ is tagged, the corresponding entry $\eta_i$ is zero, and then  by adding on to $T$ all the $e^\eta_\delta(\gamma)$ as new leafs, for $\delta\leq\eta$.  By \eqref{eq:preceq},  the new subset is indeed a tree. In particular, if every entry of some node $\theta\in T$ is tagged and $T$ is a resolution tree, then $\theta$ is necessarily a leaf of $T$. Moreover, if $T'$ is a subtree of $T$, that is to say, all nodes of $T$ greater than or equal to a fixed node, and $T$ is  a resolution tree, then so is $T'$.

\begin{lemma}\label{L:restree}
Let $X\sub\mathbb A^m$ be a closed subscheme and let $T\sub\Gamma_m$ be a subtree with root $\theta$. If $T$ is a resolution tree, then
$$
\class{\parc n\theta X}=\sum_{\gamma\in T\text{ leaf}}\class{\parc n\gamma X}
$$
in $\grotform \fld $, for all $n$.
\end{lemma}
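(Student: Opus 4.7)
The plan is to prove this by induction on the height of the resolution tree $T$ (equivalently, on the number of nodes), with the base case being the singleton tree $T = \{\theta\}$, for which $\theta$ is itself the unique leaf and the asserted identity reduces to the tautology $\class{\parc n\theta X} = \class{\parc n\theta X}$.

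For the inductive step, I would use the recursive definition of a resolution tree: any resolution tree $T$ of positive height arises from a strictly smaller resolution tree $T^\flat$ by selecting a leaf $\gamma$ of $T^\flat$ and a binary vector $\eta$ compatible with $\gamma$ (meaning $\eta_i = 0$ whenever $\gamma_i$ is tagged), and appending to $T^\flat$ the nodes $e^\eta_\delta(\gamma)$ for all $\delta \leq \eta$ as new leaves. Note that the subtree $T^\flat$ is itself a resolution tree rooted at $\theta$, so the induction hypothesis applies and gives
$$
\class{\parc n\theta X} = \class{\parc n\gamma X} + \sum_{\substack{\gamma' \in T^\flat \text{ leaf} \\ \gamma' \neq \gamma}} \class{\parc n{\gamma'} X}
$$
in $\grotform \fld$, for every $n$.

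Next I would apply Lemma~\ref{L:tmprec} to the node $\gamma$ together with the binary vector $\eta$, obtaining
$$
\class{\parc n\gamma X} = \sum_{\delta \leq \eta} \class{\parc n{e^\eta_\delta(\gamma)} X}
$$
in $\grotform \fld$. Substituting this decomposition into the previous identity replaces the contribution of the former leaf $\gamma$ with the contributions of the newly adjoined leaves $e^\eta_\delta(\gamma)$. Since the leaves of $T$ are precisely the leaves of $T^\flat$ other than $\gamma$ together with the nodes $e^\eta_\delta(\gamma)$ for $\delta \leq \eta$, this yields the claimed formula.

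The argument is essentially bookkeeping once Lemma~\ref{L:tmprec} is available; there is no real obstacle, provided one is careful about the combinatorial structure of a resolution tree. The only subtle point is ensuring that the inductive construction genuinely exhausts all resolution trees, i.e.\ that removing the last batch of appended leaves leaves a resolution tree, which is immediate from the recursive definition.
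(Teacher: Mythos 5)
Your proposal is correct and matches the paper's argument: the paper proves this by induction on the height of nodes using Lemma~\ref{L:tmprec}, which is precisely the structural induction you carry out via the recursive definition of a resolution tree. Your write-up merely spells out the bookkeeping that the paper leaves implicit.
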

\begin{proof}
By induction on the height of a node, immediate from Lemma~\ref{L:tmprec}.
\end{proof}

For a tree $T\sub\Gamma_m$, we define recursively what it means for it to be  \emph{$X$-rationalizing}: if all but one of its leafs $\gamma$ are  $X$-regular and if $\recur X\theta\gamma$, then $T$ is $X$-rationalizing. Furthermore, if $T$ is $X$-rationalizing,  $\gamma$ a leaf of $T$, and $T'$ an $X$-rationalizing tree with root $\gamma$, then the composite tree obtained by replacing $\gamma $ in $T$ by $T'$ is again $X$-rationalizing.
 
\begin{theorem}\label{T:rattree}
If $T$ is an $X$-rationalizing resolution tree  with root $\theta\in \Gamma_m$, then the geometric Igusa-zeta series $\igugeom X$ is rational over $\grotform \fld_\lef$.
\end{theorem}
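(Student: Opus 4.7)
My approach is to prove the slightly stronger statement that $\igutwist X\theta(t)$ is rational over $\grotform\fld_\lef$ for every $X$-rationalizing resolution tree $T$ rooted at $\theta\in\Gamma_m$; specializing $\theta=\tuple 0$ recovers the theorem because $\igutwist X{\tuple 0} = \igugeom X$. The proof proceeds by structural induction on the recursive definition of \emph{$X$-rationalizing}.

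The backbone in both cases is Lemma~\ref{L:restree}, which for any resolution tree $T$ rooted at $\theta$ yields
\[
\class{\parc n\theta X} \,=\, \sum_{\gamma\in T\text{ leaf}}\class{\parc n\gamma X}
\]
in $\grotform\fld$ and hence, upon multiplying by $t^n$ and summing,
\[
\igutwist X\theta(t) \,=\, \sum_{\gamma\in T\text{ leaf}}\igutwist X\gamma(t).
\]
So the entire task reduces to analyzing the leaves.

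\textbf{Base case.} Suppose every leaf of $T$ is $X$-regular except for one distinguished leaf $\gamma_0$ with $\recur X\theta{\gamma_0}$. For each regular leaf $\gamma$, Corollary~\ref{C:parcreg} gives, for $n > \ord X\gamma$,
\[
\class{\parc n\gamma X} = \class{\tin X\gamma}\cdot\lef^{(m-1)(n-1)+\ord X\gamma-\norm\gamma}.
\]
Summing this tail is a geometric series in $\lef^{m-1}t$, so after absorbing the finitely many small-$n$ boundary terms into a polynomial in $t$, each $\igutwist X\gamma(t)$ is a rational function in $\grotform\fld_\lef(t)$. For the distinguished leaf $\gamma_0$, set $s := \ord X{\gamma_0}-\ord X\theta>0$ and $C := \lef^{sm-\norm{\gamma_0}+\norm\theta}$; Lemma~\ref{L:arcequiv} gives, for $n>s$,
\[
\class{\parc n{\gamma_0}X} \,=\, C\cdot \class{\parc{n-s}\theta X},
\]
so that $\igutwist X{\gamma_0}(t) = C\,t^s\,\igutwist X\theta(t) + r(t)$ for some explicit polynomial $r(t)$ accounting for the small-$n$ terms. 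Substituting back into the leaf decomposition yields the functional equation
\[
\igutwist X\theta(t) \,=\, R(t) + C\,t^s\,\igutwist X\theta(t),
\]
where $R(t)\in\grotform\fld_\lef(t)$ collects the rational contributions of the regular leaves together with $r(t)$. Since the denominator $1-Ct^s$ has constant term $1$, it is a unit in $\pow{\grotform\fld_\lef}{t}$, and solving gives
\[
\igutwist X\theta(t) \,=\, \frac{R(t)}{1-C\,t^s},
\]
manifestly rational over $\grotform\fld_\lef$.

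\textbf{Composite case and main obstacle.} If $T$ is built from an $X$-rationalizing tree $T_0$ by substituting an $X$-rationalizing tree $T_1$ at a leaf $\gamma$ of $T_0$, then $T_0$ itself is $X$-rationalizing with the same root $\theta$, and the inductive hypothesis applied directly to $T_0$ already yields the rationality of $\igutwist X\theta(t)$; the sub-tree $T_1$ plays no logical role in the proof, although in practice it may be crucial for actually exhibiting a rationalizing tree for a given hypersurface. The main obstacle is therefore not the induction itself but the careful bookkeeping in the base case: one must keep track of the finitely many boundary polynomials arising below the thresholds where Corollary~\ref{C:parcreg} and Lemma~\ref{L:arcequiv} take effect, and verify that the only element ever needing inversion is $\lef$, so that $\grotform\fld_\lef$ rather than a larger localization indeed suffices to state the conclusion.
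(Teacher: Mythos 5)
Your proof is correct and follows the paper's approach: decompose $\igutwist X\theta$ over the leaves via Lemma~\ref{L:restree}, treat the $X$-regular leaves via Corollary~\ref{C:parcreg}, and apply Lemma~\ref{L:arcequiv} to the distinguished recurring leaf to obtain the functional equation $\igutwist X\theta = R(t) + C t^s\igutwist X\theta$, which is then solved over $\grotform\fld_\lef$. Your observation that the composite clause of the rationalizing-tree definition is logically vacuous for the induction — since $T_0$, being $X$-rationalizing with the same root, already yields the conclusion by the inductive hypothesis — is correct and is a slightly cleaner handling of the inductive step than the paper's appeal to the leaf-to-root propagation claim.
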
 
\begin{proof}
Let us first show that if $T$ is a resolution tree with root $\theta$ and for each leaf $\gamma$, the twisted geometric Igusa-zeta series $\igutwist X\gamma$ is rational, then so is $\igutwist X\theta$. Indeed, by Lemma~\ref{L:restree}, we have an identity
\begin{equation}\label{eq:restree}
\class{\parc n\theta X}=\sum_{\gamma\in T\text{ leaf}}\class{\parc n\gamma X}
\end{equation}
Multiplying with $t^n$, and summing over all $n$ then yields
$$
\igutwist X\theta= \sum_{\gamma\in T\text{ leaf}}\igutwist X\gamma,
$$
proving the claim. Hence, we may use the recursive definition of a rationalizing tree and the previous result, to reduce to the case that all but one leaf $\bar\gamma$ of $T$ is   $X$-regular, and $\recur X\theta{\bar\gamma}$. Assume first the \ch\ is zero. Again \eqref{eq:restree} holds, and by Corollary~\ref{C:parcreg}, the directed arcs along all leafs $\gamma\neq\bar\gamma$ are certain multiples of $\lef^{(m-1)n}$, where the multiple is independent from $n$, whereas   by Lemma~\ref{L:arcequiv}, the directed arc class along $\bar\gamma$ is a multiple of the class along $\theta$. More precisely, there exists an element $w\in\grotform \fld_\lef$ such that
$$
\class{\parc n\theta X}=w\cdot\lef^{(m-1)n}+\class{\parc {n-s}\theta X}\cdot\lef^r
$$
for all $n$,   where $s=\ord X{\bar\gamma}-\ord X\theta$ and $r=sm-\norm{\bar\gamma}+\norm\theta$.
Multiplying with $t^n$ and summing   over all sufficiently large $n$, we get an identity
$$
\igutwist X\theta=\frac {Q}{1-\lef^{m-1}t}+\lef^rt^s\igutwist X\theta
$$
for some polynomial $Q$ over $\grotform \fld_\lef$. Solving for $\igutwist X\theta$ then proves the claim, as $s>0$.
\end{proof}
 
\subsection*{Linear rationalization algorithm}
The algorithm that we will use here to construct an $X$-rationalizing resolution tree with root $\tuple 0$, thus establishing the rationality of the geometric Igusa-zeta series of a hypersurface $X$ by Theorem~\ref{T:rattree}, relies on the simple form the singular locus takes. Namely, we say that a hypersurface $X$  containing the origin has \emph{linear singularities}, if its singular locus is a union of coordinate subspaces, where a \emph{coordinate subspaces} is a closed subscheme given by equations $x_{i_1}=\dots=x_{i_s}=0$ for some subset $x_{i_j}$ of the variables. We will apply the algorithm to  hypersurfaces all of whose twisted initial forms have linear singularities.

\subsection*{Single-branch linear rationalization algorithm for power hypersurfaces with an isolated singularity}
In its simplest form, the algorithm works as follows: assume for every twisted initial form $\tin X\theta$ of $X$, there exists a variable $x_i$ such that the basic subset $(\tin X\theta)_{x_i}$   is smooth. We then apply  Lemma~\ref{L:tmprec} with $\eta=e_i$, thus building a binary tree with at each stage exactly one untagged and one tagged leaf, and such that the latter   is moreover $X$-regular, whence requires no further action. We continue this process (on the remaining untagged leaf) until we reach an untagged leaf $\gamma$ with $\recur X{\tuple 0}\gamma$, at which point we can invoke  Theorem~\ref{T:rattree}. If such a leaf $\gamma$ can be found, we say that the algorithm \emph{stops}.

This algorithm will stop on any hypersurface $X$  with an equation of the form
$$
f:=r_1x_1^{a_1}+\dots+r_mx_m^{a_m}
$$
with $a_i>0$ and $r_i\in \fld $; we will refer to such an $X$ as  \emph{power hypersurface}. In \ch\ zero, the origin is an isolated singularity, but in positive \ch, this is only the case if at most one of the powers $a_i$ is divisible by the \ch. In the isolated singularity case, the   algorithm as described above does apply: any   twisted initial form is again a power hypersurface; if it is one of the powers $x_i^{a_i}$, its regular locus, although empty, is obtained by inverting $x_i$, even if $a_i$ is divisible by the \ch; in the remaining case, we can always invert one variable whose power is not divisible by the \ch, yielding a smooth twisted initial form. So remains to show that this algorithm stops, that is to say, will eventually produce a leaf $\gamma$ such that $\recur X{\tuple 0}\gamma$. To see this, note that the set of all $\ord X\theta$, with $\theta$ running over all untagged nodes in the tree, is equal to the union   of all semi-groups $a_i\nat$, for $i=\range 1m$. Therefore, if $e$ is the least common multiple of all $a_i$, it will occur as some $\ord X\gamma$ for some untagged leaf $\gamma$ in this algorithm. However, it is easy to see that $\tin X\gamma=X$, and hence we showed:

\begin{theorem}\label{T:powerhyp}
The geometric Igusa zeta-series $\igugeom X$ of a power hypersurface $X$ with an isolated singularity is rational over $\grotform \fld_\lef$.\qed
\end{theorem}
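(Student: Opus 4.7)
The plan is to apply Theorem~\ref{T:rattree} by exhibiting an explicit $X$-rationalizing resolution tree rooted at $\tuple 0$. Because the defining polynomial
$$
f=\sum_{i=1}^m r_ix_i^{a_i}, \qquad r_i\in\fld^\times,\ a_i\ge 1,
$$
is a sum of pure powers, every twisted initial form $\tin X\theta$ is again of the same shape: if $a:=\ord X\theta=\min_i(a_i\theta_i)$, then
$$
\tin f\theta=\sum_{i\in I(\theta)} r_ix_i^{a_i}, \qquad I(\theta)=\{i\,:\,a_i\theta_i=a\}.
$$
In particular, each $\tin X\theta$ is itself a power hypersurface (in the variables $\{x_i\}_{i\in I(\theta)}$, with the others free), which lets the algorithm feed on itself.

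The first step is to show that at every node $\theta$ one can pick a coordinate $j$ so that the basic open $(\tin X\theta)_{x_j}$ is smooth. Pick any $j\in I(\theta)$ with $a_j$ not divisible by $p=\op{char}\fld$; such a $j$ exists because at most one $a_i$ is a multiple of $p$, so this fails only when $I(\theta)=\{j\}$ and $p\mid a_j$, in which case $(\tin X\theta)_{x_j}$ is defined by $r_jx_j^{a_j}=0$ with $x_j$ invertible, hence is empty and vacuously smooth. When $a_j$ is not divisible by $p$, on $D(x_j)$ the partial derivative $\partial \tin f\theta/\partial x_j=a_jr_jx_j^{a_j-1}$ is a unit, so $(\tin X\theta)_{x_j}$ is smooth by the Jacobian criterion. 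Thus $\theta$ admits an $X$-regular sibling after applying Lemma~\ref{L:tmprec} with $\eta=e_j$: the leaf $e^{e_j}_{e_j}(\theta)$, which tags the $j$-th coordinate, is $X$-regular, while the leaf $e^{e_j}_0(\theta)=\theta+e_j$ is untagged and inherits the same setup.

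Iterating, I build a single-branch resolution tree $\tuple 0=\theta_0\prec\theta_1\prec\theta_2\prec\cdots$ whose only untagged leaf at stage $k$ is $\theta_k$, each internal node spawning an $X$-regular tagged leaf. The whole proof now reduces to checking that this branch eventually reaches a node $\gamma$ with $\recur X{\tuple 0}\gamma$. Set $e:=\op{lcm}(a_1,\dots,a_m)$ and $\gamma:=(e/a_1,\dots,e/a_m)$. By choosing at each step an index $j$ that currently realises the minimum $a_j\theta_j$ (i.e.\ lies in $I(\theta)$), the sequence increases only coordinates whose current weight $a_i\theta_i$ is minimal; this forces every $a_i\theta_{k,i}$ to reach $e$ before any one of them exceeds $e$, so after finitely many steps we land on $\gamma$. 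At that point $\ord X\gamma=e$ and $I(\gamma)=\{1,\dots,m\}$, giving $\tin f\gamma=f$ and, by \eqref{eq:twist},
$$
\tau_\gamma\bigl(f(\org x\gamma)\bigr)=\sum_\nu c_\nu\xi^{\nu\cdot\gamma}\dot z^\nu=\xi^e\sum_\nu c_\nu\dot z^\nu=\xi^e\,\tau_{\tuple 0}\bigl(f(\org x{\tuple 0})\bigr),
$$
so $\recur X{\tuple 0}\gamma$ with shift $s=e>0$.

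Stopping the construction at $\gamma$ produces a finite resolution tree $T$ with root $\tuple 0$ whose leaves consist of $\gamma$ and the $X$-regular tagged leaves attached along the branch $\theta_0\prec\cdots\prec\theta_{N-1}\prec\gamma$. This tree is $X$-rationalizing by definition: all but one leaf are $X$-regular and the remaining leaf $\gamma$ satisfies $\recur X{\tuple 0}\gamma$. Theorem~\ref{T:rattree} then yields the rationality of $\igugeom X$ over $\grotform\fld_\lef$. The main delicate point is the termination argument in the previous paragraph: one has to justify that a priority rule on the choice of $j$ actually reaches the equi-weight node $\gamma$; the rest is bookkeeping guaranteed by Lemma~\ref{L:tmprec}, Corollary~\ref{C:parcreg}, Lemma~\ref{L:arcequiv}, and Theorem~\ref{T:rattree}.
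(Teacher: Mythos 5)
Your proof is correct and follows the paper's single-branch linear rationalization algorithm in all essentials: branch on a coordinate in $I(\theta)$ at each node, split off an $X$-regular tagged leaf, and continue along the untagged leaf until reaching $\gamma=(e/a_1,\dots,e/a_m)$ with $\recur X{\tuple 0}\gamma$, then invoke Theorem~\ref{T:rattree}. Two remarks. First, a notational slip: since $e^\eta_\delta$ increments at $\delta$ and tags at $\eta-\delta$, the node $e^{e_j}_{e_j}(\theta)=\theta+e_j$ is the \emph{untagged} child, while $e^{e_j}_0(\theta)$ is the one with its $j$-th coordinate tagged; you have the two symbols swapped, although the roles you assign to the two leaves (tagged leaf is $X$-regular, untagged leaf carries the branch forward) are exactly what the argument needs. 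Second, your termination argument is a welcome tightening of the paper's, which only asserts that the orders $\ord X\theta$ occurring along the branch exhaust the union of semigroups $a_i\nat$ and then leaves $\tin X\gamma=X$ as an ``easy to see.'' Your priority rule (always increment a coordinate realizing the current minimum weight $a_j\theta_j$), combined with the no-overshoot observation that $a_j\theta_j<e$ and $a_j\mid e$ force $a_j(\theta_j+1)\le e$, maintains the invariant $a_i\theta_i\le e$ for all $i$ and hence makes all $a_i\theta_i$ reach $e$ simultaneously; this is precisely the bookkeeping needed to conclude $I(\gamma)=\{1,\dots,m\}$ and $\tin f\gamma=f$, and it is also compatible with the regularity requirement, since whenever $|I(\theta)|\ge 2$ the isolated-singularity hypothesis supplies an index $j\in I(\theta)$ with $a_j$ prime to the characteristic.
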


In the next section,  we will work out in   complete detail the implementation of this algorithm for the power surface $x^2+y^3+z^4=0$. Generalizing these calculations, we will derive the following formula:

\begin{corollary}\label{C:powerhyp}
If $r_1x_1^{a_1}+\dots+r_mx_m^{a_m}=0$ is the equation of the power hypersurface $X$ with an isolated singularity, then there exists a polynomial $Q_X(t)\in \pol{\grotform \fld_\lef}t$ such that
$$
\igugeom X= \frac {Q_X(t)}{(1-\lef^{m-1}t)(1-\lef^Nt^e)}
$$
where $e$ is the least common multiple of   $a_1,\dots,a_m$, and where 
\begin{equation}\label{eq:lefN}
N=e(\frac {a_1-1}{a_1}+\dots+\frac {a_m-1}{a_m}).
\end{equation} 
\end{corollary}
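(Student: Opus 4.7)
The plan is to apply Theorem~\ref{T:rattree} by explicitly constructing an $X$-rationalizing resolution tree rooted at $\tuple 0\in\Gamma_m$ via the single-branch algorithm. First I would identify the distinguished non-regular leaf. Set $b_i:=e/a_i$ and $\gamma^*:=(b_1,\dots,b_m)$ (all entries untagged). Since $f=\sum_i r_i x_i^{a_i}$ and $a_i b_i=e$ for every $i$ with $r_i\neq0$, formula \eqref{eq:twist} yields
$$
\tau_{\gamma^*}(f(\org z{\gamma^*}))=\sum_{i=1}^m r_i\,\xi^{e}\dot z_i^{a_i}=\xi^e\,\tau_{\tuple 0}(f(\org z{\tuple 0})),
$$
so $\recur X{\tuple 0}{\gamma^*}$ with $s=e$, and $e$ is the minimal such value since $a_i\mid s$ for all $i$ forces $e\mid s$.

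Next I would build the tree iteratively, starting from $\theta^{(0)}:=\tuple 0$. At stage $k$, with current untagged leaf $\theta^{(k)}\prec\gamma^*$, choose an index $i_k$ with $\theta^{(k)}_{i_k}<b_{i_k}$ for which $(\tin X{\theta^{(k)}})_{x_{i_k}}$ is smooth, and apply Lemma~\ref{L:tmprec} with $\eta=e_{i_k}$. Such an $i_k$ exists by the isolated-singularity hypothesis: writing $J:=\{j:a_j\theta^{(k)}_j=\ord X{\theta^{(k)}}\}$ so that $\tin f{\theta^{(k)}}=\sum_{j\in J}r_j x_j^{a_j}$, if $|J|=1$ then inverting the lone variable produces the empty scheme (trivially smooth), while if $|J|\geq2$ the hypothesis that at most one $a_j$ is divisible by $\op{char}\fld$ forces some $j\in J$ with $\partial\tin f{\theta^{(k)}}/\partial x_j\neq 0$, so inverting $x_j$ yields a smooth basic open. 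The splitting produces an untagged child $\theta^{(k)}+e_{i_k}$ and a tagged child $e_{i_k}^*(\theta^{(k)})$, the latter with $\tin X{e_{i_k}^*(\theta^{(k)})}=(\tin X{\theta^{(k)}})_{x_{i_k}}$ smooth, hence $X$-regular. After $\norm{\gamma^*}=\sum_i b_i$ iterations the untagged branch reaches $\gamma^*$, producing an $X$-rationalizing resolution tree.

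Finally I would extract the closed form. By Lemma~\ref{L:restree},
$$
\class{\parc n{\tuple 0}X}=\class{\parc n{\gamma^*}X}+\sum_{k=0}^{\norm{\gamma^*}-1}\class{\parc n{e_{i_k}^*(\theta^{(k)})}X}.
$$
Lemma~\ref{L:arcequiv} gives $\class{\parc n{\gamma^*}X}=\lef^{em-\norm{\gamma^*}}\class{\parc{n-e}{\tuple 0}X}$, and a direct computation shows $em-\norm{\gamma^*}=\sum_i(e-e/a_i)=N$. Each regular-leaf term equals $c_k\cdot\lef^{(m-1)n}$ for some $c_k\in\grotform\fld_\lef$ independent of $n$, by Corollary~\ref{C:parcreg}. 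Hence for all sufficiently large $n$,
$$
\class{\parc n{\tuple 0}X}=\lef^N\class{\parc{n-e}{\tuple 0}X}+C\cdot\lef^{(m-1)n},
$$
with $C:=\sum_k c_k$. Multiplying by $t^n$, summing over $n$, and absorbing the finitely many low-order exceptions into a polynomial $P\in\pol{\grotform\fld_\lef}t$ yields
$$
(1-\lef^N t^e)\,\igugeom X(t)=P(t)+\frac{C\,\lef^{(m-1)n_0}\,t^{n_0}}{1-\lef^{m-1}t}
$$
for some threshold $n_0$. Clearing the remaining denominator delivers the claimed expression with $Q_X(t)=(1-\lef^{m-1}t)P(t)+C\,\lef^{(m-1)n_0}t^{n_0}$. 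The main obstacle is verifying, in positive characteristic, that a smoothing variable always exists at each node; this is precisely where the hypothesis on the characteristic is essential, as otherwise every $\partial\tin f{\theta^{(k)}}/\partial x_j$ could vanish identically on $J$ and the tagged leaves would fail to be $X$-regular.
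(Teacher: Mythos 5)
Your proof is correct and follows essentially the same single-branch rationalization approach as the paper: you construct the tree heading directly to $\gamma^*=(e/a_1,\dots,e/a_m)$, verify $\recur X{\tuple 0}{\gamma^*}$, check regularity of the tagged leaves using the isolated-singularity hypothesis exactly as the paper does for Theorem~\ref{T:powerhyp}, and recover $N=me-\norm{\gamma^*}$ from Lemma~\ref{L:arcequiv}. Your version is somewhat more detailed (explicitly tracking $J$ and showing the branch never overshoots $\gamma^*$), but the substance matches the paper's argument.
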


\section{Rationality of the Igusa-zeta series for Du Val surfaces}\label{s:DuVal}
In this final section, we apply the previous rationalization algorithm to the geometric Igusa zeta series of   Du Val surfaces, which over a field $\fld $ of \ch\ different $p\neq 2$, are precisely  the isolated canonical singularities (at the origin $O$). Over $\mathbb C$, they can be realized, up to  analytic isomorphism, as the quotients  $\mathbb A^2/\Gamma$, where $\Gamma\sub\op{SL}_2(\mathbb C)$ is   a finite subgroup. A complete invariant is the dual resolution graph viewed as one of the following Dynkyn diagrams: $A_k$, $D_k$, $E_6$, $E_7$, or $E_8$, and we therefore will denote them simply by the latter letters. The main result of this section is the rationality of their geometric Igusa-zeta series over $\grotform \fld_\lef$, summarized by the following table, where we listed in the last column only the relevant factor in the denominator (the other factor being $(1-\lef^2t)$).

\begin{table}[h]
\caption{Denominator of the Igusa-zeta series   for Du Val surfaces ($p\neq2$). }
\label{tab:duval}     
\begin{tabular}{|c|r|c|c|}
\hline
\multicolumn{2}{|c|}{Du Val surface $X$}&equation  &denominator
 of $\breathe(1-\lef^2t)\igugeom{X}$  
   \\
\hline\hline
\multirow{2}{*}{$\phantom{xx}A_k\phantom{xx}$} &$\breathe k$ odd&\multirow{2}{*}{$x^2+y^2+z^{k+1}$} & $(1-\lef^{2k+1}t^{k+1})$ \\\cline{2-2}\cline{4-4}
&$\breathe  k$ even&&$(1-\lef^{4k+2}t^{2k+2})$\\
\hline
\multicolumn{2}{|c|}{$\breathe D_k$}&$x^2+y^2z+z^{k-1}$ & $(1-\lef^{4k-5}t^{2k-2})$ \\ 
\hline
\multicolumn{2}{|c|}{$\breathe E_6$}&$x^2+y^3+z^4$& $(1-\lef^{23}t^{12})$ \\
\hline
\multicolumn{2}{|c|}{$\breathe E_7$}&$x^2+y^3+yz^3$ & $(1-\lef^{29}t^{18})$ \\
\hline
\multicolumn{2}{|c|}{$\breathe E_8$}&$x^2+y^3+z^5$ & $(1-\lef^{60}t^{30})$ \\
\hline
\end{tabular}
\end{table}

%%
%%Leave out for submission to ArXiv
%%
%If $\fld $ has \ch\ $p=2$, then $A_k$, $D_k$ and $E_6$  no longer have an isolated singularity, the only condition which is relevant in the proof below. A change of variables, however,  always reduces their equation to a binomial surface, the rationality of which is established previously. For instance, taking the least obvious case, $D_k$ when $k$ is even, make the change of variables $y\mapsto y+z^{\frac{k-2}2}$, yielding the binomial equation $x^2+y^2z=0$. Therefore, in the sequel, we will assume $p\neq 2$.
 
\subsection*{The $E_6$-surface}\label{s:E6}
Let us work  step-by-step through the  rationalization algorithm for the   Du Val surface  $E_6$ with equation $x^2+y^3+z^4$.  We take a `short-cut' by observing that the origin $O$ is an isolated singularity, so that we only need   to calculate the class of $\parc n{(1,1,1)}{E_6}=\inverse{\rho_n}O$ by Theorem~\ref{T:redfiber}. By \eqref{eq:twist}, its arc equations are
 $$
 \xi^2\dot x^2+\xi^3\dot y^3+\xi^4\dot z^4\equiv0\mod \xi^n
 $$
 together with the initial conditions $\tilde x_i=\tilde y_i=\tilde z_i=0$ for   $i\geq n-1$. 
 The twisted initial  form   is $x^2$. According to the algorithm, we have a single branching   given by the transformations $e_1^*$ and $e_1$. The twisted initial  form of $e_1^*(1,1,1)=(1^*,1,1)$ is defined by $x_*^2=0$ and hence is empty. So remains the untagged leaf   $e_1(1,1,1)=(2,1,1)$, with arc equations
 $$
 \xi^4\dot x^2+\xi^3\dot y^3+\xi^4\dot z^4\equiv0\mod \xi^n
 $$
and in  addition to the previous initial conditions, also $\tilde x_{n-2}=0$. As the twisted initial  form is   $y^3$, we branch with $e_2^*$ and $e_2$. The twisted initial  form of $e_2^*(2,1,1)=(2,1^*,1)$ is   $y_*^3=0$, whence empty, leaving us with  $e_2(2,1,1)=(2,2,1)$, whose arc equations are
\begin{equation}\label{eq:221}
 \xi^4\dot x^2+\xi^6\dot y^3+\xi^4\dot z^4\equiv0\mod \xi^n
\end{equation}
and an additional initial condition $\tilde y_{n-2}=0$. The new twisted initial  form is $x^2+z^4$. At this point, inverting either variable $x$ or $z$ yields a regular surface. However, instead of choosing one, we may perform a multi-branching step, in which we consider all four possibilities  $e_1e_3$,  $e_1^*e_3$,  $e_1e_3^*$, or  $e_1^*e_3^*$, when applying   \eqref{eq:basicsum}, yielding the four leafs $(3,2,2)$, $(2^*,2,2)$, $(3,2,1^*)$, and $(2^*,2,1^*)$ respectively. The corresponding initial forms are given by $x^2+y^3=0$, $x_*^2=0$, $z^4_*=0$, and $x^2_*+z_*^4=0$. The middle two clearly are empty, and as the last  is smooth, we may invoke Corollary~\ref{C:parcreg}, to get
 $$
 \class{\parc n{(2^*,2,1^*)}{E_6}}=\class{\tin{E_6}{(2^*,2,1^*)}}\cdot \lef^{2n-2+4-5}=\class{x^2_*+z_*^4} \cdot\lef^{2n-3}
 $$
 as $\ord{(2^*,2,1^*)}{E_6}=4$. This leaves the first leaf, $(3,2,2)$, with arc equations   
  $$
 \xi^6\dot x^2+\xi^6\dot y^3+\xi^8\dot z^4\equiv0\mod \xi^n
 $$
 and the two additional initial conditions  $\tilde x_{n-3}=\tilde z_{n-2}=0$. Its twisted initial form  $x^2+y^3$  becomes non-singular if we invert $x$ or $y$, suggesting another multi-branching step. Inverting one and equating the other   to zero leads once more to contradictory equations,  
so we only have to deal with the two leafs $(3^*,2^*,2)$ and $(4,3,2)$. For the former, we may invoke once more Corollary~\ref{C:parcreg}, yielding the class 
$$
\class{x^2_*+y_*^3=0}\cdot\lef^{2n-2+6-7},
$$
as $\ord{(3^*,2,2^*)}{E_6}= 6$. The latter has arc equations 
 $$
 \xi^8\dot x^2+ \xi^9\dot y^3+ \xi^8\dot z^4\equiv0\mod \xi^n
 $$
together with the vanishing of $\tilde x_i$, $\tilde y_i$ and $\tilde z_i$  for $i$ greater than or equal to respectively $n-4$, $n-3$, and $n-2$. Since $(4,3,2)$ has the same twisted initial form as $(2,2,1)$,
%$\tin {E_6}{(4,3,2)}=\tin {E_6}{(2,2,1)}$, 
we may repeat our previous argument. Tagging both variables gives the leaf $(4^*,3,2^*)$ and
$$
\class{x^2_*+z_*^4}\cdot \lef^{2n-2+8-9}
 $$
 as $\ord{(4^*,3,2^*)}{E_6}=8$. The latter leaf is $(5,3,3)$,  with arc equations
 $$
 \xi^{10}\dot x^2+ \xi^9\dot y^3+\xi^{12}\dot z^4\equiv0\mod \xi^n
 $$
together with  $\tilde x_i, \tilde y_i, \tilde z_i=0$  for $i\geq n-5,n-3,n-3$ respectively. As the twisted initial form is $y^3$, we again branch  over $e_2^*$ and $e_2$ leading to the leaf $(5,4,3)$,  with arc equations 
 $$
 \xi^{10}\dot x^2+\xi^{12}\dot y^3+\xi^{12}\dot z^4\equiv0\mod \xi^n
 $$
together with  $\tilde x_i, \tilde y_i, \tilde z_i=0$,  for $i\geq n-5,n-4,n-3$ respectively. As $x^2$ is the new twisted initial form, we branch   over $e_1^*$ and $e_1$, yielding the leaf $(6,4,3)$,  with arc equations 
 $$
 \xi^{12}\dot x^2+\xi^{12}\dot y^3+\xi^{12}\dot z^4\equiv0\mod \xi^n
 $$
together with   $\tilde x_i, \tilde y_i, \tilde z_i=0$  for $i\geq n-6,n-4,n-3$ respectively. As $X$ itself is the twisted initial form of this leaf, that is to say, $\recur X{\tuple 0}{(6,4,3)}$, our algorithm has come to a halt. Indeed, if we factor out $\xi^{12}$ in the last equation, we get the $(n-12)$-th arc equations. Since we have $\norm{(6,4,3)}=13$ additional initial conditions, we are left with   $3\cdot 12-13=23$ free variables  $\tilde x_i, \tilde y_i, \tilde z_i$  for $n-12\leq i< n-6,n-4,n-3$ respectively, as predicted by Lemma~\ref{L:arcequiv}. Putting everything together, we showed that $\class{\arc n{E_6}}$ is equal to 
\begin{equation}\label{eq:E6arc}
\class{E_6-O} \lef^{2n-2}+2\class{\tin {E_6}{(2^*,2,1^*)}}  \lef^{2n-3}+\class{\tin {E_6}{(3^*,2^*,2)}} \lef^{2n-3}+\class{\arc{n-12} {E_6}} \lef^{23}
\end{equation}
Multiplying with $t^n$, summing over all $n$, and solving for the zeta series yields
$$
\igugeom {E_6} =\frac{Q_{E_6}}{(1-\lef^2t)(1-\lef^{23}t^{12})}
$$
for some polynomial $Q_{E_6}$ over $\grotform \fld_\lef$. A schematic representation of these calculations is given by the following rationalization tree, in which     we equated, for brevity, a leaf to the class of the corresponding directed arc scheme   (giving only its defining polynomial):

\begin{table}[h]
  {\tiny
%@!C=4pc@R=10pt
\xymatrix@!C=4pc{
&&(1,1,1)\ar[d]^{e_1}\ar[dr]^{e^*_1}\\
&&(2,1,1)\ar[d]^{e_2}\ar[dr]^{e_2^*}&(1^*,1,1)=0\\
%&&(2,2,1)\ar[d]^{e_1}\ar[dr]^{e^*_1}&(2,1^*,1)=0\\
%&&(3,2,1)\ar[d]^{e_3}\ar[dr]^{e_3^*}&(2^*,2,1)=0\\
&&(2,2,1)\ar[d]^{e_1e_3}\ar[dr]^{e_1^*e_3^*}\ar[lld]_{e_1e_3 ^*}\ar[ld]^{e_1^*e_3}&(2,1^*,1)=0\\
(3,2,1^*)=0&(2^*,2,2)=0&(3,2,2)\ar[d]^{e_1e_2}\ar[dr]^{e_1^*e_2^*}\ar[lld]_{e_1^*e_2}\ar[ld]^{e_1e_2^*}&(2^*,2,1^*)\ar@{=}[r]& \class{x^2_*+z^4_*}\lef^{2n-3} \\
(4,2^*,2)=0&(3^*,3,2)=0&(4,3,2)\ar[d]^{e_1e_3}\ar[dr]^{e_1^*e_3^*}\ar[lld]_{e_1e_3 ^*}\ar[ld]^{e_1^*e_3}&(3^*,2^*,2)\ar@{=}[r]& \class{x^2_*+y^3_*}\lef^{2n-3} \\
(5,3,2^*)=0&(4^*,3,3)=0&(5,3,3)\ar[d]^{e_2}\ar[dr]^{e_2^*}&(4^*,3,2^*)\ar@{=}[r]& \class{x^2_*+z^4_*}\lef^{2n-3} \\
&&(5,4,3)\ar[d]^{e_1}\ar[dr]^{e^*_1}&(5,3^*,3)=0\\
&&(6,4,3)\ar@{=}[d]&(5^*,4,3)=0\\
&&\class{\arc {n-12}{E_6}}\lef^{23}&\\
}}
\caption{\label{t:E6}The rationalization tree for the $E_6$-surface}
\end{table}

 It is now also clear how this generalizes to any power hypersurface, yielding a proof of Corollary~\ref{C:powerhyp}.  Indeed, with $e$ the least common multiple of the $a_i$, the algorithm stops at the leaf $\gamma:=(\frac e{a_1},\dots,\frac e{a_m})$, whose order is $e$. During this process, we introduced $\norm\gamma$ many additional initial conditions. As we have $me$ more arc variables for the $n$-th arc as for the $(n-e)$-th arc, this yields $N=me-\norm\gamma$ free variables, explaining   formula~\eqref{eq:lefN}.
 
  Let us apply this algorithm also to the Du Val surfaces  $A_k$ and $E_8$. The former is given by $x^2+y^2+z^{k+1}$. If $k$ is odd, then $e=k+1$ and $N=3(k+1)-(\frac{k+1}2+\frac{k+1}2+1)=2k+1$, and hence 
 $$
 \igugeom{A_k}=\frac{Q_{A_k}}{(1-\lef^2t)(1-\lef^{2k+1}t^{k+1})}.
 $$
  If $k$ is even, then $e=2(k+1)$ and $N=6(k+1)-(k+1+k+1+2)=4k+2$, so that 
   $$
 \igugeom{A_k}=\frac{Q_{A_k}}{(1-\lef^2t)(1-\lef^{4k+2}t^{2(k+1)})}.
 $$
 Finally, since $E_8$ has equation $x^2+y^3+z^5$, the values are $e=30$ and $N=90-(15+10+5)=60$, so that 
 $$
 \igugeom{E_8}=\frac{Q_{E_8}}{(1-\lef^2t)(1-\lef^{60}t^{30})}. 
 $$

 Although a priori $\tin X\theta$ depends on the embedding of $X$ in some affine space, its class  may be  more independent from this embedding. For instance, in \eqref{eq:E6arc}, all but  the two middle terms are independent from an embedding. To which extent does this hold?

\subsection*{The $E_7$-surface}
 This time, the defining equation is $x^2+y^3+yz^3=0$ (and, as before,  assuming that the  \ch\ is not equal to $2$  or $3$), which again has an isolated (canonical) singularity. As this is no longer just a sum of powers, it will lead to a more complicated rationalization tree, given in  Table~\ref{t:E7} below.
% , a label $e_i$ means that we put the $i$-th variable equal to zero, $e_i^*$ on the other hand means that we invert that variable (that is to say, restrict to the basic open subset defined by it); equality refers to the value of the class of the directed arc obtained by an application of Corollary~\ref{C:parcreg}, which therefore tacitly implies  that the corresponding twisted initial form is regular on the corresponding basic open set. To simplify notation, we use defining equations for the initial forms. Note that each branching at a node is determined by a choice of a non-empty subset of the variables.

  \begin{table}
  {\tiny
%@!C=4pc@R=10pt
\xymatrix@!C=4pc{
&&(1,1,1)\ar[d]^{e_1}\ar[dr]^{e^*_1}\\
&&(2,1,1)\ar[d]^{e_2}\ar[dr]^{e_2^*}&(1^*,1,1)=0\\
&&(2,2,1)\ar[d]^{e_1}\ar[dr]^{e^*_1}&(2,1^*,1)=0\\
&&(3,2,1)\ar[d]^{e_3}\ar[dr]^{e_3^*}&(2^*,2,1)=0\\
&&(3,2,2)\ar[d]^{e_1e_2}\ar[dr]^{e_1^*e_2^*}\ar[lld]_{e_1e_2 ^*}\ar[ld]^{e_1^*e_2}&(3,2,1^*)=0\\
(4,2^*,2)=0&(3^*,3,2)=0&(4,3,2)\ar[d]^{e_1}\ar[dr]^{e^*_1}&(3^*,2^*,2)\ar@{=}[r]& \class{x^2_*+y^3_*}\lef^{2n-3} \\
&&(5,3,2)\ar[d]^{e_2e_3}\ar[dr]^{e_2^*e_3^*}\ar[lld]_{e_2^*e_3}\ar[ld]^{e_2e_3^*}&(4^*,3,2)=0\\
(5,3^*,3)=0&(5,4,2^*)\ar@{=}[d]&(5,4,3)\ar[d]^{e_1}\ar[dr]^{e^*_1}&(5,3^*,2^*)\ar@{=}[r]&\class{y^3_*+y_*z^3_*}\lef^{2n-3}\\
&\class{x^2+yz^3_*}\lef^{2n-3}&(6,4,3)\ar[d]^{e_1e_2}\ar[dr]^{e_1^*e_2^*}\ar[lld]_{e_1^*e_2}\ar[ld]^{e_1e_2^*}&(5^*,4,3)=0\\
(6^*,5,3)=0&(7,4^*,3)=0&(7,5,3)\ar[d]^{e_1e_3}\ar[dr]^{e_1^*e_3^*}\ar[lld]_{e_1^*e_3}\ar[ld]^{e_1e_3^*}&(6^*,4^*,3)\ar@{=}[r]&\class{x^2_*+y^3_*}\lef^{2n-3}\\
(7^*,5,4)=0&(8,5,3^*)\ar@{=}[d] &(8,5,4)\ar[d]^{e_2}\ar[dr]^{e^*_2}&(7^*,5,3^*)\ar@{=}[r]&\class{x^2_*+yz^3_*}\lef^{2n-3}\\
&\class{yz^3_*}\lef^{2n-4}&(8,6,4)\ar[d]^{e_1}\ar[dr]^{e^*_1}&(8,5^*,4)=0\\
&&(9,6,4)\ar@{=}[d]&(8^*,6,4)=0\\
&&\class{\arc {n-18}{E_7}}\lef^{29}&\\
}}
\caption{\label{t:E7}The rationalization tree for the $E_7$-surface}
\end{table}

  Since in any rationalization tree, the sum of all nodes is equal to the root, we get, using Theorem~\ref{T:redfiber}, that
$$
\class{\arc n{E_7}}=q\lef^{2n-3}+ \class{\arc {n-18}{E_7}}\lef^{29}
$$
where   $q$ is equal to 
$$
\class{{E_7}-O}\lef+\class{x^2+yz^3_*}+ \class{yz^3_*}\lef^{-1}+2 \class{x^2_*+y^3_*}+\class{x^2_*+yz^3_*}+\class{y^3_*+y_*z_*^{3}}.
$$
Using the  identities   
\begin{align*}
\class{x^2+yz^3_*}&=\lef^2-\lef=\class{yz^3_*}\\
\class{x^2_*+yz^3_*}&=(\lef-1)^2\\
\class{y^3_*+y_*z_*^{3}}&=(\lef-1)\class{x^2_*+y^3_*}
\end{align*}
we get 
$$
q= \lef\class{{E_7}-O} +(\lef+1)\class{x_*^2+y^3_*}+2\lef(\lef-1).
$$
 Regardless the value of $q$, the usual argument yields the rationality of  $\igugeom {E_7}$, with denominator equal to $(1-\lef^2t)(1-\lef^{29}t^{18})$.

 \vfill\eject
 \subsection*{The $D_k$-surface}
 The general equation of the $D_k$-surface   is $x^2+y^2z+z^{k-1}$ for $k\geq 4$ (assuming the \ch\ is different from $2$). Depending on whether $k$ is odd or even, we have two slightly different rationalization trees, both leading from the root $(1,1,1)$ to $(k-1,k-2,2)$, where
 $$
 \class{\parc n{(k-1,k-2,2)}{D_k}}=\class{\arc{n-2k+2}{D_k}}\cdot\lef^{4k-5}
 $$
 since we have $3(2k-2)-\norm{(k-1,k-2,2)}=4k-5$ free variables. The equations for  the directed arc along the starting value $(1,1,1)$  has arc equations 
 $$
 \xi^2\dot x^2+\xi^3\dot y^2\dot z+\xi^{k-1}\dot z^{k-1}\equiv0\mod \xi^n.
 $$
 It is not hard to see that the initial part of the tree is given by alternating $e_1$ and $e_2$. As a result, in  respectively the first and second term, the power of $\xi$ is each time increased by $2$. This goes on until one of them catches up with the power $\xi^{k-1}$, and this depends on the parity of $k$. So assume first that $k$ is odd. In that case, we arrive at a leaf with value $(\frac{k-1}2,\frac{k-1}2,1)$, whose directed arc has arc equations 
 $$
 \xi^{k-1}\dot x^2+\xi^k\dot y^2\dot z+\xi^{k-1}\dot z^{k-1}\equiv0\mod \xi^n.
 $$
 The remainder of the tree is now as follows, where the middle part gets repeated   until the indicated value is reached:

  \begin{table}[h]
  {\tiny
\xymatrix@!C=4pc{
&&(\frac{k-1}2,\frac{k-1}2,1)\ar[d]^{e_1e_3}\ar[dr]^{e_1^*e_3^*}\ar[ld]^{e_1^*e_3}\ar[lld]_{e_1e_3^*}\\
(\frac{k+1}2,\frac{k-1}2,1^*)=0&(\frac{k-1}2^*,\frac{k-1}2,2)=0&(\frac{k+1}2,\frac{k-1}2,2)\ar[d]^{e_1e_2}\ar[dr]^{e_1^*e_2^*}\ar[lld]_{e_1e_2^*}\ar[ld]^{e_1^*e_2}&(\frac{k-1}2^*,\frac{k-1}2,1^*)\ar@{=}[r]&\class{x_*^2+z_*^{k-1}}\lef^{2n-3}\\
(\frac{k+3}2,\frac{k-1}2^*,2)\ar@{=}[d]&(\frac{k+1}2^*,\frac{k+1}2,2)=0  &(\frac{k+3}2,\frac{k+1}2,2)\ar@{.>}[dd]^{e_1e_2}\ar@{.>}[ddr]^{e_1^*e_2^*}\ar@{.>}[lldd]_{e_1e_2^*}\ar@{.>}[ldd]^{e_1^*e_2} &(\frac{k+1}2^*,\frac{k-1}2^*,2)\ar@{=}[r]&\class{x_*^2+y^2_*z}\lef^{2n-3}\\
\class{y_*^2z}\lef^{2n-4}&&&\\
\class{y_*^2z}\lef^{2n-4}&0&(k-2,k-3,2)\ar[d]^{e_1e_2}\ar[dr]^{e_1^*e_2^*}\ar[lld]_{e_1e_2^*}\ar[ld]^{e_1^*e_2}&\class{x_*^2+y^2_*z}\lef^{2n-3}\\
\class{y_*^2z}\lef^{2n-4}&0&(k-1,k-2,2)&\class{x_*^2+y^2_*z}\lef^{2n-3}\\
}}
\caption{\label{t:Dkodd}Bottom part of the rationalization tree for   $D_k$, when $k$ is odd.}
\end{table}

\vfill\eject
The tree for even $k$   is analogous, where this time, the starting value is $(\frac k2,\frac{k-2}2,1)$,  with arc equations
$$
 \xi^k\dot x^2+\xi^{k-1}\dot y^2\dot z+\xi^{k-1}\dot z^{k-1}\equiv0\mod \xi^n.
 $$
The remainder of the tree, with the middle part again repeated, is

  \begin{table}[h] 
  {\tiny
\xymatrix@!C=4pc{
&&(\frac{k}2,\frac{k-2}2,1)\ar[d]^{e_2e_3}\ar[dr]^{e_2^*e_3^*}\ar[lld]_{e_2e_3^*}\ar[ld]^{e_2^*e_3}\\
(\frac{k}2,\frac{k}2,1^*)=0&(\frac{k}2,\frac{k-2}2^*,2)=0 &(\frac{k}2,\frac{k}2,2)\ar[d]^{e_1}\ar[dr]^{e_1^*}&(\frac{k}2,\frac{k-2}2^*,1^*)\ar@{=}[r]&\class{y_*^2z_*+z_*^{k-1}}\lef^{2n-3}\\
&&(\frac{k+2}2,\frac{k}2,2)\ar[d]^{e_1e_2}\ar[dr]^{e_1^*e_2^*}\ar[lld]_{e_1e_2^*}\ar[ld]^{e_1^*e_2}&(\frac{k}2^*,\frac{k}2,2)=0\\
(\frac{k+4}2,\frac{k}2^*,2)\ar@{=}[d]&(\frac{k+2}2^*,\frac{k+2}2,2)=0  & (\frac{k+4}2,\frac{k+2}2,2)\ar@{.>}[dd]^{e_1e_2}\ar@{.>}[ddr]^{e_1^*e_2^*}\ar@{.>}[lldd]_{e_1e_2^*}\ar@{.>}[ldd]^{e_1^*e_2} &(\frac{k+2}2^*,\frac{k}2^*,2)\ar@{=}[r]&\class{x_*^2+y^2_*z}\lef^{2n-3}\\
\class{y_*^2z}\lef^{2n-4}&&&\\
\class{y_*^2z}\lef^{2n-4}&0&(k-2,k-3,2)\ar[d]^{e_1e_2}\ar[dr]^{e_1^*e_2^*}\ar[lld]_{e_1e_2^*}\ar[ld]^{e_1^*e_2}&\class{x_*^2+y^2_*z}\lef^{2n-3}\\
\class{y_*^2z}\lef^{2n-4}&0&(k-1,k-2,2)&\class{x_*^2+y^2_*z}\lef^{2n-3}\\
}}
\caption{\label{t:Dkodd}Bottom part of the rationalization tree for   $D_k$, when $k$ is even.}
\end{table}

Note that $\class{y_*^2z}=\lef^2-\lef$   appears  $\round k2$ times as an end value, and $\class{x_*^2+y^2_*z}=(\lef-1)^2$ appears $\round {k+2}2$ many times. It follows that $\class{\arc n{D_k}}$ is equal to 
$$
\big(\class{{D_k}-O}\lef+\class{x^2_*+z^{k-1}_*}+\frac{k+1}2(\lef^2-\lef)+(\lef-1)^2\big)\lef^{2n-3}+ \class{\arc {n-2k+2}{D_k}}\lef^{4k-5}
$$
 in the odd case, and to 
$$
\big(\class{{D_k}-O}\lef+\class{y^2_*z_*+z^{k-1}_*}+\frac {k}2(\lef^2-\lef)+(\lef-1)^2\big)\lef^{2n-3}+ \class{\arc {n-2k+2}{D_k}}\lef^{4k-5}
$$ 
 in the even case. In particular,  the geometric Igusa-zeta series $\igugeom{D_k}$ is rational with denominator equal to $(1-\lef^2t)(1-\lef^{4k-5}t^{2k-2})$.

\vfill \eject
\section{Appendix: idempotents in Noetherian local rings}\label{s:app}
Let us call two elements in a ring $A$ \emph{orthogonal} if their product is zero.
Recall that an element $e\in A$ is called \emph{idempotent}, if $e^2=e$; the set of all idempotents in $A$ will be denoted $\op{Idem}(A)$. If $e$ is idempotent, then so is $1-e$. Moreover, $e$ and $1-e$ are orthogonal, and the ideals they generate   are subrings of $A$ with the property that $A\iso eA\oplus(1-e)A$. In fact, $A/eA\iso (1-e)A$. In particular, $A$ is indecomposable, or, equivalently, $\op{Spec}A$ is connected, \iff\ $0$ and $1$ are its only idempotents. We define a partial order relation on $\op{Idem}(A)$ by  $e\leq f$ \iff\ $eA\sub fA$.   Clearly, $0\leq e\leq 1$, for all $e\in\op{Idem}(A)$.   An idempotent is called \emph{primitive} if it cannot be written as the sum of two non-zero orthogonal idempotents. Note that the sum of two orthogonal idempotents is again idempotent.

\begin{lemma}\label{L:idem}
For idempotents $e,f,g\in A$, we have
\begin{enumerate}
  \item\label{i:idemord}   $e\leq f$ \iff\ $e=ef$; 
  \item\label{i:sum} if $e=f+g$ with $f$ and $g$ orthogonal idempotents, then $f,g\leq e$
   \item\label{i:sumzero} $0$ is primitive;
   \item\label{i:primsum} a non-zero idempotent is primitive \iff\ it cannot be written as the sum of two strictly smaller orthogonal idempotents;
      \item\label{i:prim} a non-zero idempotent is  primitive \iff\ it is minimal among all non-zero idempotents.
\end{enumerate}
\end{lemma}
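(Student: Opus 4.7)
The plan is to dispatch the five items in order, since each builds on the previous.

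First, for \eqref{i:idemord}, I would unpack the definition $e\leq f\Leftrightarrow eA\subseteq fA$. The forward direction writes $e=fa$ for some $a\in A$ and then multiplies by $f$, using $f^2=f$ to conclude $ef=fa=e$. The backward direction is trivial: $e=ef$ exhibits $e$ as an element of $fA$. Part \eqref{i:sum} is then a one-line computation: $ef=(f+g)f=f^2+gf=f+0=f$, so by \eqref{i:idemord} we have $f\leq e$, and the argument for $g$ is symmetric.

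For \eqref{i:sumzero}, I would verify vacuously that if $0=f+g$ with $f,g$ orthogonal idempotents, then $g=-f$ forces $0=fg=-f^2=-f$, whence $f=0=g$; so $0$ cannot be written as a sum of two \emph{non-zero} orthogonal idempotents, confirming primitivity.

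The content of the lemma sits in \eqref{i:primsum} and \eqref{i:prim}. For \eqref{i:primsum}, the key observation, given \eqref{i:sum}, is that whenever $e=f+g$ with $f,g$ orthogonal idempotents, automatically $f,g\leq e$; and $f=0$ is equivalent to $g=e$. Therefore the conditions \emph{``$f$ and $g$ are both non-zero''} and \emph{``$f$ and $g$ are both strictly smaller than $e$''} coincide (for non-zero $e$), which gives the equivalence directly. For \eqref{i:prim}, the one direction is immediate from \eqref{i:sum}: if $e$ is minimal and $e=f+g$ with $f,g$ non-zero orthogonal idempotents, then $f\leq e$ and $f\neq 0$, forcing $f=e$ by minimality, so $g=0$, a contradiction. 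The slightly trickier direction, where I expect the only real (but still minor) technical step, is the converse: given a non-zero idempotent $f<e$, I must manufacture a decomposition. I would set $g:=e-f$ and verify, using $ef=f$ from \eqref{i:idemord}, that
\[
g^{2}=(e-f)^{2}=e^{2}-2ef+f^{2}=e-2f+f=e-f=g,
\]
that $fg=f(e-f)=f-f=0$, and that $g\neq0$ since otherwise $f=e$. This produces the forbidden decomposition $e=f+g$ with $f,g$ non-zero orthogonal idempotents, contradicting primitivity via \eqref{i:primsum}. The whole lemma is therefore essentially a bookkeeping exercise, and the only place where a genuine calculation occurs is in checking that $e-f$ is again idempotent.
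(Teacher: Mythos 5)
Your proof is correct and follows essentially the same route as the paper's; in particular the key step — decomposing $e=f+(e-f)$ in the ``primitive implies minimal'' direction of \eqref{i:prim} — is the same decomposition the paper uses (written there as $e=ef+e(1-f)$, which equals $f+(e-f)$ once $ef=f$). Your forward direction of \eqref{i:idemord} (multiply $e=fa$ by $f$ and use $f^2=f$) is actually cleaner than the paper's argument, which needlessly squares the equation first, and your direct computation for \eqref{i:sumzero} is a fine alternative to the paper's appeal to \eqref{i:sum}; these are cosmetic differences only.
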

\begin{proof}
One direction in \eqref{i:idemord} is clear, so assume $e\in fA$, say, $e=af$. Squaring this equality yields $e=a^2f=a(af)=ae$. On the other hand, $0=(1-e)e=(1-e)af$, so that $af=eaf=(ae)f=ef$, and hence $e=ef$. For \eqref{i:sum}, multiply with $f$ to get $ef=f+fg=f$, whence $f\leq e$ by \eqref{i:idemord}.
To prove \eqref{i:sumzero}, suppose $e+f=0$ with $e,f$ orthogonal idempotents. By \eqref{i:sum}, we get $e,f\leq 0$, and hence $e=f=0$.  Property~\eqref{i:primsum} follows immediately from \eqref{i:sum}. 
For \eqref{i:prim}, let $e$ be primitive and suppose $0\neq f\leq e$, whence   $f=fe$ by \eqref{i:idemord}. Multiplying $1=f+(1-f)$ with $e$ yields $e=ef+e(1-f)$, so that by primitivity one of the two terms must be zero. As $f=ef\neq0$, we get $e(1-f)=0$, whence $e=ef=f$. Conversely, assume $e$ is minimal among all non-zero idempotents, but assume it is not primitive. Hence, by \eqref{i:primsum}, we can write it as $e=f+g$ with $f,g< e$ orthogonal idempotents. By minimality, $f=g=0$, contradicting that $e\neq0$. 
\end{proof}

\begin{lemma}\label{L:dcc}
If $A$ is Noetherian, then the order relation on $\op{Idem}(A)$ satisfies the ascending and descending chain conditions.
\end{lemma}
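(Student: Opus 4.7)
The plan is to reduce the DCC on idempotents to the ACC on idempotents, and then obtain ACC directly from Noetherianity.

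First I would check that the involution $e \mapsto 1-e$ is order-reversing on $\op{Idem}(A)$. Indeed, if $e \leq f$, then by Lemma~\ref{L:idem}\eqref{i:idemord} we have $e = ef$, and then $(1-f)(1-e) = 1 - e - f + ef = 1 - f$, so by the same characterization $1 - f \leq 1 - e$. Since the involution is its own inverse, it is in fact an order-reversing bijection. Consequently, any descending chain $e_1 \geq e_2 \geq \dots$ in $\op{Idem}(A)$ corresponds to an ascending chain $1 - e_1 \leq 1 - e_2 \leq \dots$, so it suffices to establish the ACC.

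For the ACC, take an ascending chain $e_1 \leq e_2 \leq \dots$ of idempotents. By the definition of the order (i.e., by Lemma~\ref{L:idem}\eqref{i:idemord} read contrapositively, or simply by the definition $e \leq f \Leftrightarrow eA \subseteq fA$), this yields an ascending chain of principal ideals $e_1 A \subseteq e_2 A \subseteq \dots$ in $A$. Since $A$ is Noetherian, this chain stabilizes at some index $N$, so $e_i A = e_{i+1} A$ for all $i \geq N$. It remains to show that equality of the ideals forces equality of the idempotents: if $e_i A = e_{i+1}A$, then $e_i \leq e_{i+1}$ and $e_{i+1} \leq e_i$, so by Lemma~\ref{L:idem}\eqref{i:idemord} we get $e_i = e_i e_{i+1} = e_{i+1} e_i = e_{i+1}$. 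Hence the original chain of idempotents stabilizes, proving the ACC.

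There is no real obstacle here; the only slightly delicate point is the order-reversing property of $e \mapsto 1-e$, which is a one-line verification. Everything else is a direct translation between chains of idempotents and chains of principal ideals, together with the Noetherian hypothesis.
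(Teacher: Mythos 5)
Your proof is correct and follows essentially the same approach as the paper: derive the ACC on $\op{Idem}(A)$ from the Noetherianity of $A$ (via principal ideals $eA$), and reduce the DCC to the ACC through the order-reversing involution $e \mapsto 1-e$, justified by Lemma~\ref{L:idem}\eqref{i:idemord}. You spell out two small points the paper leaves implicit (the order-reversal computation and the antisymmetry showing $e_iA = e_{i+1}A$ forces $e_i = e_{i+1}$), but the argument is the same.
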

\begin{proof}
Since  $A$ is Noetherian, ideal inclusion satisfies the ascending chain condition, and, therefore, so does $\leq$. To prove that $\leq$ also satisfies the descending chain condition, observe that $e\leq f$ is equivalent with $1-f\leq 1-e$ by \eqref{i:idemord}, so that we may apply the ascending chain condition to the latter.
\end{proof}

\begin{proposition}\label{P:primidem}
If $A$ is Noetherian, then there are only finitely many primitive idempotents, say,  $e_1,\dots,e_s$. The $e_i$ are mutually orthogonal, and  any idempotent $e$ can be written  in a unique way as a sum $e=e_{i_1}+\dots+e_{i_t}$ with $1\leq i_1<\dots<i_t\leq s$. In particular, $e_1+\dots+e_s=1$, and $\op{Idem}(A)$ is finite. Moreover, $A\iso \bigoplus_{i=1}^s e_iA$, and each $e_iA\iso A/(1-e_i)A$ is indecomposable.
\end{proposition}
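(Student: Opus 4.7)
\emph{Proof plan.} The strategy is to first establish the mutual orthogonality of primitive idempotents and the fact that every nonzero idempotent dominates at least one primitive, using the descending chain condition from Lemma~\ref{L:dcc} together with the minimality characterization of primitivity in Lemma~\ref{L:idem}\eqref{i:prim}. Finiteness of the set of primitives will then drop out of the ascending chain condition, and the remaining assertions will be routine consequences.

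First I would argue that if $e$ and $f$ are two distinct primitive idempotents, then $ef=0$: the product $ef$ is again idempotent and, by Lemma~\ref{L:idem}\eqref{i:idemord}, satisfies $ef\leq e$ and $ef\leq f$, so minimality of $e$ and $f$ (Lemma~\ref{L:idem}\eqref{i:prim}) forces $ef\in\{0,e\}\cap\{0,f\}=\{0\}$. Next, given any nonzero idempotent $e$, the descending chain condition (Lemma~\ref{L:dcc}) produces a minimal nonzero idempotent below $e$, which is primitive by Lemma~\ref{L:idem}\eqref{i:prim}. If there were infinitely many distinct primitive idempotents $e_1,e_2,\dots$, the partial sums $f_n:=e_1+\dots+e_n$ would be idempotent by pairwise orthogonality, and Lemma~\ref{L:idem}\eqref{i:sum} combined with $e_{n+1}\neq 0$ would give strict inclusions $f_nA\subsetneq f_{n+1}A$, contradicting the ascending chain condition.

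Let $e_1,\dots,e_s$ be the complete list of primitive idempotents and set $e:=e_1+\dots+e_s$. If $1-e\neq 0$, the previous paragraph yields a primitive idempotent $e_j\leq 1-e$; but then orthogonality gives $e_je=e_j$, while $e_j\leq 1-e$ gives $e_je=0$, a contradiction. Hence $e_1+\dots+e_s=1$. For an arbitrary idempotent $e$, expand $e=e\cdot 1=\sum_i ee_i$; each $ee_i$ is idempotent and satisfies $ee_i\leq e_i$, so by primitivity $ee_i\in\{0,e_i\}$, exhibiting $e$ as a sum of a subset of the $e_i$. Uniqueness follows by multiplying such an expression $e=\sum_{i\in J}e_i$ by $e_j$ and invoking orthogonality: $ee_j=e_j$ if $j\in J$ and $ee_j=0$ otherwise, so $J$ is determined by $e$.

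The internal direct sum $A=\bigoplus_i e_iA$ is now immediate from having a complete orthogonal system of idempotents summing to $1$. The canonical surjection $A\twoheadrightarrow e_iA,\ a\mapsto e_ia$ has kernel $(1-e_i)A$, yielding $e_iA\iso A/(1-e_i)A$. Finally, $e_iA$ is a ring with unit $e_i$, and its idempotents are precisely the idempotents of $A$ lying below $e_i$; primitivity of $e_i$ says that the only such nonzero idempotent is $e_i$ itself, so $e_iA$ is indecomposable. I do not foresee a serious obstacle — the whole argument is a standard application of the chain conditions — but the step requiring the most care is showing $e_1+\dots+e_s=1$, which genuinely needs exhaustiveness of the list of primitive idempotents rather than any weaker finiteness statement.
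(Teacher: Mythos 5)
Your proof is correct but takes a somewhat different route from the paper's. The paper decomposes the identity $1$ directly into a sum of mutually orthogonal primitive idempotents by iterating the splitting $e=e'+e''$ of a non-primitive idempotent and invoking the descending chain condition from Lemma~\ref{L:dcc} to force termination; only afterwards does it identify the terms of that decomposition with the complete list of primitives, obtaining the finiteness of $\op{Idem}(A)$ as a final consequence. You instead begin by proving that any two distinct primitive idempotents are orthogonal (a fact the paper never needs to state separately, since its primitives appear already mutually orthogonal within a single decomposition), use the descending chain condition only to exhibit a primitive below any nonzero idempotent, and obtain the finiteness of the set of all primitives from the ascending chain condition applied to the partial sums $f_n=e_1+\dots+e_n$; the completeness $e_1+\dots+e_s=1$ then falls out of a short contradiction. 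Both arguments rest on Lemma~\ref{L:dcc}, but the paper uses only the descending side while you use both. Your organization separates the combinatorics of primitives from the chain-condition input a bit more cleanly, at the cost of needing the orthogonality lemma up front; the paper reaches the decomposition of $1$ in a single stroke. One minor attribution remark: the strictness $f_nA\subsetneq f_{n+1}A$ really comes from $e_{n+1}\neq 0$ together with the antisymmetry of $\leq$ that follows from Lemma~\ref{L:idem}\eqref{i:idemord}, not from Lemma~\ref{L:idem}\eqref{i:sum} alone, but the substance of the step is fine.
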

\begin{proof}
We start with proving that any idempotent is the sum of mutually orthogonal primitive idempotents. Let $e\in\op{Idem}(A)$. If $e$ is not primitive, then we can find non-zero orthogonal idempotents $e_1,e_2\in A$ such that $e=e_1+e_2$. By Lemma~\ref{L:idem}\eqref{i:sum}, we have $e_1,e_2\leq e$, whence $e_1,e_2<e$, since $e_1,e_2$ are non-zero.   If either one of $e_i$ is not primitive, we may continue in this way, and split it up as a sum of two orthogonal idempotents both strictly less than $e_i$. This cannot go on indefinitely, for then we would get an infinite strictly descending chain in $\op{Idem}(A)$, contradicting Lemma~\ref{L:dcc}. This shows that any idempotent admits a decomposition into mutually orthogonal primitive idempotents. In particular, 
\begin{equation}\label{eq:decompunit}
1=e_1+\dots+e_s,
\end{equation}
 with all $e_i$ mutually orthogonal primitive idempotents. Suppose $1=f_1+\dots+f_t$ is another such decomposition. Multiplying \eqref{eq:decompunit} with $f_1$, we get 
\begin{equation}\label{eq:f1}
f_1=e_1f_1+\dots+e_sf_1.
\end{equation}
 Since all $e_if_1$ are orthogonal, and since $f_1$ is primitive, all terms in the right hand side of \eqref{eq:f1} must be zero except one, say, $e_1f_1=f_1$. In particular, $f_1\leq e_1$, whence $f_1=e_1$ by Lemma~\ref{L:idem}\eqref{i:prim}. This shows that the $e_i$ are unique. Let $e$ be an arbitrary idempotent. Multiplying \eqref{eq:decompunit} with $e$, we get $e=ee_1+\dots+ee_s$. Since $ee_i\leq e_i$ and $e_i$ is primitive, Lemma~\ref{L:idem}\eqref{i:prim} implies that $ee_i$ is either zero or equal to $e_i$. In conclusion, $e$ is the sum of some of the $e_i$ (in fact, those such that $e_i\leq e$). In particular, if $e$ is primitive and non-zero, then it must be equal to one of the $e_i$, showing that $e_1,\dots,e_s$ are all primitive idempotents. Moreover, since any idempotent is a sum of these, $\op{Idem}(A)$ is finite. The last assertion now follows readily.
 \end{proof}

\begin{theorem}\label{T:stabisosch}
Two affine $X$-schemes  are stably isomorphic over $X$ \iff\ they are isomorphic over $X$.
\end{theorem}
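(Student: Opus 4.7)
The plan is to exploit the canonical decomposition of a Noetherian affine scheme into connected components, furnished by Proposition~\ref{P:primidem} in the appendix, which will reduce the cancellation claim to a triviality about multisets.

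The forward direction is immediate (take $Z=\emptyset$), so the substance lies in the converse. Let $Y, Y', Z$ be affine $X$-schemes with $Y\sqcup Z\iso Y'\sqcup Z$ over $X$. Translating to coordinate rings, this gives an $A$-algebra isomorphism $B_Y\times B_Z\iso B_{Y'}\times B_Z$. All three rings are Noetherian, being finitely generated $A$-algebras, so by Proposition~\ref{P:primidem} applied separately to each, we may decompose
$$
Y=Y_1\sqcup\cdots\sqcup Y_s,\qquad Y'=Y'_1\sqcup\cdots\sqcup Y'_t,\qquad Z=Z_1\sqcup\cdots\sqcup Z_u,
$$
with every $Y_i, Y'_j, Z_k$ a connected (indecomposable) affine $X$-scheme, and such decompositions being essentially unique.

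Next I would observe that any isomorphism of schemes necessarily restricts to $X$-iso\-mor\-phisms on connected components (a map on connected components exists because the continuous image of a connected set is connected, and it is a bijection by invertibility). Applying this to the given isomorphism $\varphi\colon Y\sqcup Z\iso Y'\sqcup Z$, the two lists of connected components must match up to a permutation, i.e., the multiset of $X$-isomorphism classes $\{Y_1,\dots,Y_s,Z_1,\dots,Z_u\}$ coincides with the multiset $\{Y'_1,\dots,Y'_t,Z_1,\dots,Z_u\}$.

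The final step is purely formal cancellation in the free commutative monoid generated by isomorphism classes of connected affine $X$-schemes: since each $Z_k$ appears with the same multiplicity on both sides, the multisets $\{Y_1,\dots,Y_s\}$ and $\{Y'_1,\dots,Y'_t\}$ must agree, and reassembling components yields the desired $X$-isomorphism $Y\iso Y'$. There is no real obstacle here; the only point requiring a little care is verifying that $B_Z$ (and thus the primitive idempotent decomposition of $Z$) is available, which is immediate from $Z$ being of finite type over the Noetherian scheme $X$. In particular, the whole argument is a formal consequence of the material developed in Appendix~\ref{s:app}.
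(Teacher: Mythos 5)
Your proof is correct and takes essentially the same approach as the paper: reduce via Proposition~\ref{P:primidem} to the finite decomposition into connected components, then observe that an isomorphism $Y\sqcup Z\iso Y'\sqcup Z$ matches components up to isomorphism, and cancel. The paper's own proof carries out the cancellation by hand, tracking orbits of the induced permutation of primitive idempotents until each one escapes into the $B$-part; your appeal to cancellativity of the free commutative monoid on isomorphism classes of connected affine $X$-schemes replaces that bookkeeping with a one-line abstract argument, but the underlying mechanism is identical.
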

\begin{proof}
Let $B$ and $\tilde B$ be two finitely generated $A$-algebras, such that their corresponding affine schemes are stably isomorphic. Hence, there exists a finitely generated $A$-algebra $C$ such that $D:=C\oplus B$ and $\tilde D:=C\oplus \tilde B$ are isomorphic as $A$-algebras.   In fact, all we will use from these rings is that they are Noetherian, and hence we will show that, if $C\oplus B \iso  C\oplus \tilde B$, for some Noetherian rings $B$, $\tilde B$, and $C$,  then $B\iso\tilde B$.   Let $c_1,\dots,c_s$ be the   primitive idempotents of $C$, which are finite in number by Proposition~\ref{P:primidem}, and likewise $e_1,\dots,e_n$ and $\tilde e_1,\dots,\tilde e_{\tilde n}$ those of $B$ and $\tilde B$ respectively. Hence $\rij d{s+n}:=(c_1,\dots,c_s,e_1,\dots,e_n)$ and $\rij{\tilde d}{s+\tilde n}:=(c_1,\dots,c_s,\tilde e_1,\dots,\tilde e_{\tilde n})$ are the primitive idempotents of $D$ and $\tilde D$ respectively (since their sums are equal to one; see Proposition~\ref{P:primidem}). 

Since the isomorphism $f\colon D\to \tilde D$ must preserve  primitive idempotents,  $n=\tilde n$ and there exists a permutation $\sigma$ of $\{1,\dots,n+s\}$ such that $f(d_i)=\tilde d_{\sigma(i)}$. Fix some $1\leq i\leq s+n$ and let $i':=\sigma(i)$. Since $f(d_i)=\tilde d_{i'}$, the isomorphism $f$ induces an isomorphism between  $d_iD\iso D/(1-d_i)D$ and $\tilde D/(1-\tilde d_{i'})\tilde D\iso \tilde d_{i'}\tilde D$. Assume $i'\leq s$, so that $\tilde d_{i'}\tilde D\iso c_{i'}C\iso d_{i'}D$. In other words, we showed that $d_iD\iso d_{\sigma(i)}D$ whenever $\sigma(i)\leq i$. Letting $i'':=\sigma(i')$, we may repeat this argument, and hence, if $i''\leq s$, we also get $d_iD\iso d_{\sigma^2(i)}D$. In conclusion, if $k$ is the smallest positive power of $\sigma$ such that $\sigma^k(i)>s$ (it is possible that no such power exists), then   $d_iD\iso d_{\sigma^{k-1}(i)}D\iso \tilde e_{\sigma^k(i)-s}\tilde B$. 

In particular, if $i>s$, then there exists such a smallest power $k$ (since the order of $\sigma$ is an upper bound for $k$).  Writing $j:=i-s$ and $j':=\sigma^k(i)-s$, we showed that for each $j=\range 1n$, there exists $j'\in\{1,\dots,n\}$ such that $e_jB\iso \tilde e_{j'}\tilde B$. Furthermore, the assignment $j\mapsto j'$ is injective, for if $j_1'=j_2'$, then for some $k_1$ and $k_2$ we have 
\begin{equation}\label{eq:sigma12}
\sigma^{k_1}(j_1+s)=\sigma^{k_2}(j_2+s), 
\end{equation}
where $k_1$ and $k_2$ are the respective smallest powers of $\sigma$ that map $j_1+s$ and $j_2+s$ to a value greater than $s$. Without loss of generality, assume $k_2\leq k_1$. Hence applying $\sigma^{-k_2}$ to \eqref{eq:sigma12} yields $\sigma^{k_1-k_2}(j_1+s)=j_2+s>s$, so that by minimality, $k_1=k_2$, and hence $j_1=j_2$.
In particular, by the pigeonhole principle, the assignment $j\mapsto j'$ is a bijection, and, therefore, using Proposition~\ref{P:primidem}, we get an isomorphism
$$
B\iso \bigoplus_{j=1}^ne_jB\iso \bigoplus_{j=1}^n\tilde e_{j'}\tilde B\iso \tilde B,
$$
as required.
\end{proof}

%\bibliographystyle{amsplain}
%\bibliography{myabbrev,references}

%REFERENCES

\providecommand{\bysame}{\leavevmode\hbox to3em{\hrulefill}\thinspace}
\providecommand{\MR}{\relax\ifhmode\unskip\space\fi MR }
% \MRhref is called by the amsart/book/proc definition of \MR.
\providecommand{\MRhref}[2]{%
  \href{http://www.ams.org/mathscinet-getitem?mr=#1}{#2}
}
\providecommand{\href}[2]{#2}

\end{document}